\documentclass[reqno,11pt,letterpaper]{amsart}

\usepackage{amsmath}
\usepackage{amssymb}
\usepackage{amsthm}
\usepackage{mathrsfs}
\usepackage{accents}
\usepackage{calc}
\usepackage{arydshln}
\usepackage{upgreek}
\usepackage{slashed}
\usepackage{xifthen}
\usepackage{graphicx}
\usepackage{longtable}
\usepackage[inline]{enumitem}


\usepackage{xcolor}
\definecolor{winered}{rgb}{0.6,0,0}
\definecolor{lessblue}{rgb}{0,0,0.7}

\usepackage[pdftex,colorlinks=true,linkcolor=winered,citecolor=lessblue,breaklinks=true,bookmarksopen=true]{hyperref}

\hyphenation{Schwarz-schild non-empty}

\setcounter{tocdepth}{3}
\setcounter{secnumdepth}{3}

\makeatletter
\newcommand{\myitem}[2]{\item[\rm{(#2)}]\def\@currentlabel{#2}\label{#1}}
\makeatother

\addtolength{\textheight}{0.4in}
\addtolength{\oddsidemargin}{-0.5in}
\addtolength{\evensidemargin}{-0.5in}
\addtolength{\textwidth}{1.0in}
\setlength{\topmargin}{0.00in}
\setlength{\headheight}{0.18in}
\setlength{\marginparwidth}{1.0in}
\setlength{\abovedisplayskip}{0.2in}
\setlength{\belowdisplayskip}{0.2in}
\setlength{\parskip}{0.05in}


\usepackage{titletoc}

\makeatletter

\def\@tocline#1#2#3#4#5#6#7{
\begingroup
  \par
    \parindent\z@ \leftskip#3 \relax \advance\leftskip\@tempdima\relax
                  \rightskip\@pnumwidth plus 4em \parfillskip-\@pnumwidth
    \ifcase #1 
       \vskip 0.6em \hskip 0em 
       \or
       \or \hskip 0em 
       \or \hskip 1em 
    \fi%
    %
    #6
    %
    \nobreak\relax{\leavevmode\leaders\hbox{\,.}\hfill}
    \hbox to\@pnumwidth {\@tocpagenum{#7}}
  \par
\endgroup
}

 \def\l@section{\@tocline{0}{0pt}{0pc}{}{}}

\renewcommand{\tocsection}[3]{%
  \indentlabel{\@ifnotempty{#2}{ 
    \ignorespaces\bfseries{#2. #3}}}
  \indentlabel{\@ifempty{#2}{\ignorespaces\bfseries{#3}}{}} 
    \vspace{1.5pt}}

\renewcommand{\tocsubsection}[3]{%
  \indentlabel{\@ifnotempty{#2}{
    \ignorespaces#2. #3}}
  \indentlabel{\@ifempty{#2}{\ignorespaces #3}{}}
    \vspace{1.5pt}}

\renewcommand{\tocsubsubsection}[3]{%
  \indentlabel{\@ifnotempty{#2}{
    \ignorespaces#2. #3}}
  \indentlabel{\@ifempty{#2}{\ignorespaces #3}{}}
    \vspace{1.5pt}}

\makeatother

\makeatletter
\def\@nomenstarted{0}
\newlength{\@nomenoldtabcolsep}

\newcommand{\nomenstart}
  {%
    \def\@nomenstarted{1}%
    \setlength{\@nomenoldtabcolsep}{\tabcolsep}%
    \setlength{\tabcolsep}{3.5pt}%
    \begin{longtable}{p{0.11\textwidth} p{0.86\textwidth}}
  }

\newcommand{\nomenitem}[2]{%
    \ifcase\@nomenstarted%
      \or 
      \or \\ 
    \fi%
    #1\,{\leavevmode\leaders\hbox{\,.}\hfill} & #2%
    \def\@nomenstarted{2}%
  }%
\newcommand{\nomenend}
  {\\%
      \end{longtable}%
      \setlength{\tabcolsep}{\@nomenoldtabcolsep}%
      \def\@nomenstarted{0}%
  }
\makeatother

\makeatletter
\newcommand{\vast}{\bBigg@{4}}
\newcommand{\Vast}{\bBigg@{5}}
\newcommand{\VAST}[1]{\bBigg@{#1}}
\makeatother

\allowdisplaybreaks

\numberwithin{equation}{section}
\numberwithin{figure}{section}
\newtheorem{thm}{Theorem}[section]

\newtheorem{prop}[thm]{Proposition}
\newtheorem{lemma}[thm]{Lemma}
\newtheorem{cor}[thm]{Corollary}
\newtheorem*{thm*}{Theorem}
\newtheorem*{prop*}{Proposition}
\newtheorem*{cor*}{Corollary}
\newtheorem*{conj*}{Conjecture}

\theoremstyle{definition}
\newtheorem{definition}[thm]{Definition}

\theoremstyle{remark}
\newtheorem{rmk}[thm]{Remark}

\makeatletter
\newcommand{\fakephantomsection}{%
  \Hy@MakeCurrentHref{\@currenvir.\the\Hy@linkcounter}
  \Hy@raisedlink{\hyper@anchorstart{\@currentHref}\hyper@anchorend}%
  \Hy@GlobalStepCount\Hy@linkcounter%
}
\makeatother


\newcommand{\mc}{\mathcal}
\newcommand{\cA}{\mc A}

\newcommand{\cC}{\mc C}
\newcommand{\cD}{\mc D}
\newcommand{\cE}{\mc E}
\newcommand{\cF}{\mc F}

\newcommand{\cI}{\mc I}

\newcommand{\cM}{\mc M}

\newcommand{\cO}{\mc O}
\newcommand{\cP}{\mc P}

\newcommand{\cR}{\mc R}
\newcommand{\cS}{\mc S}

\newcommand{\cU}{\mc U}
\newcommand{\cV}{\mc V}

\newcommand{\cX}{\mc X}
\newcommand{\cY}{\mc Y}

\newcommand{\ms}{\mathscr}

\newcommand{\sC}{\ms C}


\newcommand{\C}{\mathbb{C}}
\newcommand{\N}{\mathbb{N}}
\newcommand{\R}{\mathbb{R}}
\newcommand{\Z}{\mathbb{Z}}

\newcommand{\Sph}{\mathbb{S}}


\newcommand{\sfb}{\mathsf{b}}

\newcommand{\sfr}{\mathsf{r}}

\newcommand{\sfw}{\mathsf{w}}

\newcommand{\sfH}{\mathsf{H}}

\newcommand{\sfZ}{\mathsf{Z}}


\newcommand{\bfA}{\mathbf{A}}

\newcommand{\bfR}{\mathbf{R}}




\newcommand{\slDelta}{\slashed{\Delta}{}}

\newcommand{\slpa}{\slashed{\partial}{}}





\newcommand{\End}{\operatorname{End}}

\renewcommand{\Re}{\operatorname{Re}}
\renewcommand{\Im}{\operatorname{Im}}
\newcommand{\Id}{\operatorname{Id}}

\newcommand{\supp}{\operatorname{supp}}
\newcommand{\sgn}{\operatorname{sgn}}


\newcommand{\Ups}{\Upsilon}

\newcommand{\eps}{\epsilon}
\newcommand{\ff}{\mathrm{ff}}

\newcommand{\la}{\langle}

\newcommand{\ol}{\overline}
\newcommand{\pa}{\partial}
\newcommand{\dd}{{\mathrm d}}
\newcommand{\ra}{\rangle}

\newcommand{\xra}{\xrightarrow}
\newcommand{\pfstep}[1]{$\bullet$\ \underline{\textit{#1}}}

\newcommand{\bop}{{\mathrm{b}}}
\newcommand{\cop}{{\mathrm{c}}}
\newcommand{\chop}{{\mathrm{c}\semi}}

\newcommand{\scop}{{\mathrm{sc}}}

\newcommand{\cl}{{\mathrm{cl}}}

\newcommand{\eop}{{\mathrm{e}}}

\newcommand{\semi}{\hbar}

\newcommand{\lb}{{\mathrm{lb}}}
\newcommand{\rb}{{\mathrm{rb}}}

\newcommand{\fbface}{{\mathrm{fbf}}}

\newcommand{\cface}{{\mathrm{cf}}}
\newcommand{\dface}{{\mathrm{df}}}

\newcommand{\sface}{{\mathrm{sf}}}

\newcommand{\tface}{{\mathrm{tf}}}

\newcommand{\cp}{{\mathrm{c}}}

\newcommand{\Diff}{\mathrm{Diff}}

\DeclareMathOperator{\Op}{Op}

\newcommand{\Vb}{\cV_\bop}
\DeclareMathOperator{\specb}{\mathrm{spec}_\bop}
\newcommand{\Diffb}{\Diff_\bop}

\newcommand{\Psib}{\Psi_\bop}
\newcommand{\Psisc}{\Psi_\scop}

\newcommand{\Psich}{\Psi_\chop}
\newcommand{\Vch}{\cV_\chop}
\newcommand{\Diffch}{\Diff_\chop}

\newcommand{\Tch}{{}^\chop T}
\newcommand{\Sch}{{}^\chop S}
\newcommand{\Ellch}{\Ell_\chop}
\newcommand{\WFch}{\WF_\chop}
\newcommand{\Hch}{H_{\cop,h}}

\newcommand{\Vsc}{\cV_\scop}

\newcommand{\Psih}{\Psi_\semi}

\newcommand{\WF}{\mathrm{WF}}
\newcommand{\Ell}{\mathrm{Ell}}

\newcommand{\Omegab}{{}^{\bop}\Omega}

\newcommand{\WFsc}{\WF_{\scop}}

\newcommand{\Tb}{{}^{\bop}T}

\newcommand{\Tsc}{{}^{\scop}T}

\newcommand{\Sb}{{}^{\bop}S}
\newcommand{\Ssc}{{}^{\scop}S}

\newcommand{\Ellh}{\Ell_{\semi}}

\newcommand{\half}{{\tfrac{1}{2}}}

\newcommand{\sigmab}{{}^\bop\sigma}
\newcommand{\sigmasc}{{}^\scop\sigma}

\newcommand{\sigmach}{{}^\chop\sigma}

\newcommand{\loc}{{\mathrm{loc}}}
\newcommand{\CI}{\cC^\infty}
\newcommand{\CIdot}{\dot\cC^\infty}

\newcommand{\CIc}{\cC^\infty_\cp}

\newcommand{\CmI}{\cC^{-\infty}}

\newcommand{\Hb}{H_{\bop}}

\newcommand{\Hbh}{H_{\bop,h}}

\newcommand{\Hsc}{H_{\scop}}
\newcommand{\Hsch}{H_{\scop,h}}



\newcommand{\openbigpmatrix}[1]
  {%
    \def\@bigpmatrixsize{#1}%
    \addtolength{\arraycolsep}{-#1}%
    \begin{pmatrix}%
  }
\newcommand{\closebigpmatrix}
  {%
    \end{pmatrix}%
    \addtolength{\arraycolsep}{\@bigpmatrixsize}%
  }



\newlength{\enummargin}\setlength{\enummargin}{1.5em}

\newcommand{\usref}[1]{{\upshape\ref{#1}}}



\DeclareGraphicsExtensions{.mps}

\makeatletter
\newcommand*{\fwbw}[1]{\expandafter\@fwbw\csname c@#1\endcsname}
\newcommand*{\@fwbw}[1]{\ifcase #1 \or {\rm fw}\or {\rm bw}\fi}
\AddEnumerateCounter{\fwbw}{\@fwbw}
\makeatother

\begin{document}

\title[Semiclassical propagation through cone points]{Semiclassical propagation through cone points}

\date{\today}

\author{Peter Hintz}
\address{Department of Mathematics, Massachusetts Institute of Technology, Cambridge, Massachusetts 02139-4307, USA}
\email{phintz@mit.edu}

\begin{abstract}
  We introduce a general framework for the study of the diffraction of waves by cone points at high frequencies. We prove that semiclassical regularity propagates through cone points with an almost sharp loss even when the underlying operator has leading order terms at the conic singularity which fail to be symmetric. We moreover show improved regularity along strictly diffractive geodesics. Applications include high energy resolvent estimates for complex- or matrix-valued inverse square potentials and for the Dirac--Coulomb equation. We also prove a sharp propagation estimate for the semiclassical conic Laplacian.

  The proofs use the semiclassical cone calculus, introduced recently by the author, and combine radial point estimates with estimates for a scattering problem on an exact cone. A second microlocal refinement of the calculus captures semiclassical conormal regularity at the cone point and thus facilitates a unified treatment of semiclassical cone and b-regularity.
\end{abstract}

\maketitle

\section{Introduction}
\label{SI}

We present a systematic analysis of the propagation of semiclassical regularity through points which are geometrically singular (cone points), analytically singular (e.g.\ including inverse square potentials), or both. The novel aspect of our approach is that it handles leading order singular terms with ease, \emph{regardless of symmetry or sign conditions}.

As a simple application of our main microlocal propagation result, we consider high energy scattering by complex-valued potentials on $\R^n$ with an inverse square singularity. Denote by $H^2_0(\R^n\setminus\{0\})$ the closure of $\CIc(\R^n\setminus\{0\})$ in the topology of $H^2(\R^n)$; denote further by $\Delta=\sum_{j=1}^n D_{x^j}^2$ (where $D=\frac{1}{i}\pa$) the nonnegative Laplacian, and denote polar coordinates on $\R^n$ by $(r,\omega)\in(0,\infty)\times\Sph^{n-1}$.

\begin{thm}[High energy estimates for potential scattering]
\label{ThmIV}
  Let $V(x)=\frac{V_0(x)}{|x|^2}$, where $V_0=V_0(r,\omega)\in\CIc([0,\infty)_r\times\Sph^{n-1};\C)$ and $V_0(0,\omega)\equiv\sfZ\in\C$. (Thus $V(x)=\frac{\sfZ}{|x|^2}+\cO(|x|^{-1})$ near the origin.) If $n\geq 5$ and $\Re\sqrt{(\frac{n-2}{2})^2+\sfZ}>1$, there exists $\lambda_0>0$ so that for all $\lambda\in\C$ with $\Re\lambda>\lambda_0$ and $0<\Im\lambda<1$, the operator
  \begin{equation}
  \label{EqIV}
    \Delta+V-\lambda \colon H^2_0(\R^n\setminus\{0\}) \to L^2(\R^n)
  \end{equation}
  is invertible, and its inverse obeys the operator norm bound
  \begin{equation}
  \label{EqIVEst}
    \|\chi(\Delta+V-\lambda)^{-1}\chi\|_{L^2\to L^2} \leq C_{\chi,\eps}|\lambda|^{-\frac12+\eps}
  \end{equation}
  for all $\chi\in\CIc(X)$ and $\eps>0$. More generally, $\Delta+V-\lambda\colon\cD\to L^2(\R^n)$ is invertible for $n\geq 2$ and $\sfZ\in\C\setminus(-\infty,-(\frac{n-2}{2})^2]$ for a suitable domain $\cD$ (see~\eqref{EqHVPropMap} with $l=1$), and the estimate~\eqref{EqIVEst} holds in this generality as well.
\end{thm}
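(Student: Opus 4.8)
The plan is to rescale to a semiclassical problem at frequency $h=|\lambda|^{-1/2}$ and to establish, for the rescaled operator, a uniform resolvent bound with loss $h^{-1-\eps}$ via a microlocal decomposition of phase space, the only nontrivial ingredient being the behavior near the cone point $r=0$, where the paper's main microlocal propagation result is to be invoked. Concretely, write $\lambda=|\lambda|z$ with $z$ near $1$ and $\Im z\in(0,h^2)$; multiplying by $h^2=|\lambda|^{-1}$ gives $h^2(\Delta+V-\lambda)=P_h-z$, where
\[
  P_h=h^2\Delta+h^2 V=h^2\Bigl(-\pa_r^2-\tfrac{n-1}{r}\pa_r+\tfrac{\Delta_\omega+V_0(r,\omega)}{r^2}\Bigr)
\]
is a semiclassical cone operator whose normal operator at $r=0$ is the exact-cone model $P_{h,0}-z$ obtained by replacing $V_0$ with $\sfZ$, with indicial roots $-\tfrac{n-2}{2}\pm\nu_k$ on the $k$-th spherical harmonic, $\nu_k=\sqrt{(\tfrac{n-2}{2})^2+k(k+n-2)+\sfZ}$ (the branch with $\Re\nu_k>0$, available since $(\tfrac{n-2}{2})^2+\sfZ\notin(-\infty,0]$). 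The bound~\eqref{EqIVEst} is then equivalent to the uniform estimate $\|\chi(P_h-z)^{-1}\chi\|_{L^2\to L^2}\le C_{\chi,\eps}h^{-1-\eps}$ as $h\to0$, $\Im z\ge0$, $\Re z$ near $1$, together with invertibility of $P_h-z\colon\cD\to L^2$; under the extra hypotheses $n\ge5$ and $\Re\nu_0>1$, Hardy's inequality makes $V\colon H^2(\R^n)\to L^2(\R^n)$ bounded, the root $-\tfrac{n-2}{2}-\nu_0$ becomes incompatible with local square-integrability near the origin, and $\cD$ may be identified with $H^2_0(\R^n\setminus\{0\})$, which for $n\ge5$ equals $H^2(\R^n)$, giving~\eqref{EqIV}.

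For the a priori estimate I would decompose the compactified semiclassical cone phase space into four regions and chain microlocal estimates. Near spatial infinity $V$ vanishes and $h^2\Delta-z$ is a semiclassical scattering operator with non-trapping Euclidean flow, giving control with only the expected $h^{-1}$ loss. On the semiclassically elliptic set over $0<r\lesssim 1$, elliptic regularity gives control with a gain. On the characteristic set over $0<r\lesssim 1$, real-principal-type propagation along null-bicharacteristics, i.e.\ the lifts of straight-line geodesics, reduces matters to the incoming bicharacteristics that limit onto the cone point, since all others escape to spatial infinity. Finally, near $r=0$ the main microlocal propagation result applies: radial point estimates at the conic radial set, combined with invertibility of the normal operator $P_{h,0}-z$ between weighted spaces adapted to the indicial roots $-\tfrac{n-2}{2}\pm\nu_k$, let one connect incoming to outgoing bicharacteristics through the cone point with an almost sharp loss of $h^{-\eps}$ on top of $h^{-1}$. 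Assembling the four pieces with a microlocal partition of unity and absorbing $\cO(h^\infty)$ errors yields $\|u\|_{\cX}\le C h^{-1-\eps}\|(P_h-z)u\|_{\cY}$ on the relevant spaces; running the same argument for $(P_h-z)^*$, an operator of the same type with $\sfZ$ and $z$ replaced by $\bar\sfZ$ and $\bar z$, gives injectivity of $P_h-z$ and of its adjoint together with closed range, hence invertibility. Undoing the rescaling then produces~\eqref{EqIVEst}.

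The crux, and the only place the non-self-adjointness is felt, is the analysis near $r=0$. For complex $\sfZ$ the leading conic term $h^2\sfZ/r^2$ fails to be symmetric, so $P_{h,0}-z$ is non-self-adjoint and the classical functional-calculus treatment of metric cones (Cheeger--Taylor, Melrose--Wunsch) is unavailable; one must instead study the scattering problem for $P_{h,0}-z$ on $(0,\infty)_r\times\Sph^{n-1}$ directly within the semiclassical cone calculus, proving the required mapping properties on weighted spaces, with weights pinned by $\Re\nu_k$, and with the correct uniformity as $h\to0$, and one must check that the boundary terms generated by the radial point estimates at $r=0$ are harmless, i.e.\ that elements of $\cD$ (resp.\ of $H^2_0$) carry the conormal decay at the cone point that kills those terms. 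I expect this region to be the main obstacle; the residual $\eps$-loss reflects that the diffractive improvement, while present, is not quite sharp at this level of generality.
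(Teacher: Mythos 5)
Your proposal is correct and follows essentially the same route as the paper: semiclassical rescaling to an admissible operator, standard non-trapping scattering estimates near infinity combined with the radial-point/normal-operator propagation result through $r=0$ (Theorem~\ref{ThmP} via Theorem~\ref{ThmHVProp}), with the normal operator inverted on weighted spaces whose admissible weights are pinned by the indicial roots $-\frac{n-2}{2}\pm\nu_k$ exactly as in Lemma~\ref{LemmaHVScalar}, and surjectivity from the dual estimate. Your identification of the domain ($l=2$ requiring $\Re\nu_0>1$, and $H^2_0(\R^n\setminus\{0\})$ for $n\geq 5$ via Hardy) matches the paper's concluding remark in \S\ref{SsHV}.
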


The point is that we can allow for $\sfZ$ to be nonreal, in which case $\Delta+V$ is not a symmetric operator on $\CIc(\R^n\setminus\{0\})$. For a general result for matrix-valued inverse square potentials without symmetry conditions, see Theorem~\ref{ThmHVProp}; Lemma~\ref{LemmaHVScalar} verifies the assumptions of Theorem~\ref{ThmHVProp} for the case considered in Theorem~\ref{ThmIV}. Typical applications of high energy resolvent estimates include decay and local smoothing estimates for solutions to wave and Schr\"odinger equations; since such applications are orthogonal to the focus of the present paper, we shall not discuss them here.

Burq and Planchon--Stalker--Tahvildar-Zadeh proved Strichartz estimates for exact inverse square potentials in the case of real $\sfZ>-(\frac{n-2}{2})^2$ \cite{PlanchonStalkerTahvildarZadehInvSq,BurqPlanchonStalkerTahvildarZadehInvSq}. Duyckaerts \cite{DuyckaertsInvSq} obtained, by means of estimates for semiclassical defect measures, high energy resolvent estimates (without the $\eps$-loss) in the more general setting of inverse square potentials at a finite collection of points $p_j$ in $\R^n$, at each of which the coefficient $\sfZ_j$ satisfies $\sfZ_j>-(\frac{n-2}{2})^2$. We also mention the work by Baskin--Wunsch \cite{BaskinWunschConicDecay} on lossless resolvent estimates in a \emph{geometric} setting, namely in the presence of finitely many conic singularities, and the work by Hillairet--Wunsch \cite{HillairetWunschConic} on resonances in this setting (see also \cite{GalkowskiVainberg}).

\begin{rmk}[More natural settings]
  The setting of Theorem~\ref{ThmIV} is chosen here for its simplicity. More natural examples in which leading order terms without signs or symmetry properties are present arise in particular in the study of PDEs on vector bundles. As an example, motivated by the recent work of Baskin--Wunsch \cite{BaskinWunschDiracCoulomb}, we prove high energy resolvent estimates for the Dirac--Coulomb equation in~\S\ref{SsHDC}, see Theorem~\ref{ThmHDC}.
\end{rmk}

The heart of the proof of Theorem~\ref{ThmIV} is the propagation of semiclassical regularity through $r=0$,\footnote{In particular, the choice of the large end of the space, here $\R^n$, is only made for convenience and allows for simple control of the global structure of the geodesic flow. Thus, we do not discuss the large literature on limiting absorption principles here.} which we prove in this paper for a general class of \emph{admissible operators}, see Definition~\ref{DefPOp} and Theorem~\ref{ThmP}. Thus, in addition to inverse square singularities (which may be anisotropic), we allow for the underlying metric $g$ to have a conic singularity at $r=0$, so $g=\dd r^2+r^2 k(r,y,\dd r,\dd y)$ for some smooth $r$-dependent tensor $k$, with $k|_{r=0}$ a Riemannian metric on a closed manifold $Y$. We moreover allow for further first order differential operators of the schematic form $r^{-1}D_r$, $r^{-2}D_y$ to be present. All these singular terms are allowed to be of the same strength at $r=0$: they are, to leading order at $r=0$, homogeneous of degree $-2$ with respect to dilations.

In order to appreciate Theorem~\ref{ThmIV}, note that the degree $-2$ homogeneity of the Laplacian and of the potential $r^{-2}$ is reflected also in the Hardy inequality, which demonstrates that any factor of $r^{-1}$ should be regarded as a derivative as far as analysis near the cone point $r=0$ is concerned. Therefore, when $\sfZ$ in Theorem~\ref{ThmIV} is nonreal, the operator $\Delta+V-\lambda$ is, \emph{even to leading order at the cone point}, not symmetric. Therefore, techniques rooted in the spectral theory of self-adjoint operators do not apply. Furthermore, recall that even for solutions of smooth coefficient PDEs $P u=f$ where the principal symbol of $P$ is complex-valued, microlocal regularity of $u$ propagates along the null-bicharacteristics of $\Re P$ \emph{only under a sign condition on $\Im P$} near the boundary of the support of $\Im P$ \cite[\S4.5]{VasyMinicourse}; on a technical level, the term $\Im P$ contributes the leading term in a positive commutator argument for proving the propagation of regularity along null-bicharacteristics of $\Re P$. The absence of sign conditions on $\Im V$ in Theorem~\ref{ThmIV} is thus a significant obstacle for the applicability of existing methods.

In general geometric or analytic settings where one cannot separate variables, propagation estimates through cone points and other types of singularities have so far largely been restricted to self-adjoint settings. Melrose--Wunsch \cite{MelroseWunschConic} studied the diffraction of waves by conic singularities by combining microlocal propagation estimates in Mazzeo's edge calculus \cite{MazzeoEdge} with the inversion of a suitable model operator on an exact cone. This point of view is closely related to that adopted in the present paper, see Remark~\ref{RmkIEdge}, though by contrast to the present work, \cite{MelroseWunschConic} takes full advantage of the self-adjointness of the underlying Laplace operator.

Later works on wave propagation in singular geometries have been based on positive commutator arguments relative to a quadratic form domain (thus still in self-adjoint settings), following the blueprint of Vasy's work \cite{VasyPropagationCorners} on the propagation of singularities on smooth manifolds with corners (see Lebeau \cite{LebeauPropagation} for the analytic setting). Vasy's work was extended to the setting of manifolds with edge singularities by Melrose--Vasy--Wunsch \cite{MelroseVasyWunschEdge}, and the same authors established improved regularity of the strictly diffracted front on manifolds with corners \cite{MelroseVasyWunschDiffraction}. See Qian \cite{QianDiffractionInvSq} for the case of inverse square potentials. We remark that in these works, the underlying geometry near the singularity is \emph{not} reflected in the type of singularities which propagate or diffract---for instance, in the case of \cite{VasyPropagationCorners}, the geometry is that of a manifold with corners equipped with a smooth (incomplete!) Riemannian metric, but the correct notion of regularity is conormality at the boundary; thus, these works introduce mixed differential-pseudodifferential calculi which are compatible with both structures.

Baskin--Marzuola \cite{BaskinMarzuolaCone} combined the techniques of \cite{VasyPropagationCorners} with \cite{BaskinVasyWunschRadMink} to study the long-time behavior of waves on manifolds with conic singularities. An important ingredient in their work is a high energy estimate for propagation through the conic singularity. In the present paper we give an alternative proof which in particular avoids the use of a mixed calculus; see also Remark~\ref{RmkI2nd}. We also mention that Gannot--Wunsch \cite{GannotWunschPotential} analyzed the diffraction by conormal potentials in the semiclassical setting using direct commutator methods involving paired Lagrangian distributions, inspired by \cite{DeHoopUhlmannVasyDiffraction}.

The recent work by Baskin--Wunsch \cite{BaskinWunschDiracCoulomb} on diffraction for the Dirac--Coulomb equation is also rooted in \cite{MelroseWunschConic,MelroseVasyWunschEdge}. While the (first order) Dirac--Coulomb operator is self-adjoint for the range of Coulomb charges considered in \cite{BaskinWunschDiracCoulomb}, the wave type operator obtained by taking an appropriate square has nonsymmetric leading order terms at the central singularity; thus, the authors work directly with the first order operator in their proofs of propagation results. We are able to give a direct proof of high energy estimates for the resolvent associated with the wave type operator arising in \cite{BaskinWunschDiracCoulomb}, see \S\ref{SsHDC}.

In the high energy regime under study in the present paper, the strategy for overcoming the issues caused by the absence of symmetry or self-adjointness properties is the following. We distill the contribution of $V$ to the high frequency propagation of regularity (i.e.\ in Theorem~\ref{ThmIV}: the inverse powers of $|\lambda|$ appearing in uniform estimates of $L^2$ norms) into a model problem right at the cone point, thus decoupling it from the real principal type propagation away from the cone point (where $V$ plays no role due to its subprincipal nature). More precisely, in the setting of Theorem~\ref{ThmIV}, set $h:=|\lambda|^{-\frac12}$ and $z=h^2\lambda=1+\cO(h)$, and define the semiclassical rescaling
\begin{equation}
\label{EqIP}
\begin{split}
  P_{h,z} = h^2(\Delta+V-\lambda) &= h^2\Delta - z + \tfrac{h^2}{|x|^2} V_0 \\
    &= (h D_r)^2-i(n-1)\tfrac{h}{r}h D_r + h^2 r^{-2}\Delta_{\Sph^{n-1}} - z + \tfrac{h^2}{r^2}V_0.
\end{split}
\end{equation}
This is a semiclassical differential operator in $r>0$. Its uniform analysis as $h\to 0$, as far as the novel bit near $r=0$ is concerned, is based on two ingredients, discussed in more detail in~\S\ref{SsIS}.

\begin{enumerate}
\item \textit{Symbolic propagation estimates:} real principal type propagation in $r>0$ in the spirit of \cite{DuistermaatHormanderFIO2}, and radial point estimates down to $r=0$ in the spirit of \cite{MelroseEuclideanSpectralTheory,VasyMicroKerrdS} but taking place in the semiclassical cone algebra introduced by the author in \cite{HintzConicPowers}. The advantage of this algebra in the present setting is that $P_{h,z}$ has a smooth and nondegenerate principal symbol in this algebra down to $r=0$; in this algebra, the proofs of the relevant symbolic estimates are then essentially standard.
\item \textit{Inversion of a model problem.} Passing to the rescaled variable $\hat r=\frac{r}{h}$ and letting $h\to 0$ for fixed $\hat r$ in the resulting expression of $P_{h,z}$ gives
  \begin{equation}
  \label{EqINormOp}
    N(P) = D_{\hat r}^2 - i(n-1)\hat r^{-1}D_{\hat r}+\hat r^{-2}\Delta_{\Sph^{n-1}} - 1 + \tfrac{\sfZ}{\hat r^2}.
  \end{equation}
  The inversion of $N(P)$ is a scattering problem on an exact cone at unit frequency, and requires the existence of the limiting (outgoing) resolvent. Its analysis is based on b-analysis near the small end of the cone \cite{MelroseAPS} and on the microlocal approach to scattering theory on spaces with conic infinite ends pioneered by Melrose \cite{MelroseEuclideanSpectralTheory}.
\end{enumerate}

The $\eps$-loss in the estimate~\eqref{EqIVEst} is then due to the analogous loss in the limiting absorption principle for the scattering problem, as one needs to exclude incoming but allow outgoing spherical waves, cf.\ Remark~\ref{RmkHLLAP}. (We shall in fact deduce the lossy estimates stated in Theorem~\ref{ThmIV} from sharp results---as far as the relationship of domain and codomain of $P_{h,z}$ is concerned---on spaces with variable semiclassical orders.) For general admissible operators, the decay rates of incoming and outgoing solutions of the model problem are typically different, and the semiclassical loss upon propagation through the cone point is equal to their difference (up to an additional $\eps$-loss); we give explicit examples in which this loss indeed occurs in Appendix~\ref{SLoss}, demonstrating that our analysis is sharp up to an $\eps$-loss.

The close connection between diffraction by conic singularities and scattering on large ends of cones was recently studied for exact (or `product') cones (i.e.\ the metric is $g=\dd r^2+r^2 k(y,\dd y)$) by Yang \cite{YangDiffraction}, resulting in a partial improvement of the classical analysis by Cheeger--Taylor \cite{CheegerTaylorConicalI,CheegerTaylorConicalII} which was based on separation of variables and Bessel function analysis. Recently, Chen Xi \cite{XiConeParametrix} constructed a detailed parametrix for high frequency diffraction by (nonexact) conic singularities, i.e.\ for the operator $(h^2\Delta_g-(1\pm i 0))^{-1}$, with applications to short time Strichartz estimates for the Schr\"odinger equation; an important ingredient in his work is the precise resolvent construction by Guillarmou--Hassell--Sikora \cite{GuillarmouHassellSikoraResIII}, applied on an exact cone which arises similarly to~\eqref{EqINormOp}. (The history of the study of propagation and diffraction phenomena for solutions of wave type equations on manifolds with singularities is long, starting with Sommerfeld's example \cite{SommerfeldDiffraction} and early developments by Friedlander \cite{FriedlanderSound} and Keller \cite{KellerDiffraction}. The use of geometric and microlocal techniques for the analysis of singularities goes back to work on manifolds with boundary by Melrose--Sj\"ostrand \cite{MelroseSjostrandSingBVPI,MelroseSjostrandSingBVPII} using commutator techniques, and Melrose and Taylor \cite{MelroseDiffractiveParametrix,TaylorGrazing,MelroseTaylorParametrix} using parametrix constructions.)

For operators $P_{h,z}=h^2\Delta_g-z$, $z=1+\cO(h)$, on (nonexact) conic manifolds, we are able to obtain a \emph{lossless} propagation estimate by means of a positive commutator argument which is global on the level of the normal operator $N(P)$, i.e.\ which involves the construction of a commutator which is positive \emph{as an operator} on an exact cone, in the spirit of Mourre's construction \cite{MourreSingular} and Vasy's approach to many-body scattering \cite{VasyThreeBody,VasyManyBody}; see Theorem~\ref{ThmHL}, in particular the estimate~\eqref{EqHLSharpCAP}.

Finally, we prove a \emph{diffractive improvement} which gives finer control on the strength of singularities as they propagate through the cone point. Combining our framework with the arguments in \cite{MelroseWunschConic,MelroseVasyWunschEdge} for the propagation of coisotropic regularity, we show that, under a nonfocusing condition, the strongest singularities propagating towards the cone point only continue along geometric geodesics (limits of geodesics barely missing the cone point), whereas away from those, the diffracted front is smoother; see~\S\ref{SsPD}.

\subsection{Sketch of the proof}
\label{SsIS}

Consider again the operator $P_{h,z}$ from~\eqref{EqIP}; we work locally near $r=0$, thus on $X=[0,1)_r\times\Sph^{n-1}$. In order to achieve a clean separation of the regimes $h\to 0$, $r>0$ (corresponding to semiclassical analysis away from the cone point) and $h\sim r\to 0$ (where the normal operator $N(P)$ in~\eqref{EqINormOp} enters and semiclassical tools cease to be applicable), we work on a resolution of the total space $[0,1)_h\times X$ obtained by a real blow-up of $h=r=0$,\footnote{Recall here that the real blow-up gives an invariant way of introducing polar coordinates around $\{0\}\times\pa X$. Thus, a neighborhood of $h=r=0$ in $X_\chop$ is diffeomorphic to $[0,1)_\rho\times[0,\frac{\pi}{2}]_\theta\times\pa X$ and equipped with a smooth map (the blow-down map) to $[0,1)\times X=[0,1)\times([0,1)\times\Sph^{n-1})$ given by $(\rho,\theta,y)\mapsto(\rho\sin\theta,(\rho\cos\theta,y))$ which is a diffeomorphism away from the front face $\rho^{-1}(0)$. In practice, it is more convenient to work with the smooth functions $h+r$, $\frac{h}{h+r}$ on $X_\chop$ instead of $\rho,\theta$.}
\[
  X_\chop := \bigl[ [0,1)_h \times X; \{0\}\times\pa X \bigr].
\]
See Figure~\ref{FigISXch}. We wish to regard $\frac{h}{h+r}$ as the `true' semiclassical parameter; we proceed to make this more precise.

\begin{figure}[!ht]
\centering
\includegraphics{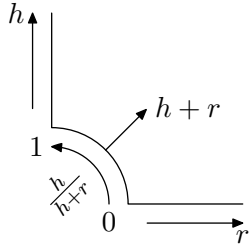}
\caption{The semiclassical cone single space $X_\chop$.}
\label{FigISXch}
\end{figure}

Note first that for $h=1$, the rescaling $r^2 P_{1,z}$ is a Fuchs-type operator, or b-differential operator in the terminology of Melrose \cite{MelroseAPS}, namely a differential operator built out of the vector fields $r\pa_r$ and $\pa_y$ (which span the space of \emph{b-vector fields}), where $y\in\R^{n-1}$ denotes local coordinates on $\pa X$. In this sense, the rescaled operator $r^2 P_{1,z}$ has elliptic principal part given by $(r D_r)^2+k^{i j}D_{y^i}D_{y^j}$, where $k^{i j}$ is the inverse metric on $\Sph^{n-1}$. As $h$ tends to $0$, the operator $r^2 P_{h,z}$ is built out of the semiclassical vector fields $h r\pa_r$ and $h\pa_y$ (which span the space of \emph{semiclassical b-vector fields}). In this semiclassical sense (i.e.\ ignoring terms with extra powers of $h$), its principal part is
\[
  r^2 P_{h,z} \sim (h r D_r)^2+k^{i j}h D_{y^i}h D_{y^j}-r^2 z.
\]
The characteristic set, i.e.\ the zero set of its principal symbol $\xi_{\bop\semi}^2+|\eta_{\bop\semi}|^2-r^2$, becomes singular at $r=0$, which is indicative of the inadequacy of the semiclassical b-setting to capture the behavior of $P_{h,z}$ microlocally near $h=r=0$ (cf.\ the above discussion regarding the tension between the geometry and the notion of regularity in \cite{MelroseWunschConic,VasyPropagationCorners} and subsequent works). The way out is to divide by $(h+r)^2$ and thus consider
\[
  \bigl(\tfrac{r}{h+r}\bigr)^2 P_{h,z} =: p_{h,z}(r,y,\tfrac{h}{h+r}r D_r,\tfrac{h}{h+r}D_y)
\]
as a differential operator built out of $\frac{h}{h+r}r\pa_r$ and $\frac{h}{h+r}\pa_{y^i}$, which are the prototypical \emph{semiclassical cone vector fields} introduced in \cite{HintzConicPowers}, see~\S\ref{SsCV}. In this sense, the principal part of $(\frac{r}{h+r})^2 P_{h,z}$ (i.e.\ ignoring terms of size $\cO(\frac{h}{h+r})$) is
\[
  \bigl(\tfrac{r}{h+r}\bigr)^2 P_{h,z} \sim \bigl(\tfrac{h}{h+r}r D_r\bigr)^2 + k^{i j}\bigl(\tfrac{h}{h+r}D_{y^i}\bigr)\bigl(\tfrac{h}{h+r}D_{y^j}\bigr) - \bigl(\tfrac{r}{h+r}\bigr)^2 z.
\]
Note that in the regime $\tfrac{h}{h+r}\ll 1$, where we are aiming to use semiclassical methods, this is now nondegenerate in the sense that its principal symbol
\[
  p_{0,1}(r,y,\xi,\eta)=\xi^2+|\eta|^2-1
\]
(recall $z=1+\cO(h)$) has a smooth zero set on which $p_{0,1}$ vanishes simply. (The microlocal analysis of \emph{semiclassical cone operators} in the semiclassical regime is thus concerned with tracking amplitudes of oscillations $r^{\frac{i}{h/(h+r)}\xi}e^{\frac{i}{h/(h+r)}\eta\cdot y}$ through the phase space over $X_\chop$---more precisely: over the `semiclassical face' $\frac{h}{h+r}=0$---whose fiber variables are $(\xi,\eta)$.)

The semiclassical cone calculus $\Psich(X)$, introduced in \cite{HintzConicPowers} and developed further in~\S\ref{SC}, makes this rigorous. It allows for the symbolic analysis of pseudodifferential operators of the form
\[
  \Op_{\cop,h}(p) = \text{``}p(\tfrac{h}{h+r},h+r,y,\tfrac{h}{h+r}r D_r,\tfrac{h}{h+r}D_y)\text{''}
\]
using standard methods from microlocal analysis: there is a semiclassical principal symbol $p(0,r,y,\xi,\eta)$, which is a symbol on the aforementioned phase space (defined rigorously after Lemma~\ref{LemmaCVSpan}). Moreover, as usual, the commutator $i[\Op_{\cop,h}(p),\Op_{\cop,h}(q)]$ is given by the quantization of the Poisson bracket of $p$ and $q$ up to operators with an extra factor of $\frac{h}{h+r}$. For the operator $P_{h,z}$ in~\eqref{EqIP}, the Hamilton vector field of its principal symbol is nondegenerate except at two submanifolds of critical points over $r=0$; these critical sets are saddle points for the Hamilton flow, and are end or starting points of geodesics hitting the cone point or emanating from it. (See Figure~\ref{FigPFlow}.) One can thus prove quantitative microlocal propagation and radial point estimates on the associated scale of semiclassical cone Sobolev spaces, which measure $L^2$ norms of derivatives along $\frac{h}{h+r}r D_r$, $\frac{h}{h+r}D_y$.

\begin{rmk}[Semiclassical cone ps.d.o.s as tools]
  The (large) calculus $\Psich(X)$ was introduced in~\cite{HintzConicPowers} as the space in which inverses and complex powers of elliptic semiclassical cone operators, such as $h^2\Delta+1$, live; the goal there was a precise description of their Schwartz kernels. Here, by contrast, we use semiclassical cone ps.d.o.s as \emph{tools} to understand propagation phenomena. Correspondingly, we only need to consider the \emph{small} semiclassical cone calculus, as our analysis will be based on proving estimates, rather than on the construction and usage of parametrices. (Parametrices are typically significantly more challenging to construct \cite{XiConeParametrix} and are very precise tools; on the flipside, they tend to be less convenient when the need for generalizations or for proofs of sharp mapping properties on various function spaces arises.) Thus, in~\S\ref{SC}, we provide a perspective on $\Psich(X)$ which makes it easy to work with in nonelliptic settings.
\end{rmk}

At this point, we control semiclassical singularities of solutions of $P_{h,z}u=f$ at $\frac{h}{h+r}=0$. In order to control $u$ globally, including at $h=r=0$, one needs to invert the \emph{normal operator} $N(P)$ of $P_{h,z}$, which is the restriction of $P_{h,z}$ to the front face of $X_\chop$; see~\eqref{EqIP}--\eqref{EqINormOp} for a concrete example. The function spaces on which one inverts $N(P)$ need to match the function spaces in which the symbolic propagation estimates are obtained. As already observed in \cite{HintzConicPowers} (see also the earlier paper \cite{LoyaConicResolvent}) and demonstrated in detail on the level of function spaces in~\S\ref{SsCPN}, the correct function spaces for $N(P)$ are standard Sobolev spaces when $\hat r=\frac{r}{h}\gtrsim 1$ (i.e.\ measuring regularity with respect to $D_{\hat r}$ and $\hat r^{-1}D_{y^i}$) and b-Sobolev spaces in $\hat r\lesssim 1$ (i.e.\ measuring regularity with respect to $\hat r D_{\hat r}$ and $D_{y^i}$). Following \cite{MelroseEuclideanSpectralTheory}, we show in~\S\ref{SsPP} that the analysis of $N(P)$ on spaces with variable orders of decay as $\hat r\to\infty$ \emph{precisely} matches the above symbolic analysis which involves variable semiclassical orders (powers of $\frac{h}{h+r}$) to accommodate the threshold requirements for propagation into/out of the radial sets, cf.\ \cite[Appendix~E.4]{DyatlovZworskiBook}.

We stress the \emph{global} (rather than microlocal or symbolic) nature of the requirement that the normal operator $N(P)$ be invertible; while verifying this in concrete situations is nontrivial, one has many standard techniques at one's disposal (such as boundary pairing arguments, unique continuation, separation of variables, etc).

In combination, the symbolic estimates and the normal operator invertibility provide control of $u$ at both hypersurfaces $\frac{h}{h+r}=0$ and $h+r=0$ of $X_\chop$. Thus, we have uniform control as $h\to 0$. (One can package this into an invertibility statement for a modification of $P_{h,z}$ by placing complex absorbing potentials away from $r=0$ in the spirit of \cite{NonnenmacherZworskiQuantumDecay,WunschZworskiNormHypResolvent,DatchevVasyGluing,VasyMicroKerrdS}, see~\S\ref{SsPA}.)

\begin{rmk}[Relation to edge propagation]
\label{RmkIEdge}
  The proof of symbolic propagation estimates for wave equations on conic or edge manifolds using the edge calculus \cite{MazzeoEdge}, as done in \cite[\S8]{MelroseWunschConic} and \cite[\S11]{MelroseVasyWunschEdge}, is closely related, via the Fourier transform in time, to the semiclassical cone Sobolev spaces associated with $\Psich(X)$; see Remark~\ref{RmkCPEdge}. Going one step further in the comparison, we note that the fine analysis of diffraction by Melrose and Wunsch \cite{MelroseWunschConic} for waves on a conic manifolds uses a normal operator at the cone point which is defined via a rescaled FBI (Fourier--Bros--Iagolnitzer) transform in time; this normal operator is thus equivalent to the operator $N(P)$ considered here, but used in a different manner.
\end{rmk}

\begin{rmk}[Second microlocalization]
\label{RmkI2nd}
  Writing $h r D_r=(h+r)\frac{h}{h+r}r D_r$ and $h D_y=(h+r)\frac{h}{h+r}D_y$ suggests that semiclassical conormal regularity at the cone point (regularity under application of $h r D_r$ and $h D_y$) can be captured on the scale of semiclassical cone Sobolev spaces as well. We present a systematic second microlocal perspective on this in~\S\ref{SsCb}, inspired by recent work of Vasy on the limiting absorption principle on asymptotically conic manifolds \cite{VasyLowEnergy,VasyLAPLag} (with the conic nature referring to the \emph{large} end of the manifold). In view of the characterization of the quadratic form domain of $h^2\Delta_g+1$ as a semiclassical cone Sobolev space in \cite[Theorem~6.1]{HintzConicPowers}, we can thus eliminate the need of working with a mixed differential-pseudodifferential calculus as in \cite{BaskinMarzuolaCone}, and instead work in a single microlocal framework.
\end{rmk}

\subsection{Outline of the paper}

In~\S\ref{Sbsc}, we review basic notions from b- and scattering analysis, with an eye towards the relationship with semiclassical cone analysis. In~\S\ref{SC}, we describe a hands-on perspective on the semiclassical cone algebra $\Psich(X)$ with a focus on its use for symbolic computations. The heart of the paper is~\S\ref{SP}: we define the general class of operators to which the analysis sketched in~\S\ref{SsIS} applies (\S\ref{SsPAdm}) and analyze in detail their symbolic properties (\S\ref{SsPChar}), followed by a general analysis of $N(P)$ (\S\ref{SsPSc}). We state and prove the main microlocal result, Theorem~\ref{ThmP}, in~\S\ref{SsPP}. We prove the diffractive improvement in~\S\ref{SsPD}. Finally, \S\ref{SH} contains applications of the general theory: a sharp version of propagation estimates for $h^2\Delta_g-1$ on conic manifolds in~\S\ref{SsHL}, and high energy resolvent estimates for scattering by inverse square potentials and the Dirac--Coulomb equation in~\S\S\ref{SsHV}--\ref{SsHDC}.

\subsection*{Acknowledgments}

I am grateful to Andr\'as Vasy and Jared Wunsch for helpful conversations. I gratefully acknowledge support from a Sloan Research Fellowship and from the NSF under Grant No.\ DMS-1955614.

\section{Review of b- and scattering calculi}
\label{Sbsc}

We denote by $X$ a smooth $n$-dimensional compact manifold with nonempty, connected, and embedded boundary $\pa X$. The Lie algebra $\Vb(X)\subset\cV(X)=\CI(X;T X)$ of \emph{b-vector fields} consists of all smooth vector fields on $X$ which are tangent to $\pa X$. The Lie subalgebra $\Vsc(X)\subset\Vb(X)$ of \emph{scattering vector fields} consists of all b-vector fields which vanish, \emph{as b-vector fields}, at $\pa X$. Thus, if $x\in\CI(X)$ denotes a boundary defining function (meaning: $\pa X=x^{-1}(0)$, and $\dd x$ does not vanish on $\pa X$), then $\Vsc(X)=x\Vb(X)$. In local coordinates $(x,y)\in[0,\infty)\times\R^{n-1}$ near a point on $\pa X$, b-vector fields are of the form
\begin{equation}
\label{EqbscVF}
  a(x,y)x\pa_x + \sum_{j=1}^{n-1}b^j(x,y)\pa_{y^j},\qquad a,b^1,\ldots,b^{n-1}\in\CI,
\end{equation}
while scattering vector fields are of the form
\[
  a(x,y)x^2\pa_x + \sum_{j=1}^{n-1}b^j(x,y)x\pa_{y^j},\qquad a,b^1,\ldots,b^{n-1}\in\CI.
\]
Correspondingly, there are natural vector bundles
\begin{equation}
\label{EqbscT}
  \Tb X \to X,\qquad
  \Tsc X \to X,
\end{equation}
isomorphic to $T X^\circ$ over $X^\circ$, but with local frames (in local coordinates as above) given by $x\pa_x,\pa_{y^1},\ldots,\pa_{y^{n-1}}$ and $x^2\pa_x,x\pa_{y^1},\ldots,x\pa_{y^{n-1}}$ respectively, so that $\Vb(X)=\CI(X;\Tb X)$ and $\Vsc(X)=\CI(X;\Tsc X)$. Here, we implicitly use the bundle maps $\Tb X\to T X$ and $\Tsc X\to T X$ (which are isomorphisms over $X^\circ$ but not over $\pa X$) to identify $\CI(X;\Tb X)$ and $\CI(X;\Tsc X)$ with subspaces of $\CI(X;T X)=\cV(X)$. The dual bundles of~\eqref{EqbscT} are the b-cotangent bundle and scattering cotangent bundle, $\Tb^*X\to X$ and $\Tsc^*X\to X$, with local frames $\frac{\dd x}{x},\dd y^1,\ldots,\dd y^{n-1}$ and $\frac{\dd x}{x^2},\frac{\dd y^1}{x},\ldots,\frac{\dd y^{n-1}}{x}$, respectively. (These 1-forms are thus \emph{smooth, nonzero} sections of $\Tb^*X$, resp.\ $\Tsc^*X$, down to $\pa X$.) Writing the canonical 1-form on $T^*X^\circ$ as
\begin{equation}
\label{EqbscCan1}
  \xi_\bop\frac{\dd x}{x}+\sum_{j=1}^{n-1}(\eta_\bop)_j\,\dd y^j,\quad\text{resp.}\quad
  \xi_\scop\frac{\dd x}{x^2}+\sum_{j=1}^{n-1}(\eta_\scop)_j\frac{\dd y^j}{x},
\end{equation}
thus defines fiber-linear coordinates $(\xi_\bop,\eta_\bop)$, resp.\ $(\xi_\scop,\eta_\scop)\in\R\times\R^{n-1}$, on $\Tb^*X$, resp.\ $\Tsc^*X$. The b-density bundle is denoted $\Omegab^1 X=|\Lambda^n\,\Tb^*X|$; in local coordinates, its smooth sections are of the form $a|\frac{\dd x}{x}\dd y^1\cdots\dd y^{n-1}|$, $a\in\CI$.

The space of finite linear combinations of up to $k$-fold compositions of elements of $\cV_\bullet(X)$, $\bullet=\bop,\scop$, is denoted $\Diff_\bullet^k(X)$, and we put $\Diff_\bullet(X)=\bigoplus_{k\in\N_0}\Diff_\bullet^k(X)$. The space $\Diffb(X)$ gives rise to the notion of \emph{conormality} (relative to a fixed function space) of distributions on $X^\circ$: concretely, the space
\[
  \cA^\alpha(X)\subset x^\alpha L^\infty(X^\circ)
\]
consists of all $u$ so that $A u\in x^\alpha L^\infty(X^\circ)$ for all $A\in\Diffb(X)$. More generally, for $\delta\leq 1$, one can consider the space
\[
  \cA^\alpha_{1-\delta}(X) \subset x^\alpha L^\infty(X^\circ)
\]
of conormal distributions $u$ of type $1-\delta$, defined by the condition that for any $k\in\N_0$ and $A\in\Diffb^k(X)$, one has $A u\in x^{\alpha-k\delta}L^\infty(X^\circ)$. (Thus, $\cA^\alpha(X)=\cA_1^\alpha(X)$.) A more restrictive class than $\cA^\alpha(X)$ is the class of \emph{classical conormal distributions}, $\cA^\alpha_\cl(X)$, which is defined simply as
\[
  \cA^\alpha_\cl(X) = x^\alpha\CI(X) \subset \cA^\alpha(X).
\]
Given an element $u=x^\alpha u_0\in\cA^\alpha_\cl(X)$, the function $u_0$ is thus not merely conormal (regularity under $x\pa_x,\pa_y$), but smooth (regularity under $\pa_x,\pa_y$).

As an important example, let $E\to X$ denote a smooth real vector bundle of rank $N$, and consider the radial compactification $\bar E\to X$, i.e.\ the fiber bundle whose fiber $\bar E_x$ over $x\in X$ is equal to the radial compactification of $E_x\cong\R^N$ defined by
\[
  \ol{\R^N} := \bigl(\R^N \sqcup \bigl([0,\infty)_\rho\times\Sph^{n-1}\bigr)\bigr) / \sim,\qquad
  \R^N\setminus\{0\}\ni\rho^{-1}\omega\sim(\rho,\omega)\in[0,\infty)\times\Sph^{n-1}.
\]
Then the total space $\bar E$ is a manifold with corners which has two boundary hypersurfaces, $\bar E_{\pa X}$ (the radial compactification of $E_{\pa X}$) and $S\bar E$ (fiber infinity, locally defined by $\rho=0$). On $\bar E$, we regard only $S\bar E$ as a boundary, in the sense that we declare $\Vb(\bar E)$ to consist of all smooth vector fields on $\bar E$ which are tangent to $S\bar E$ (but not necessarily to $\bar E_{\pa X}$). For $s\in\R$, we then put
\[
  S^s(\bar E) := \cA^{-s}(\bar E).
\]
One can of course consider variants of this, e.g.\ requiring elements of $\Vb(\bar E)$ to be tangent to \emph{both} boundary hypersurfaces and defining spaces $S^{s,r}(\bar E)$ which are conormal of weight $-s,-r$ at $S\bar E$, $\bar E_{\pa X}$, respectively; or one may require classicality at one or both of the boundary hypersurfaces.

\subsection{b-pseudodifferential operators}

We denote fiber infinity of the radial compactification $\ol{\Tb^*}X$ of $\Tb^*X$ by $\Sb^*X$. Elements of $S^s(\ol{\Tb^*}X)$ will be symbols of b-pseudodifferential operators (of type $(1,0)$, in H\"ormander's $(\rho,\delta)$ terminology \cite[\S1.1]{HormanderFIO1}). Concretely, consider $a\in S^s(\ol{\Tb^*}X)$ with support contained in a local coordinate patch near a point on $\pa X$; thus, for all $i,j\in\N_0$ and $\alpha,\beta\in\N_0^{n-1}$, there exists a constant $C_{i j\alpha\beta}$ so that
\[
  \bigl|\pa_x^i\pa_y^\alpha \pa_{\xi_\bop}^j\pa_{\eta_\bop}^\beta a(x,y,\xi_\bop,\eta_\bop)\bigr| \leq C_{i j\alpha\beta}(1+|\xi_\bop|+|\eta_\bop|)^{s-(j+|\beta|)}.
\]
The (left) quantization of $a$ is then defined by
\begin{align*}
  (\Op_\bop(a)u)(x,y) &:= (2\pi)^{-n}\iiiint \exp\Bigl(i\Bigl(\frac{x-x'}{x'}\xi_\bop+(y-y')\cdot\eta_\bop\Bigr)\Bigr)\phi\Bigl(\Bigl|\log\frac{x}{x'}\Bigr|\Bigr)\phi(|y-y'|) \\
    &\quad\hspace{10em} \times a(x,y,\xi_\bop,\eta_\bop) u(x',y')\,\frac{\dd x'}{x'}\,\dd y'\,\dd\xi_\bop\,\dd\eta_\bop,
\end{align*}
where $\phi\in\CIc((-1,1))$ is identically $1$ near $0$. We define
\[
  \Psib^s(X) := \Op_\bop\bigl(S^s(\ol{\Tb^*}X)\bigr) + \Psib^{-\infty}(X).
\]
Here, writing $\pi_{L/R}\colon X^2\to X$ for the left/right projection, the space $\Psib^{-\infty}(X)$ of residual operators consists of all operators $\CIdot(X)\to\CIdot(X)$ (with $\CIdot(X)$ denoting smooth functions on $X$ vanishing to infinite order at $\pa X$) whose Schwartz kernels $\kappa\in\CmI(X^2;\pi_R^*\Omegab^1 X)$ (the dual space of $\CIdot(X^2;\pi_L^*\Omegab^1 X)$) pull back to smooth right b-densities on the b-double space\footnote{For a detailed discussion of real blow-ups such as~\eqref{EqbscX2b}, we refer the reader to \cite{MelroseDiffOnMwc}. See \cite[Appendix~A]{HintzConicPowers} for a brief summary which is sufficient for our purposes.}
\begin{equation}
\label{EqbscX2b}
  X^2_\bop := [X^2;(\pa X)^2]
\end{equation}
which vanish to infinite order at the left boundary $\lb_\bop$ (the lift of $\pa X\times X)$ and the right boundary $\rb_\bop$ (the lift of $X\times\pa X$) but are smooth down to the front face $\ff_\bop$. (See \cite[\S6]{VasyMinicourse} for more details, and also \cite{MelroseAPS,GrieserBasics}.) One often encounters weighted operators as well,
\[
  \Diffb^{k,l}(X) := x^l\Diffb^k(X),\qquad
  \Psib^{s,l}(X) := x^l\Psib^s(X).
\]
More generally still, one can consider quantizations of symbols which are conormal of order $s$ at $\Sb^*X$ and of order $l$ at $\ol{\Tb^*_{\pa X}}X$; this level of generality is occasionally useful, see e.g.\ \cite[\S5]{VasyLowEnergy} and \S\ref{SsCb}. Given an operator $A\in\Psib^{s,l}(X)$, we denote its Schwartz kernel by $K_A$.

Elements of $\Psib^{s,l}(X)$ define continuous linear operators on $\CIdot(X)$, and the composition of two b-ps.d.o.s is again a b-ps.d.o., with orders equal to the sum of the orders of the two factors. The principal symbol $\sigmab_s\colon\Psib^{s,l}(X)\to(x^l S^s/x^l S^{s-1})(\ol{\Tb^*}X)$ is a *-homomorphism, and maps commutators into Poisson brackets. In local coordinates (and omitting orders for brevity), this means that for two operators $A,B\in\Psib(X)$ with principal symbols $a,b$, we have
\begin{equation}
\label{EqbscHamb}
  \sigmab(i[A,B]) = \{a,b\} = H_a b,\qquad
  H_a = (\pa_{\xi_\bop}a)x\pa_x + (\pa_{\eta_\bop}a)\pa_y - (x\pa_x a)\pa_{\xi_\bop} - (\pa_y a)\pa_{\eta_\bop}.
\end{equation}

\subsection{Scattering pseudodifferential operators}

Turning to scattering ps.d.o.s, it is important to consider more general symbol classes than merely $S^s(\ol{\Tsc^*}X)$ or $x^{-r}S^s(\ol{\Tsc^*}X)$. Namely, for $\delta\in[0,\half)$, we shall consider the class
\[
  S_{1-\delta,\delta}^{s,r}(\ol{\Tsc^*}X)
\]
of symbols which are conormal at $\Ssc^*X$ with weight $-s$, and conormal of type $1-\delta$ with weight $-r$ at $\ol{\Tsc^*_{\pa X}}X$. This means that $S_{1-\delta,\delta}^{s,r}(\ol{\Tsc^*}X)$ consists of all smooth functions $a$ on $\Tsc^*X$ which over $X^\circ$ are symbols of type $(1,0)$ and order $s$, i.e.\ $a|_{T^*X^\circ}\in S_{1,0}^s(T^*X^\circ)=S^s(\ol{T^*}X^\circ)$, and which near $\pa X$ satisfy for all $i,j\in\N_0$ and $\alpha,\beta\in\N_0^{n-1}$ an estimate
\[
  \bigl| (x\pa_x)^i\pa_y^\alpha \pa_{\xi_\scop}^j\pa_{\eta_\scop}^\beta a(x,y,\xi_\scop,\eta_\scop) \bigr| \leq C_{i j\alpha\beta} x^{-r-(i+j+|\alpha|+|\beta|)\delta}(1+|\xi_\scop|+|\eta_\scop|)^{s-(j+|\beta|)}.
\]
In the case $\delta=0$, we omit the subscript `$1-\delta,\delta$'. We then define the (left) scattering quantization of $a$ by
\begin{align*}
  &(\Op_\scop(a)u)(x,y) \\
  &\quad := (2\pi)^{-n}\iiiint \exp\Bigl(i\Bigl[\frac{x-x'}{x^2}\xi_\scop+\frac{y-y'}{x}\cdot\eta_\scop\Bigr]\Bigr)\phi\Bigl(\Bigl|\log\frac{x}{x'}\Bigr|\Bigr)\phi(|y-y'|) \\
    &\quad\hspace{8em} \times a(x,y,\xi_\scop,\eta_\scop) u(x',y')\,\frac{\dd x'}{x'{}^2}\,\frac{\dd y'}{x'{}^{n-1}}\,\dd\xi_\scop\,\dd\eta_\scop.
\end{align*}
(In this generality, scattering ps.d.o.s were introduced by Melrose \cite{MelroseEuclideanSpectralTheory}.) If one were working with global coordinates, one could remove the cutoffs here due to the rapid decay of the partial (in the fiber variables) inverse Fourier transform of $a$ as $|\frac{1}{x}-\frac{1}{x'}|+|\frac{y}{x}-\frac{y'}{x'}|\to\infty$.\footnote{Importantly, one typically does \emph{not} want to localize more sharply to $|\frac{1}{x}-\frac{1}{x'}|+|\frac{y}{x}-\frac{y'}{x'}|\lesssim 1$ (which is a small neighborhood of the lifted diagonal in the scattering double space, see \cite[\S21]{MelroseEuclideanSpectralTheory}), as this would thus destroy the leading order commutativity of the scattering calculus at $\pa X$.} We then set
\[
  \Psi_{\scop,1-\delta,\delta}^{s,r}(X) := \Op_\scop\bigl(S_{1-\delta,\delta}^{s,r}(\ol{\Tsc^*}X)\bigr) + \Psisc^{-\infty,-\infty}(X),
\]
where $\Psisc^{-\infty,-\infty}(X)$ consists of all operators with Schwartz kernels in $\CIdot(X^2;\pi_R^*\Omega^1 X)$. We shall refer to $s$ as the (scattering) differential order, and to $r$ as the (scattering) decay order.

The principal symbol of scattering operators captures their leading order behavior for large frequencies as well as at $\pa X$:
\[
  \sigmasc_{s,r} \colon \Psi_{\scop,1-\delta,\delta}^{s,r}(X) \to (S^{s,r}_{1-\delta,\delta}/S^{s-1,r-1+2\delta}_{1-\delta,\delta})(\ol{\Tsc^*}X).
\]
This is a *-homomorphism. Thus, for $A_j\in\Psi_{\scop,1-\delta,\delta}^{s_j,r_j}(X)$, $j=1,2$, we have
\[
  [A_1,A_2] \in \Psi_{\scop,1-\delta,\delta}^{s_1+s_2-1,r_1+r_2-1+2\delta}(X);
\]
the principal symbol (capturing the commutator modulo $\Psi_{\scop,1-\delta,\delta}^{s_1+s_2-2,r_1+r_2-2+4\delta}(X)$) is given in terms of the principal symbols $a_1,a_2$ of $A_1,A_2$ by
\begin{align}
  &\sigmasc_{s_1+s_2-1,r_1+r_2-1+2\delta}(i[A_1,A_2]) = H_{a_1}a_2, \nonumber\\
\label{EqbscHamsc}
  &\qquad x^{-1}H_{a_1} = (\pa_{\xi_\scop}a_1)(x\pa_x+\eta_\scop\pa_{\eta_\scop}) + (\pa_{\eta_\scop}a_1)\pa_y - \bigl((x\pa_x+\eta_\scop\pa_{\eta_\scop})a_1\bigr)\pa_{\xi_\scop} - (\pa_y a_1)\pa_{\eta_\scop}.
\end{align}
We refer the reader to \cite[\S3]{VasyMinicourse} for more details in the special case $X=\ol{\R^n}$, in which case the scattering calculus is the same as the standard ps.d.o.\ calculus on $\R^n$ for amplitudes which are product-type symbols in the base and fiber variables.

A natural setting where one must work with $\delta>0$ arises when working with operators which have a variable scattering decay order
\[
  \sfr \in \CI(\ol{\Tsc^*}X).
\]
To wit, for $s\in\R$, we define
\[
  S^{s,\sfr}(\ol{\Tsc^*}X)
\]
to consist of all $a$ of the form $a=x^{-\sfr}a_0$, where $a_0\in\bigcap_{\delta\in(0,\frac12)}S^{s,0}_{1-\delta,\delta}(\ol{\Tsc^*}X)$. It is easy to check that $S^{s,\sfr}(\ol{\Tsc^*}X)\subset\bigcap_{\delta\in(0,\frac12)} S^{s,r_0}_{1-\delta,\delta}(\ol{\Tsc^*}X)$ for any $r_0>\sup\sfr$; in fact, differentiating variable order symbols produces only logarithmic factors in the boundary defining function $x$. Thus, we can quantize such symbols, giving rise to the space
\[
  \Psisc^{s,\sfr}(X) := \Op_\scop\bigl(S^{s,\sfr}(\ol{\Tsc^*}X)\bigr) + \Psisc^{-\infty,-\infty}(X) \subset \bigcap_{\delta\in(0,\frac12)} \Psi_{\scop,1-\delta,\delta}^{s,r_0}(X).
\]
Principal symbols of elements of $\Psisc^{s,\sfr}(X)$ are elements of $(S^{s,\sfr}/\bigcap_{\delta>0}S^{s-1,\sfr-1+2\delta})(\ol{\Tsc^*}X)$. The (variable) orders are additive under operator composition; this is a consequence of the formula for the full symbol (in local coordinates) of the composition of two ps.d.o.s.

We point out that for fixed $s\in\R$ the space $S^{s,\sfr}(\ol{\Tsc^*}X)$ (and thus $\Psisc^{s,\sfr}(X)$) only depends on the restriction of $\sfr$ to $\ol{\Tsc^*_{\pa X}}X$. Indeed, given $\sfr'\in\CI(\ol{\Tsc^*}X)$ with $\sfr'-\sfr=0$ at $\ol{\Tsc^*_{\pa X}}X$, we can write $\sfr'-\sfr = x\sfw$, $\sfw\in\CI(\ol{\Tsc^*}X)$, and therefore $x^{-\sfr'} = x^{-\sfr} \exp(-\sfw x\log x)$; by direct differentiation, one then finds that $\exp(-\sfw x\log x)\in S^{0,0}_{1-\delta,\delta}(\ol{\Tsc^*}X)$ for any $\delta>0$. In view of this, we can define $S^{s,\sfr}(\ol{\Tsc^*}X)$ and $\Psisc^{s,\sfr}(X)$, given a variable order
\[
  \sfr \in \CI(\ol{\Tsc^*_{\pa X}}X),
\]
to be equal to $S^{s,\tilde\sfr}(\ol{\Tsc^*}X)$ and $\Psisc^{s,\tilde\sfr}(X)$, respectively, where $\tilde\sfr\in\CI(\ol{\Tsc^*}X)$ is any smooth extension of $\sfr$.

\subsection{Sobolev spaces}

We next recall the corresponding scales of weighted Sobolev spaces. We have some flexibility in the choice of the underlying $L^2$-space. Thus, fix any smooth positive b-density $\mu_0\in\CI(X;\Omegab^1 X)$, and fix $a_\mu\in\R$. We then set $\mu:=x^{\alpha_\mu}\mu_0$ and
\begin{equation}
\label{EqbscL2}
  \Hb^0(X;\mu) \equiv L^2_\bop(X;\mu) \equiv \Hsc^0(X;\mu) \equiv L^2_\scop(X;\mu) := L^2(X;\mu).
\end{equation}
These spaces are independent of the choice of $\mu_0$ (but not $a_\mu$), up to equivalence of norms; the same will be true for the spaces defined in the sequel. When the density $\mu$ is fixed and clear from the context, we drop it from the notation. Let $\bullet=\bop,\scop$. For $s\geq 0$, we then let
\[
  H_\bullet^s(X) := \{ u\in H_\bullet^0(X) \colon A u\in H_\bullet^0(X) \},
\]
where $A\in\Psi_\bullet^s(X)$ denotes any fixed elliptic operator. For $s<0$, we define $H_\bullet^s(X)=(H_\bullet^{-s}(X))^*$ with respect to the $L^2_\bullet(X)$ inner product; an equivalent definition is given by $H_\bullet^s(X)=\{u_1+A u_2\colon u_1,u_2\in H_\bullet^0(X)\}$ where $A\in\Psi_\bullet^{-s}(X)$ is elliptic. Weighted spaces are defined by
\[
  \Hb^{s,l}(X)=x^l\Hb^s(X),\quad
  \Hsc^{s,r}(X)=x^r\Hsc^s(X).
\]
Finally, we define scattering Sobolev spaces with variable decay orders $\sfr\in\CI(\ol{\Tsc^*_{\pa X}}X)$ by taking $r_0<\inf\sfr$ and putting
\[
  \Hsc^{s,\sfr}(X) := \{ u\in\Hsc^{s,r_0}(X) \colon A u \in \Hsc^0(X) \},
\]
where $A\in\Psisc^{s,\sfr}(X)$ is any fixed elliptic operator.

\subsection{b-scattering operators and Sobolev spaces}

In our application, we shall encounter a compact manifold $X$ whose boundary $\pa X$ has \emph{two} connected components, say $H_1,H_2$, both of which are embedded. We can then consider the space $\cV_{\bop,\scop}(X)$ of b-scattering vector fields (which localized to a neighborhood of $H_1$, resp.\ $H_2$ lie in $\Vb$, resp.\ $\Vsc$), the corresponding b-scattering tangent bundle ${}^{\bop,\scop}T X$ and its dual ${}^{\bop,\scop}T^*X$, as well as weighted b-scattering Sobolev spaces,
\[
  H_{\bop,\scop}^{s,l,\sfr}(X),\qquad s,l\in\R,\ \sfr\in\CI(\ol{\Tsc^*_{H_2}}X).
\]
Localized to a neighborhood of $H_1$, its elements lie in $\Hb^{s,l}$, and localized to a neighborhood of $H_2$, they lie in $\Hsc^{s,\sfr}$.

Let us make this even more concrete in the setting which will arise below,
\begin{equation}
\label{Eqbsc2End}
  X = [0,\infty]_{\hat x} \times Y,\qquad H_1=\hat x^{-1}(0),\quad H_2=\hat x^{-1}(\infty),
\end{equation}
where we write $[0,\infty]$ for the closure of $[0,\infty)$ inside of $\ol\R$; here $Y$ is a compact $(n-1)$-dimensional manifold without boundary. Then $\frac{\hat x}{\hat x+1}$ and $(1+\hat x)^{-1}$ are defining functions of $H_1$ and $H_2$, respectively, hence $\cV_{\bop,\scop}(X)=(1+\hat x)^{-1}\Vb(X)$. Using local coordinates $y^1,\ldots,y^{n-1}$ on an open subset $U\subset Y$, the collection of 1-forms
\[
  (1+\hat x)\tfrac{\dd\hat x}{\hat x},\quad (1+\hat x)\dd y^1,\ \cdots,\ (1+\hat x)\dd y^{n-1}
\]
is a smooth frame of ${}^{\bop,\scop}T^*X$ over $[0,\infty]\times U$. Denoting the corresponding fiber-linear coordinates on ${}^{\bop,\scop}T^*X$ by $(\xi_{\bop,\scop},\eta_{\bop,\scop})\in\R\times\R^{n-1}$, we can then quantize a symbol\footnote{We leave the minor, largely notational, changes to accommodate symbols with variable scattering decay orders $r$ to the reader.} $a\in S^{s,l,r}(\ol{{}^{\bop,\scop}T^*}X)=\bigl(\tfrac{\hat x}{\hat x+1}\bigr)^{-l}(1+\hat x)^r S^s(\ol{{}^{\bop,\scop}T^*}X)$ by
\begin{equation}
\label{EqbscQuant}
\begin{split}
  &(\Op_{\bop,\scop}(a)u)(\hat x,y) \\
  &\qquad := (2\pi)^{-n}\iiint \exp\biggl(i\biggl( \frac{\hat x-\hat x'}{\hat x\tfrac{1}{1+\hat x}}\xi_{\bop,\scop} + \frac{y-y'}{\tfrac{1}{1+\hat x}}\cdot\eta_{\bop,\scop} \biggr)\biggr) \phi\Bigl(\Bigl|\log\frac{\hat x}{\hat x'}\Bigr|\Bigr)\phi(|y-y'|) \\
  &\qquad\hspace{10em} \times a(\hat x,y,\xi_{\bop,\scop},\eta_{\bop,\scop})u(\hat x',y')\,\frac{\dd\hat x'}{\hat x'\tfrac{1}{1+\hat x'}}\,\frac{\dd y'}{\bigl(\tfrac{1}{1+\hat x'}\bigr)^{n-1}}\,\dd\xi_{\bop,\scop}\,\dd\eta_{\bop,\scop}.
\end{split}
\end{equation}
The space $\Psi_{\bop,\scop}^{s,l,r}(X)$ of b-scattering ps.d.o.s is then the sum
\[
  \Psi_{\bop,\scop}^{s,l,r}(X) = \Op_{\bop,\scop}\bigl(S^{s,l,r}(\ol{{}^{\bop,\scop}T^*}X)\bigr) + \Psi_{\bop,\scop}^{-\infty,l,-\infty}(X).
\]
Here, $\Psi_{\bop,\scop}^{-\infty,l,-\infty}(X)=\bigl(\tfrac{\hat x}{\hat x+1}\bigr)^{-l}\Psi_{\bop,\scop}^{-\infty,0,-\infty}(X)$ is defined momentarily. First define the double space
\begin{equation}
\label{Eqbsc2EndDbl}
  X^2_{\bop,\scop} := \bigl[ \ol{[0,\infty)^2} \times Y^2; (\{0\}\times Y)^2; \Delta \cap (\{\infty\}\times Y)^2 \bigr],
\end{equation}
where $\ol{[0,\infty)^2}$ is the radial compactification (equivalently, the closure of $[0,\infty)^2$ inside of $\ol{\R^2}$), and $\Delta\subset\ol{[0,\infty)^2}\times Y^2$ is the diagonal. Then $\Psi_{\bop,\scop}^{-\infty,0,-\infty}(X)$ consists of all operators whose Schwartz kernels are smooth right b-densities on $X_{\bop,\scop}^2$ which vanish to infinite order at all boundary hypersurfaces except for the lift of $(\{0\}\times Y)^2$. See Figure~\ref{FigbscDouble}. Moreover, Schwartz kernels of elements of $\Psi_{\bop,\scop}^{s,0,r}(X)$ are conormal of order $s$ to the lifted diagonal in $X_{\bop,\scop}^2$ smoothly down to the lift of $(\{0\}\times Y)^2$, conormal with weight $-r$ down to the lift of $\Delta\cap(\{\infty\}\times Y)^2$, and vanish to infinite order at all other boundary hypersurfaces.

\begin{figure}[!ht]
\centering
\includegraphics{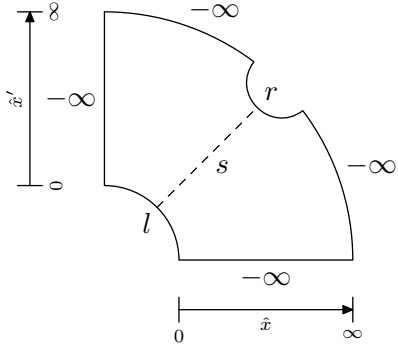}
\caption{The double space $X_{\bop,\scop}^2$ without the factor $Y^2$. The dashed line is the lifted diagonal. Indicated are the symbolic orders of Schwartz kernels of elements of $\Psi_{\bop,\scop}^{s,l,r}(X)$.}
\label{FigbscDouble}
\end{figure}

\section{Semiclassical cone calculus}
\label{SC}

We revisit and generalize the algebra $\Psich(X)$ and the associated scale of weighted Sobolev spaces from~\cite{HintzConicPowers}, give a user-friendly treatment of the symbol calculus (including Poisson brackets), and study operators and function spaces with variable (semiclassical) orders and their behavior upon restriction to the transition faces of the semiclassical cone single and double spaces (recalled later in this section). Throughout this section, we denote by $X$ a compact $n$-dimensional manifold with nonempty, connected, and embedded boundary $\pa X$. We denote by $x\in\CI(X)$ a boundary defining function.

\subsection{Vector fields, bundles, Poisson brackets}
\label{SsCV}

We recall from~\S\ref{SsIS} the \emph{semiclassical cone single space}
\[
  X_\chop := \bigl[ [0,1)_h \times X ; \{0\}\times\pa X \bigr],
\]
the boundary hypersurfaces of which we denote by $\cface$ (conic face, lift of $[0,1)\times\pa X$), $\tface$ (transition face, the front face), and $\sface$ (semiclassical face, lift of $\{0\}\times X$). See Figure~\ref{FigCVSingle}. Defining functions of these boundary hypersurfaces are $\frac{x}{x+h}$, $x+h$, and $\frac{h}{h+x}$, respectively. On $X_\chop\setminus\cface$, it is convenient to use the local defining functions $x$ of $\tface$ and $\frac{h}{x}$ of $\sface$.

\begin{figure}[!ht]
\centering
\includegraphics{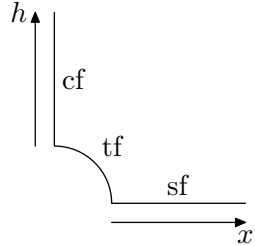}
\caption{The semiclassical cone single space $X_\chop$.}
\label{FigCVSingle}
\end{figure}

\begin{definition}[Vector fields]
\label{DefCV}
  We define the space
  \[
    \Vch(X_\chop)
  \]
  of \emph{semiclassical cone vector fields} to consist of all b-vector fields $V\in\Vb(X_\chop)$ which are horizontal, i.e.\ tangent to the fibers of $X_\chop\to[0,1)_h$, and whose restriction to $\sface$ vanishes.
\end{definition}

\begin{lemma}[Spanning set]
\label{LemmaCVSpan}
  Identifying a vector field $V\in\Vb(X)$ with its horizontal lift to $X_\chop$ along $X_\chop\to X$, the space $\Vch(X_\chop)$ is spanned over $\CI(X_\chop)$ by $\frac{h}{h+x}\Vb(X)$. Moreover, given $V,W\in\Vch(X_\chop)$, we have $[V,W]\in\frac{h}{h+x}\Vch(X_\chop)$.
\end{lemma}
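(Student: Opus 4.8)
\emph{Proof proposal.} The plan is to reduce the statement to two structural facts about $X_\chop$ and then read both assertions off from them. Set $\rho_\sface:=\frac{h}{h+x}$, a smooth boundary defining function of $\sface$ on $X_\chop$ (recall that the blow-up resolves the apparent singularity of $h/(h+x)$ at $h=x=0$), and write $\mathcal H$ for the $\CI(X_\chop)$-module of \emph{horizontal} b-vector fields, i.e.\ those $V\in\Vb(X_\chop)$ tangent to the fibers of $X_\chop\to[0,1)_h$. The two facts are: (i) $\Vch(X_\chop)=\rho_\sface\mathcal H$; and (ii) $\mathcal H$ is spanned over $\CI(X_\chop)$ by the horizontal lifts of $\Vb(X)$, which moreover form a local frame, so that $\mathcal H=\CI(X_\chop;\pi^*\Tb X)$ for $\pi\colon X_\chop\to X$. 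Granting (i) and (ii), the first assertion of the Lemma is immediate: $\Vch(X_\chop)=\rho_\sface\mathcal H=\CI(X_\chop)\cdot\frac{h}{h+x}\Vb(X)$.

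For (i), note that by Definition~\ref{DefCV} one has $\Vch(X_\chop)=\{V\in\mathcal H:V|_\sface=0\}$. The inclusion $\rho_\sface\mathcal H\subseteq\Vch(X_\chop)$ is clear since $\rho_\sface$ vanishes on $\sface$; conversely, if $V\in\mathcal H$ vanishes on $\sface$, then Taylor's theorem in the bundle $\Tb X_\chop$ writes $V=\rho_\sface W$ with $W\in\CI(X_\chop;\Tb X_\chop)$, and since $W=\rho_\sface^{-1}V$ is horizontal on the dense set $\{\rho_\sface>0\}$ and horizontality is a pointwise closed condition, $W\in\mathcal H$. For the inclusion ``$\supseteq$'' in (ii): a horizontal lift of $V\in\Vb(X)$ has no $\pa_h$-component, hence is horizontal, and, viewed on $[0,1)_h\times X$, it is tangent to the center $\{0\}\times\pa X$ of the blow-up (as $x\pa_x$ vanishes there and the $\pa_{y^j}$ are tangent to it) and to the two boundary hypersurfaces of $[0,1)_h\times X$, so it lifts to an element of $\Vb(X_\chop)$. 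The reverse inclusion in (ii) and the frame property I would obtain by explicit computation in the two projective coordinate charts of the blow-up near $h=x=0$ (the region away from $\pa X$, and the region $h>0$ bounded away from $x=0$, being trivial products): in the chart $(x,\sigma,y)$, $\sigma=h/x$, the lifts of $x\pa_x,\pa_{y^j}$ are $x\pa_x-\sigma\pa_\sigma,\pa_{y^j}$, while a b-vector field $a\,x\pa_x+b\,\sigma\pa_\sigma+\sum_j c^j\pa_{y^j}$ is horizontal iff it annihilates $h=x\sigma$, i.e.\ iff $a+b=0$; thus $\mathcal H$ is the $\CI$-span of the frame $x\pa_x-\sigma\pa_\sigma,\pa_{y^1},\dots,\pa_{y^{n-1}}$, and likewise, in the chart $(h,\tau,y)$, $\tau=x/h$, the lift of $x\pa_x$ is $\tau\pa_\tau$, horizontality means $a=0$, and $\mathcal H$ is freely spanned by $\tau\pa_\tau,\pa_{y^1},\dots,\pa_{y^{n-1}}$. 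Linear independence of these frames down to $\tface$ is immediate from the coefficient vectors $(1,-1,0,\dots)$, resp.\ $(1,0,\dots)$.

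For the bracket statement, write $V=\rho_\sface V'$ and $W=\rho_\sface W'$ with $V',W'\in\mathcal H$ using (i). Since $V',W'$ are b-vector fields tangent to $\sface=\{\rho_\sface=0\}$, Taylor's theorem gives $V'\rho_\sface=\rho_\sface a$ and $W'\rho_\sface=\rho_\sface b$ with $a,b\in\CI(X_\chop)$. Using $[fV',gW']=fg[V',W']+f(V'g)W'-g(W'f)V'$ with $f=g=\rho_\sface$,
\[
  [V,W]=\rho_\sface^2[V',W']+\rho_\sface(V'\rho_\sface)W'-\rho_\sface(W'\rho_\sface)V'=\rho_\sface^2\bigl([V',W']+a\,W'-b\,V'\bigr).
\]
The Lie bracket of two horizontal vector fields is again horizontal, and it is a b-vector field since $\Vb(X_\chop)$ is a Lie algebra; hence $[V',W']\in\mathcal H$, so $[V',W']+a W'-b V'\in\mathcal H$, and therefore $[V,W]\in\rho_\sface^2\mathcal H=\rho_\sface(\rho_\sface\mathcal H)=\frac{h}{h+x}\Vch(X_\chop)$, as claimed.

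I expect the main obstacle to be the computational core of (ii): verifying in the projective charts that the lifts of $x\pa_x,\pa_{y^j}$ genuinely span \emph{all} horizontal b-vector fields and remain linearly independent all the way down to the transition face $\tface$, which forces one to treat the regimes $h\lesssim x$ and $x\lesssim h$ separately and to keep careful track of which of $x,\ h/x,\ x/h$ is a defining function of which boundary hypersurface. Once the identification $\mathcal H=\CI(X_\chop;\pi^*\Tb X)$ is in hand, everything else is elementary, the only conceptual input being the standard fact that a b-vector field applied to a defining function of a hypersurface to which it is tangent produces a function vanishing on that hypersurface — this is what supplies the crucial extra factor of $\rho_\sface$ both in (i) and in the bracket computation.
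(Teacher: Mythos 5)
Your proof is correct and follows essentially the same route as the paper's: the same two projective coordinate charts near $\tface$ to identify the spanning set, and the same Leibniz-rule computation (hinging on the fact that a b-vector field applied to $\tfrac{h}{h+x}$ returns a multiple of $\tfrac{h}{h+x}$) for the bracket relation. Your intermediate step of isolating the module $\mathcal{H}$ of horizontal b-vector fields and writing $\Vch(X_\chop)=\tfrac{h}{h+x}\mathcal{H}$ is a harmless repackaging of what the paper does directly on elements of $\Vch(X_\chop)$.
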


This allows us to define the graded ring
\[
  \Diffch(X_\chop)=\bigoplus_{k\in\N_0}\Diffch^k(X_\chop)
\]
of differential operators in the usual manner.

\begin{proof}
  Directly from the definition, we have $\frac{h}{h+x}\Vb(X)\subset\Vch(X_\chop)$. Conversely, suppose $V\in\Vch(X_\chop)$. Let us work in local coordinates $(x,y)\in[0,\infty)\times\R^{n-1}$ near a point in $\pa X$. Near $\cface$, we use the local coordinates $(h,\hat x,y)$ with $\hat x:=\frac{x}{h}$. From the definition, we have
  \begin{equation}
  \label{EqCVSpancf}
    V = a(h,\hat x,y)\hat x\pa_{\hat x}+\sum_{j=1}^{n-1}b^j(h,\hat x,y)\pa_{y^j}
  \end{equation}
  with $a,b^j\in\CI$. Since $\hat x\pa_{\hat x}=x\pa_x$, this expresses $V$ in the desired form.
  
  Near $\sface$ on the other hand, we use $(\hat h,x,y)$ with $\hat h:=\frac{h}{x}$. Since $V\in\Vb(X_\chop)$, we can write
  \[
    V = a(\hat h,x,y)(x\pa_x-\hat h\pa_{\hat h}) + \tilde a(\hat h,x,y)\hat h\pa_{\hat h} + \sum_{j=1}^{n-1}b^j(\hat h,x,y)\pa_{y^j}.
  \]
  The horizontal nature of $V$ means $0=V h=V(x\hat h)=\tilde a x\hat h$, which implies $\tilde a\equiv 0$ by continuity from $(X_\chop)^\circ=\{x>0,\hat h>0\}$. The vanishing of $V$ at $\hat h=0$ as a b-vector field implies, in addition, that $a=\hat h a'$ and $b^j=\hat h b'_j$ with $a',b'_j\in\CI$. Since the horizontal lifts of $x\pa_x,\pa_{y^j}\in\Vb(X)$ to $X_\chop$ are equal to $x\pa_x-\hat h\pa_{\hat h},\pa_{y^j}$, the claim follows.

  Regarding the Lie algebra structure, we compute, for $V,W\in\Vb(X)$,
  \[
    \bigl[\tfrac{h}{h+x}V,\tfrac{h}{h+x}W\bigr] = \tfrac{h}{h+x}\Bigl(\tfrac{h}{h+x}[V,W] + V\bigl(\tfrac{h}{h+x}\bigr)W - W\bigl(\tfrac{h}{h+x}\bigr)V \Bigr).
  \]
  Since $V,W\in\Vb(X_\chop)$, we have $V\bigl(\tfrac{h}{h+x}\bigr),W\bigl(\tfrac{h}{h+x}\bigr)\in\tfrac{h}{h+x}\CI(X_\chop)$. The proof is complete.
\end{proof}

There exists a vector bundle
\[
  \Tch X_\chop \to X_\chop
\]
together with a smooth bundle map $\Tch X_\chop\to\Tb X_\chop$ so that the space $\Vch(X_\chop)$ is equal to the space of smooth sections of $\Tch X_\chop$. In local coordinates on $X$, a local frame of $\Tch X_\chop$ is given by (the horizontal lifts to $X_\chop$ of)
\[
  \tfrac{h}{h+x}x\pa_x,\quad \tfrac{h}{h+x}\pa_{y^1},\ \ldots,\ \tfrac{h}{h+x}\pa_{y^{n-1}}.
\]
We call $\Tch X_\chop$ the \emph{$\chop$-tangent bundle} and its dual $\Tch^*X_\chop$ the \emph{$\chop$-cotangent bundle}, with local frame
\[
  \tfrac{x+h}{h}\tfrac{\dd x}{x},\quad\tfrac{x+h}{h}\,\dd y^1,\ \ldots,\ \tfrac{x+h}{h}\,\dd y^{n-1}.
\]

A choice of local coordinates $(x,y)\in[0,\infty)\times\R^{n-1}$ on an open set $U\subset X$ induces a trivialization of $\Tch^* X_\chop$ over the preimage of $[0,1)\times U$ under $X_\chop\to X$, with fiber-linear coordinates $(\xi_\chop,\eta_\chop)\in\R\times\R^{n-1}$ defined by the requirement that the canonical 1-form on $T^*X^\circ$ be equal to
\begin{equation}
\label{EqCVCoord}
  \xi_\chop\frac{x+h}{h}\frac{\dd x}{x}+\sum_{j=1}^{n-1}(\eta_\chop)_j\frac{x+h}{h}\dd y^j.
\end{equation}
In $X_\chop\setminus\cface$, where a smooth frame of $\Tch X_\chop$ is given by $\frac{h}{x}x\pa_x,\frac{h}{x}\pa_{y^1},\ldots,\frac{h}{x}\pa_{y^{n-1}}$, it is computationally simpler to use the fiber-linear coordinates $(\xi,\eta)$ in which the canonical 1-form takes the form
\begin{equation}
\label{EqCVCoordTilde}
  \xi\frac{x}{h}\frac{\dd x}{x}+\sum_{j=1}^{n-1}\eta_j\frac{x}{h}\dd y^j.
\end{equation}
We compute the form of the Hamilton vector field $H_a$ of a smooth function $a\in\CI(\Tch^*X_\chop)$ in these fiber coordinates, and using $(\hat h,x,y)$ with $\hat h=\frac{h}{x}$ as coordinates on the base. In terms of the coordinates on $\Tb^*X$ used in~\eqref{EqbscCan1}, we have $(\xi,\eta)=\frac{h}{x}(\xi_\bop,\eta_\bop)$ and thus, by changing coordinates in the expression~\eqref{EqbscHamb},
\begin{equation}
\label{EqCVHam}
\begin{split}
  H_a &= \hat h\Bigl( (\pa_\xi a)(x\pa_x-\hat h\pa_{\hat h}-\eta\pa_\eta) + (\pa_\eta a)\pa_y \\
    &\quad\qquad - \bigl((x\pa_x-\hat h\pa_{\hat h}-\eta\pa_\eta)a\bigr)\pa_\xi a - (\pa_y a)\pa_\eta \Bigr).
\end{split}
\end{equation}

\subsection{Symbols, pseudodifferential operators, Sobolev spaces}
\label{SsCP}

The simplest symbol class for $\chop$-operators is $S^s(\Tch^*X_\chop)=\cA^{-s}(\ol{\Tch^*}X_\chop)$, where we only regard fiber infinity $\Sch^*X_\chop$ as a boundary, i.e.\ we require symbols to be smooth down to $\ol{\Tch^*_\bullet}X_\chop$ for $\bullet=\cface,\tface,\sface$. In practice, we need more general symbols: for $\delta\in[0,\half)$ and for $s,l,\alpha,b\in\R$, we define
\[
  S^{s,l,\alpha,b}_{1-\delta,\delta}(\ol{\Tch^*}X_\chop) = \bigl(\tfrac{x}{x+h}\bigr)^{-l}(x+h)^{-\alpha}\bigl(\tfrac{h}{h+x}\bigr)^{-b}S^{s,0,0,0}_{1-\delta,\delta}(\ol{\Tch^*}X_\chop)
\]
to consist of all symbols which are conormal with weight $-s$ at $\Sch^*X_\chop$, conormal with weight $-l$ at $\ol{\Tch^*_\cface}X_\chop$ and with weight $-\alpha$ at $\ol{\Tch^*_\tface}X_\chop$, and conormal of type $1-\delta$ at $\ol{\Tch^*_\sface}X_\chop$ with weight $-b$. In the coordinates~\eqref{EqCVCoordTilde}, the membership $a\in S_{1-\delta,\delta}^{s,0,0,0}(\ol{\Tch^*}X_\chop)$ is equivalent to $a=a(\hat h,x,y,\xi,\eta)$ (with $\hat h=\frac{h}{x}$) satisfying estimates
\begin{align*}
  &\bigl| (x\pa_x)^i\pa_y^\alpha(\hat h\pa_{\hat h})^j \pa_\xi^k\pa_\eta^\beta a(\hat h,x,y,\xi,\eta) \bigr| \\
  &\qquad \leq C_{i j k\alpha\beta}(1+|\xi|+|\eta|)^{s-(k+|\beta|)}\hat h^{-(i+j+k+|\alpha|+|\beta|)\delta}
\end{align*}
for all $i,j,k\in\N_0$ and $\alpha,\beta\in\N_0^{n-1}$; in coordinates $(h,\hat x,y,\xi_\bop,\eta_\bop)$ on the $\chop$-cotangent bundle over $X_\chop\setminus\sface$, with $\hat x=\frac{x}{h}$ and with the canonical 1-form given by~\eqref{EqbscCan1}, $a$ must satisfy
\[
  \bigl| (\hat x\pa_{\hat x})^i\pa_y^\alpha(h\pa_h)^j\pa_{\xi_\bop}^k\pa_{\eta_\bop}^\beta a(h,\hat x,y,\xi_\bop,\eta_\bop) \bigr| \leq C_{i j k\alpha\beta} (1+|\xi_\bop|+|\eta_\bop|)^{s-(k+|\beta|)}.
\]
See Figure~\ref{FigCPSymbol}. As usual, we omit the subscript `$1-\delta,\delta$' when $\delta=0$.

It is occasionally useful to restrict attention to symbols which are \emph{classical} conormal down to $\tface$, which amounts to replacing $x\pa_x$, $h\pa_h$ in the above symbol estimates (which are for symbols of order $0$ at $\tface$) by $\pa_x$, $\pa_h$. We denote the corresponding symbol classes with a subscript `$\cl$' as in $S^{s,l,\alpha,b}_\cl(\ol{\Tch^*}X_\chop)$.

As in~\S\ref{Sbsc}, the main use of $\delta>0$ is to accommodate symbols with variable orders. Here, we only discuss the case of variable semiclassical orders. Thus, consider $\sfb\in\CI(\ol{\Tch^*_\sface}X_\chop)$, an arbitrary extension of which to an element of $\CI(\ol{\Tch^*}X_\chop)$ we denote by the same letter; we then put
\[
  S^{s,l,\alpha,\sfb}(\ol{\Tch^*}X_\chop) := \biggr\{ \bigl(\tfrac{h}{h+x}\bigr)^\sfb a_0 \colon a_0 \in \bigcap_{\delta\in(0,\frac12)}S^{s,l,\alpha,0}_{1-\delta,\delta}(\ol{\Tch^*}X_\chop) \biggr\},
\]
which is a subset of $\bigcap_{\delta\in(0,\frac12)}S^{s,l,\alpha,b_0}_{1-\delta,\delta}(\ol{\Tch^*}X_\chop)$ for any $b_0>\sup\sfb$.

\begin{figure}[!ht]
\centering
\includegraphics{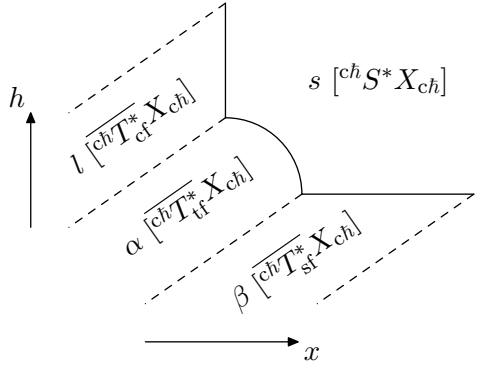}
\caption{Illustration of $\ol{\Tch^*}X_\chop$ (showing only part of the compactified fibers) and the symbol class $S^{s,l,\alpha,\beta}(\ol{\Tch^*}X_\chop)$, indicating the orders at the various boundary hypersurfaces of $\ol{\Tch^*}X_\chop$.}
\label{FigCPSymbol}
\end{figure}

We now proceed to quantize symbols $a=a(h,x,y,\xi_\chop,\eta_\chop)$, thereby giving meaning to the formal expression ``$\Op_{\cop,h}(a)=a(h,x,y,\tfrac{h}{h+x}x D_x,\tfrac{h}{h+x}D_y)$''. Thus, fixing $\phi\in\CIc((-1,1))$, identically $1$ near $0$, we define, in local coordinates $(x,y)$ on $X$,
\begin{equation}
\label{EqCPQuant}
\begin{split}
  &(\Op_{\cop,h}(a)u)(h,x,y) \\
  &\qquad := (2\pi)^{-n}\iiiint \exp\biggl(i\biggl[ \frac{x-x'}{x\tfrac{h}{h+x}}\xi_\chop + \frac{y-y'}{\tfrac{h}{h+x}}\cdot\eta_\chop \biggr]\biggr) \phi\Bigl(\Bigl|\log\frac{x}{x'}\Bigr|\Bigr)\phi(|y-y'|) \\
  &\qquad\hspace{9em} \times a(h,x,y,\xi_\chop,\eta_\chop)u(h,x',y')\,\frac{\dd x'}{x'\tfrac{h}{h+x'}}\,\frac{\dd y'}{\bigl(\tfrac{h}{h+x'}\bigr)^{n-1}}\,\dd\xi_\chop\,\dd\eta_\chop
\end{split}
\end{equation}
for $a$ and $u$ supported in the coordinate chart; for general $a,u$, one defines $\Op_{\cop,h}(a)u$ using a partition of unity.

We interpret this in terms of the $\chop$-double space
\[
  X^2_\chop := \bigl[ [0,1)_h \times X^2_\bop; \{0\}\times\ff_\bop; \{0\}\times\Delta_\bop \bigr],
\]
where we denote by $\Delta_\bop\subset X^2_\bop$ the lift of the diagonal in $X^2$ to $X^2_\bop$; see equation~\eqref{EqbscX2b} and the subsequent paragraph for the definition of $X^2_\bop$ and its boundary hypersurfaces $\lb_\bop,\ff_\bop,\rb_\bop$. First, recall from\footnote{We add subscripts `$2$' here in order to avoid confusion during the frequent changes between $X_\chop$ and $X_\chop^2$ later on.} \cite[Definition~3.1]{HintzConicPowers} that $\lb_2,\rb_2,\ff_2,\tface_2,\sface_2$, and $\dface_2$ are the lifts of $[0,1)\times\lb_\bop$, $[0,1)\times\rb_\bop$, $[0,1)\times\ff_\bop$, $\{0\}\times\ff_\bop$, $\{0\}\times X^2_\bop$, and $\{0\}\times\Delta_\bop$, respectively; moreover, $\Delta_\chop$ denotes the lift of $[0,1)\times\Delta_\bop$. See Figure~\ref{FigCPDouble}.

\begin{figure}[!ht]
\centering
\includegraphics{FigCPDouble}
\caption{The $\chop$-double space $X^2_\chop$.}
\label{FigCPDouble}
\end{figure}

Then the Schwartz kernel of $\Op_{\cop,h}(a)$ is a conormal distribution of order $s-\tfrac14$ at $\Delta_\chop$, conormal down to $\ff_2,\tface_2,\dface_2$ with weights $-l,-\alpha,-b$, and vanishes identically in a neighborhood of $\lb_2,\rb_2,\sface_2$.

The composition of two $\chop$-quantizations is almost a $\chop$-quantization itself; one merely has to allow for additional residual terms: define the space $\Psich^{-\infty}(X)$ of residual operators to consist of all operators whose Schwartz kernels are conormal sections of the right b-density bundle on $X^2_\chop$, with weight $0$ at $\ff_2$ and $\tface_2$, and with infinite order vanishing at $\lb_2,\rb_2,\dface_2,\sface_2$. We then put
\[
  \Psich^s(X) := \Op_\chop\bigl(S^s(\ol{\Tch^*}X_\chop)\bigr) + \Psich^{-\infty}(X),
\]
where $\Op_\chop=(\Op_{\cop,h})_{h\in(0,1)}$; this gives the same space as \cite[Definition~3.2]{HintzConicPowers}. More generally, we define the quantization of symbols $a\in S^{s,l,\alpha,b}_{1-\delta,\delta}(\ol{\Tch^*}X_\chop)$ by the same formula~\eqref{EqCPQuant}; the space of residual operators is now
\[
  \Psich^{-\infty,l,\alpha,-\infty}(X):=(\tfrac{x}{x+h})^{-l}(x+h)^{-\alpha}\Psich^{-\infty}(X).
\]
Thus, we can now define the spaces
\begin{align*}
  \Psi_{\chop,1-\delta,\delta}^{s,l,\alpha,b}(X) &:= \Op_\chop\bigl(S^{s,l,\alpha,b}_{1-\delta,\delta}(\ol{\Tch^*}X_\chop)\bigr) + \Psich^{-\infty,l,\alpha,-\infty}(X), \\
  \Psi_{\chop}^{s,l,\alpha,\sfb}(X) &:= \Op_\chop\bigl(S^{s,l,\alpha,\sfb}(\ol{\Tch^*}X_\chop)\bigr) + \Psich^{-\infty,l,\alpha,-\infty}(X),
\end{align*}
where in the second line $\sfb\in\CI(\ol{\Tch^*_\sface}X_\chop)$ is a variable order function. Their Schwartz kernels can be characterized as being conormal distributions (of order $s-\tfrac14$ and type $(1,0)$) at $\Delta_\chop$ which are conormal at $\ff_2$ (with weight $-l$), $\tface_2$ (with weight $-\alpha$), and conormal of type $1-\delta$ at $\dface_2$ (with weight $-b$), and which vanish to infinite order at $\lb_2,\rb_2,\sface_2$. One can also consider subalgebras which are classical at $\tface$, i.e.\ the symbols are required to be classical conormal at $\tface$, and the residual operators are required to have classical conormal Schwartz kernels at $\tface_2$; we denote these algebras by a subscript `$\cl$', such as
\[
  \Psi_{\chop,\cl}^{s,l,\alpha,\sfb}(X).
\]

All such ps.d.o.s define $h$-dependent families of bounded\footnote{though not uniformly in $h$ unless $\sfb\geq 0$} linear maps on $\CIdot(X)$; compositions of two such ps.d.o.s give a ps.d.o.\ in the same class, with orders given by the sum of the orders of the two factors. The principal symbol map is
\[
  \sigmach_{s,l,\alpha,b} \colon \Psi_{\chop,1-\delta,\delta}^{s,l,\alpha,b}(X) \to (S^{s,l,\alpha,b}/S^{s-1,l,\alpha,b-1+2\delta})(\ol{\Tch^*}X_\chop),
\]
similarly for the variable order spaces (with $\delta>0$ then arbitrary), and it is a *-homo\-morph\-ism. These facts follow from a minor variation of \cite[Proposition~3.9]{HintzConicPowers} (using weights instead of index sets), with the statements about principal symbols following by continuity from the corresponding statements for standard semiclassical operators (of type $(1-\delta,\delta)$) in $x>0$ and b-ps.d.o.s in $h>0$; we leave the details to the reader. We moreover have, for $A_j\in\Psi_{\chop,1-\delta,\delta}^{s_j,l_j,\alpha_j,b_j}(X)$, $j=1,2$, with principal symbols $a_j$,
\[
  \Op_{\cop,h}(i[A_1,A_2]) - \Op_{\cop,h}(H_{a_1}a_2) \in \Psi_{\chop,1-\delta,\delta}^{s-2,l,\alpha,b-2+4\delta}(X),
\]
analogously for variable order operators. One can evaluate $H_{a_1}a_2$ using the formula~\eqref{EqCVHam}.

Since the principal symbol captures operators to leading order at $\Sch^*X_\chop\cup\ol{\Tch^*_\sface}X_\chop$, the latter set is also the locus of the elliptic and wave front sets of an operator. Thus, for $A\in\Psich^{s,l,\alpha,b}(X)$, we define
\[
  \Ellch^{s,l,\alpha,b}(A),\ \WFch^{\prime l,\alpha}(A) \subset \Sch^*X_\chop \cup \ol{\Tch^*_\sface}X_\chop
\]
as follows: $\Ellch^{s,l,\alpha,b}(A)$ is the set of all $\zeta$ so that $\sigmach_{s,l,\alpha,b}(A)$ is elliptic in a neighborhood of $\zeta$, and $\WFch^{\prime l,\alpha}(A)$ is the complement of the set of $\zeta$ so that the full symbol of $A$ lies in $S^{-\infty,l,\alpha,-\infty}(\ol{\Tch^*}X_\chop)$ when localized to a sufficiently small neighborhood of $\zeta$. In particular, we have $\WFch^{\prime l,\alpha}(A)=\emptyset$ if and only if $A\in\Psich^{-\infty,l,\alpha,-\infty}(X)$. We omit the orders $s,l,\alpha,b$ and $l,\alpha$ when they are clear from the context. The definitions for type $(1-\delta,\delta)$ and variable order operators are analogous. See Figure~\ref{FigCPEll}.

\begin{figure}[!ht]
\centering
\includegraphics{FigCPEll}
\caption{The shaded boundary hypersurfaces are the locus of the elliptic set as well as of operator and distributional wave front sets. Cf.\ Figure~\ref{FigCPSymbol}.}
\label{FigCPEll}
\end{figure}

Finally, we define the corresponding weighted Sobolev spaces. As in~\eqref{EqbscL2}, we first fix a weighted b-density $\mu=x^{\alpha_\mu}\mu_0$, where $0<\mu_0\in\CI(X;\Omegab^1 X)$ and $\alpha_\mu\in\R$, and define
\[
  \Hch^0(X;\mu) := L^2(X;\mu),\quad
  \Hch^{0,l,\alpha,b}(X;\mu) := \bigl(\tfrac{x}{x+h}\bigr)^l(x+h)^a\bigl(\tfrac{h}{h+x}\bigr)^b\Hch^0(X;\mu).
\]
These spaces depend on $\alpha_\mu$, but are independent of $\mu_0$ (up to equivalence of norms). When the choice of $\mu$ is clear from the context, we will omit it from the notation. For $s\geq 0$, we then define $\Hch^{s,l,\alpha,b}(X)$ to consist of all $u\in\Hch^{0,l,\alpha,b}(X)$ so that $A u\in\Hch^0(X)$ for any (thus all) elliptic $A\in\Psich^{s,l,\alpha,b}(X)$. We note for $s\in\N_0$ the equivalent characterization
\[
  \Hch^{s,l,\alpha,b}(X) = \bigl\{ u\in\Hch^{0,l,\alpha,b}(X) \colon V_1\cdots V_j u\in\Hch^{0,l,\alpha,b}(X)\ \forall\,V_i\in\Vch(X_\chop),\ 0\leq i\leq j\leq s\bigr\}.
\]
For $s<0$, the space $\Hch^{s,l,\alpha,b}(X)$ can be defined either by duality as $(\Hch^{-s,-l,-\alpha,-b}(X))^*$, or as the space of all $u_1+A u_2$ where $u_1,u_2\in\Hch^{0,l,\alpha,b}(X)$ and $A\in\Psich^{-s}(X)$. Lastly, for a variable order $\sfb\in\CI(\ol{\Tch^*_\sface}X_\chop)$, we pick $b_0<\inf\sfb$ and put
\[
  \Hch^{s,l,\alpha,\sfb}(X) := \{ u\in\Hch^{s,l,\alpha,b_0}(X) \colon A u\in\Hch^0(X) \},
\]
where $A\in\Psich^{s,l,\alpha,\sfb}(X)$ is any elliptic operator; the space $\Hch^{s,l,\alpha,\sfb}(X)$ is independent of the choices of $b_0$ and $A$, up to equivalence of norms.

We can define Sobolev wave front sets in the usual manner. Let $l,\alpha\in\R$, and suppose that we are given a distribution $u\in\Hch^{-\infty,l,\alpha,-\infty}(X)$, meaning $u\in\Hch^{-N,l,\alpha,-N}(X)$ for some $N\in\R$. Let $s,b\in\R$. Then
\[
  \WFch^{s,l,\alpha,b}(u) \subset \Sch^*X_\chop \cup \ol{\Tch^*_\sface}X_\chop
\]
is the complement of all $\alpha$ so that there exists an operator $A\in\Psich^{s,l,\alpha,b}(X)$, elliptic at $\alpha$, so that $A u\in\Hch^0(X)$. (The a priori assumption on $u$ is familiar from the definition of the b-wave front set, see e.g.\ \cite[Definition~6.2]{VasyMinicourse}, and ensures that one then also has $B u\in\Hch^0(X)$ for any $B\in\Psich^{s,l,\alpha,b}(X)$ with $\WFch'(B)\subset\Ellch'(A)$.)

\begin{rmk}[Operators on vector bundles]
\label{RmkCPBundles}
  If $E,F\to X$ are smooth vector bundles, one can consider semiclassical cone ps.d.o.s acting between sections of $E,F$, giving rise to classes $\Psich^s(X;E,F)$ and function spaces $\Hch^s(X;E)$ etc. More generally, one can allow $E,F$ to be vector bundles $E,F\to X_\chop$ over the semiclassical single space, with Schwartz kernels of elements of $\Psich^s(X;E,F)$ defined by taking the tensor product of $\Psich^s(X)$ over $\CI(X^2_\chop)$ with $\CI(X^2_\chop;\pi_L^*E\boxtimes\pi_R^*F^*)$ where $\pi_L,\pi_R\colon X^2_\chop\to X_\chop$ are the stretched left and right projections. Using such ps.d.o.s, one can define Sobolev spaces $\Hch^s(X;E)$ etc.\ in this generality.
\end{rmk}

\begin{rmk}[Relationship with edge Sobolev spaces]
\label{RmkCPEdge}
  For the propagation through cone points in the spacetime setting, many authors \cite{MelroseWunschConic,MelroseVasyWunschEdge} have utilized Mazzeo's edge algebra \cite{MazzeoEdge}. A typical example is the operator $-D_t^2+\Delta_g$, where $g=g(x,y,\dd y)$ is a conic metric on a manifold $X$ with boundary (see~\eqref{EqPProdMet}); upon multiplication by $x^2$, this is a second order differential operator, the principal part of which is a Lorentzian signature quadratic form in the collection $(x D_t,x D_x,D_y)$ of edge vector fields. The membership $u\in H_\eop^1(\R_t\times X,|\dd t\,\dd g|)$---meaning that $u$, $x D_t u$, $x D_x u$, $D_y u\in L^2$---can then be characterized by taking the Fourier transform in $t$ as
  \[
    \hat u(\sigma),\ x|\sigma|\hat u(\sigma),\ x D_x\hat u(\sigma),\ D_y\hat u(\sigma) \in L^2(\R_\sigma;L^2(X;|\dd g|)).
  \]
  Introducing $h=\la\sigma\ra^{-1}$, this is equivalent to the $L^2(\R_\sigma;L^2(X))$ membership of $\tfrac{h+x}{h}\hat u$, $x D_x\hat u$, $D_y\hat u$. Upon multiplication by $\frac{h}{h+x}$, we thus find
  \[
    u\in H_\eop^1(\R_t\times X,|\dd t\,\dd g|) \quad\Longleftrightarrow\quad \hat u\in L^2\bigl(\R_\sigma;H_{\cop,\la\sigma\ra^{-1}}^{1,0,0,1}(X;|\dd g|)\bigr),
  \]
  and the respective norms of $u$ and $\hat u$ are equivalent. (One can show that similar spectral characterizations of edge Sobolev spaces remain valid also for spaces with weights and with variable differential orders; the details will be given elsewhere.)
\end{rmk}

\subsection{Restriction to \texorpdfstring{$\tface$}{the transition face}}
\label{SsCPN}

Symbolic arguments for the analysis of semiclassical cone PDEs $P u=f$ can at best control $u$ microlocally at $\Sch^*X_\chop\cup\ol{\Tch^*_\sface}X_\chop$, i.e.\ modulo errors which are trivial at infinite frequencies and at $\sface$. Crucially however, such errors may well be nontrivial at $\tface$, and thus nontrivial (meaning in particular: not small) as $h\to 0$. To obtain control at $\tface$, one needs to invert the normal operator $N(P)$, defined in~\cite[\S3.1.2]{HintzConicPowers} (denoted $N_\tface(P)$ there) and recalled below. The following result, already implicit in the definition of the normal operator in~\cite[\S3.1.2]{HintzConicPowers}, lays the groundwork for the analysis of $N(P)$.

\begin{lemma}[Restriction to $\tface$: vector fields]
\label{LemmaCPN}
  The restriction map $\Vb(X_\chop)\ni V\mapsto V|_\tface\in\Vb(\tface)$ restricts to a surjective map
  \begin{equation}
  \label{EqCPN}
    N \colon \Vch(X_\chop) \to \cV_{\bop,\scop}(\tface)
  \end{equation}
  onto the space $\cV_{\bop,\scop}(\tface)=\tfrac{h}{h+x}\Vb(\tface)$ of vector fields which are b-vector fields near $\tface\cap\cface$ and scattering vector fields near $\tface\cap\sface$. The map~\eqref{EqCPN} induces bundle isomorphisms
  \begin{equation}
  \label{EqCPNBundles}
    \Tch_\tface X_\chop \cong {}^{\bop,\scop}T\tface,\qquad
    \Tch_\tface^* X_\chop \cong {}^{\bop,\scop}T^*\tface.
  \end{equation}
\end{lemma}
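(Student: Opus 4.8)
The plan is to prove the three assertions in turn — that $V\mapsto V|_\tface$ sends $\Vch(X_\chop)$ into $\cV_{\bop,\scop}(\tface)$, that the resulting map $N$ is surjective, and that it induces the bundle isomorphisms~\eqref{EqCPNBundles} — the first and third being essentially bookkeeping and the substance lying in a local frame computation used for surjectivity which also powers the bundle statement. Before anything, I would record the b-scattering structure of $\tface$: it is a $[0,\infty]_{\hat x}$-bundle over $\pa X$, with $\hat x=\tfrac xh$ a fiber coordinate valid away from $\tface\cap\sface$ and $\hat h=\tfrac hx=\hat x^{-1}$ one valid near $\tface\cap\sface$; setting $H_1:=\tface\cap\cface$ and $H_2:=\tface\cap\sface$, in the notation of~\eqref{Eqbsc2End} (with $Y=\pa X$) the face $H_1$ is the b-end and $H_2$ the scattering end of $\tface$. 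Writing $\rho_\sface=\tfrac{h}{h+x}$ for the defining function of $\sface$, one has $\rho_\sface|_\tface=(1+\hat x)^{-1}$ near $H_1$, so $\rho_\sface|_\tface$ is a defining function of $H_2$, and hence $\cV_{\bop,\scop}(\tface)=(\rho_\sface|_\tface)\Vb(\tface)$, consistently with the statement. Now any $V\in\Vch(X_\chop)\subset\Vb(X_\chop)$ is tangent to every boundary hypersurface of $X_\chop$, in particular to $\tface$ and — being tangent to $\cface$ and $\sface$ — to the faces $H_1,H_2$ of $\tface$, so $V|_\tface\in\Vb(\tface)$; moreover the condition defining $\Vch(X_\chop)$ beyond ``b-vector field'', namely vanishing at $\sface$ as a b-vector field, means precisely $V\in\rho_\sface\Vb(X_\chop)$, which upon restriction to $\tface$ yields $V|_\tface\in(\rho_\sface|_\tface)\Vb(\tface)=\cV_{\bop,\scop}(\tface)$. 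This settles the first assertion.

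For surjectivity I would invoke Lemma~\ref{LemmaCVSpan}, which says $\Vch(X_\chop)$ is spanned over $\CI(X_\chop)$ by the vector fields $\rho_\sface\wt V$, $V\in\Vb(X)$, where $\wt V$ is the horizontal lift of $V$. Since $\CI(X_\chop)\to\CI(\tface)$ is onto and $\tface$ is compact, a standard partition-of-unity argument reduces surjectivity of $N$ to the local claim that, as $V$ runs over a local frame $x\pa_x,\pa_{y^1},\dots,\pa_{y^{n-1}}$ of $\Vb(X)$ near a point of $\pa X$, the restrictions $(\rho_\sface\wt V)|_\tface$ form a local frame of ${}^{\bop,\scop}T\tface$. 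Near $H_1$, in coordinates $(h,\hat x,y)$ one has $\wt{x\pa_x}=\hat x\pa_{\hat x}$ and $\wt{\pa_{y^j}}=\pa_{y^j}$, so the restrictions to $\tface=\{h=0\}$ are $(1+\hat x)^{-1}\hat x\pa_{\hat x}$ and $(1+\hat x)^{-1}\pa_{y^j}$, i.e.\ precisely the frame of ${}^{\bop,\scop}T\tface$ dual to $(1+\hat x)\tfrac{\dd\hat x}{\hat x},(1+\hat x)\dd y^1,\dots,(1+\hat x)\dd y^{n-1}$. Near $H_2$, in coordinates $(\hat h,x,y)$ one has $\wt{x\pa_x}=x\pa_x-\hat h\pa_{\hat h}$, $\wt{\pa_{y^j}}=\pa_{y^j}$, and $\rho_\sface=\tfrac{\hat h}{1+\hat h}$, so the restrictions to $\tface=\{x=0\}$ are $-\tfrac{\hat h^2}{1+\hat h}\pa_{\hat h}$ and $\tfrac{\hat h}{1+\hat h}\pa_{y^j}$, which form a local frame of ${}^{\bop,\scop}T\tface$ near $H_2$ (there ${}^{\bop,\scop}T\tface$ is the scattering tangent bundle $\Tsc\tface$, locally framed by $\hat h^2\pa_{\hat h},\hat h\pa_{y^1},\dots,\hat h\pa_{y^{n-1}}$). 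On the interior of $\tface$ these $n$ vector fields are visibly linearly independent, proving the local claim and hence surjectivity.

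Finally, the bundle isomorphisms: the restriction map obeys $(fV)|_\tface=(f|_\tface)(V|_\tface)$ for $f\in\CI(X_\chop)$, so it descends to a $\CI(\tface)$-linear map on sections and is therefore induced by a bundle map $\Tch X_\chop|_\tface\to{}^{\bop,\scop}T\tface$; by the computation above this bundle map carries the local frame $\rho_\sface\wt{x\pa_x},\rho_\sface\wt{\pa_{y^1}},\dots,\rho_\sface\wt{\pa_{y^{n-1}}}$ of $\Tch X_\chop$ to a local frame of ${}^{\bop,\scop}T\tface$, hence is an isomorphism $\Tch_\tface X_\chop\cong{}^{\bop,\scop}T\tface$; dualizing gives $\Tch_\tface^*X_\chop\cong{}^{\bop,\scop}T^*\tface$. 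The point I expect to require the most care is the corner $H_2=\tface\cap\sface$: one has to see that the lone condition distinguishing $\Vch(X_\chop)$ from $\Vb(X_\chop)$ — vanishing at $\sface$ — is exactly what forces $V|_\tface$ to be scattering, rather than merely b-, at $H_2$; this is the content of the single computation $\rho_\sface\wt{x\pa_x}|_\tface=-\tfrac{\hat h^2}{1+\hat h}\pa_{\hat h}$ in the coordinate $\hat h=h/x$, and everything else is unwinding the definitions and the normalizations fixed in~\S\ref{Sbsc}.
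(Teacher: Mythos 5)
Your proof is correct and follows essentially the same route as the paper's: the content in both cases is the pair of coordinate computations, $(h,\hat x,y)$ near $\tface\cap\cface$ via~\eqref{EqCVSpancf} and $(\hat h,x,y)$ near $\tface\cap\sface$ giving $\rho_\sface\,\wt{x\pa_x}|_\tface=-\tfrac{\hat h^2}{1+\hat h}\pa_{\hat h}$, with surjectivity and the bundle isomorphism read off from the resulting frames. The only cosmetic difference is that you obtain injectivity of the bundle map from the frame-to-frame observation, whereas the paper phrases it via $\ker N=(x+h)\Vch(X_\chop)$; these are equivalent.
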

\begin{proof}
  On $\tface\setminus\sface$, the conclusion follows from the expression~\eqref{EqCVSpancf}. On $\tface\setminus\cface$ on the other hand, and using coordinates $(\hat h,x,y)$ with $\hat h=\frac{h}{x}$, we can write $V\in\Vch(X_\chop)$ as
  \[
    V = \hat h a(\hat h,x,y)(x\pa_x-\hat h\pa_{\hat h}) + \sum_{j=1}^{n-1}\hat h b^j(\hat h,x,y)\pa_{y^j}
  \]
  with smooth coefficients $a,b^1,\ldots,b^{n-1}$. Restriction to $\tface$, which in these coordinates is given by $x=0$, produces $V|_\tface = -a(\hat h,0,y)\hat h^2\pa_{\hat h} + \sum_{j=1}^{n-1} b^j(\hat h,0,y)\hat h\pa_{y^j}$, which is a scattering vector field, as claimed. The surjectivity of~\eqref{EqCPN} is clear from these explicit calculations.
  
   The surjectivity of~\eqref{EqCPNBundles} follows from this, and the injectivity is also clear from these explicit calculations. More conceptually, injectivity follows from the fact that $\ker N=(x+h)\Vch(X_\chop)$; that is, $N(V)$ captures $V$ to leading order at $\tface$ as a $\chop$-vector field.
\end{proof}

The map~\eqref{EqCPN} induces a surjective map
\begin{equation}
\label{EqCPNDiff}
  N \colon \bigl(\tfrac{x}{x+h}\bigr)^{-l}\bigl(\tfrac{h}{h+x}\bigr)^{-b}\Diffch^k(X_\chop) \to \bigl(\tfrac{\hat x}{\hat x+1}\bigr)^{-l}(\hat x+1)^b\,\Diff_{\bop,\scop}^k(\tface),\qquad \hat x:=\tfrac{x}{h}
\end{equation}
into weighted b-scattering differential operators on $\tface$. More generally:

\begin{lemma}[Restriction to the transition face: ps.d.o.s]
\label{LemmaCPNpsdo}
  Let $s,l,b\in\R$. Restriction to $\tface_2\subset X^2_\chop$ induces a surjective map $N\colon\Psi_{\chop,\cl}^{s,l,0,b}(X) \to \Psi_{\bop,\scop}^{s,l,b}(\tface)$.\footnote{Recall that the subscript `$\cl$' refers to classicality at, i.e.\ here smoothness of the Schwartz kernels, down to $\tface_2$.} More generally, if $\sfb\in\CI(\ol{\Tch^*_\sface}X_\chop)$, then $\sfb':=\sfb|_{\tface\cap\sface}\in\CI(\ol{{}^{\bop,\scop}T_{\tface\cap\sface}^*}\tface)$, and restriction to $\tface_2\subset X^2_\chop$ induces a surjective map
  \begin{equation}
  \label{EqCPNpsdo}
    N\colon\Psi_{\chop,\cl}^{s,l,0,\sfb}(X) \to \Psi_{\bop,\scop}^{s,l,\sfb'}(\tface).
  \end{equation}
\end{lemma}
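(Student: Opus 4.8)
The plan is to realize $N$ as restriction of Schwartz kernels to $\tface_2$, after identifying the relevant faces of $X^2_\chop$ with b-scattering objects over $\tface$. The crucial geometric input---already implicit in the construction of the normal operator in~\cite[\S3.1.2]{HintzConicPowers}---is that, writing $\hat x=x/h$, one has $\tface\cong[0,\infty]_{\hat x}\times\pa X$, a two-ended manifold which is of b-type at $\tface\cap\cface=\{\hat x=0\}$ and of scattering type at $\tface\cap\sface=\{\hat x=\infty\}$ (exactly the situation of~\eqref{Eqbsc2End} with $Y=\pa X$), and that under the iterated blow-up $X^2_\chop=\bigl[[0,1)_h\times X^2_\bop;\{0\}\times\ff_\bop;\{0\}\times\Delta_\bop\bigr]$ there is a diffeomorphism $\tface_2\cong\tface^2_{\bop,\scop}$ (the space~\eqref{Eqbsc2EndDbl} built from $\tface$) carrying $\Delta_\chop\cap\tface_2$ to the lifted diagonal, $\ff_2\cap\tface_2$ to the b-front face (lift of $(\{0\}\times\pa X)^2$), $\dface_2\cap\tface_2$ to the scattering front face (lift of $\Delta\cap(\{\infty\}\times\pa X)^2$), and $\lb_2\cap\tface_2$, $\rb_2\cap\tface_2$, $\sface_2\cap\tface_2$ to the remaining (b-left, b-right, and scattering-left/right) boundary hypersurfaces of $\tface^2_{\bop,\scop}$. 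Together with the bundle isomorphism $\Tch^*_\tface X_\chop\cong{}^{\bop,\scop}T^*\tface$ of Lemma~\ref{LemmaCPN}---whose restriction to $\tface\cap\sface$ reads $\Tch^*_{\tface\cap\sface}X_\chop\cong\Tsc^*_{\tface\cap\sface}\tface$, so that $\sfb':=\sfb|_{\tface\cap\sface}$ is indeed a variable scattering order on $\tface$---and the accompanying identification of right b-density bundles, this reduces everything to restricting kernels from $X^2_\chop$ to $\tface_2$ and, conversely, extending kernels off $\tface_2$.

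For the forward direction: given $a\in S^{s,l,0,\sfb}_\cl(\ol{\Tch^*}X_\chop)$, the hypothesis that $a$ is \emph{classical at $\tface$} together with $\alpha=0$ ensures that the kernel of $\Op_\chop(a)$ is smooth down to $\tface_2$, so its restriction to $\tface_2\cong\tface^2_{\bop,\scop}$ is well-defined; by the face identification of the previous paragraph it is a conormal distribution on $\tface^2_{\bop,\scop}$ of the appropriate conormal order at the lifted diagonal, with weights $-l$, $-\sfb'$ at the b-front and scattering front faces and infinite-order vanishing at the remaining faces, i.e.\ the kernel of an element of $\Psi_{\bop,\scop}^{s,l,\sfb'}(\tface)$. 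Most explicitly, substituting $x=h\hat x$ and $x'=h\hat x'$ in the quantization~\eqref{EqCPQuant} and noting that $\tfrac{h}{h+x}=\tfrac1{1+\hat x}$ and $\tfrac{x}{x+h}=\tfrac{\hat x}{1+\hat x}$ are functions of $\hat x$ alone, the phase and the integration density in~\eqref{EqCPQuant} become manifestly $h$-independent and coincide with those of~\eqref{EqbscQuant} on $[0,\infty]_{\hat x}\times\pa X$, while the amplitude converges as $h\to0$ to $a|_\tface$; thus $N(\Op_\chop(a))=\Op_{\bop,\scop}(a|_\tface)$, with orders preserved since the fibre variables are identified by Lemma~\ref{LemmaCPN}. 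The same restriction sends the residual part $\Psi_{\chop,\cl}^{-\infty,l,0,-\infty}(X)$ into $\Psi_{\bop,\scop}^{-\infty,l,-\infty}(\tface)$, because those kernels are smooth of weight $-l$ at $\ff_2$ and vanish to infinite order at $\lb_2,\rb_2,\dface_2,\sface_2$. Adding the two contributions gives~\eqref{EqCPNpsdo}; the fixed-order case $\sfb\equiv b$ is the special case $\sfb'\equiv b$, and the variable-order bookkeeping is no different since the symbol classes depend on $\sfb$, resp.\ $\sfb'$, only through their restrictions to $\sface$, resp.\ the scattering end of $\tface$, the discrepancy $(1+\hat x)^{\sfb-\sfb'}$ being an order-zero scattering symbol as $\sfb-\sfb'$ vanishes at $\tface\cap\sface$ (as for $\exp(-\sfw x\log x)$ in~\S\ref{Sbsc}).

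Surjectivity I would prove by extension off $\tface_2$. Writing $B=\Op_{\bop,\scop}(b)+R_B\in\Psi_{\bop,\scop}^{s,l,\sfb'}(\tface)$, use a collar $\tface\times[0,1)_{x+h}\hookrightarrow X_\chop$ together with $\Tch^*_\tface X_\chop\cong{}^{\bop,\scop}T^*\tface$ to extend $b$ to a symbol $\tilde b\in S^{s,l,0,\sfb}_\cl(\ol{\Tch^*}X_\chop)$ supported near $\tface$ with $\tilde b|_\tface=b$; then $N(\Op_\chop(\tilde b))=\Op_{\bop,\scop}(b)$ by the forward computation. Similarly, extend the Schwartz kernel of $R_B$ off $\tface_2\cong\tface^2_{\bop,\scop}$ into $X^2_\chop$---using a collar of $\tface_2$ and cutoffs---to a kernel that is smooth down to $\tface_2$, of weight $-l$ at $\ff_2$, and vanishing to infinite order at $\lb_2,\rb_2,\dface_2,\sface_2$ (possible because $R_B$ already exhibits the matching behaviour at the corresponding corners of $\tface_2$); this yields $R\in\Psi_{\chop,\cl}^{-\infty,l,0,-\infty}(X)$ with $N(R)=R_B$, whence $N(\Op_\chop(\tilde b)+R)=B$.

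The one step that is more than bookkeeping---and which I expect to be the main point---is the geometric identification $\tface_2\cong\tface^2_{\bop,\scop}$ with the correct correspondence of \emph{all} six boundary hypersurfaces and of the lifted diagonals, i.e.\ checking that the two successive blow-ups defining $X^2_\chop$ restrict over $\tface_2$ to the two successive blow-ups~\eqref{Eqbsc2EndDbl} defining $\tface^2_{\bop,\scop}$. This is implicit in~\cite[\S3.1.2]{HintzConicPowers} and can also be verified directly in local coordinates near $\pa X$ by following the blow-ups in turn; granting it, the remaining density normalizations, weight matchings, and extension-off-a-boundary-face arguments are routine (cf.~\cite{MelroseDiffOnMwc,MelroseAPS}).
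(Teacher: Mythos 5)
Your proposal is correct and follows essentially the same route as the paper: the paper likewise combines the Schwartz-kernel identification $\tface_2\cong\tface^2_{\bop,\scop}$ (inherited from \cite[\S3.1.2]{HintzConicPowers}) with the explicit observation that the change of variables $\hat x=x/h$, $\hat x'=x'/h$ turns the $\chop$-quantization~\eqref{EqCPQuant} into the b-scattering quantization~\eqref{EqbscQuant}, deducing surjectivity from the surjectivity of symbol restriction (Lemma~\ref{LemmaCPN}) for the symbolic part and from kernel extension for the residual part. Your extra care with the six-face correspondence and with the variable-order discrepancy $(1+\hat x)^{\sfb-\sfb'}$ only makes explicit what the paper leaves to the reader.
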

\begin{proof}
  This can be proved entirely on the level of Schwartz kernels, since memberships in $\Psich$ or $\Psi_{\bop,\scop}$ are characterized as conormal distributions with conormal regularity at various boundary hypersurfaces. The point then is that $\tface_2$ is naturally diffeomorphic to the double space $\tface^2_{\bop,\scop}$ in the notation of~\eqref{Eqbsc2EndDbl}, where we note that $\tface\cong[0,\infty]_{\hat x}\times\pa X$ is indeed of the form~\eqref{Eqbsc2End}. This is the route taken in \cite[\S3.1.2]{HintzConicPowers}.

  Alternatively, we can proceed explicitly for the symbolically nontrivial part using the quantization map~\eqref{EqCPQuant}, and use the Schwartz kernel perspective only to deduce the surjectivity of the restriction map for residual operators, $\Psi_{\chop,\cl}^{-\infty,l,0,-\infty}(X)\to\Psi_{\bop,\scop}^{-\infty,l,-\infty}(\tface)$. Indeed, on the level of symbols, note that with $\hat x=\frac{x}{h}$, we have
  \[
    S^{s,l,0,b}(\ol{\Tch^*}X_\chop) = \bigl(\tfrac{x}{x+h}\bigr)^{-l}\bigl(\tfrac{h}{h+x}\bigr)^{-b}S^{s,0,0,0}(\ol{\Tch^*}X_\chop) = \bigl(\tfrac{\hat x}{\hat x+1}\bigr)^{-l}(\hat x+1)^b S^{s,0,0,0}(\ol{\Tch^*}X_\chop),
  \]
  hence Lemma~\ref{LemmaCPN} implies that restriction to $\ol{\Tch^*_\tface}X_\chop$ induces a surjective map
  \[
    S_\cl^{s,l,0,b}(\ol{\Tch^*}X_\chop)\to S^{s,l,b}(\ol{{}^{\bop,\scop}T^*}\tface).
  \]
  But changing variables in the $\chop$-quantization~\eqref{EqCPQuant} to $\hat x=\frac{x}{h}$, $\hat x'=\frac{x'}{h}$ produces precisely the b-scattering quantization~\eqref{EqbscQuant}. This proves the lemma for constant orders; the proof in the variable order case is the same.
\end{proof}

As a consequence, we can relate semiclassical cone Sobolev spaces to b-scattering Sobolev spaces in the following manner:
\begin{cor}[Restriction to $\tface$: Sobolev spaces]
\label{CorCPNSob}
  Fix a collar neighborhood $\cU=[0,x_0)_x\times\pa X$ of $\pa X$. Suppose $\chi\in\cA^0([0,1)_h\times X)$ has compact support in $[0,1)\times\cU$. Define the map $\pi\colon [0,1)\times[0,\infty)_{\hat x}\times\pa X\to[0,1)\times[0,\infty)_x\times\pa X$ by $\pi(h,\hat x,y) = (h,h\hat x,y)$. Let $0<\mu_0\in\CI(X;\Omegab^1 X)$ and $0<\hat\mu_0\in\CI(\tface;\Omegab^1 X)$, let $\alpha_\mu\in\R$, and put
  \[
    \mu := x^{\alpha_\mu}\mu_0,\quad
    \hat\mu := \hat x^{\alpha_\mu}\hat\mu_0.
  \]
  \begin{enumerate}
  \item{\rm (Constant orders.)} Let $s,l,\alpha,b\in\R$. Then
    \begin{equation}
    \label{EqCPNSob}
      \| \chi u \|_{\Hch^{s,l,\alpha,b}(X;\mu)} \sim h^{\frac{\alpha_\mu}{2}-\alpha} \| \pi^*(\chi u) \|_{H_{\bop,\scop}^{s,l,b-\alpha}(\tface;\hat\mu)},\qquad u\in\Hch^{s,l,\alpha,b}(X;\mu),
    \end{equation}
    in the sense that the left hand side is bounded by a uniform constant (independent of $h$ and $u$) times the right hand side and vice versa.
  \item\label{ItCPNSobVar}{\rm (Variable orders.)} Let $\sfb\in\CI(\ol{\Tch^*_\sface}X_\chop)$ denote a variable order, and let $\sfb':=\sfb|_{\tface\cap\sface}$. If $\sfb$ is invariant under the lift of the dilation action $(x,y)\mapsto(\lambda x,y)$ in $\cU$, then $\|\chi u\|_{\Hch^{s,l,\alpha,\sfb}(X;\mu)}\sim h^{\frac{\alpha_\mu}{2}-\alpha}\|\pi^*(\chi u)\|_{H_{\bop,\scop}^{s,l,\sfb'-\alpha}(\tface;\hat\mu)}$. For general $\sfb$, and given $\delta>0$, there exists $x_0(\delta)\in(0,x_0]$ so that for $\chi\in\CIc([0,x_0(\delta))\times\pa X)$, we have
    \begin{equation}
    \label{EqCPNSobLossy}
    \begin{split}
      &C^{-1}h^{\frac{\alpha_\mu}{2}-\alpha}\|\pi^*(\chi u)\|_{H_{\bop,\scop}^{s,l,\sfb'-\alpha-\delta}(\tface;\hat\mu)} \\
      &\qquad \leq \|\chi u\|_{\Hch^{s,l,\alpha,\sfb}(X;\mu)} \leq C h^{\frac{\alpha_\mu}{2}-\alpha}\|\pi^*(\chi u)\|_{H_{\bop,\scop}^{s,l,\sfb'-\alpha+\delta}(\tface;\hat\mu)},
    \end{split}
    \end{equation}
    where $C$ does not depend on $h,u$.
  \end{enumerate}
\end{cor}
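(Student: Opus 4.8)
The overall plan is to reduce to the case $s=0$, prove that by a direct change of variables, and then bootstrap to general $s$ (and to variable orders) by testing with a carefully chosen elliptic operator whose normal operator intertwines with $\pi^*$ \emph{exactly}. Since $\chi$ has compact support in $[0,1)\times\cU$ and both sides of the asserted equivalences are additive over a partition of unity, I first reduce to distributions supported in a single coordinate chart $(x,y)\in[0,x_0)\times B$ with $B\subset\R^{n-1}$ a coordinate ball. For $s=0$ one expands $\|\chi u\|_{\Hch^{0,l,\alpha,b}(X;\mu)}^2$ as a weighted $L^2(\dd x\,\dd y)$-integral (away from $\cface$ the weights are $\tfrac{x}{x+h}$, $x+h$, $\tfrac{h}{h+x}$, and $\mu=x^{\alpha_\mu}\mu_0$), substitutes $x=h\hat x$, and collects the resulting powers of $h$ coming from the weights, from $x^{\alpha_\mu-1}$, and from the Jacobian. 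The bookkeeping produces exactly $h^{\alpha_\mu-2\alpha}$ times the squared integral defining $\|\pi^*(\chi u)\|^2_{H^{0,l,b-\alpha}_{\bop,\scop}(\tface;\hat\mu)}$ (the $\chop$-weights turning into the b-scattering weights $w_1=\tfrac{\hat x}{1+\hat x}$, $w_2=\tfrac{1}{1+\hat x}$ on $\tface\cong[0,\infty]_{\hat x}\times\pa X$ with exponents $-l$ and $-(b-\alpha)$); the only discrepancy with $\|\pi^*(\chi u)\|^2_{H^{0,l,b-\alpha}_{\bop,\scop}}$ is that the smooth positive coefficient $a$ of $\mu_0$ enters as $a(h\hat x,y)$ rather than as the coefficient $\hat a(\hat x,y)$ of $\hat\mu_0$. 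As $a$ is smooth and positive on the compact $\hat x$-range where it is evaluated and $\hat a$ is smooth and positive on $\tface$, the two are uniformly comparable, which gives the $s=0$ case of part~(1) with prefactor $h^{\frac{\alpha_\mu}{2}-\alpha}$.

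For general $s\in\R$ (first $s\geq 0$) I work with $A:=\Op_\chop(\la\xi_\chop,\eta_\chop\ra^s)\in\Psi_{\chop,\cl}^{s,0,0,0}(X)$, the quantization in the fixed chart of the \emph{base-independent} symbol $\la\xi_\chop,\eta_\chop\ra^s=(1+|\xi_\chop|^2+|\eta_\chop|^2)^{s/2}$. Substituting $\hat x=\tfrac{x}{h}$, $\hat x'=\tfrac{x'}{h}$ in~\eqref{EqCPQuant} turns the integral verbatim into the b-scattering quantization~\eqref{EqbscQuant} of the same symbol---this is exactly the change of variables in the proof of Lemma~\ref{LemmaCPNpsdo}, now trivialized by the $(h,x,y)$-independence of the amplitude---so $\pi^*(A v)=N(A)(\pi^* v)$ holds exactly for $v$ supported in the chart, with $N(A)=\Op_{\bop,\scop}(\la\xi_{\bop,\scop},\eta_{\bop,\scop}\ra^s)\in\Psi_{\bop,\scop}^{s,0,0}(\tface)$ elliptic. (This is the reason for insisting on a base-independent symbol: for a general elliptic $A$ the intertwining holds only up to an $\cO(h)$-error which turns out to carry a negative scattering decay order at $H_2=\hat x^{-1}(\infty)$, and its harmlessness would have to be argued separately.) Since $A$ is elliptic, $\|\chi u\|_{\Hch^{s,l,\alpha,b}}\sim\|A(\chi u)\|_{\Hch^{0,l,\alpha,b}}+\|\chi u\|_{\Hch^{0,l,\alpha,b}}$, and---choosing the cutoff $\phi$ in~\eqref{EqCPQuant} supported sufficiently close to $0$, which does not change $\Psich$---the operator $A(\chi u)$ is again supported in $\cU$; likewise $\|\pi^*(\chi u)\|_{H^{s,l,b-\alpha}_{\bop,\scop}(\tface)}\sim\|N(A)(\pi^*(\chi u))\|_{H^{0,l,b-\alpha}_{\bop,\scop}}+\|\pi^*(\chi u)\|_{H^{0,l,b-\alpha}_{\bop,\scop}}$ by the b-scattering elliptic estimate. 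Feeding the $s=0$ case into the norms of both $\chi u$ and $A(\chi u)$, and using $\pi^*(A(\chi u))=N(A)(\pi^*(\chi u))$, yields part~(1) for $s\geq 0$. For $s<0$ one passes to the adjoint problem---using that the $L^2$-pairings on $X$ and on $\tface$ correspond under $\pi^*$ up to the explicit $h$-power---or uses the representation of negative-order Sobolev elements as $v_1+A v_2$ with $A$ elliptic of order $-s$.

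Part~(2) follows the same pattern with $A$ replaced by $\Op_\chop\bigl((\tfrac{h}{h+x})^{\tilde\sfb}\la\xi_\chop,\eta_\chop\ra^s\bigr)$ for a smooth extension $\tilde\sfb$ of $\sfb$. When $\sfb$ is dilation-invariant near $\pa X$, one can take $\tilde\sfb$ independent of $(h,x)$, so $(\tfrac{h}{h+x})^{\tilde\sfb}$ pulls back under $\hat x=\tfrac{x}{h}$ exactly to the variable-order weight $w_2^{\sfb'}$ on $\tface$, the intertwining with $N(A)\in\Psi_{\bop,\scop}^{s,0,\sfb'}(\tface)$ is again exact, and the argument of part~(1) goes through with $b-\alpha$ replaced by $\sfb'-\alpha$. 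For general $\sfb$ one has $\tilde\sfb(x,y,\cdot)-\sfb'(y,\cdot)=\cO(x)$ near $\sface$, so on $\supp\chi\subset\{x<x_0(\delta)\}$---with $x_0(\delta)$ chosen small and after absorbing the off-diagonal spreading of $A$ into rapidly decaying errors---the ratio of the two variable-order weights pulls back to $w_2^{\cO(x)}\in[w_2^\delta,w_2^{-\delta}]$, which shifts the scattering decay order at $H_2$ by at most $\delta$ in either direction. Propagating this $\pm\delta$ discrepancy through the elliptic estimates gives the lossy bounds~\eqref{EqCPNSobLossy}.

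The heart of the matter---and the only place the conic geometry genuinely intervenes---is the exact accounting in the $s=0$ change of variables (the precise powers of $h$ and the $\alpha$-shift of the decay order), together with the localization arguments guaranteeing that $A(\chi u)$ stays in a coordinate chart inside $\cU$ so that the $s=0$ identity may be reapplied to it. The subtle point in part~(2) is that the loss is \emph{exactly} $\delta$: this works only because the sole obstruction to an exact match of orders is the $\cO(x)$-deviation of $\sfb$ from its restriction $\sfb'$ to $\tface\cap\sface$, which is brought below $\delta$ once the $x$-support has been shrunk to size $x_0(\delta)$.
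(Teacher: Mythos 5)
Your proposal is correct and follows essentially the same route as the paper: the identical change-of-variables computation for $s=0$, reduction of general $s$ to $s=0$ by testing against operators that intertwine exactly with restriction to $\tface$ (you use a single dilation-invariant elliptic quantization for all $s\geq 0$ plus duality, where the paper uses differential operators for integer $s$ plus duality and interpolation — a minor variation resting on the same change of variables in~\eqref{EqCPQuant}), and for variable orders the same device of a jointly $(h,x)$-dilation-invariant elliptic operator restricting to an elliptic $\hat A\in\Psi_{\bop,\scop}^{s,l,\sfb'}(\tface)$, with the $\pm\delta$ loss coming from the $\cO(x)$ deviation of $\sfb$ from the dilation-invariant extension of $\sfb'$. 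The only slip is cosmetic: it is the $x=h\hat x$-range, not the $\hat x$-range, that is compact when comparing the coefficients of $\mu_0$ and $\hat\mu_0$.
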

\begin{proof}
  By factoring out $h^{-\alpha}$, it suffices to consider the case $\alpha=0$. Consider first the case of constant orders. Factoring out the appropriate powers of $\tfrac{x}{x+h}=\tfrac{\hat x}{\hat x+1}$ and $\tfrac{h}{h+x}=(\hat x+1)^{-1}$, we reduce to the case $l=b=0$. For $s=0$, the equivalence of norms~\eqref{EqCPNSob} then follows from
  \[
    \iint |\chi u|^2\,x^{\alpha_\mu}\frac{\dd x}{x}\,\dd y = \iint |\pi^*(\chi u)|^2\,h^{\alpha_\mu}\hat x^{\alpha_\mu}\frac{\dd\hat x}{\hat x}\,\dd y.
  \]
  For $s\in\Z$, the conclusion follows from~\eqref{EqCPNDiff}; for general $s\in\R$, use duality and interpolation.

  For variable semiclassical orders $\sfb$ (and still with $\alpha=0$), and under the assumption of dilation-invariance near $\tface_2$, we first pick an elliptic operator $\hat A\in\Psi_{\bop,\scop}^{s,l,\sfb'}(\tface)$; we can then extend its Schwartz kernel to a neighborhood of $\tface_2$ to be constant along the orbits of $(h,x)\mapsto(\lambda h,\lambda x)$, and then extend it further to an elliptic operator $A\in\Psi_{\chop,\cl}^{s,l,0,\sfb}(X)$. In this manner, we obtain a right inverse (with special properties) of the restriction map~\eqref{EqCPNpsdo}. For any fixed $b_0<\inf\sfb$, we thus have
  \begin{align*}
    \|\chi u\|_{\Hch^{s,l,0,\sfb}(X;\mu)}^2 &\sim \|\chi u\|_{\Hch^{s,l,0,b_0}(X;\mu)}^2 + \|A(\chi u)\|_{\Hch^0(X;\mu)}^2 \\
      &\sim h^{\frac{\alpha_\mu}{2}} \Bigl( \|\pi^*(\chi u)\|_{H_{\bop,\scop}^{s,l,b_0}(\tface;\hat\mu)}^2 + \|\hat A(\pi^*(\chi u))\|_{H_{\bop,\scop}^0(X;\hat\mu)}^2 \Bigr) \\
      &\sim h^{\frac{\alpha_\mu}{2}} \|\chi u\|_{H_{\bop,\scop}^{s,l,\sfb'}(\tface;\hat\mu)}^2.
  \end{align*}
  The lossy estimate~\eqref{EqCPNSobLossy} is an immediate consequence of this, as the dilation-invariant extension of $\sfb'-\delta$, resp.\ $\sfb'+\delta$ is less, resp.\ greater than $\sfb$ in a sufficiently small (depending on $\sfb$ and $\delta$) neighborhood of $\pa X$.
\end{proof}

\subsection{Relative semiclassical b-regularity}
\label{SsCb}

We now make Remark~\ref{RmkI2nd} precise and demonstrate how to combine the notions of semiclassical cone regularity and semiclassical b- (i.e.\ conormal) regularity. Recall here that a semiclassical b-vector field is a particular type of $h$-dependent b-vector field on $X$; namely, it is a vector field on $[0,1)_h\times X$ which is horizontal and which vanishes at $h=0$. In local coordinates as in~\eqref{EqbscVF}, such a vector fields can be written as
\begin{equation}
\label{EqCbVF}
  a(h,x,y) h x\pa_x + \sum_{j=1}^{n-1} b^j(h,x,y)h\pa_{y^j}.
\end{equation}
The main insight is that the semiclassical b-algebra can be embedded into the semiclassical cone algebra via a phase space resolution, see Lemma~\ref{LemmaCbPhase} below; this can alternatively be phrased as a second microlocalization of the semiclassical b-algebra at the zero section over $\pa X$ at $h=0$, see Remark~\ref{RmkCb2nd}.

First, we explain a slightly nonstandard perspective on semiclassical (b-)phase spaces. Let $X$ be an $n$-dimensional manifold with nonempty embedded boundary $\pa X$. Thus, paralleling Definition~\ref{DefCV}, we define
\[
  X_{\bop\semi} := [0,1)\times X,\qquad
  \cV_{\bop\semi}(X_{\bop\semi}) = \{ V\in\Vb(X_{\bop\semi}) \colon V\ \text{is horizontal},\ V|_{h=0}=0 \}.
\]
It is then easy to see that $\cV_{\bop\semi}(X_{\bop\semi})$ is spanned over $\CI(X_{\bop\semi})$ by $h V$ for $V\in\Vb(X)$ (cf.\ \eqref{EqCbVF}), where we identify $V$ with an $h$-independent horizontal vector field on $X_{\bop\semi}$. We then have $\cV_{\bop\semi}(X_{\bop\semi})=\CI(X_{\bop\semi};{}^{\bop\semi}T X_{\bop\semi})$ for a rank $n$ vector bundle
\[
  {}^{\bop\semi}T X_{\bop\semi} \to X_{\bop\semi}.
\]
In local coordinates $[0,\infty)_x\times\R^{n-1}_y$, a smooth frame of this bundle is $h x\pa_x,h\pa_{y^1},\ldots,h\pa_{y^{n-1}}$. We can introduce fiber-linear coordinates on the dual bundle ${}^{\bop\semi}T^*X_{\bop\semi}$ by writing the canonical 1-form as
\[
  \xi_{\bop\semi}h^{-1}\frac{\dd x}{x} + \sum_{j=1}^{n-1} (\eta_{\bop\semi})_j h^{-1}\dd y^j.
\]
Thus, for example, the symbol of the semiclassical b-differential operator $h x D_x$ is $\xi_{\bop\semi}$.\footnote{By contrast, the standard convention is to introduce fiber-linear coordinates $(\xi_\bop,\eta_\bop)$ on $\Tb^*X$ as in~\eqref{EqbscCan1} and \emph{declare} the principal symbol of $h x D_x$ to be $\xi_\bop$; the translation to the present convention is accomplished by using (the adjoint of) the bundle isomorphism ${}^{\bop\semi}T X_{\bop\semi}\cong[0,1)_h\times\Tb X$ induced by division by $h$ (i.e.\ induced by the map $\cV_{\bop\semi}(X_{\bop\semi})\ni V\mapsto (h^{-1}V)_{h\in[0,1)}$).} Denote fiber infinity of the radial compactification $\ol{{}^{\bop\semi}T^*}X_{\bop\semi}$ by ${}^{\bop\semi}S^*X_{\bop\semi}$. Given a symbol $a\in S^{s,l,b}(\ol{{}^{\bop\semi}T^*}X_{\bop\semi})=x^{-l}h^{-b}S^{s,0,0}(\ol{{}^{\bop\semi}T^*}X_{\bop\semi})$ (conormal with weight $-s$ at ${}^{\bop\semi}S^*X_{\bop\semi}$, conormal with weight $-l$ at $\ol{{}^{\bop\semi}T^*_{[0,1)\times X}}X_{\bop\semi}$, and conormal with weight $-b$ at $h=0$), we can then define the semiclassical quantization
\begin{align*}
  (\Op_{\bop,h}(a)u)(x,y) &:= (2\pi)^{-n}\iiiint \exp\Bigl(i\Bigl[\frac{x-x'}{h x}\xi_{\bop\semi} + \frac{y-y'}{h}\cdot\eta_{\bop\semi}\Bigr]\Bigr) \phi\Bigl(\Bigl|\log\frac{x}{x'}\Bigr|\Bigr)\phi(|y-y'|) \\
    &\quad\hspace{10em} a(x,y,\xi_{\bop\semi},\eta_{\bop\semi})u(x',y')\,\frac{\dd x'}{h x'}\,\frac{\dd y'}{h^{n-1}}\,\dd\xi_{\bop\semi}\,\dd\eta_{\bop\semi}.
\end{align*}

If we make the change of variables
\begin{equation}
\label{EqCbCoV}
  (\xi_{\bop\semi},\eta_{\bop\semi})=(x+h)(\xi_\chop,\eta_\chop),
\end{equation}
cf.\ \eqref{EqCVCoord}, this exactly matches the $\chop$-quantization~\eqref{EqCPQuant}. The key point is now that this match is has a clean interpretation on the level of symbol classes on a joint resolution of the semiclassical cone and b-phase spaces:

\begin{lemma}[Relationship between semiclassical cone and b-phase spaces]
\label{LemmaCbPhase}
  Define the $\cop\bop\semi$-phase space
  \begin{equation}
  \label{EqCbTchb}
    \ol{{}^{\cop\bop\semi}T^*}X_\chop := \bigl[ \ol{\Tch^*}X_\chop; \Sch^*_\tface X_\chop \bigr].
  \end{equation}
  Denote by $\cC:=\ol{{}^{\bop\semi}T^*_{\{0\}\times\pa X}}X_{\bop\semi}$ the semiclassical b-phase space over the corner $h=x=0$, and denote by $o_\cC\subset\cC$ the zero section. Then the identity map on $(0,1)_h\times T^*X^\circ$ extends by continuity to a diffeomorphism
  \begin{equation}
  \label{EqCbPhaseDiff}
    \ol{{}^{\cop\bop\semi}T^*}X_\chop \xra{\cong} \bigl[ \ol{{}^{\bop\semi}T^*}X_{\bop\semi}; \cC; o_\cC \bigr].
  \end{equation}
\end{lemma}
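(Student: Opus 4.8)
The plan is to reduce to a neighbourhood of the corner $h=x=0$ of $X_{\bop\semi}$, resolve the base there on the $\bop\semi$-side (which is precisely what turns $X_{\bop\semi}$ into $X_\chop$), and then recognise what remains as the standard resolution relating a ``$b$-type'' and a ``scattering-type'' compactification of one and the same vector bundle, applied fibrewise. For the reduction: over the complement of $\{h=x=0\}$ (in particular over $X^\circ$, and wherever $h$ is bounded away from $0$) the function $\tfrac{1}{x+h}$ relating the spanning sets $\tfrac{h}{h+x}\Vb(X)$ and $h\,\Vb(X)$ of $\Vch(X_\chop)$ and $\cV_{\bop\semi}(X_{\bop\semi})$ (cf.\ Lemma~\ref{LemmaCVSpan}) is smooth and nonvanishing, so $\Tch X_\chop\cong{}^{\bop\semi}T X_{\bop\semi}$ there; moreover none of the centres blown up in~\eqref{EqCbTchb} or in $[\,\ol{{}^{\bop\semi}T^*}X_{\bop\semi};\cC;o_\cC\,]$ meets this region. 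Hence the identity of $(0,1)_h\times T^*X^\circ$ already extends to a diffeomorphism over the complement of the preimage of $\{h=x=0\}$, and it suffices to construct the extension near that preimage and check it is a diffeomorphism; density of $(0,1)_h\times T^*X^\circ$ then forces the two constructions to agree on the overlap.

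Next I resolve the base on the $\bop\semi$-side. The centre $\cC$ is the \emph{full} preimage of the codimension-two corner $\{h=x=0\}$ under $\ol{{}^{\bop\semi}T^*}X_{\bop\semi}\to X_{\bop\semi}$, and blowing up a full preimage is (locally, in a bundle chart) the same as pulling the bundle back along the blow-down map of that corner, which is by definition $X_\chop$. Thus $[\,\ol{{}^{\bop\semi}T^*}X_{\bop\semi};\cC\,]$ is canonically a radially compactified rank-$n$ bundle $\ol{\cE'}\to X_\chop$ with fibre coordinates $w=(\xi_{\bop\semi},\eta_{\bop\semi})$, in which $\cC$ lifts to the boundary hypersurface $\tface$ and $o_\cC$ lifts to the zero section $o$ of $\cE'$ over $\tface$. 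Therefore $[\,\ol{{}^{\bop\semi}T^*}X_{\bop\semi};\cC;o_\cC\,]=[\,\ol{\cE'};o\,]$.

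The heart of the proof is a fibrewise resolution statement. Over $X_\chop^\circ$ both $\cE'$ and $\Tch^*X_\chop$ are canonically $T^*X^\circ$, and expressing a covector in the two frames (cf.~\eqref{EqCVCoord} and~\eqref{EqCbCoV}) gives the relation $w=(x+h)v$ with $v=(\xi_\chop,\eta_\chop)$, i.e.\ $\cE'=(x+h)\cdot\Tch^*X_\chop$, where $x+h$ is a defining function of $\tface$. Claim: for a rank-$n$ real vector bundle $\cE\to B$ over a manifold with boundary hypersurface $G$ and local defining function $\rho$, putting $\cE':=\rho\cdot\cE$ (equal to $\cE$ over $B^\circ$, with fibre coordinates rescaled by $\rho$), the identity of $\cE|_{B^\circ}$ extends to a diffeomorphism
\[
  [\,\ol{\cE'};o_{\cE'|_G}\,]\ \xra{\cong}\ [\,\ol{\cE};S\cE|_G\,],
\]
where $S\cE|_G$ denotes fibre infinity of $\ol\cE$ over $G$ and $o_{\cE'|_G}$ the zero section of $\cE'$ over $G$. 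I will verify this in projective coordinate charts: (i) away from fibre infinity of $\cE$, the coordinates $(\rho,v)$ are exactly those of the chart of $[\ol{\cE'};o_{\cE'|_G}]$ on which $v=w/\rho$ is bounded; (ii) on the chart of $[\ol{\cE};S\cE|_G]$ on which $\rho$ defines the front face, with fibre coordinates $\mu:=|v|^{-1}$, $\omega:=v/|v|$, one has $\mu/\rho=|w|^{-1}$ and $\omega=w/|w|$, identifying the chart with the open set $\ol{\cE'}\setminus o_{\cE'|_G}$; (iii) on the chart of $[\ol{\cE};S\cE|_G]$ on which $\mu$ defines the front face, coordinates $(\mu,\rho/\mu,\omega)$ become $(\mu,|w|,\omega)$ under $w=\rho v$, matching the radial chart $(|w|,\rho/|w|,\omega)$ of the blow-up of $o_{\cE'|_G}$ — note that here the front face $\{\mu=0\}$ of the former is identified with the lift of $G$ in the latter, the usual duality swapping the zero section and fibre infinity; the remaining projective charts match in the same way, and all transition maps are smooth.

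Applying the Claim with $B=X_\chop$, $G=\tface$, $\rho=x+h$, $\cE=\Tch^*X_\chop$ — so that $\cE'$ is the bundle of the second paragraph and $o_{\cE'|_G}=o$ — and chaining with that paragraph yields
\[
  [\,\ol{{}^{\bop\semi}T^*}X_{\bop\semi};\cC;o_\cC\,]=[\,\ol{\cE'};o\,]\cong[\,\ol{\Tch^*}X_\chop;\Sch^*_\tface X_\chop\,]=\ol{{}^{\cop\bop\semi}T^*}X_\chop,
\]
a diffeomorphism extending the identity on $(0,1)_h\times T^*X^\circ$; patching with the first paragraph completes the proof. The only genuinely computational step is the Claim — checking in projective charts that, on the scattering-type side $\ol{\cE'}$, blowing up the zero section over $G$ reproduces the blow-up of fibre infinity over $G$ on the $b$-type side $\ol{\cE}$, with smooth transition functions; this resolution is standard and is implicit in Vasy's treatment of the low-energy and limiting-absorption regime \cite{VasyLowEnergy,VasyLAPLag}. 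Everything else — the localisation away from the corner, the identification of the $\cC$-blow-up with a pullback bundle, and the bookkeeping of which boundary hypersurface maps to which — is formal.
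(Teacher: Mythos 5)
Your proof is correct and its computational core --- the matching of the three projective coordinate charts relating $w=\zeta_{\bop\semi}$ and $v=\zeta_\chop$ via $w=(x+h)v$, with the zero section on one side trading places with fiber infinity on the other --- is exactly the computation in the paper's proof, which works directly in polar coordinates $\rho=x+h$, $\theta=(x,h)/|(x,h)|$. The extra scaffolding you add (localization away from the corner, the identification of the $\cC$-blow-up with the pullback bundle $\ol{\cE'}\to X_\chop$, and the abstraction of the chart check into a fibrewise claim for a general bundle $\cE\to B$ rescaled by a boundary defining function) is correct and makes the argument more modular, but does not change the route.
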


We refer to the front face of~\eqref{EqCbTchb} as $\fbface$ (`finite b-frequencies'). See Figure~\ref{FigCbPhase}.

\begin{figure}[!ht]
\centering
\includegraphics{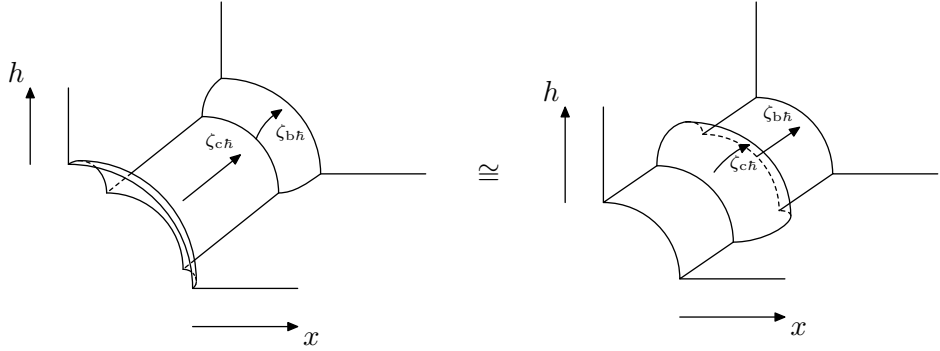}
\caption{\textit{On the left:} the resolution $\ol{{}^{\cop\bop\semi}T^*}X_\chop$ of the fiber-compactified semiclassical cone phase space at fiber infinity over $\tface$, see~\eqref{EqCbTchb}. (Unlike in Figure~\ref{FigCPSymbol}, we show the full compactified fibers here.) \textit{On the right:} the resolution of the fiber-compactified semiclassical b-phase space at $x=h=0$ and at the zero section over $x=h=0$, see~\eqref{EqCbPhaseDiff}.}
\label{FigCbPhase}
\end{figure}

\begin{proof}[Proof of Lemma~\usref{LemmaCbPhase}]
  We work in polar coordinates $\rho=x+h$, $\theta=\frac{(x,h)}{|(x,h)|}$ in the $(x,h)$ variables. Thus, local coordinates near $\Tch^*_\tface X_\chop$ are $(\rho,y,\theta,\zeta_\chop)$, $\zeta_\chop:=(\xi_\chop,\eta_\chop)$, while local coordinates near the front face of $[\ol{{}^{\bop\semi}T^*}X_{\bop\semi};\cC]$, away from fiber infinity, are $(\rho,y,\theta,\zeta_{\bop\semi})$, $\zeta_{\bop\semi}=(\xi_{\bop\semi},\eta_{\bop\semi})$. Coordinates near the interior of the front face of the final blow-up in~\eqref{EqCbPhaseDiff} are then $(\rho,y,\theta,\frac{\zeta_{\bop\semi}}{\rho})=(\rho,y,\theta,\zeta_\chop)$, see~\eqref{EqCbCoV}. Near the intersection of the lift of $o_\cC$ with that of $\cC$, smooth coordinates can be constructed by introducing polar coordinates in the fiber variables, giving $(\frac{\rho}{|\zeta_{\bop\semi}|},y,\theta,|\zeta_{\bop\semi}|,\frac{\zeta_{\bop\semi}}{|\zeta_{\bop\semi}|})$; this matches, up to a permutation, the local coordinates on $\ol{{}^{\cop\bop\semi}T^*}X_\chop$ near the lift of $\ol{\Tch^*_\tface}X_\chop$ given by $(\frac{\rho}{|\zeta_\chop|^{-1}},y,\theta,|\zeta_\chop|^{-1},\frac{\zeta_\chop}{|\zeta_\chop|})$. Lastly, near the lift of fiber infinity on the resolved b-phase space, we can use coordinates $(\rho,y,\theta,|\zeta_{\bop\semi}|^{-1},\frac{\zeta_{\bop\semi}}{|\zeta_{\bop\semi}|})$, which matches the local coordinates near the lift of $\Sch^*X_\chop$ given by $(\rho,y,\theta,\frac{|\zeta_\chop|^{-1}}{\rho},\frac{\zeta_\chop}{|\zeta_\chop|})$.
\end{proof}

The blow-up of a boundary face does not enlarge the space of conormal distributions, but allows for more precise accounting of weights. Concretely, define for $s,s',l,\alpha,b\in\R$ the symbol space
\begin{equation}
\label{EqCbSymbolsSpace}
  S^{s,s',l,\alpha,b}(\ol{{}^{\cop\bop\semi}T^*}X_\chop),
\end{equation}
where the orders refer, in this order, to fiber infinity, the front face $\fbface$ of~\eqref{EqCbTchb}, and the phase space over the lifts of $\cface$, $\tface$ and $\sface$, see Figure~\ref{FigCbSymbols}. Then we have
\begin{equation}
\label{EqCbSymbols}
\begin{split}
  S^{s,l,\alpha,b}(\ol{\Tch^*}X_\chop) &= S^{s,s+\alpha,l,\alpha,b}(\ol{{}^{\cop\bop\semi}T^*}X_\chop), \\
  S^{s,s',l,\alpha,b}(\ol{{}^{\cop\bop\semi}T^*}X_\chop) &\subset S^{\max(s,s'-\alpha),l,\alpha,b}(\ol{\Tch^*}X_\chop).
\end{split}
\end{equation}
Note that the second inclusion is \emph{false} if we use spaces of \emph{classical} symbols on both sides; after all, blow-ups do enlarge the space of smooth functions (but preserve the space of conormal functions). Since we worked with general conormal symbols and ps.d.o.s in~\S\ref{SsCP}, we can immediately quantize symbols on the $\cop\bop\semi$-phase space:

\begin{definition}[$\cop\bop\semi$-pseudodifferential operators]
\label{DefCbPsdo}
  Let $s,s',l,\alpha,b\in\R$. Then we define
  \[
    \Psi_{\cop\bop\semi}^{s,s',l,\alpha,b}(X) := \Op_\chop\bigl(S^{s,s',l,\alpha,b}(\ol{{}^{\cop\bop\semi}T^*}X_\chop)\bigr) + \Psich^{-\infty,l,\alpha,-\infty}(X),
  \]
  Operators with variable semiclassical orders $\sfb\in\ol{\Tch^*_\sface}X_\chop$ are defined similarly.
\end{definition}

\begin{figure}[!ht]
\centering
\includegraphics{FigCbSymbol}
\caption{Illustration of the orders of (symbols of) $\cop\bop\semi$-pseudodifferential operators in~\eqref{EqCbSymbolsSpace} and Definition~\ref{DefCbPsdo}.}
\label{FigCbSymbols}
\end{figure}

\begin{rmk}[Second microlocalization]
\label{RmkCb2nd}
  In view of Lemma~\ref{LemmaCbPhase}, one can view $\Psi_{\cop\bop\semi}(X)$ as a second microlocalization of the (conormal) semiclassical b-algebra $\Psi_{\bop\semi}(X)$ at the zero section over $h=x=0$. In terms of symbol classes, we have
  \begin{equation}
  \label{EqCbSymbols2}
  \begin{split}
    S^{s,l,b}(\ol{{}^{\bop\semi}T^*}X_{\bop\semi}) &= S^{s,l+b,l,l+b,b}(\ol{{}^{\cop\bop\semi}T^*}X_\chop), \\
    S^{s,s',l,\alpha,b}(\ol{{}^{\cop\bop\semi}T^*}X_\chop) &\subset S^{s,l,\max(b,s'-l,\alpha-l)}(\ol{{}^{\bop\semi}T^*}X_{\bop\semi}),
  \end{split}
  \end{equation}
  and analogous statements hold for ps.d.o.s. However, similarly to~\cite[\S5]{VasyLowEnergy} in the context of b- and scattering algebras, it is analytically advantageous to resolve $\Psi_\chop(X)$ as in Definition~\ref{DefCbPsdo}, as the second microlocal/resolved algebra involves \emph{global} (noncommutative) phenomena at $h=x=0$ (i.e.\ the lift of $\tface$, associated to which is the normal operator homomorphism into a noncommutative algebra) which are directly inherited from $\Psich(X)$, but which are not visible on the level of $\Psi_{\bop\semi}(X)$.
\end{rmk}

For two ps.d.o.s $A_j\in\Psi_{\cop\bop\semi}^{s_j,s'_j,l_j,\alpha_j,b_j}(X)$, one can compute the full symbol (i.e.\ the symbol modulo $S^{-\infty,l_1+l_2,\alpha_1+\alpha_2,-\infty}(\ol{\Tch^*}X_\chop)=S^{-\infty,-\infty,l_1+l_2,\alpha_1+\alpha_2,-\infty}(\ol{{}^{\cop\bop\semi}T^*}X_\chop)$) of the composition $A_1\circ A_2\in\Psi_{\cop\bop\semi}^{\max(s_1,s'_1-\alpha_1)+\max(s_2,s'_2-\alpha_2),l_1+l_2,\alpha_1+\alpha_2,b_1+b_2}(X)$ in local coordinates using the usual symbol expansion to be the sum of products of derivatives of the full symbols of the two factors along b-vector fields on $\ol{\Tch^*}X_\chop$ which vanish, as b-vector fields, at $\Sch^*X_\chop$ (thus vanishing as b-vector fields at the lift of $\Sch^*X_\chop$ as well as at the front face of~\eqref{EqCbTchb}) and at the lift of $\ol{\Tch^*_\sface}X_\chop$. Plugging the $\cop\bop\semi$-symbols of $A_1,A_2$ into such an expansion thus shows that, in fact,
\[
  A_1\circ A_2 \in \Psi_{\cop\bop\semi}^{s_1+s_2,s'_1+s'_2,l_1+l_2,\alpha_1+\alpha_2,b_1+b_2}(X).
\]
Similar arguments show that the principal symbol map
\[
  {}^{\cop\bop\semi}\sigma \colon \Psi_{\cop\bop\semi}^{s,s',l,\alpha,b}(X) \to (S^{s,s',l,\alpha,b}/S^{s-1,s'-1,l,\alpha,b-1})(\ol{{}^{\cop\bop\semi}T^*}X_\chop)
\]
is well-defined (and a *-homomorphism as usual). One can moreover define an associated scale of Sobolev spaces
\begin{equation}
\label{EqCbSob}
  H_{\cop\bop,h}^{s,s',l,\alpha,b}(X) = \bigl\{ u\in\Hch^{\min(s,s'-\alpha),l,\alpha,b}(X) \colon A u\in L^2(X)\ \forall\,A\in\Psi_{\cop\bop\semi}^{s,s',l,\alpha,b}(X) \bigr\},
\end{equation}
The relationships~\eqref{EqCbSymbols} and \eqref{EqCbSymbols2} imply:

\begin{prop}[Relationships between Sobolev spaces]
\label{PropCbSob}
  Let $s,s',l,\alpha,b\in\R$. Define $L^2$ using the volume density $\mu=x^{\alpha_\mu}\mu_0$, $0<\mu_0\in\CI(X;\Omegab^1 X)$ with $\alpha_\mu\in\R$. Then
  \begin{align*}
    \Hch^{s,l,\alpha,b}(X) &= H_{\cop\bop,h}^{s,s+\alpha,l,\alpha,b}(X), \\
    \Hbh^{s,l,b}(X) &= H_{\cop\bop,h}^{s,l+b,l,l+b,b}(X).
  \end{align*}
\end{prop}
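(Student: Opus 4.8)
The plan is to reduce the statement---which concerns \emph{function spaces}---to two identities of \emph{operator algebras}, each a direct consequence of the symbol-class relations \eqref{EqCbSymbols} and \eqref{EqCbSymbols2}; the genuinely substantive input, the phase-space resolution of Lemma~\ref{LemmaCbPhase}, is already in place. Concretely: the top line of \eqref{EqCbSymbols} says that $S^{s,l,\alpha,b}(\ol{\Tch^*}X_\chop)$ and $S^{s,s+\alpha,l,\alpha,b}(\ol{{}^{\cop\bop\semi}T^*}X_\chop)$ are \emph{literally the same} space of functions on $(0,1)_h\times T^*X^\circ$; since $\Psich^{s,l,\alpha,b}(X)$ and $\Psi_{\cop\bop\semi}^{s,s+\alpha,l,\alpha,b}(X)$ are obtained by applying $\Op_\chop$ to this common space and adjoining the \emph{same} residual space $\Psich^{-\infty,l,\alpha,-\infty}(X)$, these operator classes coincide. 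Likewise, the top line of \eqref{EqCbSymbols2} gives $S^{s,l,b}(\ol{{}^{\bop\semi}T^*}X_{\bop\semi})=S^{s,l+b,l,l+b,b}(\ol{{}^{\cop\bop\semi}T^*}X_\chop)$, and the change of variables \eqref{EqCbCoV} (spelled out just before Lemma~\ref{LemmaCbPhase}) shows that the semiclassical b-quantization $\Op_{\bop,h}$ of such a symbol coincides with its $\chop$-quantization $\Op_\chop$.

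For the second operator identity it then remains only to match residual classes, which I would do at the level of Schwartz kernels: using the natural relationship between $X^2_\chop$ and the semiclassical b-double space, the weight and vanishing conditions defining $\Psich^{-\infty,l,l+b,-\infty}(X)$ become, after undoing \eqref{EqCbCoV}, precisely the defining property of a $\cO(h^\infty)$-kernel in the semiclassical b-calculus; hence $\Psibh^{s,l,b}(X)=\Psi_{\cop\bop\semi}^{s,l+b,l,l+b,b}(X)$. Being about double spaces rather than symbols, this is the one step calling for genuine (if routine) attention.

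Granting the operator identities, the Sobolev statements follow by unwinding \eqref{EqCbSob}. For the first identity, specializing $s'=s+\alpha$ gives $\min(s,s'-\alpha)=s$ for every $s\in\R$, so $H_{\cop\bop,h}^{s,s+\alpha,l,\alpha,b}(X)=\{u\in\Hch^{s,l,\alpha,b}(X):Au\in L^2(X)\ \forall\,A\in\Psich^{s,l,\alpha,b}(X)\}$; since each $A\in\Psich^{s,l,\alpha,b}(X)$ maps $\Hch^{s,l,\alpha,b}(X)\to\Hch^0(X)=L^2(X)$ (elliptic parametrix argument, cf.\ \S\ref{SsCP}), this membership condition is automatic and the set equals $\Hch^{s,l,\alpha,b}(X)$. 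For the second identity with $s\geq 0$, specializing $s'=\alpha=l+b$ makes the a priori space $\Hch^{\min(s,0),l,l+b,b}(X)=\Hch^{0,l,l+b,b}(X)$, which unwinds via $\bigl(\tfrac{x}{x+h}\bigr)^l(x+h)^{l+b}\bigl(\tfrac{h}{h+x}\bigr)^b=x^l h^b$ to $x^l h^b L^2(X)=\Hbh^{0,l,b}(X)$, the a priori space used to define $\Hbh^{s,l,b}(X)$; as the tested algebra is now $\Psibh^{s,l,b}(X)$, the same parametrix argument gives $H_{\cop\bop,h}^{s,l+b,l,l+b,b}(X)=\Hbh^{s,l,b}(X)$. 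The variable-order version ($\sfb\in\CI(\ol{\Tch^*_\sface}X_\chop)$) goes through verbatim.

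The only case needing a different (but still routine) argument is the second identity for $s<0$, where the a priori space $\Hch^{s,l,l+b,b}(X)$ no longer equals that of $\Hbh^{s,l,b}(X)$. Here I would use the ``$u_1+Au_2$'' description of negative-order spaces from \S\ref{SsCP}: it expresses $\Hbh^{s,l,b}(X)$ through $\Hbh^{0,l,b}(X)=\Hch^{0,l,l+b,b}(X)$ and operators in $\Psibh^{-s,0,0}(X)=\Psi_{\cop\bop\semi}^{-s,0,0,0,0}(X)\subset\Psich^{-s,0,0,0}(X)$ (the inclusion by the operator form of the second line of \eqref{EqCbSymbols}), yielding $\Hbh^{s,l,b}(X)\subset\Hch^{s,l,l+b,b}(X)$; the reverse inclusion comes from the elliptic parametrix of an elliptic element of $\Psibh^{s,l,b}(X)$, whose residual term is $\cO(h^\infty)$ and hence carries $\Hch^{s,l,l+b,b}(X)$ harmlessly into $\Hbh^{s,l,b}(X)$. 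Beyond this and the residual-class matching above, the proof is bookkeeping; all the real content lives in \eqref{EqCbSymbols}, \eqref{EqCbSymbols2}, and Lemma~\ref{LemmaCbPhase}.
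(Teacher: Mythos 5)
Your proposal is correct and follows essentially the same route as the paper, which offers no separate proof but derives the Proposition directly from the symbol-class identities \eqref{EqCbSymbols} and \eqref{EqCbSymbols2} (themselves resting on Lemma~\ref{LemmaCbPhase}); your write-up simply makes explicit the unwinding of \eqref{EqCbSob} and the mapping properties that the paper leaves implicit. The one caveat is that your claimed \emph{exact} equality of operator algebras in the second identity is slightly stronger than what is needed or literally true (the residual classes $\Psich^{-\infty,l,l+b,-\infty}(X)$ and the $\cO(h^\infty)$ residual class of the semiclassical b-calculus do not coincide as sets of Schwartz kernels), but since only the coincidence of the quantized symbol classes and the boundedness of either residual class on the a priori space enter the Sobolev-space argument, this does not affect the conclusion.
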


One can conversely embed $H_{\cop\bop,h}^{s,s',l,\alpha,b}(X)$ into $\Hch^{\tilde s,\tilde l,\tilde\alpha,\tilde b}(X)$ and $\Hbh^{\tilde s,\tilde l,\tilde b}(X)$ under suitable inequalities (which can be read off from Proposition~\ref{PropCbSob}) between the orders. In particular, this allows us to give a direct proof of \cite[Proposition~3.18]{HintzConicPowers} on the relationship between $\Hch(X)$ and $\Hbh(X)$; for instance, for $s,l,\alpha\in\R$ (denoted $s,\alpha,\tau$ in the reference), we have
\begin{equation}
\label{EqCbProp1}
  \Hch^{s,l,\alpha,0}(X) = H_{\cop\bop,h}^{s,s+\alpha,l,\alpha,0}(X) \subset \Hbh^{s,l,\min(0,\alpha-l,\alpha-l+s)}(X),
\end{equation}
which implies (and is slightly sharper than) the first part of \cite[Proposition~3.18]{HintzConicPowers}. If one wishes to translate estimates on cone spaces to b-spaces, the advantage of the resolved $\cop\bop\semi$-Sobolev spaces, compared with $\chop$-Sobolev spaces, is that one can reduce losses in powers of $h$ (or in regularity) in the conversion; as a simple concrete example, we have
\[
  H_{\cop\bop,h}^{s,s'+\alpha,l,\alpha,0}(X) \subset \Hbh^{s,l,\min(0,\alpha-l,\alpha-l+s')}(X),
\]
which for $s'\geq -s_-$ gives an improved bound at $h=0$, and for $s'\geq 0$ a bound which is independent of the differential orders $s,s'$, unlike~\eqref{EqCbProp1} which gets lossier as $s$ decreases.

\begin{rmk}[Variable semiclassical orders]
  The above discussion applies, mutatis mutandis, to symbols and operators with variable semiclassical orders $\sfb$ as well; here $\sfb$ is a smooth function on the lift of $\ol{\Tch^*_\sface}X_\chop$ to $\ol{{}^{\cop\bop\semi}T^*}X_\chop$.
\end{rmk}

\section{Microlocal propagation estimates at cone points and generalizations}
\label{SP}

Let $n\geq 1$. We work locally near a cone point, thus on an $n$-dimensional manifold
\begin{equation}
\label{EqPMfd}
  X = [0,2 x_0)_x \times Y,\quad x_0>0,
\end{equation}
where $Y$ is a closed connected $(n-1)$-dimensional manifold, and where $X^\circ=(0,2 x_0)\times Y$ is equipped with a smooth Riemannian metric $g$ of the form
\begin{equation}
\label{EqPProdMet}
  g = \dd x^2 + x^2 k(x,y,\dd y),
\end{equation}
where $k\in\CI([0,x_0);\CI(Y,S^2 T^*Y))$ is a smooth family of smooth Riemannian metrics on the cross section $Y$. Any metric which locally near $\pa X$ is of the form $\dd\tilde x^2+\tilde x^2 k(\tilde x,y,\dd\tilde x,\dd y)$ with $k|_{\pa X}$ a Riemannian metric on $\pa X$ is of the form~\eqref{EqPProdMet} in a suitable smooth collar neighborhood of $\pa X$, as shown in~\cite[\S1]{MelroseWunschConic}.

While the above $X$ is not compact, all calculations and estimates will take place in the compact subset $[0,x_0]\times Y$ of $X$; thus, we shall commit a slight abuse of notation and write $\|u\|_{\Hch^s(X)}$ etc.\ for norms of functions $u$ on $X$ which will always have support in $x^{-1}([0,x_0])$. We fix the volume density
\begin{equation}
\label{EqPVol}
  \mu = |\dd g| = x^{n-1}|\dd x\,\dd k| \in x^n\CI(X;\Omegab^1 X)
\end{equation}
on $X$, and define Sobolev spaces relative to $L^2(X):=L^2(X;\mu)$. We moreover define
\begin{equation}
\label{EqPhat}
\begin{alignedat}{2}
  \hat x &:= \tfrac{x}{h}, &\qquad
  \hat g &:= \dd\hat x^2 + \hat x^2 k(0,y,\dd y), \\
  \hat h &:= \hat x^{-1} = \tfrac{h}{x}, &\qquad
  \hat\mu &:= |\dd\hat g|=\hat x^{n-1}|\dd\hat x\,\dd k(0)|.
\end{alignedat}
\end{equation}

\subsection{Admissible operators}
\label{SsPAdm}

The class of operators of interest to us is the following.

\begin{definition}[Admissible operators]
\label{DefPOp}
  We call an $h$-dependent differential operator $P_{h,z}$ on $X^\circ$ \emph{admissible} if it is of the form
  \begin{equation}
  \label{EqPOp}
    P_{h,z} = h^2\Delta_g - z + h^2 x^{-2}Q_{1,z} + h x^{-1}q_{0,z},
  \end{equation}
  where $Q_{1,z}\in\Diffb^1(X)$ and $q_{0,z}\in\CI(X)$ depend smoothly on $z\in\C$, $|z-1|<C h$.
\end{definition}

We shall henceforth take $z=z(h)$ to be a smooth function of $h\in[0,1)$ with $z(0)=1$.

\begin{rmk}[Vector bundles]
\label{RmkPBundle0}
  Our analysis applies also to operators acting on sections of a vector bundle $E\to X$; we explain the necessary (largely notational) changes in Remark~\ref{RmkPBundle}.
\end{rmk}

Using local coordinates $y\in\R^{n-1}$ on $\pa X$, let us write $Q_{1,z}=q_{1,z}(x,y,x D_x,D_y)$ and $q_{0,z}=q_{0,z}(x,y)$. The normal operator of $P_{h,z}$ is
\begin{equation}
\label{EqPN}
\begin{split}
  N(P) &:= \Delta_{\hat g} - 1 + \hat x^{-2}q_{1,1}(0,y,\hat x D_{\hat x},D_y) + \hat x^{-1}q_{0,1}(0,y) \\
    &= D_{\hat x}^2 - i(n-1)D_{\hat x} + \hat x^{-2}\Delta_{k(0)} - 1 + \hat x^{-2}q_{1,1}(0,y,\hat x D_{\hat x},D_y) + \hat x^{-1}q_{0,1}(0,y)
\end{split}
\end{equation}
on $\tface=[0,\infty]_{\hat x}\times\pa X$.\footnote{This can be defined more invariantly as an operator on the inward pointing normal bundle ${}^+N\pa X$, which is the natural place for the b-normal operators $q_{1,1}(0,y,x D_x,D_y)$ and $q_{0,1}(0,y)$ to live; see \cite[\S4.15]{MelroseAPS} and \cite[\S3]{HintzConicPowers} for details.}

\begin{lemma}[Structural properties]
\label{LemmaPStruct}
  We have $P_{h,z}\in(\tfrac{x}{x+h})^{-2}\Diffch^2(X_\chop)$ and $N(P)\in(\tfrac{\hat x}{\hat x+1})^{-2}\Diff_{\bop,\scop}^2(\tface)$. Furthermore, $P_{h,z}-N(P)\in(x+h)\bigl(\tfrac{x}{x+h}\bigr)^{-2}\Diffch^2(X_\chop)$.
\end{lemma}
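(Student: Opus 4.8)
The plan is to prove the three assertions in order, reducing everything to a short list of weight identities on $X_\chop$ together with the spanning property of $\Vch(X_\chop)$ from Lemma~\ref{LemmaCVSpan}. The identities I will use are the algebraic relations $h^2 x^{-2}=(\tfrac{x}{x+h})^{-2}(\tfrac{h}{h+x})^2$ and $h x^{-1}=(\tfrac{x}{x+h})^{-1}\tfrac{h}{h+x}$; the fact that $\tfrac{x}{x+h}$, $x+h$, $\tfrac{h}{h+x}$ are smooth nonnegative defining functions of $\cface$, $\tface$, $\sface$ (so that $x=(x+h)\tfrac{x}{x+h}$ and $h=(x+h)\tfrac{h}{x+h}$ with smooth factors, and $(\tfrac{x}{x+h})^{-1}\subset(\tfrac{x}{x+h})^{-2}\CI(X_\chop)$); and the inclusion $(\tfrac{h}{h+x})^2\Diffb^j(X)\subset\Diffch^j(X_\chop)\subset\Diffch^2(X_\chop)$ for $0\le j\le 2$, which follows from Lemma~\ref{LemmaCVSpan} by writing a $j$-fold product of factors $\tfrac{h}{h+x}V$, $V\in\Vb(X)$, moving all factors $\tfrac{h}{h+x}$ to the left and picking up lower-order $\chop$-operators from $V(\tfrac{h}{h+x})\in\tfrac{h}{h+x}\CI(X_\chop)$. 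The one further input, standard for conic metrics (cf.\ \cite[\S1]{MelroseWunschConic}), is $x^2\Delta_g\in\Diffb^2(X)$; concretely $x^2\Delta_g=(x D_x)^2-i(n-2)x D_x-i a_0\,x(x D_x)+\Delta_{k(x)}$, where $a_0:=\pa_x\log\sqrt{|k(x,\cdot)|}\in\CI(X)$ and $\Delta_{k(x)}\in\Diffb^2(X)$ is the Laplacian of the cross-section metric.

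For the first assertion I treat the four summands of~\eqref{EqPOp} separately: $h^2\Delta_g=(\tfrac{x}{x+h})^{-2}(\tfrac{h}{h+x})^2\,x^2\Delta_g\in(\tfrac{x}{x+h})^{-2}\Diffch^2(X_\chop)$; $h^2 x^{-2}Q_{1,z}=(\tfrac{x}{x+h})^{-2}(\tfrac{h}{h+x})^2 Q_{1,z}\in(\tfrac{x}{x+h})^{-2}\Diffch^1(X_\chop)$ as $Q_{1,z}\in\Diffb^1(X)$; $-z\in\CI([0,1)_h)\subset\CI(X_\chop)$; and $h x^{-1}q_{0,z}=(\tfrac{x}{x+h})^{-1}\tfrac{h}{h+x}\,q_{0,z}\in(\tfrac{x}{x+h})^{-1}\CI(X_\chop)\subset(\tfrac{x}{x+h})^{-2}\CI(X_\chop)$. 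Summing gives $P_{h,z}\in(\tfrac{x}{x+h})^{-2}\Diffch^2(X_\chop)$. The membership in the second assertion is then immediate from the surjective normal-operator homomorphism~\eqref{EqCPNDiff} with $(l,b,k)=(2,0,2)$, which maps $(\tfrac{x}{x+h})^{-2}\Diffch^2(X_\chop)$ into $(\tfrac{\hat x}{\hat x+1})^{-2}\Diff_{\bop,\scop}^2(\tface)$; what still has to be checked is that this abstract image is the explicit operator displayed in~\eqref{EqPN}. For that one expresses $(\tfrac{x}{x+h})^2 P_{h,z}$ in the $\chop$-frame, restricts its coefficients to $\tface$ using Lemma~\ref{LemmaCPN} (so $\tfrac{x}{x+h}|_\tface=\tfrac{\hat x}{\hat x+1}$, $\tfrac{h}{h+x}|_\tface=(1+\hat x)^{-1}$, $N(\tfrac{h}{h+x}x D_x)=\tfrac{\hat x}{\hat x+1}D_{\hat x}$, $k(x,\cdot)|_\tface=k(0,\cdot)$, $z|_\tface=z(0)=1$, $q_{i,z}|_\tface=q_{i,1}(0,\cdot)$), and multiplies back by $(\tfrac{\hat x}{\hat x+1})^{-2}$. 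I expect this to be the main obstacle: it is the only genuinely computational step, and the delicate point is keeping track of the commutator contribution in $(\tfrac{h}{h+x})^2(x D_x)^2=(\tfrac{h}{h+x}x D_x)^2+(\text{lower order})$, since that is what pins down the coefficient $n-1$ in front of $\hat x^{-1}D_{\hat x}$ in~\eqref{EqPN} (and, with the $-i(n-2)x D_x$ term of $x^2\Delta_g$ and the vanishing of the $a_0$ term at $\tface$, reproduces $\Delta_{\hat g}$ on the nose).

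The third assertion is the point of the lemma, and I would argue it directly. Reinterpreting~\eqref{EqPN} via $\hat x=x/h$, so that $\hat x D_{\hat x}=x D_x$, $\hat x^{-1}=h/x$, and $\Delta_{\hat g}=h^2\Delta_{g_0}$ with $g_0:=\dd x^2+x^2 k(0,y,\dd y)$, the operator $N(P)$ becomes $h^2\Delta_{g_0}-1+h^2 x^{-2}q_{1,1}(0,y,x D_x,D_y)+h x^{-1}q_{0,1}(0,y)$, again of the form~\eqref{EqPOp} but with $x$- and $z$-independent coefficients; hence $N(P)\in(\tfrac{x}{x+h})^{-2}\Diffch^2(X_\chop)$ by the argument of the first assertion. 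Now $P_{h,z}-N(P)=h^2(\Delta_g-\Delta_{g_0})+(1-z)+h^2 x^{-2}\bigl(Q_{1,z}-q_{1,1}(0,y,x D_x,D_y)\bigr)+h x^{-1}\bigl(q_{0,z}-q_{0,1}(0,y)\bigr)$, and in each summand the coefficient difference gains a factor of $x$ or of $h$, hence — since $x,h\in(x+h)\CI(X_\chop)$ — a factor of $x+h$: one has $x^2(\Delta_g-\Delta_{g_0})=-i a_0\,x(x D_x)+\bigl(\Delta_{k(x)}-\Delta_{k(0)}\bigr)\in x\Diffb^2(X)$ (Taylor in $x$, since $a_0 x\in x\CI(X)$ and the coefficients of $\Delta_{k(x)}-\Delta_{k(0)}$ lie in $x\CI(X)$), $1-z=1-z(h)\in h\CI([0,1)_h)$ because $z(0)=1$, and likewise $Q_{1,z}-q_{1,1}(0,y,x D_x,D_y)\in x\Diffb^1(X)+h\Diffb^1(X)$ and $q_{0,z}-q_{0,1}(0,y)\in x\CI(X)+h\CI(X)$ by Taylor in $x$ and in $z$ (using $z-1\in h\CI([0,1)_h)$). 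Feeding these back through the identities of the first paragraph — e.g.\ $h^2(\Delta_g-\Delta_{g_0})\in(\tfrac{x}{x+h})^{-2}(\tfrac{h}{h+x})^2\,x\,\Diffb^2(X)\subset(x+h)(\tfrac{x}{x+h})^{-1}\Diffch^2(X_\chop)\subset(x+h)(\tfrac{x}{x+h})^{-2}\Diffch^2(X_\chop)$, and analogously for the remaining three summands — yields $P_{h,z}-N(P)\in(x+h)(\tfrac{x}{x+h})^{-2}\Diffch^2(X_\chop)$, as claimed. Alternatively, and more conceptually, once~\eqref{EqPN} is identified with the abstract $\tface$-restriction of $P_{h,z}$ as in the second assertion, $P_{h,z}$ and $N(P)$ have the same image under the homomorphism~\eqref{EqCPNDiff}, whose kernel is $(x+h)(\tfrac{x}{x+h})^{-2}\Diffch^2(X_\chop)$ — this being the extension to differential operators of the identity $\ker N=(x+h)\Vch(X_\chop)$ used in the proof of Lemma~\ref{LemmaCPN}.
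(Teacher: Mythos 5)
Your proposal is correct and follows essentially the same route as the paper: express $\Delta_g$ explicitly in local coordinates, factor the singular weights as $h^2x^{-2}=(\tfrac{x}{x+h})^{-2}(\tfrac{h}{h+x})^2$ and $hx^{-1}=(\tfrac{x}{x+h})^{-1}\tfrac{h}{h+x}$, and invoke the spanning property of $\Vch(X_\chop)$ (the paper works with the first-order factors $hD_x=\tfrac{x+h}{x}\cdot\tfrac{h}{h+x}xD_x$ etc.\ rather than pulling the weight out of $x^2\Delta_g$ all at once, but this is cosmetic). Your Taylor-expansion argument for the third assertion is a correct and more explicit version of what the paper compresses into the identification of the normal operator plus ``the remaining terms are analyzed similarly.''
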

\begin{proof}
  In local coordinates $y^1,\ldots,y^{n-1}$ on $Y$, the metric $k(x,y,\dd y)$ is given by an $(n-1)\times(n-1)$ matrix $(k_{i j})$ with determinant $|k|>0$ and inverse $(k^{i j})$, and we have
  \begin{align*}
    \Delta_g &= |k|^{-\frac12} x^{-n+1} D_x\bigl(|k|^{\frac12} x^{n-1}D_x\bigr) + x^{-2}\Delta_{k(x)} \\
      &= D_x^2 - i(n-1+x\gamma)x^{-1}D_x + \sum_{i,j=1}^{n-1} x^{-2}|k|^{-\frac12}D_{y^i}\bigl(|k|^{\frac12}k^{i j}D_{y^j}\bigr),
  \end{align*}
  where $\gamma=\half\pa_x\log|k|\in\CI$. Since
  \begin{equation}
  \label{EqPStructDiff}
  \begin{alignedat}{2}
    h D_x&=\tfrac{x+h}{x}\cdot\tfrac{h}{h+x}x D_x&&\in\tfrac{x+h}{x}\Vch(X_\chop), \\
    h x^{-1}D_{y^i}&=\tfrac{x+h}{x}\cdot \tfrac{h}{h+x}D_{y^i}&&\in\tfrac{x+h}{x}\Vch(X_\chop), \\
    h x^{-1}&=\tfrac{x+h}{x}\cdot\tfrac{h}{h+x} &&\in \tfrac{x+h}{x}\Diffch^1(X_\chop),
  \end{alignedat}
  \end{equation}
  we find $h^2\Delta_g\in(\frac{x}{x+h})^{-2}\Diffch^2(X_\chop)$, with normal operator $D_{\hat x}^2-i(n-1)\hat x^{-1}D_{\hat x}+\hat x^{-2}\Delta_{k(0)}=\Delta_{\hat g}$. The remaining terms in~\eqref{EqPOp} are analyzed similarly.
\end{proof}

\subsection{Characteristic set, Hamilton flow}
\label{SsPChar}

Using the fiber-linear coordinates $(\xi_\chop,\eta_\chop)$ on $\Tch^*X_\chop$ from~\eqref{EqCVCoord}, we can read off the principal symbol from~\eqref{EqPStructDiff} to be
\[
  p := \bigl(\tfrac{x}{x+h}\bigr)^2\cdot\sigmach(P_{h,z}) = \xi_\chop^2 + |\eta_\chop|_{k^{-1}}^2 - 1.
\]
(Here, we use that $z=1+\cO(h)$, hence the principal symbol of $z$ is $1$.) This is elliptic at fiber infinity $\Sch^*X_\chop$, but has a nonempty characteristic set at finite frequencies. Near $\sface$, it is more convenient to use the fiber coordinates $(\xi,\eta)$ from~\eqref{EqCVCoordTilde}, and $\hat h=\frac{h}{x},x,y$ as coordinates on the base, so that
\begin{equation}
\label{EqPSymb}
  p = \xi^2 + k^{i j}(x,y)\eta_i\eta_j - 1, \qquad
  \Sigma = p^{-1}(0) \cap \Tch^*_\sface X_\chop = \{ \hat h=0,\ \xi^2+|\eta|_{k^{-1}}^2 = 1\}.
\end{equation}
Using~\eqref{EqCVHam} and writing $|\eta|^2=k^{i j}\eta_i\eta_j$, we then compute
\begin{equation}
\label{EqPSymbHam}
\begin{split}
  \sfH := \hat h^{-1}H_p &= 2\xi(x\pa_x-\hat h\pa_{\hat h}-\eta\pa_\eta) + \bigl(2|\eta|^2 - x\pa_x k^{i j}\eta_i\eta_j\bigr)\pa_\xi \\
    &\qquad + 2 k^{i j}\eta_i\pa_{y^j} - (\pa_{y^k}k^{i j})\eta_i\eta_j\pa_{\eta_k}.
\end{split}
\end{equation}
Restricted to $x=0$ as a b-vector field on $\ol{\Tch^*}X_\chop$, this is
\begin{equation}
\label{EqPHamResc}
  \sfH|_{x=0} = 2\xi(x\pa_x-\hat h\pa_{\hat h}-\eta\pa_\eta) + 2|\eta|^2\pa_\xi + \bigl( 2 k^{i j}\eta_i\pa_{y^j} - (\pa_{y^k}k^{i j})\eta_i\eta_j\pa_{\eta_k} \bigr).
\end{equation}
This vanishes as a standard vector field on $\hat h=x=0$ if and only if $\eta=0$. The intersection of $\eta^{-1}(0)$ with $\Sigma\cap x^{-1}(0)$ has two components: the \emph{incoming} and \emph{outgoing radial sets} $\cR_{\rm in/out}\subset\Tch^*_\sface X_\chop$,
\begin{equation}
\label{EqPRadSets}
\begin{split}
  \cR_{\rm in} &:= \{ (\hat h,x,y,\xi,\eta) \colon \hat h=0,\ x=0,\ y\in\pa X,\ \xi=-1,\ \eta=0 \}, \\
  \cR_{\rm out} &:= \{ (\hat h,x,y,\xi,\eta) \colon \hat h=0,\ x=0,\ y\in\pa X,\ \xi=+1,\ \eta=0 \}.
\end{split}
\end{equation}
These are saddle points for the rescaled Hamilton vector field $\sfH$ since
\begin{equation}
\label{EqPRadSetsLin}
  x^{-1}\sfH x=\mp 2,\quad
  \hat h^{-1}\sfH\hat h=\pm 2,\quad
  |\eta|^{-2}\sfH|\eta|^2=\pm 4\qquad\text{at}\ \cR_{\rm in/out}.
\end{equation}
(The top sign is for `in', the bottom sign for `out'.) See Figure~\ref{FigPFlow}.

\begin{figure}[!ht]
\centering
\includegraphics{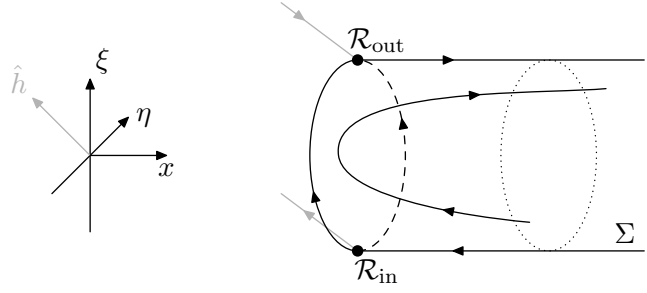}
\caption{Illustration of the flow along the rescaled Hamilton vector field $\sfH$, see~\eqref{EqPSymbHam}, through the radial sets $\cR_{\rm in}$ and $\cR_{\rm out}$. Shown is the characteristic set, the fibers of which over $\sface$ are spheres (here 1-spheres); one fiber is drawn as a dotted circle. Also indicated is (in gray) the linearization of $\sfH$ at $\cR_{\rm in/out}$ over $\tface$.}
\label{FigPFlow}
\end{figure}

Over $\Tch^*_\tface X_\chop$, the set $\cR_{\rm in}$ is a radial \emph{source} (though this really only makes sense infinitesimally at $\tface\cap\sface$ since the $\chop$-calculus is not symbolic over $\tface\setminus\sface$), and $\cR_{\rm out}$ is a radial \emph{sink}. This matches precisely the familiar situation of scattering theory on the asymptotically conic space $(\tface,\hat g)$, see~\cite{MelroseEuclideanSpectralTheory}, which we discuss in detail in~\S\ref{SsPSc}.

In $x>0$, the flow of $\sfH$ is a reparameterization of the flow of $h^{-1}H_p=x^{-1}\sfH$. Integral curves of $\sfH$ starting over a point in $X^\circ$ never reach $\pa X$ in finite time. Instead, we consider
\begin{equation}
\label{EqPHsf}
\begin{split}
  \sfH_\sface := h^{-1}H_p|_{\hat h=0} &= 2\xi(\pa_x - x^{-1}\eta\pa_\eta) + \bigl(2 x^{-1}|\eta|^2-\pa_x k^{i j}\eta_i\eta_j\bigr)\pa_\xi \\
    &\qquad + x^{-1}\bigl( 2 k^{i j}\eta_i\pa_{y^j} - (\pa_{y^k}k^{i j})\eta_i\eta_j\pa_{\eta_k} \bigr).
\end{split}
\end{equation}
Given $y_0\in\pa X$, the curves
\begin{equation}
\label{EqPInOutCurves}
\begin{alignedat}{2}
  \gamma_{I,y_0}(s) &:= (-2 s,y_0,-1,0), &\quad s&\in(-x_0,0), \\
  \gamma_{O,y_0}(s) &:= (2 s,y_0,1,0),&\quad s&\in(0,x_0)
\end{alignedat}
\end{equation}
are integral curves of $\sfH_\sface$. Here, $\gamma_{I,y_0}$ strikes $\pa X$ at $s=0$ at the incoming radial set over point $y_0$, whereas $\gamma_{O,y_0}$ emanates from the outgoing radial set over $y_0$ at $s=0$.

\begin{lemma}[Incoming/outgoing null-bicharacteristics]
\label{LemmaPInOut}
  Let $0<s_0<x_0$, and suppose that $\gamma\colon(0,s_0)\to\Sigma\cap\Tch^*_{\sface\setminus\tface}X_\chop$, is an integral curve of $\sfH_\sface$ tending to $\pa X$ as $s\searrow 0$ in the weak sense that $\liminf_{s\searrow 0}x(\gamma(s))=0$. Then in the coordinates $(x,y,\xi,\eta)$, $\gamma$ is necessarily of the form $\gamma(s)=\gamma_{O,y_0}(s)$ for some $y_0\in\pa X$. Similarly, if $\gamma\colon(-s_0,0)\to\Sigma\cap\Tch^*_{\sface\setminus\tface}X_\chop$ is an integral curve of $\sfH_\sface$ with $\liminf_{s\nearrow 0} x(\gamma(s))=0$, then $\gamma(s)=\gamma_{I,y_0}(s)$ for some $y_0\in\pa X$.
\end{lemma}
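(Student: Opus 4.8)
The plan is to track the ``squared angular momentum'' $L^2 := x^2|\eta|_{k^{-1}}^2 = x^2 k^{ij}(x,y)\eta_i\eta_j$ along $\gamma$ and show that it vanishes identically; this forces $\gamma$ to be radial, after which the sign of $\xi$ and the value of $x$ are pinned down by the hypothesis $\liminf x(\gamma(s))=0$. (This is the microlocal incarnation of the classical fact that a geodesic of a conic metric which reaches the cone point must be radial.) Since $\gamma$ takes values in $\Sigma$, on which $p=\xi^2+|\eta|_{k^{-1}}^2-1$ vanishes (see~\eqref{EqPSymb}), we have $\xi^2+|\eta|_{k^{-1}}^2\equiv 1$ along $\gamma$; in particular $|\xi|\le 1$ and $0\le L^2\circ\gamma(s)\le x(\gamma(s))^2$ for all $s$.

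The key computation is that of $\sfH_\sface(L^2)$ using the formula~\eqref{EqPHsf}. The two ``$y$-derivative'' contributions---from $2x^{-1}k^{ij}\eta_i\pa_{y^j}$ and from $-x^{-1}(\pa_{y^k}k^{ij})\eta_i\eta_j\pa_{\eta_k}$, both acting on $k^{lm}\eta_l\eta_m$---cancel after relabeling indices, and a short computation using $\sfH_\sface x=2\xi$ then yields
\[
  \sfH_\sface(L^2) = 2\xi\,x^2\,(\pa_x k^{ij})\eta_i\eta_j .
\]
Since $k$ is a smooth, uniformly positive-definite family of metrics over the compact set $[0,x_0]\times Y$, there is a constant $C$ with $|(\pa_x k^{ij})\eta_i\eta_j|\le C\,k^{ij}\eta_i\eta_j$ there; combined with $|\xi|\le 1$ this gives $\bigl|\tfrac{d}{ds}(L^2\circ\gamma)\bigr|\le 2C\,(L^2\circ\gamma)$ on the domain interval $I$ of $\gamma$. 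Since $|I|<x_0$, Grönwall's inequality shows that $L^2\circ\gamma$ either vanishes identically on $I$ or is bounded below on $I$ by some positive constant $c_0$. In the latter case the bound $L^2\circ\gamma(s)\le x(\gamma(s))^2$ forces $x(\gamma(s))\ge\sqrt{c_0}>0$ on $I$, contradicting $\liminf x(\gamma(s))=0$. Hence $L^2\circ\gamma\equiv 0$, and since $x>0$ along $\gamma$ and $k^{-1}$ is positive definite, $\eta\circ\gamma\equiv 0$.

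It remains to conclude. On $\{\eta=0\}$ the energy relation gives $\xi^2\equiv 1$, so $\xi\circ\gamma\equiv+1$ or $\xi\circ\gamma\equiv-1$ by continuity and connectedness of $I$; moreover $\sfH_\sface|_{\{\eta=0\}}=2\xi\pa_x$ (all other terms carry a factor of $\eta$ and $x>0$), so $y\circ\gamma$ is a constant $y_0\in\pa X$ and $x\circ\gamma$ solves $\dot x=2\xi$. In the first case ($I=(0,s_0)$, $\liminf_{s\searrow 0}x(\gamma(s))=0$): if $\xi\equiv-1$ then $x(\gamma(s))=-2s+c\to c$ as $s\searrow 0$, forcing $c=0$ and hence $x(\gamma(s))<0$, which is impossible; thus $\xi\equiv+1$ and $x(\gamma(s))=2s+c\to c$, forcing $c=0$, i.e.\ $\gamma(s)=(2s,y_0,1,0)=\gamma_{O,y_0}(s)$. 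The second case ($I=(-s_0,0)$, $\liminf_{s\nearrow 0}x(\gamma(s))=0$) is entirely symmetric and gives $\xi\equiv-1$, $\gamma(s)=(-2s,y_0,-1,0)=\gamma_{I,y_0}(s)$. The one step requiring genuine care is the identification of $L^2$ and the logarithmic-derivative estimate $|\sfH_\sface(L^2)|\lesssim L^2$: $L^2$ is \emph{exactly} conserved for product cones $k=k(y,\dd y)$ and ``logarithmically conserved'' in general, and this is what makes the $x$-interval stay away from $0$ unless $\eta$ vanishes; the remainder is an elementary ODE argument together with conservation of the principal symbol.
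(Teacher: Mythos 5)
Your proof is correct, and it takes a genuinely different route from the paper's. The paper appeals to the stable/unstable manifold theorem for the rescaled vector field $\sfH$ at $\cR_{\rm out}$: it identifies the unstable manifold by its tangent space (the sum of $T\cR_{\rm out}$ and the $\pa_x$-eigenspace of the linearization) and then observes that the set $\cR_{\rm out}\cup\{\gamma_{O,y_0}(s)\}$ is an invariant manifold with that same tangent space, hence equals the unstable manifold. You instead give a self-contained quantitative argument: the squared angular momentum $L^2=x^2|\eta|^2_{k^{-1}}$ satisfies $\sfH_\sface(L^2)=2\xi x^2(\pa_x k^{ij})\eta_i\eta_j$ (the $y$-derivative contributions indeed cancel after relabeling), whence $|\sfH_\sface L^2|\lesssim L^2$ and Gr\"onwall forces $L^2\equiv 0$ unless $x$ is bounded away from zero (via $L^2\le x^2$ on $\Sigma$); the rest is an elementary ODE analysis on $\{\eta=0\}$. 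This is the microlocal version of angular-momentum conservation on exact cones, degraded to a log-conservation law in general. Your approach is more elementary and gives an explicit rate, but is tailored to this specific dichotomy; the paper's dynamical-systems argument is shorter given the saddle analysis already in place in~\eqref{EqPRadSetsLin}, and has the side benefit of identifying the full stable/unstable manifolds of $\cR_{\rm in/out}$ in one stroke. One small point of care: the uniform bound $|(\pa_x k^{ij})\eta_i\eta_j|\le C\,k^{ij}\eta_i\eta_j$ should be taken on a compact region of the form $[0,x_1]\times Y$ containing the image of $x\circ\gamma$, which is legitimate since $s_0<x_0$ keeps $x$ in a compact range where $k$ is smooth and uniformly positive definite; otherwise the argument is complete.
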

\begin{proof}
  The stable/unstable manifold theorem applied to the rescaled vector field $\sfH$ applies at $\cR_{\rm out}$ and produces a stable manifold (namely, $\Sigma\cap\Tch^*_{\sface\cap\tface}X_\chop\setminus\cR_{\rm in}$) and an unstable manifold whose tangent space at a point $\zeta\in\cR_{\rm out}$ is the sum of $T_\zeta(\cR_{\rm out})$ and to the 1-dimensional space spanned by the unique eigenvector $\pa_x$ of the linearization of $\sfH$ with positive eigenvalue. Since the manifold $\cR_{\rm out}\cup\{\gamma_{O,y_0}(s)\colon y_0\in\pa X,\ s\in(0,s_0)\}$ is $\sfH$-invariant with the same tangent space, it must be equal to this unstable manifold. The first part of the lemma follows from this observation; the second part is completely analogous.
\end{proof}

\begin{definition}[Generalized broken bicharacteristics]
\label{DefPBroken}
  Denote by $\dot\Sigma$ the topological space defined as the quotient $\Sigma/\pa\Sigma$. Let $I\subset\R$ denote an open interval. We then say that a continuous curve $\gamma\colon I\to\dot\Sigma$ is a \emph{generalized broken bicharacteristic} (GBB) if either $\gamma(I)\subset\Sigma\setminus\pa\Sigma$ and $\gamma$ is an integral curve of $\sfH_\sface$, or there exist $s_0\in I$ and $y_I,y_O\in\pa X$ so that $\gamma(s_0+t)=\gamma_{O,y_O}(t)$ for $t>0$, $s_0+t\in I$ and $\gamma(s_0+t)=\gamma_{I,y_I}(t)$ for $t<0$, $s_0+t\in I$.\footnote{In light of Lemma~\ref{LemmaPInOut}, this is equivalent to the condition that $\gamma$ is an $\sfH_\sface$-integral curve outside of $\pa\Sigma$, but may enter and exit the characteristic set over $\pa X$ at different points.} If $y_O$ is at distance $\pi$ from $y_I$ with respect to the metric $k(0)$ on $\pa X$, we say that $\gamma$ is a \emph{geometric GBB}, otherwise $\gamma$ is a \emph{strictly diffractive GBB}.
\end{definition}

See Figure~\ref{FigPBroken}. We remark without proof that geometric GBB are uniform limits of $\sfH_\sface$-integral curves just barely missing $\pa X$ (see also \cite[Lemma~1.5]{MelroseWunschConic}).

\begin{figure}[!ht]
\centering
\includegraphics{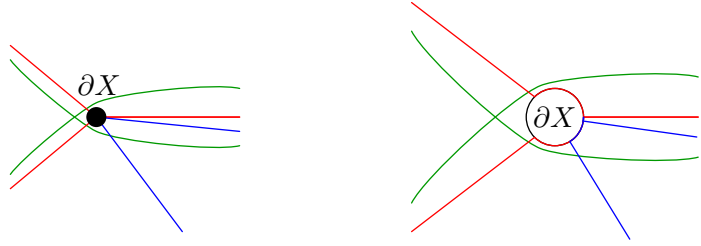}
\caption{The projection of strictly diffractive (blue) and geometric (red) GBBs to the base $X$, as well as geodesics (green) just barely missing the cone tip $\pa X$. \textit{On the left:} the geometric picture, where $\pa X$ is collapsed to a point. \textit{On the right:} the resolved picture.}
\label{FigPBroken}
\end{figure}

\subsection{Scattering theory for the normal operator}
\label{SsPSc}

Propagation through the `cone point' $\pa X$ will require \emph{global} control of the normal operator, namely the absence of purely outgoing or purely incoming solutions (depending on the direction in which one wants to propagate estimates). Let us define fiber-linear coordinates on the scattering cotangent bundle $\Tsc^*(\tface\setminus\cface)$ via
\[
  \xi_\scop\frac{\dd\hat h}{\hat h^2} + \sum_{j=1}^{n-1}(\eta_\scop)_j\frac{\dd y^j}{\hat h}.
\]
Via the identification~\eqref{EqCPNBundles}, the radial sets $\cR_{\rm in/out}$ defined in~\eqref{EqPRadSets} are then equal to the sets ${}^\scop\cR_{\rm in/out}\subset\Tsc^*(\tface\setminus\cface)$, where
\begin{align*}
  {}^\scop\cR_{\rm in} &:= \{ (\hat h,y,\xi_\scop,\eta_\scop) \colon \hat h=0,\ y\in\pa X,\ \xi_\scop=+1,\ \eta_\scop=0 \}, \\
  {}^\scop\cR_{\rm out} &:= \{ (\hat h,y,\xi_\scop,\eta_\scop) \colon \hat h=0,\ y\in\pa X,\ \xi_\scop=-1,\ \eta_\scop=0 \}.
\end{align*}
Invariantly, $\cR_{\rm in}={}^\scop\cR_{\rm in}$ is the graph of $-\frac{x}{h}\frac{\dd x}{x}=-\dd(\hat h^{-1})=\frac{\dd\hat h}{\hat h^2}$, likewise for $\cR_{\rm out}={}^\scop\cR_{\rm out}$ but with an overall sign switch. 

\begin{definition}[Conditions on the normal operator]
\label{DefPNormOp}
  Let $l,l'\in\R$, and recall~\eqref{EqPhat}.
  \begin{enumerate}
  \item\label{ItPNormOpFw} We say that $N(P)$ is \emph{injective at weight $l$ on outgoing functions} if the only solution $u$ to the equation $N(P)u=0$ satisfying $u\in\bigcup_{N\in\R} H_{\bop,\scop}^{\infty,l,-N}(\tface;\hat\mu)$ and $\WFsc(u)\subset{}^\scop\cR_{\rm out}$ is trivial: $u\equiv 0$.
  \item\label{ItPNormOpBw} We say that $N(P)^*$ (the formal adjoint with respect to $L^2(\tface;\hat\mu)$) is \emph{injective at weight $l'$ on incoming functions} if the only solution $v$ to the equation $N(P)^*v=0$ satisfying $v\in\bigcup_{N\in\R} H_{\bop,\scop}^{\infty,l',-N}(\tface;\hat\mu)$ and $\WFsc(v)\subset{}^\scop\cR_{\rm in}$ is trivial: $v\equiv 0$.
  \item\label{ItPNormOpInv} If condition~\eqref{ItPNormOpFw} and condition~\eqref{ItPNormOpBw} with $l'=-l+2$ hold, we say that $N(P)$ is \emph{invertible at weight $l$}.
  \end{enumerate}
\end{definition}

The wave front set assumptions here are the microlocal formulations of outgoing/in\-com\-ing radiation conditions. In the special case that $N(P)=\Delta_{\hat g}-1$, these assumptions are indeed equivalent to the standard Sommerfeld radiation condition. Our goal is to elevate the qualitative conditions of Definition~\ref{DefPNormOp} to quantitative estimates, see Lemma~\ref{LemmaPNEst}.

Changing variables in the expression~\eqref{EqPN} for $N(P)$ to $(\hat h,y)$ gives
\[
  N(P) = (\hat h^2 D_{\hat h})^2 + i(n-1)\hat h^2 D_{\hat h} + \hat h^2\Delta_{k(0)} - 1 + \hat h^2 q_{1,1}(0,y,-\hat h D_{\hat h},D_y) + \hat h q_{0,1}(0,y),
\]
with scattering principal symbol at $\Tsc^*_{\tface\cap\sface}\tface$ given by
\begin{subequations}
\begin{equation}
\label{EqPtfSymb}
  p_\tface=\xi_\scop^2+|\eta_\scop|^2-1.
\end{equation}
Its Hamilton vector field is
\begin{equation}
\label{EqPtfSymbHam}
  \sfH_\tface := \hat h^{-1}H_{p_\tface} = 2\xi_\scop(\hat h\pa_{\hat h}+\eta_\scop\pa_{\eta_\scop}) - 2|\eta_\scop|^2\pa_{\xi_\scop} + \hat h^{-1}H_{|\eta_\scop|^2}
\end{equation}
\end{subequations}
by~\eqref{EqbscHamsc}, which has a source, resp.\ sink structure at ${}^\scop\cR_{\rm in}$, resp.\ ${}^\scop\cR_{\rm out}$ within the characteristic set $p_\tface^{-1}(0)$. Recall then that microlocal propagation estimates near the radial sets ${}^\scop\cR_{\rm in/out}$ require suitable orders---here the decay order---of weighted Sobolev spaces to be above or below certain threshold values, see \cite[\S9]{MelroseEuclideanSpectralTheory}, \cite[\S4.7]{VasyMinicourse}, and \cite[Appendix~E.4]{DyatlovZworskiBook}.

\begin{definition}[Threshold quantities]
\label{DefPThr}
  Define the functions
  \[
    r_1:=\Im\bigl(\sigmab_1(Q_{1,1})(-\tfrac{\dd x}{x})|_{x=0}\bigr)\in\CI(\pa X),\qquad
    r_0:=\Im\bigl(q_{0,1}|_{x=0}\bigr)\in\CI(\pa X).
  \]
  Then the threshold quantities $r_{\rm in/out}\in\R$ are defined as
  \[
    r_{\rm in} := -\half + \half\sup_{\pa X}(r_1+r_0),\quad
    r_{\rm out} := -\half + \half\inf_{\pa X}(r_1-r_0).
  \]
\end{definition}

We next recall that at the other end of $\tface$, i.e.\ the `b-end' $\tface\cap\cface$, the weights $l,l'$ in Definition~\ref{DefPNormOp} are related to the \emph{boundary spectrum} of $N(P)$. Concretely, from the expression~\eqref{EqPN}, we read off
\begin{equation}
\label{EqPNormOpResc}
  \hat x^2 N(P) \in (\hat x D_{\hat x})^2 - i(n-2)\hat x D_{\hat x} + \Delta_{k(0)} + q_{1,1}(0,y,\hat x D_{\hat x},D_y) + \hat x\Diffb(\tface\setminus\sface).
\end{equation}
Its (dilation-invariant in $\hat x$) normal operator at $\hat x=0$ is given by the sum of the first four terms, and the Mellin transformed normal operator family is defined by formally replacing $\hat x D_{\hat x}$ by multiplication with $\lambda\in\C$, giving
\begin{equation}
\label{EqPNormOpMellin}
  \hat N(P)(\lambda,y,D_y) := \lambda^2 - i(n-2)\lambda + \Delta_{k(0)} + q_{1,1}(0,y,\lambda,D_y).
\end{equation}
This is a holomorphic family in $\lambda$ taking values in elliptic elements of $\Diff^2(\pa X)$. The \emph{boundary spectrum} of $N(P)$ is then
\[
  \specb(N(P)) := \{ \lambda\in\C \colon \hat N(P)(\lambda)\colon\CI(\pa X)\to\CI(\pa X)\ \text{is not invertible} \};
\]
it is a discrete subset of $\C$, and its intersection with $|\Im\lambda|<C$ is finite for any fixed value of $C$ \cite[\S5.3]{MelroseAPS}. Let us now put
\begin{equation}
\label{EqPLambda}
  \Lambda := \{ -\Im\lambda \colon \lambda\in\specb(N(P)) \};
\end{equation}
this is a discrete subset of $\R$.

\begin{lemma}[Estimates for $N(P)$]
\label{LemmaPNEst}
  Let $s,l\in\R$ and $\sfr\in\CI(\ol{\Tsc^*_{\tface\cap\sface}}\tface)$. Suppose that $\sfr$ is constant near ${}^\scop\cR_{\rm in/out}$ and satisfies $\sfr>r_{\rm in}$ at ${}^\scop\cR_{\rm in}$, $\sfr<r_{\rm out}$ at ${}^\scop\cR_{\rm out}$. Suppose moreover that $\sfH_\tface\sfr\leq 0$ and $\sqrt{-\sfH_\tface\sfr}\in\CI$ on $\Tsc^*_{\tface\cap\sface}\tface\cap\{p_\tface=0\}$ in the notation of~\eqref{EqPtfSymb}--\eqref{EqPtfSymbHam}.
  \begin{enumerate}
  \item\label{ItPNEstFw} If $N(P)$ is injective at weight $l$ on outgoing functions and $l-\frac{n}{2}\notin\Lambda$, then
    \begin{equation}
    \label{EqPNEstFw}
      \|u\|_{H_{\bop,\scop}^{s,l,\sfr}(\tface;\hat\mu)} \leq C\|N(P)u\|_{H_{\bop,\scop}^{s-2,l-2,\sfr+1}(\tface;\hat\mu)}
    \end{equation}
    for all $u$ for which both sides are finite.
  \item\label{ItPNEstBw} If $N(P)^*$ is injective at weight $-l+2$ on incoming functions and $-l+2-\frac{n}{2}\notin\Lambda$, then
    \begin{equation}
    \label{EqPNEstBw}
      \|v\|_{H_{\bop,\scop}^{-s+2,-l+2,-\sfr-1}(\tface;\hat\mu)} \leq C\|N(P)^*v\|_{H_{\bop,\scop}^{-s,-l,-\sfr}(\tface;\hat\mu)}
    \end{equation}
    for all $v$ for which both sides are finite.
  \item\label{ItPNEstInv} If $N(P)$ is invertible at weight $l$ and $l-\frac{n}{2}\notin\Lambda$,\footnote{This condition is automatically satisfied since for $l-\frac{n}{2}\notin\Lambda$, the operator $N(P)$ is not even Fredholm, cf.\ \cite[\S6.2]{MelroseAPS}.} then the operator $N(P)$ is invertible as a map
    \[
      \bigl\{ u\in H_{\bop,\scop}^{s,l,\sfr}(\tface;\hat\mu) \colon N(P)u\in H_{\bop,\scop}^{s-2,l-2,\sfr+1}(\tface;\hat\mu) \bigr\} \to H_{\bop,\scop}^{s-2,l-2,\sfr+1}(\tface;\hat\mu).
    \]
  \end{enumerate}
\end{lemma}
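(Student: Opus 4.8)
The plan is to prove this as a standard consequence of elliptic Fredholm theory for the operator $N(P)$ on the manifold-with-corners $\tface = [0,\infty]_{\hat x}\times\pa X$, which carries a b-structure near $\hat x=0$ and a scattering structure near $\hat x=\infty$. The key points are: (i) establish the estimate \eqref{EqPNEstFw} for $N(P)$ together with a compactness of the error, giving semi-Fredholmness; (ii) establish the dual estimate \eqref{EqPNEstBw} for $N(P)^*$ likewise; (iii) combine (i) and (ii) to conclude Fredholmness with index zero, and use the injectivity hypotheses to upgrade to invertibility. Parts \eqref{ItPNEstFw} and \eqref{ItPNEstBw} are therefore the heart, and \eqref{ItPNEstInv} is a soft corollary.

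\textbf{Proof of the symbolic/microlocal estimate with compact error.} First I would record the microlocal building blocks. Away from $\cR_{\rm in/out}$ and away from fiber infinity, $N(P)$ is elliptic in the b-scattering calculus $\Psi_{\bop,\scop}^{2,2,-1}(\tface)$ (noting $\hat x^2 N(P)$ is b-elliptic near $\tface\cap\cface$ by \eqref{EqPNormOpResc}, and its scattering principal symbol \eqref{EqPtfSymb} is elliptic at fiber infinity), so elliptic regularity gives control in $H_{\bop,\scop}^{s,l,\sfr}$ microlocally there. On the characteristic set $p_\tface^{-1}(0)$ minus the radial sets, real principal type propagation along $\sfH_\tface$ (in the spirit of \cite{DuistermaatHormanderFIO2}, in the scattering-decay-order variant of \cite{MelroseEuclideanSpectralTheory} and \cite[\S4.7]{VasyMinicourse}) propagates regularity, using the sign condition $\sfH_\tface\sfr\le 0$ with $\sqrt{-\sfH_\tface\sfr}$ smooth to control the variable-order commutator. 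At the radial sets, the source/sink structure of $\sfH_\tface$ at ${}^\scop\cR_{\rm in/out}$ (a source at ${}^\scop\cR_{\rm in}$, a sink at ${}^\scop\cR_{\rm out}$) together with the threshold inequalities $\sfr > r_{\rm in}$ at ${}^\scop\cR_{\rm in}$, $\sfr < r_{\rm out}$ at ${}^\scop\cR_{\rm out}$ and the skew-adjoint contributions computed via $r_1,r_0$ in Definition~\ref{DefPThr} allows the high-regularity radial point estimate at ${}^\scop\cR_{\rm in}$ (propagating \emph{into} the source, fed by the wave front assumption on outgoing solutions being away from it) and propagation-out at ${}^\scop\cR_{\rm out}$; see \cite[Appendix~E.4]{DyatlovZworskiBook}. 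Finally, near $\tface\cap\cface$ the b-normal operator analysis of \cite[\S5]{MelroseAPS}, together with the hypothesis $l-\frac n2\notin\Lambda$ so that the Mellin-transformed normal operator $\hat N(P)(\lambda)$ from \eqref{EqPNormOpMellin} is invertible on the line $\Im\lambda = -(l-\frac n2)$, gives the b-estimate at weight $l$. Patching these via a partition of unity in phase space yields
\[
  \|u\|_{H_{\bop,\scop}^{s,l,\sfr}(\tface)} \le C\|N(P)u\|_{H_{\bop,\scop}^{s-2,l-2,\sfr+1}(\tface)} + C\|u\|_{H_{\bop,\scop}^{s-1,l-1,\sfr-1}(\tface)},
\]
with the last term compactly included in $H_{\bop,\scop}^{s,l,\sfr}$ (strictly lower in all three orders on a compact manifold with corners). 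Hence $N(P)$ has finite-dimensional kernel and closed range; the kernel elements automatically satisfy the wave front condition $\WFsc(u)\subset{}^\scop\cR_{\rm out}$ (by the radial point estimate at ${}^\scop\cR_{\rm in}$, there is no regularity to propagate out of ${}^\scop\cR_{\rm in}$) and lie in $H_{\bop,\scop}^{\infty,l,-N}$ by elliptic regularity and propagation, so by hypothesis~\eqref{ItPNormOpFw} the kernel is trivial. The dual estimate \eqref{EqPNEstBw} is proved identically after replacing $N(P)$ by $N(P)^*$, reversing all signs (the source/sink roles of the radial sets swap, the threshold becomes the one for $-l+2$, $-\sfr$, and the adjoint threshold quantities involve $r_1-r_0$ versus $r_1+r_0$ consistently with Definition~\ref{DefPThr} and Definition~\ref{DefPNormOp}\eqref{ItPNormOpBw}); the injectivity hypothesis on $N(P)^*$ on incoming functions at weight $-l+2$, together with $-l+2-\frac n2\notin\Lambda$, makes $\ker N(P)^* = 0$ in the relevant space.

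\textbf{Conclusion.} By \eqref{EqPNEstFw}, $N(P)\colon\{u\in H_{\bop,\scop}^{s,l,\sfr} : N(P)u\in H_{\bop,\scop}^{s-2,l-2,\sfr+1}\}\to H_{\bop,\scop}^{s-2,l-2,\sfr+1}$ is injective with closed range; by \eqref{EqPNEstBw} applied with the dual orders, the cokernel is identified with $\ker N(P)^*$ in $H_{\bop,\scop}^{-s+2,-l+2,-\sfr-1}$, which is trivial. (The pairing here is the $L^2(\tface;\hat\mu)$ pairing, for which $H_{\bop,\scop}^{s-2,l-2,\sfr+1}$ and $H_{\bop,\scop}^{-s+2,-l+2,-\sfr-1}$ are dual; that $N(P)^*$ in this sense is exactly the formal adjoint in Definition~\ref{DefPNormOp}\eqref{ItPNormOpBw} is immediate since $\hat\mu$ is the b-density used throughout.) Therefore $N(P)$ is invertible on the stated spaces, with the norm of the inverse bounded by the constant $C$ in \eqref{EqPNEstFw}.

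\textbf{Main obstacle.} The genuinely delicate point is the radial point analysis at ${}^\scop\cR_{\rm in/out}$ \emph{in the absence of self-adjointness}: the subprincipal/skew-adjoint contributions of $\hat x^{-2}Q_{1,1}$ and $\hat x^{-1}q_{0,1}$ shift the effective threshold by exactly the quantities $r_1,r_0$ of Definition~\ref{DefPThr}, and one must check that these enter the positive-commutator argument with the correct sign so that the threshold inequalities $\sfr>r_{\rm in}$, $\sfr<r_{\rm out}$ are precisely what is needed — and, crucially, that no sign condition on $\Im q_{0,1}$ away from $\pa X$ is required, because the whole contribution of the non-symmetric lower-order terms has been localized to the boundary $\tface\cap\sface$ where the commutator argument takes place. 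Getting the bookkeeping of these imaginary parts right (including in the adjoint estimate, where $q_{0,1}$ changes sign relative to $q_{1,1}$) is where the real work lies; everything else is standard b- and scattering-calculus machinery assembled from the cited references.
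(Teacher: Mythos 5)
Your proposal is correct and follows essentially the same route as the paper's proof: a symbolic estimate with a compactly included error term, assembled from b-elliptic/Mellin analysis at $\tface\cap\cface$ (using $l-\frac n2\notin\Lambda$), elliptic and real principal type estimates, and threshold radial point estimates at ${}^\scop\cR_{\rm in/out}$, after which the injectivity hypotheses remove the error term and part~\eqref{ItPNEstInv} follows by duality. The only quibble is presentational: at the source ${}^\scop\cR_{\rm in}$ the above-threshold estimate is unconditional rather than ``fed by'' the outgoing wave front assumption — that assumption enters only at the very end, to conclude that the finite-dimensional kernel (whose elements are shown a posteriori to have $\WFsc\subset{}^\scop\cR_{\rm out}$) is trivial, exactly as you in fact use it.
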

\begin{proof}
  This is a standard application of elliptic b-theory at $\tface\cap\cface$ and radial point estimates at $\tface\cap\sface$ in the scattering calculus as in~\cite{MelroseEuclideanSpectralTheory} and \cite[\S4.8]{VasyMinicourse}.
  
  We first prove symbolic estimates for $N(P)$ and $N(P)^*$ which do not use the injectivity assumptions. In $\tface\setminus\sface$, the operator $N(P)$ is an elliptic weighted b-differential operator. Let $\phi_j\in\CIc(\tface\setminus\sface)$, $j=0,1,2,3$, be identically $1$ near $\tface\cap\cface$, with $\phi_{j+1}\equiv 1$ on $\supp\phi_j$. Then, only recording the b-regularity and the weight at $\cface$, we have
  \begin{equation}
  \label{EqPNEst0}
    \|\phi_1 u\|_{\Hb^{s,l}} \leq C\bigl(\|\phi_2 N(P)u\|_{\Hb^{s-2,l-2}} + \|\phi_2 u\|_{\Hb^{-N,l}}\bigr)
  \end{equation}
  for any fixed $N$. Now, recalling~\eqref{EqPhat}, we have
  \[
    \Hb^{s,l}([0,\infty)_{\hat x}\times\pa X;\hat\mu)=\Hb^{s,l-\frac{n}{2}}\bigl([0,\infty)\times\pa X;\bigl|\tfrac{\dd\hat x}{\hat x}\dd k(0)\bigr|\bigr).
  \]
  Using now that $l-\frac{n}{2}\notin\Lambda$, we can estimate
  \[
    \|\phi_2 u\|_{\Hb^{-N,l}}\leq C\|\hat x^{-2}\hat N(P)(\hat x D_{\hat x},y,D_y)(\phi_2 u)\|_{\Hb^{-N-2,l-2}}
  \]
  by passing to the Mellin transform. Since $N(P)-\hat x^{-2}\hat N(P)(\hat x D_x,y,D_y)\in\hat x^{-1}\Diffb$ by~\eqref{EqPNormOpResc}, this can be plugged into~\eqref{EqPNEst0} and yields (putting back the scattering decay orders, which at this point are still arbitrary due to the localizers)
  \begin{equation}
  \label{EqPNEst1}
    \|\phi_1 u\|_{H_{\bop,\scop}^{s,l,\sfr}} \leq C\bigl(\|\phi_3 N(P)u\|_{H_{\bop,\scop}^{s-2,l-2,\sfr+1}} + \|\phi_3 u\|_{H_{\bop,\scop}^{-N,l-1,-N}}\bigr).
  \end{equation}

  Turning to the scattering end, and with $\psi_j=1-\phi_j$, we claim that (now with the b-decay orders being arbitrary)
  \begin{equation}
  \label{EqPNEst2}
    \|\psi_1 u\|_{H_{\bop,\scop}^{s,l,\sfr}} \leq C\bigl(\|\psi_0 N(P)u\|_{H_{\bop,\scop}^{s-2,l-2,\sfr+1}} + \|\psi_0 u\|_{H_{\bop,\scop}^{-N,-N,-N}}\bigr).
  \end{equation}
  This is proved by means of the scattering calculus by a combination of elliptic estimates (controlling $\psi_1 u$ away from $\Sigma_\tface:=p_\tface^{-1}(0)$), radial point estimates at ${}^\scop\cR_{\rm in/out}$, and microlocal real principal type estimates on $\Sigma_\tface\setminus({}^\scop\cR_{\rm in}\cup{}^\scop\cR_{\rm out})$. We only sketch the argument for the radial points in order to explain the emergence of the threshold condition on $\sfr$; details can be found e.g.\ in \cite[\S4.7]{VasyMinicourse}.
  
  We work in $[0,1)_{\hat h}\times\pa X\subset\tface$, and consider estimates near ${}^\scop\cR_{\rm in}$. Fixing a cutoff function $\chi\in\CIc([0,\half))$, identically $1$ near $0$ and with $\chi'\leq 0$ on $[0,\half)$, we consider a commutant
  \[
    a := \hat h^{-2\sfr-1}\chi(\hat h/\delta)\chi(|\eta_\scop|^2)\chi((\xi_\scop-1)^2),\quad
    A = \half(\Op_\scop(a)+\Op_\scop(a)^*) \in \Psisc^{-\infty,2\sfr+1},
  \]
  where $\delta>0$ controls the localization near $\hat h=0$. We compute the commutator
  \[
    2\Im\la N(P)u,A u\ra = \big\la \bigl(i[N(P),A]+2\tfrac{N(P)-N(P)*}{2 i}A\bigr)u,u\big\ra.
  \]
  (This holds directly for sufficiently decaying $u$, and for $u$ as in the statement of the Lemma can be justified using a regularization argument.) The principal symbol of $i[N(P),A]$ is equal to $\hat h\sfH_\tface a$. When $\sfH_\tface$ falls on the cutoff in $\hat h$, the result is supported in the elliptic set of $N(P)$, hence easily controlled. When $\sfH_\tface$ falls on either of the second or third cutoff functions, the result is $\leq 0$ on $\supp a$ in view of the source character of ${}^\scop\cR_{\rm in}$ (or directly using~\eqref{EqPtfSymbHam}), provided $\delta>0$ is sufficiently small; at ${}^\scop\cR_{\rm in}$ then, the principal symbol of $i[N(P),A]+2\tfrac{N(P)-N(P)^*}{2 i}A$ has a matching definite sign, i.e.\ is a \emph{negative} multiple of $\hat h^{-2\sfr}$, provided that
  \begin{equation}
  \label{EqPNRin}
    2\cdot(+1)\cdot(-2\sfr-1) + 2\cdot\sigmasc\bigl(\tfrac{Q-Q^*}{2 i}\bigr) < 0,\qquad
    Q := \hat h q_{1,1}(0,y,-\hat h D_{\hat h},D_y) + q_{0,1}(0,y),
  \end{equation}
  at ${}^\scop\cR_{\rm in}$. But ${}^\scop\cR_{\rm in}$ is the graph of the 1-form $\frac{\dd\hat h}{\hat h^2}$, hence $\sigmasc\bigl(\tfrac{Q-Q^*}{2 i}\bigr)=\Im\sigmasc(Q)$ at ${}^\scop\cR_{\rm in}$ is equal to
  \[
    \Im\sigmab(q_{1,1}(0,y,-\hat h D_{\hat h},D_y))\bigl(\tfrac{\dd\hat h}{\hat h}\bigr)+\Im q_{0,1} = r_1+r_0
  \]
  in the notation of Definition~\ref{DefPThr}. The condition~\eqref{EqPNRin} thus becomes $-2\sfr-1+(r_1+r_0)<0$, which is satisfied on all of ${}^\scop\cR_{\rm in}$ provided that $\sfr>-\half+\half\sup(r_1+r_0)=r_{\rm in}$ there. Under this assumption, one thus obtains control on $u$ microlocally near ${}^\scop\cR_{\rm in}$ in the space $\Hsc^{s,\sfr}$ by $N(P)u$ measured in $\Hsc^{s-2,\sfr+1}$.

  The analysis at ${}^\scop\cR_{\rm out}$ is similar, now using the commutant $\hat h^{-2\sfr-1}\chi(\hat h/\delta)\chi(|\eta_\scop|^2)\chi((\xi_\scop+1)^2)$. The derivatives of the latter two cutoffs along $\sfH_\tface$ are now positive due to the sink character of ${}^\scop\cR_{\rm out}$, and the principal symbol of the commutator at the radial set is a negative multiple of $\hat h^{-2\sfr}$ (thus allowing us to propagate control from a punctured neighborhood of the radial set into the radial set itself) provided that
  \begin{equation}
  \label{EqPNEstThr}
    2\cdot(-1)\cdot(-2\sfr-1) + 2\cdot\sigmasc\bigl(\tfrac{Q-Q^*}{2 i}\bigr) < 0\quad\text{at}\ {}^\scop\cR_{\rm out}.
  \end{equation}
  In view of ${}^\scop\cR_{\rm out}$ being the graph of $-\frac{\dd\hat h}{\hat h^2}$ and the calculation
  \[
    \Im\sigmasc(Q)|_{{}^\scop\cR_{\rm out}} = \Im\sigmab(q_{1,1}(0,y,-\hat h D_{\hat h},D_y))\bigl(\tfrac{-\dd\hat h}{\hat h}\bigr)+\Im q_{0,1} = -r_1+r_0,
  \]
  the condition~\eqref{EqPNEstThr} reads $2\sfr+1-r_1+r_0<0$, so $\sfr<-\half+\half\inf(r_1-r_0)=r_{\rm out}$.

  Putting~\eqref{EqPNEst1} and \eqref{EqPNEst2} together, we obtain the estimate
  \begin{equation}
  \label{EqPNEst3}
    \|u\|_{H_{\bop,\scop}^{s,l,\sfr}} \leq C\bigl(\|N(P)u\|_{H_{\bop,\scop}^{s-2,l-2,\sfr+1}} + \|u\|_{H_{\bop,\scop}^{-N,l-1,-N}}\bigr)
  \end{equation}
  for any $N$; we choose $N$ to satisfy $-N<s$ and $-N<\min\sfr$.
  
  The estimate~\eqref{EqPNEst3} implies that $N(P)$, acting on $H_{\bop,\scop}^{s,l,\sfr}$, has finite-dimensional kernel; any element $u$ in the kernel automatically lies in $H_{\bop,\scop}^{\infty,l,\sfr'}$ for any variable order function $\sfr'$ satisfying $\sfr'<r_{\rm out}$ at ${}^\scop\cR_{\rm out}$. Thus, $\WFsc(u)\subset{}^\scop\cR_{\rm out}$. Under the injectivity assumption on $N(P)$, we thus conclude that $u=0$. A standard functional analytic argument then allows one to drop the error term in~\eqref{EqPNEst3}, which gives the estimate~\eqref{EqPNEstFw}.

  The proof of part~\eqref{ItPNEstBw} is analogous; the direction of propagation in the characteristic set is now reversed, which is precisely matched by the sign switches in the orders in the estimate~\eqref{EqPNEstBw}. Part~\eqref{ItPNEstInv} is an immediate consequence of the first two parts.
\end{proof}

\begin{rmk}[Flexibility in the choice of $l$]
\label{RmkPNWeight}
  If the assumptions of part~\eqref{ItPNEstFw} of the Lemma are satisfied for some value of $l$, then they continue to hold for all values $\tilde l$ with $\tilde l-\frac{n}{2}\notin\Lambda$ for which either $\tilde l>l$, or $\tilde l\leq l$ but $\tilde l-\frac{n}{2}$ and $l-\frac{n}{2}$ lie in the same connected component $(a,b)$ of $\R\setminus\Lambda$. (Indeed, the claim for $\tilde l\leq l$ follows from the fact---proved using the Mellin transform upon localizing near $\tface\cap\cface$---that any element in the kernel of $N(P)$ on $H_{\bop,\scop}^{s,\tilde l,\sfr}$ automatically lies in $H_{\bop,\scop}^{s,b+\frac{n}{2}-\eps,\sfr}$ for any $\eps>0$.) A similar statement holds for part~\eqref{ItPNEstBw}: we may increase $-l+2$ (or stay in the same connected component of $(\R\setminus\Lambda)+\frac{n}{2}$), i.e.\ decrease $l$. Altogether then, there typically only exists an interval of finite length (possibly empty) of weights $l$ so that the invertibility condition of part~\eqref{ItPNEstInv} is satisfied.
\end{rmk}

\subsection{Statement and proof of the microlocal propagation estimate}
\label{SsPP}

We are now ready to state the main result of the paper:

\begin{thm}[Microlocal propagation through the cone point]
\label{ThmP}
  Let $P_{h,z}$ denote an admissible operator in the sense of Definition~\usref{DefPOp}, and define the threshold quantities $r_{\rm in},r_{\rm out}$ as in Definition~\usref{DefPThr}. Let $\Sigma\subset\Tch^*_\sface X_\chop$ denote the characteristic set of $P_{h,z}$ 9see~\eqref{EqPSymb}). Denote by $\sfH=\frac{x}{h}H_p\in\cV(\Tch^*_\sface X_\chop)$ the rescaled Hamilton vector field (see~\eqref{EqPSymbHam}). Let $s,l,\alpha\in\R$, $\sfb\in\CI(\ol{\Tch^*_\sface}X_\chop)$. Assume that $\sfb$ is constant near the radial sets $\cR_{\rm in/out}$ (see~\eqref{EqPRadSets}) and satisfies $\sfb-\alpha>r_{\rm in}$ at $\cR_{\rm in}$ and $\sfb-\alpha<r_{\rm out}$ at $\cR_{\rm out}$; assume moreover that $\sfH\sfb\leq 0$ and $\sqrt{-\sfH\sfb}\in\CI$ on $\Sigma$. Let $\chi,\tilde\chi\in\CIc(X)$ be cutoffs, identically $1$ near $\pa X$, and with $\tilde\chi\equiv 1$ on $\supp\chi$. Let $E\in\Psich^0(X)$, with Schwartz kernel supported in $[0,1)_h\times(\tilde\chi^{-1}(1)\times\tilde\chi^{-1}(1))$.
  \begin{enumerate}
  \item\label{ItPThmFw}{\rm (Forward propagation.)} Suppose $N(P)$ is injective at weight $l$ on outgoing functions (see Definition~\usref{DefPNormOp}\eqref{ItPNormOpFw}). Suppose that (the preimage in $\Sigma$ of) all backward GBBs (see Definition~\usref{DefPBroken}) starting in $\Sigma\cap\supp\chi$ reach $\Ellch(E)$ in finite time while remaining inside $\tilde\chi^{-1}(1)$. Then for some small $\delta>0$, we have
    \begin{equation}
    \label{EqPThmFw}
    \begin{split}
      \| \chi u \|_{\Hch^{s,l,\alpha,\sfb}(X)} &\leq C\Bigl(\|\tilde\chi P_{h,z}u\|_{\Hch^{s-2,l-2,\alpha,\sfb+1}(X)} \\
        &\quad\qquad + \|E u\|_{\Hch^{s,l,\alpha,\sfb}(X)} + h^\delta\| \tilde\chi u \|_{\Hch^{-N,l,\alpha,\sfb}(X)} \Bigr).
    \end{split}
    \end{equation}
  \item\label{ItPThmBw}{\rm (Backward propagation.)} Suppose $N(P)^*$ is injective at weight $-l+2$ on incoming functions, see Definition~\usref{DefPNormOp}\eqref{ItPNormOpBw}. Suppose that (the preimage in $\Sigma$ of) all forward GBBs starting in $\Sigma\cap\supp\chi$ reach $\Ellch(E)$ in finite time while remaining inside $\tilde\chi^{-1}(1)$. Then for some small $\delta>0$, we have
    \begin{equation}
    \label{EqPThmBw}
    \begin{split}
      &\| \chi u \|_{\Hch^{-s+2,-l+2,-\alpha,-\sfb-1}(X)} \\
      &\quad \leq C\Bigl(\|\tilde\chi P_{h,z}^*u\|_{\Hch^{-s,-l,-\alpha,-\sfb}(X)} \\
      &\quad\quad\qquad + \|E u\|_{\Hch^{-s+2,-l+2,-\alpha,-\sfb-1}(X)} + h^\delta\| \tilde\chi u \|_{\Hch^{-N,-l+2,-\alpha,-\sfb-1}(X)} \Bigr).
    \end{split}
    \end{equation}
  \end{enumerate}
\end{thm}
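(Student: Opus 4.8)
The plan is to prove the forward estimate~\eqref{EqPThmFw} by combining positive commutator arguments in the semiclassical cone calculus $\Psich(X)$ of~\S\ref{SC} --- which control $\chi u$ microlocally at $\Sch^*X_\chop\cup\Tch^*_\sface X_\chop$, i.e.\ away from $\tface$ --- with the quantitative normal operator estimate of Lemma~\ref{LemmaPNEst}\eqref{ItPNEstFw}, which controls the behavior at $\tface$ and, crucially, transfers microlocal control from the incoming radial set $\cR_{\rm in}$ to the outgoing radial set $\cR_{\rm out}$. The backward estimate~\eqref{EqPThmBw} then follows by running the same argument with the direction of the Hamilton flow and the signs of all orders reversed, using Lemma~\ref{LemmaPNEst}\eqref{ItPNEstBw} in place of~\eqref{ItPNEstFw} (equivalently, by a duality argument). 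At the outset one reduces, using finiteness of the right-hand side, to $u\in\Hch^{-M,l,\alpha,-M}(X)$ for some fixed $M$, and inserts regularizers $\Lambda_\tau\in\Psich^{-M}(X)$, $\Lambda_\tau\to\Id$ as $\tau\to 0$, so that the positive commutator identities $2\Im\la P_{h,z}u,Au\ra=\bigl\la\bigl(i[P_{h,z},A]+2\tfrac{P_{h,z}-P_{h,z}^*}{2i}A\bigr)u,u\bigr\ra$ are justified with constants uniform in $\tau$; one lets $\tau\to 0$ at the end.

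By Lemma~\ref{LemmaPStruct}, $(\tfrac{x}{x+h})^2 P_{h,z}\in\Diffch^2(X_\chop)$ has principal symbol $p=\xi_\chop^2+|\eta_\chop|_{k^{-1}}^2-1$, elliptic at fiber infinity and with smooth, simply characteristic $\Sigma\subset\Tch^*_\sface X_\chop$, and its rescaled Hamilton vector field $\sfH$ is given in~\eqref{EqPSymbHam}; by~\eqref{EqCVHam} the commutator of two $\chop$-operators is the quantization of the $\chop$-Poisson bracket modulo an extra power of $\tfrac{h}{h+x}$. This makes available the three standard families of symbolic estimates on the scale $\Hch^{s,l,\alpha,\sfb}(X)$: microlocal elliptic regularity at $\Sch^*X_\chop$; real principal type propagation along $\sfH$ on $\Sigma\setminus(\cR_{\rm in}\cup\cR_{\rm out})$, carrying the variable order $\sfb$ by virtue of $\sfH\sfb\le 0$ and $\sqrt{-\sfH\sfb}\in\CI$; and radial point estimates at $\cR_{\rm in/out}$. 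For the last, the computation in the proof of Lemma~\ref{LemmaPNEst} --- identical here via the identification~\eqref{EqCPNBundles} --- shows that the relevant subprincipal term of $h^2 x^{-2}Q_{1,z}+hx^{-1}q_{0,z}$ equals $r_1\pm r_0$ with $r_1,r_0$ as in Definition~\ref{DefPThr}, so that the threshold for propagating control \emph{into} $\cR_{\rm in}$, resp.\ \emph{out of} $\cR_{\rm out}$, is precisely $\sfb-\alpha>r_{\rm in}$, resp.\ $\sfb-\alpha<r_{\rm out}$, as hypothesized. Using Lemma~\ref{LemmaPInOut} to identify GBBs near $\pa X$ with the curves~\eqref{EqPInOutCurves}, and the hypothesis that every backward GBB through $\Sigma\cap\supp\chi$ meets $\Ellch(E)$ over $\tilde\chi^{-1}(1)$, one propagates control forward from $\Ellch(E)$ along each GBB by real principal type until the curve $\gamma_{I,y_0}$, and then by the radial estimate into $\cR_{\rm in}$ --- at which point the chain stalls, since within $\Sigma$ nothing flows into $\cR_{\rm out}$.

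The transfer across $\pa X$ is supplied by $N(P)$. Localizing near $\tface$ and restricting $P_{h,z}u=f$ to $\tface$, the identity $P_{h,z}-N(P)\in(x+h)(\tfrac{x}{x+h})^{-2}\Diffch^2(X_\chop)$ from Lemma~\ref{LemmaPStruct} shows the leading term $u|_\tface$ solves $N(P)(u|_\tface)=f|_\tface$ on $\tface=[0,\infty]_{\hat x}\times\pa X$; by Lemma~\ref{LemmaCPNpsdo} and the scaling equivalence of Corollary~\ref{CorCPNSob} --- the variable semiclassical order $\sfb$ restricting to the scattering decay order $\sfr=\sfb|_{\tface\cap\sface}-\alpha$ --- the control at $\cR_{\rm in}$ obtained above translates into $u|_\tface\in H_{\bop,\scop}^{s,l,\sfr}(\tface;\hat\mu)$ with $\WFsc(u|_\tface)\subset{}^\scop\cR_{\rm out}$. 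The injectivity hypothesis on $N(P)$ (together with $l-\tfrac{n}{2}\notin\Lambda$, tacitly assumed here) then gives, via Lemma~\ref{LemmaPNEst}\eqref{ItPNEstFw}, the bound~\eqref{EqPNEstFw}; converting back by Corollary~\ref{CorCPNSob} yields control of $\chi u$ near $\cR_{\rm out}$ modulo $\|\tilde\chi P_{h,z}u\|$-terms and a remainder carrying the extra factor $x+h$ from $P_{h,z}-N(P)$. In the region near $\tface$ not already handled by the symbolic estimates --- where $\hat x=x/h$ is bounded and hence $x+h\sim h$ --- this remainder is $O(h)$ times a weak norm of $\tilde\chi u$; a bootstrap of the whole estimate (and, for non-dilation-invariant $\sfb$, the lossy conversion~\eqref{EqCPNSobLossy} at the price of shrinking $\delta$) produces the stated error $h^\delta\|\tilde\chi u\|_{\Hch^{-N,l,\alpha,\sfb}(X)}$. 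Now the radial estimate out of $\cR_{\rm out}$ and real principal type along $\gamma_{O,y_0}$ propagate control to all of $\Sigma\cap\supp\chi$, which together with microlocal elliptic regularity off $\Sigma$ gives~\eqref{EqPThmFw}.

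The main difficulty is the coherent gluing in the third step: the $\chop$-symbolic estimates live on the phase space over $X_\chop$ and genuinely degenerate at $\tface$, where $\Psich(X)$ is no longer symbolic, whereas $N(P)$ lives on $\tface$ and is analyzed with the b- and scattering calculi. One must ensure that the variable semiclassical order used in the radial point estimates restricts to exactly the variable scattering decay order required by Lemma~\ref{LemmaPNEst}, that the residual-at-$\tface$ error and the $(x+h)$-remainder $P_{h,z}-N(P)$ are tracked precisely enough to extract the gain $h^\delta$ rather than a merely bounded error, and that the regularization and the stitching of the various microlocal estimates are compatible with all of these weights; the bookkeeping of Lemmas~\ref{LemmaCPN} and~\ref{LemmaCPNpsdo} and of Corollary~\ref{CorCPNSob} is precisely what makes this routine.
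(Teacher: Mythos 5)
Your overall architecture --- symbolic radial point and propagation estimates in $\Psich(X)$, combined with the normal operator estimate of Lemma~\ref{LemmaPNEst} transferred via Corollary~\ref{CorCPNSob} --- is the paper's, and you correctly identify the thresholds and the role of $P_{h,z}-N(P)\in(x+h)(\tfrac{x}{x+h})^{-2}\Diffch^2(X_\chop)$. But two points need repair. First, the assertion that the symbolic chain ``stalls'' at $\cR_{\rm in}$ because ``within $\Sigma$ nothing flows into $\cR_{\rm out}$'' is false: by~\eqref{EqPHamResc} and~\eqref{EqPRadSetsLin}, $\sfH$ restricted to $\Sigma\cap x^{-1}(0)$ flows from $\cR_{\rm in}$ to $\cR_{\rm out}$ (these are the curves $\Gamma^\rightarrow(y)$ of Definition~\ref{DefPDCurves}), and the paper's symbolic step propagates control along them --- from $\Ellch(E)$ into $\cR_{\rm in}$, across $x^{-1}(0)$ into a punctured neighborhood of $\cR_{\rm out}$, into $\cR_{\rm out}$, and out along $\gamma_{O,y_0}$ --- yielding~\eqref{EqPThmSymb2} with an error $\|\tilde\chi u\|_{\Hch^{-N,l,\alpha,\sfb'}}$ whose semiclassical order can be pushed arbitrarily low except at $\cR_{\rm in}$, where $\sfb'-\alpha>r_{\rm in}$ is forced. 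The normal operator is then used only to absorb this residual error at $\tface$, not to carry microlocal control from $\cR_{\rm in}$ to $\cR_{\rm out}$. Your variant, in which $N(P)$ performs the transfer, might be salvageable, but you would then have to explain how the $H_{\bop,\scop}$-control on $\tface$ supplies the a priori input required by the sink-type radial point estimate at $\cR_{\rm out}$, which you do not address.

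Second, and more seriously, the extraction of the $h^\delta$ gain is where the argument genuinely breaks. You claim the remainder $(P_{h,z}-N(P))u=O(x+h)$ is ``$O(h)$ in the region not already handled by the symbolic estimates, where $\hat x=x/h$ is bounded.'' But the estimate~\eqref{EqPNEstFw} is global on $\tface=[0,\infty]_{\hat x}\times\pa X$ and cannot be localized to bounded $\hat x$: the radiation condition and the radial sets ${}^\scop\cR_{\rm in/out}$ live at $\hat x=\infty$. Any cutoff to which Corollary~\ref{CorCPNSob} is applied must therefore cover the transition regime $h\ll x\ll 1$, where $x+h\sim x$ is not $O(h^\delta)$ and the available semiclassical gain from the symbolic step is only $(\tfrac{h}{h+x})^{\sfb-\sfb'}$ with $\sfb-\sfb'<\sfb-\alpha-r_{\rm in}$, which may be arbitrarily small. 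Moreover, the normal operator estimate itself costs a full semiclassical order ($\sfr\mapsto\sfr+1$ in~\eqref{EqPNEstFw}), so the naive bookkeeping you describe closes only under the stronger hypothesis $\sfb-\alpha>r_{\rm in}+1$, where the one order gained at $\tface$ can pay for it. Under the stated hypothesis $\sfb-\alpha>r_{\rm in}$ one needs the additional device of the paper's proof: the cutoff $\chi^\flat=\psi^\flat(\hat h/x)$, supported where $x\lesssim h^{1/2}$, together with the memberships $\chi^\flat\in\cA^{0,-\zeta,\zeta}(X_\chop)$ and $1-\chi^\flat\in\cA^{0,\zeta,-\zeta}(X_\chop)$, which exploit the identity $(x+h)\cdot\tfrac{h}{h+x}=h$ to trade $\tface$-decay against $\sface$-decay and produce the uniform $h^\delta$. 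This is not routine bookkeeping but a specific idea your proposal is missing.
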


Since by Lemma~\ref{LemmaPStruct} and the calculations in~\S\ref{SsPChar}, the operator $P_{h,z}\in\Psich^{2,2,0,0}(X)$ is elliptic at fiber infinity, and is of real principal type (except at the radial points) at $\sface$, the estimates~\eqref{EqPThmFw} and \eqref{EqPThmBw} are sharp as far as the relative orders in the norms on $u$ on the left and $P_{h,z}^{(*)}u$ on the right are concerned. Indeed, it has the well-known real principal type loss of one order at $\sface$ and is an elliptic estimate in the $\chop$-differentiability sense.

The improvement of the final (error) terms on the right hand sides in~\eqref{EqPThmFw} and~\eqref{EqPThmBw} relative to the space on the left hand sides is accomplished at $\sface$ by microlocal symbolic means, and at $\tface$ using global normal operator estimates. The overall improvement by a positive power of $h$ between error term and left hand side allows for the inversion of $P_{h,z}$ for small $h>0$ under suitable assumptions on the \emph{global} behavior of the null-bicharacteristic flow; see~\S\S\ref{SsPA} and \ref{SH} for examples.

\begin{rmk}[Operators on vector bundles]
\label{RmkPBundle}
  Let $E\to X$ denote a smooth vector bundle. Theorem~\ref{ThmP} then holds (with the same proof) also for operators $P_{h,z}$ acting on sections of $E$, provided $P_{h,z}$ is \emph{admissible} in the sense that
  \[
    P_{h,z} = h^2 x^{-2}Q_{2,z} + h x^{-1}q_{0,z} - z,\qquad Q_{2,z}\in\Diffb^2(X;E),\quad q_{0,z}\in\CI(X;\End(E)),
  \]
  where $x^{-2}Q_{2,z}$ (replacing the combination $h^2\Delta_g+h^2 x^{-2}Q_{1,z}$ in Definition~\ref{DefPOp}) has scalar principal symbol $\sigmab(x^{-2}Q_{2,z})=\sigmab(x^{-2}\Delta_g)$. That is, $\sigmab(x^{-2}Q_{2,z})(\zeta)=|\zeta|_{g^{-1}}^2$ for $\zeta\in\Tb^*X$, with $g$ the conic metric~\eqref{EqPProdMet}. The normal operator is of class
  \[
    N(P)\in(\tfrac{\hat x}{\hat x+1})^{-2}\Diff_{\bop,\scop}^2(\tface;\pi^*E_{\pa X}),
  \]
  where $\pi\colon\tface=[0,\infty]_{\hat x}\times\pa X\to\pa X$ denotes the projection map. The injectivity conditions of Definition~\ref{DefPNormOp} are unchanged. The definition of the threshold quantities $r_{\rm in/out}$ in Definition~\ref{DefPThr} requires a minor change; to wit, with respect to a choice of a positive definite fiber inner product on $E_{\pa X}$, we set (top sign for `$\rm in$', bottom sign for `$\rm out$')
  \begin{align*}
    r_{\rm in/out} &:= -\half \pm \half\sup_{\pa X} \sigmasc\Bigl(\frac{x^{-2}Q_{2,1}-(x^{-2}Q_{2,1})^* + q_{0,1}-q_{0,1}^*}{2 i}\Bigr)\Big|_{\mp\frac{\dd x}{x^2}},
  \end{align*}
  where the $\sup$ is defined to be the supremum of the largest eigenvalue of the scattering symbol (which takes values in self-adjoint endomorphisms of $E$). One may choose different fiber inner products in the calculation of $r_{\rm in}$ and $r_{\rm out}$, respectively. A (near-)optimal choice of fiber inner products, resulting in (almost) the smallest possible $r_{\rm in}$ and largest possible $r_{\rm out}$, is typically easy to read off in concrete situations.
\end{rmk}

\begin{rmk}[Technical assumptions on the variable order]
\label{RmkPVar}
  One can replace the assumptions that $\sfb$ be locally constant near $\cR_{\rm in/out}$ and satisfy $\sqrt{-\sfH\sfb}\in\CI$ on $\Sigma$ by the simpler assumption that $\sfH\sfb\leq 0$ on $\Sigma$. This would require the use of the sharp G\aa{}rding inequality for the $\chop$-calculus, which however we do not prove here.
\end{rmk}

\begin{proof}[Proof of Theorem~\usref{ThmP}]
  We give details for the proof of part~\eqref{ItPThmFw}; the proof of part~\eqref{ItPThmBw} is completely analogous. If backward GBBs starting in $\WFch'(B)$ never pass through $\pa\Sigma\subset\Tch^*_{\sface\cap\tface}X_\chop$, the orders $l$ and $a$ are irrelevant, and the estimate~\eqref{EqPThmFw} follows from standard elliptic regularity and real principal type propagation in the (variable order) semiclassical calculus on $X^\circ$. We shall thus work in a small neighborhood of $x=0$. 

  \pfstep{Step 1: symbolic positive commutator estimate.} We first work near the incoming radial set $\cR_{\rm in}$ defined in~\eqref{EqPRadSets}; we shall use the coordinates $(\hat h,x,y,\xi,\eta)$ near $\Tch^*_{\sface\cap\tface}X_\chop$ defined by \eqref{EqCVCoordTilde} and \eqref{EqPhat}. Fix cutoffs $\chi_\pa,\chi_\sface,\chi_\cR\in\CIc([0,1))$, identically $1$ near $0$ and satisfying $\chi_\bullet'\leq 0$ and $\sqrt{-\chi_\bullet\chi_\bullet'}\in\CI([0,1))$. Denote a smooth extension of $\sfb$ to $\ol{\Tch^*}X_\chop$ by the same symbol. For small $\delta>0$, fixed momentarily, we then consider a commutant
  \begin{equation}
  \label{EqPThmComm}
    \check a = \hat h^{-\sfb-\frac12}x^{-\alpha}\chi_\pa\bigl(\tfrac{x}{\delta}\bigr)\chi_\sface\bigl(\tfrac{\hat h}{\delta}\bigr)\chi_\cR\bigl(\tfrac{\omega}{\delta}\bigr),\qquad\omega := \sqrt{|\eta|^2+|\xi+1|^2}.
  \end{equation}
  Thus, $\supp a$ is contained in any fixed open neighborhood of $\cR_{\rm in}$ when $\delta>0$ is sufficiently small. We have $\check a\in S^{-\infty,-\infty,\alpha,\sfb+\frac12}(\ol{\Tch^*}X_\chop)$. Let
  \[
    \check A \in \Psich^{-\infty,-\infty,\alpha,\sfb+\frac12}(X),\quad \sigmach(\check A)=\check a,\qquad
    A := \check A^*\check A.
  \]
  Using the $L^2(X;\mu)$ inner product, we then evaluate the commutator
  \begin{equation}
  \label{EqPThmPC}
    2\Im\la \check A P_{h,z}u,\check A u\ra = \la \sC u,u\ra,\qquad \sC=i[P_{h,z},A]+2\tfrac{P_{h,z}-P_{h,z}^*}{2 i}A \in \Psich^{-\infty,-\infty,2 a,2\sfb}(X).
  \end{equation}
  The principal symbol of $\sC$ is
  \begin{equation}
  \label{EqPThmPCSymb}
    2\hat h\check a\sfH\check a + 2\cdot\sigmach\Bigl(\tfrac{P_{h,z}-P_{h,z}^*}{2 i\hat h}\Bigr)\hat h\check a^2.
  \end{equation}
  When $\sfH$ hits $\chi_\pa$, we obtain a nonnegative contribution (in fact, the square $e^2$ of a smooth function $e$), while differentiation of $\chi_\cR$ gives a nonpositive contribution (in fact, a negative squares $-b_\cR^2$), consistently with the saddle point structure of $\sfH$ at $\cR_{\rm in}$. Differentiation of $\chi_\sface$ produces a symbol with semiclassical order $-\infty$.
  
  The main term of $\hat h\check a \sfH\check a$ near $\cR_{\rm in}$ arises from differentiation of the weight $\hat h^{-\sfb-\frac12}x^{-\alpha}$; since $H_p=\hat h\sfH$ is, modulo $\hat h x\Vb$, given by the expression~\eqref{EqPHamResc}, we can compute this modulo $x S^{-\infty,-\infty,2 a,2\sfb}$ by substituting the expression~\eqref{EqPHamResc} of $\sfH|_{x=0}$ for $\sfH$. Thus, the main term is
  \[
    \hat h^{-2\sfb}x^{-2\alpha}\bigl(2\xi\bigl(-2\alpha+2\sfb+1\bigr)+\cO(x)\bigr)\chi_\pa^2\chi_\sface^2\chi_\cR^2.
  \]
  A further contribution arises from the skew-adjoint part of $P_{h,z}$ at $\cR_{\rm in}$, which is the same as the skew-adjoint part of $N(P)$ at ${}^\scop\cR_{\rm in}$ upon making the identification~\eqref{EqCPNBundles}; this was already computed in the proof of Lemma~\ref{LemmaPNEst}. Overall then, we can write
  \begin{equation}
  \label{EqPThmPCSymbSq}
    \sigmach(\sC) = e^2 - b_\cR^2 - \eps\hat h\check a^2 - f^2\hat h\check a^2,
  \end{equation}
  where $f=\sqrt{-\bigl[2(-2(\sfb-\alpha)-1)+2(r_1+r_0)\bigr]-\eps}$ is positive (and smooth) at $\cR_{\rm in}$ for small $\eps>0$. Denoting $\chop$-quantizations of the lower case symbols by the corresponding upper case letters, we thus have
  \[
    \sC = E^*E - B_\cR^*B_\cR - \eps\|\hat h^{\frac12}\check A u\|^2 - \|\hat h^{\frac12}F\check A u\|^2 + R,
  \]
  where $R\in\Psich^{-\infty,-\infty,2\alpha,2\sfb-1}(X)$ has $\WFch'(R)\subset\supp\check a$ and arises as the remainder term not controlled by the previous symbolic considerations. We will plug this into the right hand side of~\eqref{EqPThmPC}; the left hand side is bounded from below by
  \begin{align*}
    &-\eps\|\hat h^{\frac12}\check A u\|^2 - \eps^{-1}\|\hat h^{-\frac12}\check A P_{h,z}u\|^2 \\
    &\qquad \geq -\eps\|\hat h^{\frac12}\check A u\|^2 - C\eps^{-1}\|G P_{h,z}u\|_{\Hch^{-N,-N,\alpha,\sfb+1}} - C\eps^{-1}\|\tilde\chi u\|_{\Hch^{-N,-N,\alpha,-N}}^2,
  \end{align*}
  where $G\in\Psich^0(X)$ is elliptic on $\supp\check a$; here $N\in\R$ is arbitrary. Putting $B_0:=\hat h^{\frac12}F\check A\in\Psich^{-\infty,-\infty,\alpha,\sfb}(X)$ and dropping the contribution of $B_\cR$, we thus obtain the estimate
  \begin{equation}
  \label{EqPThmRin}
    \|B_0 u\|^2 \lesssim \| G P_{h,z}u \|_{\Hch^{-N,-N,\alpha,\sfb+1}}^2 + \|E u\|^2 + |\la R u,u\ra| + \|\tilde\chi u\|_{\Hch^{-N,-N,\alpha,-N}}^2,
  \end{equation}
  which provides $\Hch^{-N,-N,\alpha,\sfb}$-control of $u$ microlocally near $\cR_{\rm in}$ provided one has microlocal $\Hch^{-N,-N,-N,\sfb}$-control of $u$ on $\WFch'(E)\subset\{0<x<\delta,\ |\xi+1|<\delta\}$, and provided $|\la R u,u\ra|$ is finite; since $G$ is elliptic near $\WFch'(R)$, we can insert the estimate
  \[
    |\la R u,u\ra|\lesssim\|G u\|_{\Hch^{-N,-N,\alpha,\sfb-\frac12}}^2+\|\tilde\chi u\|_{\Hch^{-N,-N,\alpha,-N}}^2
  \]
  into the right hand side of~\eqref{EqPThmRin}.

  Concatenating this radial point estimate with the propagation of regularity from a punctured neighborhood of $\cR_{\rm in}$ to a punctured neighborhood of $\cR_{\rm out}$ and then a radial point estimate at $\cR_{\rm out}$---proved by the same method, with the commutant $\check a$ again given by~\eqref{EqPThmComm} but now with $\omega=(|\eta|^2+|\xi-1|^2)^{1/2}$ and using that $2(\sfb-\alpha)+1-r_1+r_0<0$---we obtain the propagation estimate
  \begin{equation}
  \label{EqPThmSymb}
    \|B_0 u\|_{\Hch^{s,l,\alpha,\sfb}} \lesssim \|G P_{h,z}u\|_{\Hch^{s-2,l-2,\alpha,\sfb+1}} + \|E u\|_{\Hch^{s,-N,-N,\sfb}} + \|\tilde\chi u\|_{\Hch^{-N,l,\alpha,\sfb-\frac12}}.
  \end{equation}
  The orders at $\cface$ are unconstrained at this point, but chosen for compatibility with the normal operator argument below.
  
  Away from $\Sigma$, we have elliptic estimates. Fixing a variable order function $\sfb'$ so that
  \begin{equation}
  \label{EqPThmbp}
    \sfb'-\alpha>r_{\rm in}\quad\text{at $\cR_{\rm in}$},
  \end{equation}
  we then have the estimate
  \begin{equation}
  \label{EqPThmSymb2}
    \|\chi u\|_{\Hch^{s,l,\alpha,\sfb}} \lesssim \|\tilde\chi P_{h,z}u\|_{\Hch^{s-2,l-2,\alpha,\sfb+1}} + \|E u\|_{\Hch^{s,-N,-N,\sfb}} + \|\tilde\chi u\|_{\Hch^{-N,l,\alpha,\sfb'}}
  \end{equation}
  under the assumptions on $E,\chi,\tilde\chi$ stated in the Theorem. For $\sfb'\geq\sfb-\half$, this follows directly from~\eqref{EqPThmSymb} (upon replacing the microlocal cutoff $G$ by the less precise cutoff $\tilde\chi$). For general $\sfb'$, note that as long as~\eqref{EqPThmbp} is satisfied, one can apply this estimate inductively to the error term $\tilde\chi u$ provided $\supp\tilde\chi$ is sufficiently close to $\supp\chi$ (so that the same operator $E u$ satisfies the geometric control assumption for $\tilde\chi$ in place of $\chi$), increasing supports of the involved cutoff functions by an arbitrarily small but positive amount and gaining half a semiclassical order at each step. Thus, away from $\cR_{\rm in}$, one can ultimately take $\sfb'$ to be arbitrarily negative, while at $\cR_{\rm in}$, one always needs to have~\eqref{EqPThmbp}.

  \pfstep{Step 2: normal operator estimate.} We now work on the error term $\tilde\chi u$ in~\eqref{EqPThmSymb2}.
  
  We first prove the desired estimate~\eqref{EqPThmFw} under the stronger condition that $\sfb-\alpha>r_{\rm in}+1$ at $\cR_{\rm in}$. We split $\tilde\chi u=\chi^\flat u+(1-\chi^\flat)\chi\tilde u$, where $\chi^\flat\in\CI(X)$ is identically $1$ near $\pa X$ and supported in a very small neighborhood of $\pa X$; the part $(1-\chi^\flat)\tilde\chi u$ is supported away from $\cface\cup\tface$, hence $\|(1-\chi^\flat)\tilde\chi u\|_{\Hch^{-N,l,\alpha,\sfb^\flat}}\lesssim\|(1-\chi^\flat)\tilde\chi u\|_{\Hch^{-N,-N,-N,\sfb^\flat}}$ for any $N$. To estimate $\chi^\flat u$, we use the injectivity assumption on $N(P)$ and the resulting estimate~\eqref{EqPNEstFw} together with Corollary~\ref{CorCPNSob}\eqref{ItCPNSobVar} (with $\alpha_\mu=n$). For $0<\delta<1$ with $\sfb^\flat+2\delta<\sfb$, and choosing $\supp\chi^\flat$ sufficiently small, we obtain
  \begin{align}
  \label{EqPThmNorm0}
    \|\chi^\flat u\|_{\Hch^{-N,l,\alpha,\sfb^\flat}} &\lesssim h^{\frac{n}{2}-\alpha}\|\pi^*(\chi^\flat u)\|_{H_{\bop,\scop}^{-N,l,\sfb^\flat|_{\sface\cap\tface}-\alpha+\delta}} \\
      &\lesssim h^{\frac{n}{2}-\alpha}\|N(P)(\pi^*(\chi^\flat u))\|_{H_{\bop,\scop}^{-N-2,l-2,\sfb^\flat|_{\sface\cap\tface}-\alpha+\delta+1}} \nonumber\\
      &\lesssim \|N(P)(\chi^\flat u)\|_{\Hch^{-N-2,l-2,\alpha,\sfb^\flat+1+2\delta}}. \nonumber
  \end{align}
  Put $\sfb^\sharp:=\sfb^\flat+1+2\delta$. Using Lemma~\ref{LemmaPStruct}, which gives $N(P)-P_{h,z}\in\Psich^{2,2,-1,0}(X)$, we further estimate
  \begin{align}
    &\|N(P)(\chi^\flat u)\|_{\Hch^{-N-2,l-2,\alpha,\sfb^\sharp}} \nonumber\\
  \label{EqPThmNormInter}
  \begin{split}
    &\quad \leq \|\chi^\flat P_{h,z}u\|_{\Hch^{-N-2,l-2,\alpha,\sfb^\sharp}} + \|\chi^\flat(P_{h,z}-N(P))u\|_{\Hch^{-N-2,l-2,\alpha,\sfb^\sharp}} \\
    &\quad\qquad + \|[N(P),\chi^\flat]u\|_{\Hch^{-N-2,l-2,\alpha,\sfb^\sharp}}
  \end{split} \\
  \label{EqPThmNorm}
    &\quad \lesssim \|\chi^\flat P_{h,z}u\|_{\Hch^{-N-2,l-2,\alpha,\sfb^\sharp}} + \|\chi^\flat u\|_{\Hch^{-N,l,\alpha-1,\sfb^\sharp}} + \|\chi^\sharp u\|_{\Hch^{-N-1,-N,-N,\sfb^\sharp-1}},
  \end{align}
  where $\chi^\sharp\equiv 1$ on $\supp\chi^\flat$. Under the present condition that $\sfb-\alpha>r_{\rm in}+1$ at $\cR_{\rm in}$, we can choose $\sfb^\flat$ as in~\eqref{EqPThmbp} so that $\sfb^\sharp<\sfb$ still. Plugging this into~\eqref{EqPThmSymb} finishes the proof of part~\eqref{ItPThmFw} under this condition.

  In order to prove the Theorem as stated, thus only assuming $\sfb-\alpha>r_{\rm in}$ at $\cR_{\rm in}$, we note that the norm on second term on the right in~\eqref{EqPThmNorm} is $1$ order weaker at $\tface$ than the left hand side of~\eqref{EqPThmNorm0}, but only $\sfb^\sharp-\sfb=\sfb^\flat+1+2\delta-\sfb<1$ orders stronger at $\sface$. This suggests revisiting the estimates~\eqref{EqPThmNorm0}--\eqref{EqPThmNorm} using a more precise cutoff which distinguishes between the regimes $\hat h\lesssim x$ and $\hat h\gtrsim x$. To wit, consider $\chi^\flat=\psi^\flat(\frac{\hat h}{x})$, where $\psi^\flat\equiv 0$ on $[0,1]$ and $\psi^\flat\equiv 1$ on $[2,\infty)$. This is a smooth function on $[X_\chop;\sface\cap\tface]$, and thus conormal on $X_\chop$; in fact, we have
  \begin{equation}
  \label{EqPThmChiFlat}
    1-\chi^\flat \in \cA^{0,\zeta,-\zeta}(X_\chop) = (x+h)^\zeta\bigl(\tfrac{h}{h+x}\bigr)^{-\zeta}\cA^0(X_\chop) \subset \Psich^{0,0,-\zeta,\zeta}(X)
  \end{equation}
  for any $\zeta\geq 0$, since on $\supp(1-\chi^\flat)$ we have $x\lesssim\hat h$, thus $x+h\lesssim\frac{h}{h+x}$. Taking $\zeta=\delta$, we can therefore estimate
  \[
    \|(1-\chi^\flat)\tilde\chi u\|_{\Hch^{-N,l,\alpha,\sfb^\flat}} \lesssim \|\tilde\chi u\|_{\Hch^{-N,l,\alpha-\delta,\sfb^\flat+\delta}} \leq h^\delta\|\tilde\chi u\|_{\Hch^{-N,l,\alpha,\sfb}}
  \]
  Next, the estimate~\eqref{EqPThmNorm0} holds without change. (Note that Corollary~\ref{CorCPNSob} applies for merely conormal cutoffs.) Finally, we need to estimate~\eqref{EqPThmNormInter} more carefully. Note that
  \[
    \chi^\flat \in \cA^{0,-\zeta,\zeta}(X_\chop) \subset \Psich^{0,0,\zeta,-\zeta}(X)
  \]
  for any $\zeta\geq 0$. Taking $\zeta=1-\delta$, this gives $\chi^\flat(P_{h,z}-N(P))\in\Psich^{2,2,-\delta,-1+\delta}(X)$, hence
  \[
    \|\chi^\flat(P_{h,z}-N(P))u\|_{\Hch^{-N-2,l-2,\alpha,\sfb^\sharp}} \lesssim \|\tilde\chi u\|_{\Hch^{-N,l,\alpha-\delta,\sfb^\sharp-1+\delta}} \leq h^\delta\|\tilde\chi u\|_{\Hch^{-N,l,\alpha,\sfb}}.
  \]
  For the final, commutator, term in~\eqref{EqPThmNormInter}, we note that we can replace $\chi^\flat$ by $1-\chi^\flat$ and use~\eqref{EqPThmChiFlat} with $\zeta=\delta$, so $[N(P),\chi^\flat]\in\Psich^{1,-\infty,-\delta,-1+\delta}(X)$, which gives
  \[
    \|[N(P),\chi^\flat]u\|_{\Hch^{-N-2,l-2,\alpha,\sfb^\sharp}} \lesssim \|\tilde\chi u\|_{\Hch^{-N-1,-N,\alpha-\delta,\sfb^\sharp-1+\delta}} \leq h^\delta\|\tilde\chi u\|_{\Hch^{-N-1,-N,\alpha,\sfb}}.
  \]
  Altogether, we have shown
  \[
    \|\tilde\chi u\|_{\Hch^{-N,l,\alpha,\sfb^\flat}} \lesssim \|\tilde\chi P_{h,z}u\|_{\Hch^{-N-2,l-2,\alpha,\sfb^\sharp}} + h^\delta\|\tilde\chi u\|_{\Hch^{-N,l,\alpha,\sfb}}.
  \]
  Plugged into~\eqref{EqPThmSymb2}, we have now established the desired estimate~\eqref{EqPThmFw}. This finishes the proof of the Theorem.
\end{proof}

We can sharpen Theorem~\ref{ThmP} by working with the resolved Sobolev spaces defined in~\eqref{EqCbSob}. This is straightforward since admissible operators
\[
  P_{h,z} \in \Psich^{2,2,0,0}(X) \subset \Psi_{\cop\bop\semi}^{2,2,2,0,0}(X)
\]
are elliptic at the front face $\fbface$ of $\ol{{}^{\cop\bop\semi}T^*}X_\chop$; indeed, this follows from the ellipticity at fiber infinity $\Sch^*_\tface X_\chop\subset\ol{\Tch^*}X_\chop$ and the classical nature of the principal symbol of $P_{h,z}$. Therefore:

\begin{thm}[Propagation estimates with relative b-regularity]
\label{ThmCb}
  In the notation of Theorem~\usref{ThmP}, and for any $s'\in\R$, the forward propagation estimate~\eqref{EqPThmFw} generalizes to an estimate on $\cop\bop\semi$-Sobolev spaces,
    \begin{align*}
      \| \chi u \|_{H_{\cop\bop,h}^{s,s',l,\alpha,\sfb}(X)} &\leq C\bigl(\|\tilde\chi P_{h,z}u\|_{H_{\cop\bop,h}^{s-2,s'-2,l-2,\alpha,\sfb+1}(X)} \\
        &\quad\qquad + \|E u\|_{H_{\cop\bop,h}^{s,s',l,\alpha,\sfb}(X)} + h^\delta\| \tilde\chi u \|_{H_{\cop\bop,h}^{-N,-N,l,\alpha,\sfb}(X)} \bigr).
    \end{align*}
    The backward propagation estimate~\eqref{EqPThmBw} generalizes similarly.
\end{thm}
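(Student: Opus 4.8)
The plan is to run the proof of Theorem~\ref{ThmP} essentially verbatim, treating the additional order $s'$ at the face $\fbface$ of $\ol{{}^{\cop\bop\semi}T^*}X_\chop$ (see~\eqref{EqCbTchb}) as a spectator which is controlled at every stage by elliptic regularity. The only genuinely new input is the observation, already recorded above, that an admissible operator $P_{h,z}\in\Psi_{\cop\bop\semi}^{2,2,2,0,0}(X)$ is elliptic not only at fiber infinity $\Sch^*X_\chop$ but also at $\fbface$; this is immediate from the classicality of its $\chop$-principal symbol $\xi_\chop^2+|\eta_\chop|_{k^{-1}}^2-1$ together with its ellipticity at fiber infinity, since $\fbface$ is by definition the front face of the blow-up of $\Sch^*_\tface X_\chop$ inside $\ol{\Tch^*}X_\chop$.

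First I would establish the elliptic estimate: a microlocal parametrix for $P_{h,z}$ in $\Psi_{\cop\bop\semi}$, elliptic near $\Sch^*X_\chop\cup\fbface$, bounds $\chi u$ there in the $H_{\cop\bop,h}^{s,s',l,\alpha,\sfb}$-norm by $P_{h,z}u$ measured two orders lower plus a remainder which is residual near $\Sch^*X_\chop\cup\fbface$. In particular the order $s'$ at $\fbface$ is obtained with no constraint whatsoever, for any $s'\in\R$. Next comes the symbolic propagation over $\sface$. The characteristic set $\Sigma\subset\Tch^*_\sface X_\chop$, and in particular the radial sets $\cR_{\rm in/out}$, lies at finite frequencies over $\sface$, hence away from both $\Sch^*X_\chop$ and $\fbface$ (the latter sitting over $\tface$); over that region the blow-up~\eqref{EqCbTchb} is a diffeomorphism, so symbols, Poisson brackets and commutators in $\Psi_{\cop\bop\semi}$ agree with those in $\Psich$. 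Consequently the elliptic, real principal type and radial point arguments of Step~1 of the proof of Theorem~\ref{ThmP} go through unchanged — the commutants used there have symbol order $-\infty$ at fiber infinity, hence lie in $\Psi_{\cop\bop\semi}$ with order $-\infty$ at $\fbface$ — and, combined with the elliptic estimate, they yield the exact analogue of~\eqref{EqPThmSymb2} on $\cop\bop\semi$-Sobolev spaces, with error term $\tilde\chi u$ measured in $H_{\cop\bop,h}^{-N,-N,l,\alpha,\sfb'}(X)$.

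It remains to dispose of this error term via the normal operator, which is where the bookkeeping requires care. Restriction to $\tface_2\subset X_\chop^2$ should send $\Psi_{\cop\bop\semi,\cl}(X)$ to the resolution of $\Psi_{\bop,\scop}(\tface)$ obtained by blowing up fiber infinity of ${}^{\bop,\scop}T^*\tface$; being the blow-up of a boundary face, this enlarges neither the conormal symbol classes nor the ps.d.o.\ classes, and $N(P)$ is elliptic at the new face for exactly the reason $P_{h,z}$ is elliptic at $\fbface$. Thus I would first prove the analogues of Lemma~\ref{LemmaCPNpsdo} and Corollary~\ref{CorCPNSob} for the resolved calculus — the routine but fiddly heart of the argument — matching the order $s'$ at $\fbface$ with the differential order at fiber infinity over the b-end $\tface\cap\cface$ of $\tface$. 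Granting this, the estimate~\eqref{EqPNEstFw} of Lemma~\ref{LemmaPNEst} upgrades by one more application of elliptic b-scattering regularity to track $s'$, and feeding it into Step~2 of the proof of Theorem~\ref{ThmP} — using that $N(P)-P_{h,z}$ lies in $\Psi_{\cop\bop\semi}$ with a one-order gain at $\tface$ (by Lemma~\ref{LemmaPStruct}) — removes $\tilde\chi u$ with the usual gain of $h^\delta$, proving the stated forward estimate; the backward estimate follows from the same adjoint/duality considerations as in Theorem~\ref{ThmP}. I expect the only real obstacle to be this compatibility bookkeeping for the resolved restriction maps, the analytic content being entirely inherited from Theorem~\ref{ThmP} and the elliptic theory of the $\cop\bop\semi$-calculus.
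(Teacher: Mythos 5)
Your proposal is correct and follows exactly the route the paper takes: the paper's entire justification is the observation, stated in the paragraph preceding the theorem, that $P_{h,z}\in\Psi_{\cop\bop\semi}^{2,2,2,0,0}(X)$ is elliptic at the front face $\fbface$ (by classicality of its principal symbol and ellipticity at fiber infinity), so that the extra order $s'$ is a spectator handled by elliptic regularity while the symbolic and normal operator arguments of Theorem~\ref{ThmP} carry over unchanged. Your write-up in fact supplies more detail (the location of $\Sigma$ away from $\fbface$, the compatibility of the resolved restriction map to $\tface_2$) than the paper, which leaves these points implicit.
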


\subsection{Global estimates with complex absorption}
\label{SsPA}

We upgrade the microlocal estimate proved above into a quantitative invertibility statement for an operator which effectively localizes the interesting nonelliptic phenomena near the cone point into a small neighborhood of $\pa X$ via complex absorption.

Concretely, with $X=[0,2 x_0)\times Y$ and $g$ as in~\eqref{EqPMfd}--\eqref{EqPProdMet}, consider a compact $n$-dimensional manifold $X'\supset X$ with boundary $\pa X'=\pa X$, equipped with a smooth metric $g'$ which is equal to $g$ on $X^\flat:=[0,x_0]\times Y$. Given an admissible operator $P_{h,z}$ on $X$, let $P'_{h,z}\in(\frac{x}{x+h})^{-2}\Diffch^2(X'_\chop)$ denote an extension of $P_{h,z}$ from $[0,1)\times X^\flat$ to $[0,1)\times X'$ with principal part equal to $h^2\Delta_{g'}$. For $c\in(0,1)$, denote
\[
  K_c:=X'\setminus([0,c x_0]\times Y).
\]
In order to implement complex absorption, let us take $c\in(0,\half)$ small and fix an operator
\[
  Q \in \Psi_h^{-\infty}((X')^\circ)
\]
whose Schwartz kernel is supported in $K_c\times K_c$, and so that $Q$ is elliptic on $T^*K_{2 c}$ with nonnegative principal symbol. We then consider
\begin{equation}
\label{EqPAcP}
  \cP_{h,z} := P'_{h,z} - i Q,
\end{equation}
and assume that
\begin{equation}
\label{EqPAcPCtrl}
  \text{all backward GBBs of $P'_{h,z}$ enter $\Ellh(Q)$ in finite time.}
\end{equation}

By construction, $\cP_{h,z}$ is a semiclassically elliptic second order semiclassical ps.d.o.\ on $(X')^\circ$ which is elliptic over $K_{2 c}$. Moreover, due to the sign condition on the principal symbol of $Q$, one can propagate semiclassical regularity for solutions of $\cP_{h,z}u=f$ along \emph{forward} null-bicharacteristics of $P'_{h,z}$, see \cite[\S2.5]{VasyMicroKerrdS} and \cite[\S5.6.3]{DyatlovZworskiBook}. For our fixed metric $g$ on $[0,2 x_0)\times Y$, the control condition~\eqref{EqPAcPCtrl} is satisfied if we choose $c>0$ sufficiently small. Indeed, from the expression~\eqref{EqPHsf}, one finds that if $\sfH_\sface x=2\xi=0$ on the characteristic set, then $|\eta|^2=1$ and thus $\sfH_\sface^2 x=2\sfH\xi\geq 2 x^{-1}|\eta|^2-C|\eta|^2=2 x^{-1}-C>c^{-1}-C>0$ in $x<2 c$ when $c$ is sufficiently small; hence the level sets of $x$ are geodesically convex in $x<2 c$, which implies the claim.

\begin{rmk}[Relaxed conditions on $Q$]
\label{RmkPAQdelta}
  One can more generally allow $Q$ to be a second order operator with real principal symbol; a concrete choice is then $Q=\psi\cdot(h^2\Delta_{g'}+1)$ where $\psi\in\CIc(K_c^\circ)$ is identically $1$ on $K_{2 c}$.
\end{rmk}

We then have:
\begin{prop}[Global estimates with complex absorption]
\label{PropPA}
  Let $s,l,\alpha,\sfb$ be as in the statement of Theorem~\usref{ThmP} (for the operator $P_{h,z}$). Fix the volume density on $X'$ to be the metric density $|\dd g'|$. Then for small $h>0$, the operator $\cP_{h,z}$ defined by~\eqref{EqPAcP} is invertible as a map $\Hb^{s,l}(X')\to\Hb^{s-2,l-2}(X')$, and it satisfies the uniform estimate
  \begin{equation}
  \label{EqPAEst}
    \|u\|_{\Hch^{s,l,\alpha,\sfb}(X')} \leq C\|\cP_{h,z}u\|_{\Hch^{s-2,l-2,\alpha,\sfb+1}(X')} = C h^{-1}\|(x+h)\cP_{h,z}u\|_{\Hch^{s-2,l-2,\alpha,\sfb}}.
  \end{equation}
  More generally, for any $s'\in\R$, we have $\|u\|_{H_{\cop\bop,h}^{s,s',l,\alpha,\sfb}(X')} \leq C\|\cP_{h,z}u\|_{H_{\cop\bop,h}^{s-2,s'-2,l-2,\alpha,\sfb+1}}$.
\end{prop}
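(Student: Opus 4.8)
The plan is to use Theorem~\ref{ThmP} to handle the genuinely singular behaviour in a neighbourhood of $\pa X$, to treat the remainder of $X'$ by standard semiclassical microlocal analysis, and then to pass from the resulting a~priori estimate to invertibility by a short functional-analytic argument at fixed small $h$. Since the absorbing term $Q$ is supported in $K_c$, hence away from $\pa X$, the operator $\cP_{h,z}$ agrees with $P'_{h,z}$---and therefore with $P_{h,z}$---on $[0,1)\times X^\flat$; moreover $\cP_{h,z}$ is semiclassically elliptic on $(X')^\circ$ except on $\Char(P'_{h,z})\cap\{q=0\}$, where $q$ is the semiclassical principal symbol of $Q$, so in particular it is elliptic on a neighbourhood of $\Ellh(Q)$. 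Fix cutoffs $\chi,\tilde\chi$ supported near $\pa X$ as in Theorem~\ref{ThmP}, and an operator $E\in\Psich^0(X')$ microlocally supported in, and elliptic on, $\Ellh(Q)$. Then Theorem~\ref{ThmP}\eqref{ItPThmFw} applies: its hypotheses on $s,l,\alpha,\sfb$ and on the injectivity of $N(P)$ at weight $l$ on outgoing functions are exactly those standing here, while its geometric control hypothesis on backward GBBs is precisely~\eqref{EqPAcPCtrl}. Absorbing $\|E u\|$ by the ellipticity of $\cP_{h,z}$ on $\WFch'(E)$, we obtain
\[
  \|\chi u\|_{\Hch^{s,l,\alpha,\sfb}(X')} \leq C\bigl(\|\tilde\chi\cP_{h,z}u\|_{\Hch^{s-2,l-2,\alpha,\sfb+1}(X')} + h^\delta\|\tilde\chi u\|_{\Hch^{-N,l,\alpha,\sfb}(X')}\bigr).
\]

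On $\operatorname{supp}(1-\chi)$, i.e.\ on a compact subset of $(X')^\circ$ away from $\pa X$, $\cP_{h,z}$ is a semiclassically elliptic second order semiclassical pseudodifferential operator, elliptic over $K_{2c}$, and since $Q\geq 0$ its solutions propagate semiclassical regularity along forward null-bicharacteristics of $P'_{h,z}$ (as in \cite[\S2.5]{VasyMicroKerrdS}, \cite[\S5.6.3]{DyatlovZworskiBook}); by~\eqref{EqPAcPCtrl} every backward GBB reaches $\Ellh(Q)$, where $u$ is controlled by $\cP_{h,z}u$ by ellipticity. Elliptic estimates, real principal type propagation and the absorbing estimate thus control $(1-\chi)u$ in $\Hch^{s,l,\alpha,\sfb}(X')$ by $\cP_{h,z}u$ modulo an $O(h^\infty)$ error. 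Adding this to the near-boundary estimate---with supports of $\chi,\tilde\chi$ and the elliptic set of $E$ chosen compatibly so that the geometric control in Theorem~\ref{ThmP} holds---gives a global a~priori estimate
\[
  \|u\|_{\Hch^{s,l,\alpha,\sfb}(X')} \leq C\bigl(\|\cP_{h,z}u\|_{\Hch^{s-2,l-2,\alpha,\sfb+1}(X')} + h^\delta\|u\|_{\Hch^{-N,l,\alpha,\sfb}(X')}\bigr).
\]
The embedding $\Hch^{s,l,\alpha,\sfb}(X')\hookrightarrow\Hch^{-N,l,\alpha,\sfb}(X')$ is bounded uniformly in $h$, so once $h$ is small the last term is absorbed into the left-hand side; this is~\eqref{EqPAEst}, whose second form follows from $\Hch^{s-2,l-2,\alpha,\sfb+1}(X')=\tfrac{h}{h+x}\Hch^{s-2,l-2,\alpha,\sfb}(X')$ and $\tfrac{h}{h+x}(x+h)=h$. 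Running the same argument with Theorem~\ref{ThmCb} in place of Theorem~\ref{ThmP} near $\pa X$---and using that $\cP_{h,z}\in\Psi_{\cop\bop\semi}^{2,2,2,0,0}(X')$ is elliptic at $\fbface$, so the extra order $s'$ is absorbed by elliptic regularity, while the $\cop\bop\semi$-structure is invisible away from $h=x=0$---yields the $H_{\cop\bop,h}$-version.

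For invertibility, we run the same construction for the formal adjoint $\cP_{h,z}^*=(P'_{h,z})^*+iQ^*$. Near $\pa X$ it equals $P_{h,z}^*$, so Theorem~\ref{ThmP}\eqref{ItPThmBw} applies---its hypotheses, the injectivity of $N(P)^*$ at weight $-l+2$ on incoming functions and the threshold conditions, being standing assumptions---and in the interior it propagates semiclassical regularity along \emph{backward} null-bicharacteristics, with control coming from $\Ellh(Q)$; here every \emph{forward} GBB reaches $\Ellh(Q)$ by the time-reversed form of the geodesic-convexity argument behind~\eqref{EqPAcPCtrl}. This yields, for small $h$,
\[
  \|v\|_{\Hch^{-s+2,-l+2,-\alpha,-\sfb-1}(X')} \leq C\|\cP_{h,z}^*v\|_{\Hch^{-s,-l,-\alpha,-\sfb}(X')}.
\]
Fix now $h>0$ small. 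Since for fixed $h$ the spaces $\Hch^\bullet(X')$ coincide with the b-Sobolev spaces $\Hb^\bullet(X')$ up to an $h$-dependent norm equivalence, and $\cP_{h,z}\in x^{-2}\Diffb^2(X')+\Psi_h^{-\infty}((X')^\circ)$ is bounded $\Hb^{s,l}(X')\to\Hb^{s-2,l-2}(X')$, estimate~\eqref{EqPAEst} shows that $\cP_{h,z}$ is injective with closed range, while the adjoint estimate shows that $\cP_{h,z}^*$ is injective on $\Hb^{-s+2,-l+2}(X')=(\Hb^{s-2,l-2}(X'))^*$ (duality with respect to the $L^2(X';|\dd g'|)$ pairing); hence the range of $\cP_{h,z}$ is dense, so $\cP_{h,z}$ is a bounded bijection, i.e.\ invertible. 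The $\cop\bop\semi$-version of~\eqref{EqPAEst} has already been noted, which completes the proof.

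The genuinely hard analysis---propagation through the cone point, with its $h^\delta$ gain and its dependence on the global invertibility of $N(P)$---is already contained in Theorems~\ref{ThmP} and~\ref{ThmCb}, and the remaining steps require care but no new ideas. The point demanding the most attention is the gluing in the second paragraph: the near-boundary estimate controls $\chi u$ in terms of $\tilde\chi u$ on a slightly larger set, so the interior estimate (and the choice of $E$ and of $\tilde\chi$) must be arranged so that the combined estimate closes into a genuinely global one; after that, the legitimacy of absorbing the $O(h^\delta)$ error---which rests on the uniform-in-$h$ continuity of the embedding in the $\chop$-differential order---and the passage from two-sided a~priori estimates to invertibility are routine.
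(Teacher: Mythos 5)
Your proof is correct and follows essentially the same route as the paper: Theorem~\ref{ThmP} near the cone point, elliptic estimates and forward real-principal-type propagation with complex absorption in the interior, absorption of the $h^\delta$ error for small $h$, and the dual estimate via backward propagation to get surjectivity. The only (harmless) difference is that you place $E$ directly in $\Ellh(Q)$ and control $E u$ by ellipticity of $\cP_{h,z}$ there, whereas the paper takes $E$ supported in $X'\setminus K_c$ and controls $E u$ by propagating from $\Ellh(Q)$.
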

\begin{proof}
  By our assumptions on the complex absorbing potential $Q$, we can apply Theorem~\ref{ThmP}\eqref{ItPThmFw} with $E$ and $\chi$ supported in $X'\setminus K_c$. We thus have
  \[
    \|\chi u\|_{\Hch^{s,l,\alpha,\sfb}} \leq C\bigl(\|\cP_{h,z}u\|_{\Hch^{s-2,l-2,\alpha,\sfb+1}} + \| E u \|_{\Hch^{s,-N,-N,\sfb}} + h^\delta\|\tilde\chi u\|_{\Hch^{-N,l,\alpha-1,\sfb^\flat}}\bigr).
  \]
  On the other hand, we can control $E u$ and $(1-\chi)u$ in $\Hch^{s,l,\alpha,\sfb}$ (or simply $h^\sfb H_h^s$ if we take $E$ to be localized away from $x=0$, as we may arrange) by $\cP_{h,z}u$ in $\Hch^{s-2,l-2,\alpha,\sfb+1}$ using a combination of elliptic estimates and real principal type propagation estimates (with complex absorption), starting either from $\Ellh(Q)$ or $\{\chi=1\}$. Altogether, we obtain
  \begin{equation}
  \label{EqPAEstPf}
    \|u\|_{\Hch^{s,l,\alpha,\sfb}} \leq C\bigl(\|\cP_{h,z}u\|_{\Hch^{s-2,l-2,\alpha,\sfb+1}} + h^\delta\|u\|_{\Hch^{-N,l,\alpha,\sfb}}\bigr).
  \end{equation}
  For $h_0>0$ with $C h_0^\delta<\half$, we can now drop the error term in~\eqref{EqPAEstPf} for $0<h<h_0$. This proves the injectivity of $\cP_{h,z}$ (with a quantitative estimate). Analogous arguments prove the dual estimate
  \[
    \|v\|_{\Hch^{-s+2,-l+2,-\alpha,-\sfb-1}} \leq C\|\cP_{h,z}^*v\|_{\Hch^{-s,-l,-\alpha,-\sfb}},
  \]
  which implies the surjectivity of $\cP_{h,z}$. The proof is complete.
\end{proof}

\subsection{Propagation of Lagrangian regularity; diffractive improvement}
\label{SsPD}

By adapting arguments from \cite{MelroseWunschConic,MelroseVasyWunschEdge}, we improve upon Theorem~\ref{ThmP} by demonstrating that, under a non-focusing condition, strong singularities can only propagate along geometric GBBs. The key technical result concerns the propagation of Lagrangian regularity with respect to the incoming and outgoing Lagrangian submanifolds, localized near geometric continuations of a GBB striking the cone point. Using the coordinates $(\hat h,x,y,\xi,\eta)$ and the notation of~\eqref{EqPInOutCurves}, the incoming and outgoing Lagrangians are given by
\[
  \cF_\bullet := \bigcup_{y_0\in\pa X} \cF_{\bullet,y_0}, \qquad \bullet=I,O,
\]
where
\begin{equation}
\label{EqPDFInOut}
\begin{alignedat}{2}
  \cF_{I,y_0} &:= \gamma_{I,y_0}((-x_0,0)) &&= \{ (0,x,y_0,-1,0) \colon x<2 x_0 \}, \\
  \cF_{O,y_0} &:= \gamma_{O,y_0}((-x_0,0)) &&= \{ (0,x,y_0,1,0) \colon x<2 x_0 \}.
\end{alignedat}
\end{equation}
(We are making the $\hat h$-coordinate, which was set to $0$ in~\eqref{EqPInOutCurves}, explicit here.)

We shall first show that one can control the Lagrangian regularity of a solution $u$ of $P_{h,z}u=f$, with sufficiently regular forcing $f$, near $\cF_{O,y_0}$ by propagating Lagrangian regularity from the union of all $\cF_{I,y'}^\circ$, with $y'$ at distance $\pi$ from $y_0$, into $\pa X$ and then within $\pa X$ to $\cF_{O,y_0}\cap x^{-1}(0)$. Localization within the radial sets $\cR_{\rm in/out}$ requires a more careful choice of commutants compared to the symbolic part of the proof of Theorem~\ref{ThmP}, and the extra Lagrangian regularity is captured using test modules, as introduced in \cite{HassellMelroseVasySymbolicOrderZero} and used for this purpose in \cite{MelroseVasyWunschEdge,MelroseVasyWunschDiffraction}; see also \cite{HaberVasyPropagation}. (Test modules also feature prominently in \cite{BaskinVasyWunschRadMink,BaskinVasyWunschRadMink2,GellRedmanHassellShapiroZhangHelmholtz}.) Fix $x_0<x_1<x_2<x_3<2 x_0$ and cutoffs
\[
  \chi_j \in \CIc([0,x_j)),\ \chi_j\equiv 1\ \text{on}\ [0,x_{j-1}],\quad j=1,2,3.
\]
Mirroring \cite[Definition~4.2]{MelroseVasyWunschEdge}, we then introduce:

\begin{definition}[Test module]
\label{DefPDModule}
  Let $\cF=\cF_I\cup\cF_O$. Define the $\Psich^0(X)$-module\footnote{Recall that $K_A$ denotes the Schwartz kernel of $A$.}
  \[
    \cM := \bigl\{ A\in\Psich^{0,0,0,1}(X) \colon \supp K_A \subset [0,1)_h\times(\supp\chi_2)^2,\ \sigmach_0(\tfrac{h}{h+x}A)|_\cF=0 \bigr\}.
  \]
  Denote by $\cM^k\subset\Psich^{0,0,0,k}(X)$ the set of finite linear combinations of up to $k$-fold products of elements of $\cM$. If $\cX$ is a function space on which $\Psich^0(X)$ acts continuously, we say that $u$ has \emph{Lagrangian regularity of order $k$ relative to $\cX$} if $\cM^k u\subset\cX$. We say that elements of the space $\cM^k\cX$ satisfy the \emph{nonfocusing condition of degree $k$ relative to $\cX$}.
\end{definition}

Since $\Psich^{0,0,0,1}(X)=\bigl(\tfrac{h}{h+x}\bigr)^{-1}\Psich^0(X)$, regularity with respect to elements of $\cM$ means that the semiclassical order improves upon differentiation along suitable elements of $\Psich^0(X)$. A concrete example of an element of $\cM$ in local coordinates is $\tfrac{h+x}{h}(\tfrac{h}{h+x}D_{y^j})=D_{y^j}$.

\begin{lemma}[Properties of $\cM$]
\label{LemmaPDModule}
  (Cf.\ \cite[Lemma~4.4]{MelroseVasyWunschEdge}.) The set $\cM$ is closed under commutators. Moreover, $\cM$ is finitely generated in the sense that there exist $A_1,\ldots,A_N\in\Psich^{0,0,0,1}(X)$ with $\supp K_{A_j}\in[0,1)_h\times(\supp\chi_3)^2$ so that with $A_0:=I$, we have
  \[
    \cM = \Biggl\{ A \in \Psich^{0,0,0,1}(X) \colon \exists\,Q_j\in\Psich^0(X),\ A=\sum_{j=0}^N Q_j A_j \Biggr\}.
  \]
  Concretely, one can take $A_N$ to have principal symbol $\tfrac{h+x}{h}\cdot\sigmach((\tfrac{x}{x+h})^2 P_{h,z})$; and one may take $A_j$, $1\leq j\leq N-1$, to have principal symbol $\tfrac{h+x}{h}a_j$, where $a_j\in\CI(\ol{\Tch^*}X_\chop)$ vanishes on $\cF$ and has differential $\dd a_j$ which at a point $\zeta\in\cR_{\rm in}$, resp.\ $\zeta\in\cR_{\rm out}$ lies in the unstable, resp.\ stable eigenspace of the linearization of $\sfH$ (as a vector field on $\Tch^*_\sface X_\chop$) at $\zeta$.
\end{lemma}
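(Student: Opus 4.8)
The plan is to follow the blueprint of \cite[Lemma~4.4]{MelroseVasyWunschEdge}, adapted to the semiclassical cone calculus $\Psich(X)$. First I would verify that $\cM$ is closed under commutators: if $A,B\in\cM$, then $A,B\in\Psich^{0,0,0,1}(X)=(\tfrac{h}{h+x})^{-1}\Psich^0(X)$, so writing $A=(\tfrac{h}{h+x})^{-1}A_0$, $B=(\tfrac{h}{h+x})^{-1}B_0$ with $A_0,B_0\in\Psich^0(X)$, the commutator identity $i[A,B]=(\tfrac{h}{h+x})^{-2}\bigl(i[A_0,B_0]+(\text{terms from }A_0,B_0\text{ acting on }\tfrac{h}{h+x})\bigr)$ together with the fact that $i[\Psich^0,\Psich^0]\subset\tfrac{h}{h+x}\Psich^0$ (Lemma~\ref{LemmaCVSpan} and the symbol calculus of~\S\ref{SsCP}) shows $i[A,B]\in(\tfrac{h}{h+x})^{-1}\Psich^0(X)=\Psich^{0,0,0,1}(X)$ with the correct support. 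The vanishing of $\sigmach_0(\tfrac{h}{h+x}\,i[A,B])$ on $\cF$ then follows because $\sigmach_0(\tfrac{h}{h+x}\,i[A,B])=\{a,b\}=H_a b$ where $a=\sigmach_0(\tfrac{h}{h+x}A)$, $b=\sigmach_0(\tfrac{h}{h+x}B)$ both vanish on the Lagrangians $\cF_I,\cF_O$; since $\cF_\bullet$ are $\sfH_\sface$-invariant and $H_a$ is, along $\cF$, a combination of $\sfH_\sface$ and of vector fields tangent to $\cF$ (as $a$ vanishes there), $H_a b|_\cF=0$.

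For the finite-generation statement I would argue as in the edge case. The symbol space of elements $a$ of $\CI(\ol{\Tch^*}X_\chop)$ vanishing on $\cF=\cF_I\cup\cF_O$ is, away from the radial sets, a finitely generated $\CI$-module by the usual argument (the $\cF_{\bullet,y_0}$ form a smooth family of 1-dimensional submanifolds sweeping out the characteristic set over $\{\hat h=0\}$, away from $\cR_{\rm in/out}$ one can use linear coordinates vanishing on the relevant leaf). The delicate point is near $\cR_{\rm in}$ and $\cR_{\rm out}$: there $\cF$ degenerates (the curves $\gamma_{I,y_0}$ all limit onto $\cR_{\rm in}$), and one needs generators whose differentials at a point $\zeta\in\cR_{\rm in}$ (resp.\ $\cR_{\rm out}$) span the unstable (resp.\ stable) eigenspace of the linearization of $\sfH$ at $\zeta$ — this is dictated by the requirement, needed in the positive-commutator arguments later in~\S\ref{SsPD}, that the $A_j$ be propagated correctly through the radial points. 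The linearization of $\sfH$ at $\cR_{\rm in/out}$ is computed from~\eqref{EqPRadSetsLin} and~\eqref{EqPHamResc}: over $\tface\cap\sface$ the unstable directions at $\cR_{\rm out}$ (eigenvalues $+2,+4$) are spanned by $\pa_x$ and by $\eta$, while the stable/center directions are $\pa_{\hat h}$ and the $y,\eta$-directions tangent to $\cR_{\rm out}$. So at $\cR_{\rm out}$ one takes $a_j$ with $\dd a_j$ spanning the $\hat h$- and $y$-conormals (the stable eigenspace), and dually at $\cR_{\rm in}$; concretely the functions $\eta_j$, $\xi\mp1$ (appropriately), and $\hat h$ restricted to suitable faces furnish these, matching the structure used in~\eqref{EqPThmComm}. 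Finally $A_N$ with principal symbol $\tfrac{h+x}{h}\sigmach((\tfrac{x}{x+h})^2 P_{h,z})=\tfrac{h+x}{h}(\xi_\chop^2+|\eta_\chop|^2-1)$ is included because this symbol vanishes on $\cF\subset\Sigma$, and it is needed so that $\cM$ is a module over which $P_{h,z}$ "acts", i.e.\ $\tfrac{h+x}{h}P_{h,z}\in\cM+\Psich^0\cdot\cM$ modulo lower-order terms.

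I would then assemble these local generators using a partition of unity over $\supp\chi_3$ to obtain global $A_1,\dots,A_N\in\Psich^{0,0,0,1}(X)$ with the stated support property, and check the two inclusions defining equality of $\cM$ with $\{\sum Q_j A_j\}$: the inclusion $\supseteq$ is immediate from closure of $\Psich^0$ under composition and the support/symbol conditions; the inclusion $\subseteq$ follows because given $A\in\cM$, its principal symbol $\tfrac{h+x}{h}\sigmach(A)$ vanishes on $\cF$, hence lies in the $\CI$-module generated by the $\tfrac{h+x}{h}a_j$ and by $\tfrac{h+x}{h}\sigmach((\tfrac{x}{x+h})^2P_{h,z})$, so $A-\sum Q_j A_j$ drops one $\tfrac{h}{h+x}$-order, i.e.\ lies in $\Psich^{0}(X)\cdot\cM\subset\sum\Psich^0 A_j$ (using $A_0=I$ absorbs the remaining terms), and one iterates/uses an asymptotic summation in the semiclassical order $b$ at $\sface$ to conclude. \textbf{The main obstacle} is the behavior at the radial sets $\cR_{\rm in/out}$: ensuring that the chosen generators $a_j$ not only vanish on $\cF$ but have their differentials lying in precisely the stable/unstable eigenspaces there, and that this property survives the globalization by partition of unity and the reduction modulo lower orders — this is exactly the subtle point that \cite[Lemma~4.4]{MelroseVasyWunschEdge} handles carefully and that makes the later diffractive-improvement commutator arguments go through.
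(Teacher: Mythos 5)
Your overall architecture matches the paper's: closure under commutators via the tangency of $H_b$ to the Lagrangian $\cF$ (the paper's justification is simply that $b$ vanishes on the Lagrangian $\cF$, hence $H_b$ is tangent to it, plus the bookkeeping that $\tfrac{h}{h+x}H_b(\tfrac{h+x}{h})c_0+H_b c_0$ remains a symbol of order $0$), and finite generation via defining functions of $\cF$ assembled by a partition of unity. That part of your proposal is fine; one remark is that no iteration or asymptotic summation is needed for the inclusion $\subseteq$, since $A-\sum_{j=1}^N Q_j A_j$ already lies in $\Psich^{-1,0,0,0}(X)\subset\Psich^0(X)=\Psich^0(X)\cdot A_0$ after a single step.

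The genuine gap is in your concrete choice of generators and your reading of the linearization, which is exactly the content of the ``concretely'' clause that Lemma~\ref{LemmaPDComm} later depends on. First, $\cF$ does \emph{not} degenerate at the radial sets: it is globally (in a coordinate patch) the joint zero set of $\eta_1,\dots,\eta_{n-1}$ and $p=\xi^2+|\eta|^2-1$, which have linearly independent differentials, so every smooth function vanishing on $\cF$ is a $\CI$-combination of these; the generators are simply $\hat h^{-1}\eta_j$ ($1\leq j\leq N-1$) and $\hat h^{-1}p$ (for $A_N$). Your proposed additional generators fail the requirements: $\xi\mp 1$ does not vanish on all of $\cF=\cF_I\cup\cF_O$, and even after localizing near one radial set its differential is $\dd\xi$, which by~\eqref{EqPDCommHLin} lies in the \emph{centre} eigenspace of the linearization of $\sfH$, not the unstable/stable one --- so using it would break the quadratic vanishing in the proof of Lemma~\ref{LemmaPDComm}; and $\hat h$ is identically zero on $\ol{\Tch^*_\sface}X_\chop$, so it imposes no symbol condition at all. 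Second, your identification of the eigenspaces is inconsistent with~\eqref{EqPRadSetsLin}--\eqref{EqPDCommHLin}: at $\cR_{\rm out}$ the differentials $\dd\eta_j$ (together with $\dd\hat h$) span the stable ($-2$)-eigenspace --- they are not unstable there --- while $\dd y^j$ is a mixture of the centre eigenvector $\dd y^j\pm k^{i j}\dd\eta_i$ and $\dd\eta_i$, hence not an admissible choice. The correct and much simpler point is that $\dd\eta_j$ automatically has the eigenvalue matching that of $\dd\hat h$ at both radial sets, which is precisely what makes the subsequent commutator computation work.
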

\begin{proof}
  Let $B=\tfrac{h+x}{h}B_0$, $C=\tfrac{h+x}{h}C_0\in\cM$. Denote the principal symbols of $B_0,C_0\in\Psich^0(X)$ by $b_0,c_0$. We then have $[B,C]\in\Psich^{-1,0,0,1}(X)$, and
  \[
    d := \sigmach_0(\tfrac{h}{h+x}i[B,C])=\tfrac{h}{h+x} H_b(\tfrac{h+x}{h}c_0) = \tfrac{h}{h+x}H_b(\tfrac{h+x}{h})c_0 + H_b c_0.
  \]
  But by~\eqref{EqCVHam}, $H_b|_{\hat h=0}$ is a smooth b-vector field for $b\in S^{0,0,0,1}$, thus $d\in S^0(\ol{\Tch^*}X_\chop)$. Moreover, since $\cF$ is a Lagrangian submanifold, $H_b$ is tangent to $\cF$; therefore, $H_b c_0=0$ on $\cF$ since $c_0|_\cF=0$, and thus $d|_\cF=0$ as well. This proves $[B,C]\in\cM$.

  Let us now work in local coordinates $(\hat h,x,y,\xi,\eta)$ in which the rescaled Hamilton vector field $\sfH=\hat h^{-1}H_p$ of $P_{h,z}$ takes the form~\eqref{EqPSymbHam}. The linearization of $\sfH$ at $\cR_{\rm out/in}$ as a vector field on $\Tch^*X_\chop$ is (top sign for `in', bottom sign for `out')
  \begin{equation}
  \label{EqPDCommHLin}
    {\mp}2(x\pa_x-\hat h\pa_{\hat h}-\eta\pa_\eta) + 2 k^{i j}\eta_i\pa_{y^j},
  \end{equation}
  which thus has eigenvalue $\mp 2$ (with eigenvector $\dd x$), $\pm 2$ (with eigenspace spanned by $\dd\hat h$ and $\dd\eta_j$), and $0$ (with eigenspace spanned by $\dd\xi$ and $\dd y^j\pm k^{i j}\dd\eta_i$). Upon restriction to $\hat h=0$, the same statements remain true except there is no contribution from $\dd\hat h$ anymore. Since $\cF$ is locally the joint zero set of $\eta^1,\ldots,\eta^{n-1}$, and $p$, which have linearly independent differentials, every smooth function vanishing on $\cF$ can be written as a linear combination (with smooth coefficients) of $p$ and $\eta_j$. Thus, we may take quantizations of $\hat h^{-1}\eta_j$ for the operators $A_j$ in local coordinates. The full collection of $A_j$ can be defined using a partition of unity.
\end{proof}

The fact that $\cM$ is a $\Psich^0(X)$-module and a Lie algebra implies that
\begin{equation}
\label{EqPDModulek}
  \cM^k = \Biggl\{ \sum_{|\alpha|\leq k} Q_\alpha A^\alpha \colon Q_\alpha\in\Psich^0(X) \Biggr\},\qquad A^\alpha := \prod_{i=1}^N A_i^{\alpha_i},
\end{equation}
where $\alpha=(\alpha_1,\ldots,\alpha_N)\in\N_0^N$. Since modulo $\Psich^0(X)$, the operator $A_N$ is a multiple of $P_{h,z}$, regularity of solutions $u$ of $P_{h,z}u=f$ under application of an element $Q_\alpha\prod_{i=1}^N A_i^{\alpha_i}\in\cM^k$ with $\alpha_N>0$ is automatic once Lagrangian regularity of order $k-1$ has been established. In order to prove regularity of solutions of $P_{h,z}u=f$ under application of $A_j$, $1\leq j\leq N-1$, we need to control the commutators of $P_{h,z}$ with the $A_j$ chosen in Lemma~\ref{LemmaPDModule}:

\begin{lemma}[Commutators]
\label{LemmaPDComm}
  (Cf.\ \cite[Lemma~4.5]{MelroseVasyWunschEdge}.) With the $A_j$ chosen as in Lemma~\usref{LemmaPDModule}, we have, for $j=1,\ldots,N-1$,
  \[
    i[P_{h,z},A_j] = \sum_{k=0}^N C_{j k}A_k,\qquad C_{j k}\in\Psich^{1,2,0,-1}(X),
  \]
  and $\sigmach_{1,2,0,-1}(C_{j k})|_{\cF\cap x^{-1}(0)}=0$ for $k\neq 0$.
\end{lemma}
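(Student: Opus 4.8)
The plan is to prove Lemma~\ref{LemmaPDComm} by a direct symbolic computation that uses the membership $P_{h,z}\in(\frac{x}{x+h})^{-2}\Diffch^2(X_\chop)$, the structure of the $A_j$ from Lemma~\ref{LemmaPDModule}, and the module property of $\cM$. First, I would record the weights: since $A_j\in\Psich^{0,0,0,1}(X)$ and $P_{h,z}\in\Psich^{2,2,0,0}(X)$, the commutator $i[P_{h,z},A_j]$ lies a priori in $\Psich^{1,2,0,1}(X)$ (one order is gained at fiber infinity and at $\sface$ by the commutator; the orders at $\cface$ and $\tface$ are additive, giving $l=2$, $\alpha=0$). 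However, because $(\frac{x}{x+h})^2 P_{h,z}$ has principal symbol $p=\xi_\chop^2+|\eta_\chop|^2-1$ independent of $\frac{h}{h+x}$ (its semiclassical order at $\sface$ is $0$ and it is \emph{classical} there), and similarly $\frac{h+x}{h}A_j$ has principal symbol $\frac{h+x}{h}a_j$ with $a_j\in\CI(\ol{\Tch^*}X_\chop)$, the leading term in the commutator symbol is $H_p(\frac{h+x}{h}a_j)$. Writing $H_p=\hat h\sfH$, the factor $\hat h=\frac{h}{x}$ combines with $\frac{h+x}{h}$: one finds $H_p(\frac{h+x}{h}a_j)=\frac{h}{h+x}\cdot\frac{h+x}{h}\cdot\sfH(\frac{h+x}{h}a_j)$, and since $\sfH$ acting on $\frac{h+x}{h}$ produces a factor in $\frac{h}{h+x}\CI$ (as $\frac{h+x}{h}$ is, up to the defining function $\frac{h}{h+x}$ of $\sface$, smooth), the net effect is that the commutator symbol actually has semiclassical order $-1$ rather than $+1$, i.e.\ $i[P_{h,z},A_j]\in\Psich^{1,2,0,-1}(X)$. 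This is the same gain-of-two-orders-at-$\sface$ phenomenon used throughout~\S\ref{SsCP}.

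Next, I would decompose this commutator into the generators $A_k$. The key point is that the principal symbol $\sigmach_{1,2,0,-1}(i[P_{h,z},A_j])$, which (after multiplying by appropriate weights) is governed by $\sfH a_j$, vanishes on $\cF\cap x^{-1}(0)$: indeed, by the choice of $a_j$ in Lemma~\ref{LemmaPDModule}, $a_j$ vanishes on the Lagrangian $\cF$, and $\cF$ is $\sfH$-invariant over $\sface$ (it is a union of integral curves $\gamma_{I/O,y_0}$ of $\sfH_\sface$ by~\eqref{EqPInOutCurves}); hence $\sfH a_j|_\cF=0$, and a fortiori $\sfH a_j|_{\cF\cap x^{-1}(0)}=0$. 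Therefore $\sfH a_j$, being a smooth function vanishing on the codimension-$(n-1)$ submanifold $\cF$ over $\sface$ (where $\cF$ is cut out by $\eta_1,\dots,\eta_{n-1}$ together with $p$, with independent differentials, modulo terms supported away from $x=0$), can be written as $\sum_{k=1}^{N-1}c_{j k}a_k + c_{j N}p$ with $c_{j k}\in\CI(\ol{\Tch^*}X_\chop)$; quantizing and absorbing the part supported away from $x=0$ into the $C_{j 0}A_0=C_{j 0}$ term, this yields $i[P_{h,z},A_j]=\sum_{k=0}^N C_{j k}A_k$ with $C_{j k}\in\Psich^{1,2,0,-1}(X)$. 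The vanishing of $\sigmach_{1,2,0,-1}(C_{j k})|_{\cF\cap x^{-1}(0)}$ for $k\neq 0$ follows because the coefficients $c_{j k}$ in the decomposition can be chosen to vanish where $\sfH a_j$ does \emph{in a neighborhood of $\cF$}—more precisely, one uses that near $\cF$, division of a function vanishing on $\cF$ by the ideal generators $(a_1,\dots,a_{N-1},p)$ produces coefficients which need not vanish, so one has to be more careful: the real point is that the full symbol of $C_{j k}$ can be arranged so that modulo terms vanishing on $\cF\cap x^{-1}(0)$, the remainder is absorbed into $C_{j 0}$ (which multiplies the identity, so no constraint). I would handle this exactly as in the proof of \cite[Lemma~4.5]{MelroseVasyWunschEdge}.

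I expect the main obstacle to be the bookkeeping in the decomposition step: making precise, on the level of \emph{operators} rather than symbols, that the remainder after subtracting $\sum_{k=1}^N C_{j k}A_k$ from $i[P_{h,z},A_j]$ (with $C_{j k}$ having symbols vanishing on $\cF\cap x^{-1}(0)$) lies in $\Psich^{1,2,0,-1}(X)$ and can be taken as $C_{j 0}$. This requires an induction over lower-order terms in the symbol expansion, using that $\cM$ is closed under commutators (the first part of Lemma~\ref{LemmaPDModule}), so that lower-order correction operators arising from the non-exactness of the principal symbol calculus are again in $\cM$ and hence of the form $\sum Q_k A_k$ with $Q_k\in\Psich^{-1}(X)$, and their symbols continue to vanish on $\cF\cap x^{-1}(0)$ because $\cF$ is $\sfH$-invariant. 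The geometry near the radial sets $\cR_{\rm in/out}$ requires the specific eigenspace conditions on $\dd a_j$ recorded in Lemma~\ref{LemmaPDModule}: there, $\sfH$ does not vanish but its linearization~\eqref{EqPDCommHLin} has $a_j$ (the $\eta_j$-type generators) in the $\pm 2$-eigenspace, so $\sfH a_j=\pm 2 a_j+\cO(|a|^2+|p|)$ at $\cR_{\rm in/out}$, which gives a clean decomposition with $c_{j j}=\pm 2$ at the radial set—and this is consistent with the vanishing of $\sigmach(C_{j k})|_{\cF\cap x^{-1}(0)}$ for $k\neq 0$ because $\cR_{\rm in/out}\subset x^{-1}(0)$ but the relevant vanishing is at $\cF\cap x^{-1}(0)\supset\cR_{\rm in/out}$, and there $\sfH a_j$ vanishes to first order in the $a_k$'s. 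I would present this as a short computation citing \cite[Lemma~4.5]{MelroseVasyWunschEdge} for the structural argument and the module framework of \cite{HassellMelroseVasySymbolicOrderZero}, emphasizing the two-order semiclassical gain as the one genuinely new ingredient compared to the edge setting.
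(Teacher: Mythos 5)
Your overall strategy---compute the $\sface$-principal symbol of the commutator via the Hamilton vector field, divide by the ideal of $\cF$, and invoke the eigenvalue structure of the linearized flow at the radial sets---is the same as the paper's, but two linked errors in the weight bookkeeping leave a genuine gap at exactly the point the lemma is about. First, the claim $i[P_{h,z},A_j]\in\Psich^{1,2,0,-1}(X)$ is false. Your assertion that $\sfH$ applied to $\tfrac{h+x}{h}$ produces something in $\tfrac{h}{h+x}\CI$ inverts the weight: for a b-vector field $V$ on $X_\chop$ one has $V(\tfrac{h}{h+x})\in\tfrac{h}{h+x}\CI(X_\chop)$, hence $V(\tfrac{h+x}{h})=-(\tfrac{h+x}{h})^2 V(\tfrac{h}{h+x})\in\tfrac{h+x}{h}\CI(X_\chop)$, i.e.\ of the \emph{same} order $+1$ at $\sface$, not $-1$ (concretely, $\hat h^{-1}\sfH\hat h=\pm 2$ at $\cR_{\rm in/out}$ by~\eqref{EqPRadSetsLin}, so $\sfH(\hat h^{-1})=\mp 2\hat h^{-1}$). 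The commutator therefore lies only in $\Psich^{1,2,0,0}(X)$, with $\sface$-principal symbol $\hat h\sfH(\hat h^{-1}a_j)=\sfH a_j-(\hat h^{-1}\sfH\hat h)a_j$, which is generically nonvanishing at $\sface$: for $a_j=\eta_j$ it equals $-(\pa_{y^j}k^{l m})\eta_l\eta_m$ modulo $O(x)$, an order-zero symbol. Had your claim been true, the lemma would be vacuous ($C_{j 0}:=i[P_{h,z},A_j]$, all other $C_{j k}=0$); the content of the lemma is precisely that the order-zero part of the commutator can be distributed as (order $-1$) coefficients times (order $+1$) module generators.

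Second, and this is the substantive gap: you decompose $\sfH a_j$ rather than the actual commutator symbol $\hat h\sfH(\hat h^{-1}a_j)=\hat h^{-1}\bigl(\hat h\sfH a_j-a_j\sfH\hat h\bigr)$. Since $\sfH a_j=\pm 2 a_j+O(\mathrm{quadratic})$ at the radial sets, your decomposition forces $c_{j j}=\pm 2$ there; but $\cF\cap x^{-1}(0)$ \emph{equals} $\cR_{\rm in}\cup\cR_{\rm out}$ (it is not a proper superset), so this contradicts the required vanishing $\sigmach(C_{j k})|_{\cF\cap x^{-1}(0)}=0$ for $k\neq 0$. Your closing sentence concedes $c_{j j}=\pm 2$ at the radial set and calls this ``consistent'' with the vanishing condition; it is the negation of it. The mechanism you are missing is the cancellation supplied by the weight $\hat h^{-1}$ in $\sigmach(A_j)=\hat h^{-1}a_j$: because $\dd a_j$ and $\dd\hat h$ lie in the \emph{same} eigenspace (eigenvalue $\pm 2$) of the linearization~\eqref{EqPDCommHLin} of $\sfH$, the first-order terms in $\hat h\sfH a_j-a_j\sfH\hat h$ cancel, so this combination vanishes quadratically at $\cF\cap x^{-1}(0)$, and it is this quadratic vanishing that permits a division $\hat h\sfH(\hat h^{-1}a_j)=\sum_k(\hat h c_{j k})(\hat h^{-1}a_k)$ with $c_{j k}|_{\cF\cap x^{-1}(0)}=0$. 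Away from $x^{-1}(0)$ your first-order argument suffices, and the operator-level bookkeeping you worry about at the end is not needed: one quantizes the $\hat h c_{j k}$ and absorbs the entire remainder $i[P_{h,z},A_j]-\sum_{k\geq 1}C_{j k}A_k\in\Psich^{0,2,0,-1}(X)$ into $C_{j 0}$, on which no vanishing is imposed.
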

\begin{proof}
  Denote by $a_j$ the principal symbol of $\hat h A_j$ for $j=1,\ldots,N-1$, so $\sigmach(A_j)=\hat h^{-1}a_j$. Since $P_{h,z}\in\Psich^{2,2,0,0}(X)$, we have $i[P_{h,z},A_j]\in\Psich^{1,2,0,0}(X)$, with principal symbol at $\sface$ given by $\hat h\sfH(\hat h^{-1}a_j)$ in the notation used in~\eqref{EqPSymbHam}. It thus suffices to prove the existence of $c_{j k}\in S^{1,2,0,0}(\ol{\Tch^*}X_\chop)$ such that near $\hat h=0$,
  \begin{equation}
  \label{EqPDCommSymb}
    \hat h\sfH(\hat h^{-1}a_j)=\sum_{k=1}^N \hat h c_{j k} \hat h^{-1}a_k,\qquad c_{j k}|_{\cF\cap x^{-1}(0)}=0\ \ (k\neq 0);
  \end{equation}
  indeed, if $C_{j k}\in\Psich^{1,2,0,-1}(X)$ is a quantization of $\hat h c_{j k}$ times a cutoff to a neighborhood of $\sface$, then \eqref{EqPDCommSymb} implies that $C_{j 0}:=i[P_{h,z},A_j]-\sum_{k=1}^N C_{j k}A_k\in\Psich^{0,2,0,-1}(X)$. In order to verify~\eqref{EqPDCommSymb}, we note that the left hand side equals $\hat h^{-1}(\hat h\sfH a_j - a_j\sfH\hat h)$; but since at $\cF\cap x^{-1}(0)$, the differentials $\dd a_j$  and $\dd\hat h$ are eigenvectors of the linearization of $\sfH$ with the same eigenvalue, as discussed after~\eqref{EqPDCommHLin}, this vanishes quadratically at $\cF\cap x^{-1}(0)$, completing the proof.
\end{proof}

We are now ready to propagate Lagrangian regularity through the radial sets. For $s,l,\alpha,b\in\R$ and $k\in\N_0$, and using the notation~\eqref{EqPDModulek}, denote
\[
  \Hch^{s,l,\alpha,b;k}(X) := \bigl\{ u\in\Hch^{s,l,\alpha,b}(X) \colon A^\alpha u\in\Hch^{s,l,\alpha,b}(X)\ \forall\,|\alpha|\leq k \bigr\}.
\]
We recall that we will only encounter distributions on $X$ with compact support, justifying the convenient, albeit slightly imprecise, notation here.

\begin{prop}[Microlocalized propagation near the radial sets]
\label{PropPDMicro}
  Let $s,l,\alpha,b\in\R$. Let $B,E,G\in\Psich^0(X)$ denote operators with Schwartz kernels supported in $[0,1)_h\times(\supp\chi_1)^2$. Recall the quantities $r_{\rm in/out}$ from Definition~\usref{DefPThr}.
  \begin{enumerate}
  \item\label{ItPDMicroIn} {\rm (Propagation into $\cR_{\rm in}$.)} Suppose that all backward integral curves of $\sfH$ starting in $\Sigma\cap\WFch'(B)$ either tend to a subset $\cS\subset\cR_{\rm in}$ or enter $\Ellch'(E)$ in finite time while remaining inside $\Ellch(G)$. Suppose moreover that for all incoming null-bicharacteristics $\gamma_{I,y_0}\colon(-x_0,0)\to\Sigma$ with $\gamma_{I,y_0}(0)\in\cS$, there exists $s\in(-x_0,0)$ (depending on $y_0$) such that $\gamma_{I,y_0}((s,0])\subset\Ellch(G)$ and $\gamma_{I,y_0}(s)\in\Ellch(E)$. Under the condition $b-\alpha>r_{\rm in}$, we then have
    \begin{equation}
    \label{EqPDMicroIn}
    \begin{split}
      \|B u\|_{\Hch^{s,l,\alpha,b;k}(X)} &\leq C\Bigl(\|G P_{h,z}u\|_{\Hch^{s-2,l-2,\alpha,b+1;k}(X)} \\
        &\quad\qquad+ \|E u\|_{\Hch^{-N,l,\alpha,b;k}(X)} + \|\chi_2 u\|_{\Hch^{-N,l,\alpha,b-\frac12}(X)}\Bigr).
    \end{split}
    \end{equation}
  \item\label{ItPDMicroOut} {\rm (Propagation out of $\cR_{\rm out}$.)}  Suppose that all backward integral curves of $\sfH$ starting in $\Sigma\cap\WFch'(B)$ either tend to a subset $\cS\subset\cR_{\rm out}$ or enter $\WFch'(E)$ in finite time while remaining inside $\Ellch(G)$. Suppose moreover that $\cS\subset\Ellch(G)$, and that for every integral curve $\gamma\subset\Sigma\cap x^{-1}(0)\setminus\cR_{\rm out}$ of $\sfH$ with $\lim_{s\to\infty}\gamma(s)\in\cS$, there exists $s$ so that $\gamma((s,\infty))\subset\Ellch(G)$ and $\gamma(s)\in\Ellch(E)$. Then the estimate~\eqref{EqPDMicroIn} holds under the condition $b-\alpha<r_{\rm out}$.
  \end{enumerate}
\end{prop}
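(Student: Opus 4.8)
The plan is to prove Proposition~\ref{PropPDMicro} by induction on the order $k$ of Lagrangian regularity, with the base case $k=0$ being exactly the radial point estimates established in the symbolic part of the proof of Theorem~\ref{ThmP} (Step 1), combined with ordinary real principal type propagation to bring in the microlocal cutoff $E$ along the flow inside $\Ellch(G)$. So assume the estimate~\eqref{EqPDMicroIn} holds with $k$ replaced by $k-1$; we must upgrade it to order $k$. By the generating property~\eqref{EqPDModulek} and the remark after Lemma~\ref{LemmaPDComm}, regularity of $A^\alpha u$ for multi-indices $\alpha$ with $\alpha_N>0$ is automatic from $P_{h,z}u=f$ and the inductive hypothesis, since modulo $\Psich^0(X)$ the operator $A_N$ is a multiple of $P_{h,z}$; so it suffices to control $A^\alpha u$ with $\alpha\in\{0,\dots,N-1\}^N$, i.e.\ products of the $A_1,\dots,A_{N-1}$ only.

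For this, first I would fix such an $\alpha$ with $|\alpha|=k$, write $A^\alpha=\prod A_i^{\alpha_i}$, and note $A^\alpha\in\Psich^{0,0,0,k}(X)$. The strategy is a positive commutator argument for the operator $A^\alpha u$, adapting the commutant~\eqref{EqPThmComm} from the proof of Theorem~\ref{ThmP} but now applied to $A^\alpha u$ in place of $u$; equivalently one runs a positive commutator argument for $u$ with commutant $(A^\alpha)^*\check A^*\check A A^\alpha$. Commuting $A^\alpha$ through $P_{h,z}$ produces, via Lemma~\ref{LemmaPDComm} and the Leibniz rule, terms of the schematic form $(\text{product of }\leq k-1\text{ of the }A_i)\cdot C_{j k}\cdot(\text{product of }\leq k-1\text{ of the }A_i)$ with $C_{j k}\in\Psich^{1,2,0,-1}(X)$; the crucial point, furnished by Lemma~\ref{LemmaPDComm}, is that $\sigmach(C_{j k})$ vanishes on $\cF\cap x^{-1}(0)$ for $k\neq 0$, so that modulo terms of lower module order (handled by the inductive hypothesis) these commutator contributions carry an extra vanishing factor at $\cF$ and hence, after quantization, an extra factor of $\tfrac{h}{h+x}$ relative to the naive count—this is precisely what is needed so that the commutator of $P_{h,z}$ with the module does not degrade the semiclassical order used in the positive commutator estimate. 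The threshold condition $b-\alpha>r_{\rm in}$ (resp.\ $<r_{\rm out}$) enters identically to the $k=0$ case, since the module elements $A_i$ have semiclassical order $1$ built in and do not shift the effective weight at which the radial point estimate is run. As in Step 1 of the proof of Theorem~\ref{ThmP}, the real principal type propagation from $\WFch'(E)$ to $\WFch'(B)$ along $\sfH$-integral curves inside $\Ellch(G)$ is carried out with the analogous module-regularized commutant, using that $\cM$ is closed under the Hamilton flow (being a Lagrangian-annihilating module, $H_p$ is tangent to $\cF$), so that commuting the $A_i$ past the propagation commutant again produces only terms handled by induction.

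The main obstacle I anticipate is bookkeeping the module-order grading together with the four-fold $(s,l,\alpha,b)$ grading of the $\chop$-calculus simultaneously in the positive commutator computation: one must verify that every error term generated by commuting $A^\alpha$ through both $P_{h,z}$ and the commutant $\check A$ either (i) has module order $\leq k-1$ and is thus absorbed by the inductive hypothesis, or (ii) is a genuine lower-order (in $s$ or in $b$) term of module order $k$ that can be absorbed into the left side or into the error terms $\|Eu\|$, $\|\chi_2 u\|$ on the right of~\eqref{EqPDMicroIn}. Establishing this cleanly requires the finite generation statement of Lemma~\ref{LemmaPDModule} (so that one works with a fixed finite list $A_1,\dots,A_N$ rather than the whole module) and the commutator identity of Lemma~\ref{LemmaPDComm} (so that the "bad" commutators $[P_{h,z},A_j]$ stay within the module with a quantitative symbolic improvement at $\cF\cap x^{-1}(0)$). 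A secondary technical point is the justification of the formal pairing $2\Im\la A^\alpha P_{h,z}u, \check A^*\check A A^\alpha u\ra$ for the class of distributions $u$ in the statement, which as in the proof of Theorem~\ref{ThmP} is handled by a standard regularization argument, inserting a family of regularizers $\Lambda_\epsilon\in\Psich^{-\delta}(X)$ bounded in $\Psich^0(X)$ and commuting them through; the module elements $A_i$ commute with $\Lambda_\epsilon$ to top order, so this introduces no new difficulty. Parts~\eqref{ItPDMicroIn} and~\eqref{ItPDMicroOut} are proved in the same way with the roles of stable/unstable directions of the linearized flow~\eqref{EqPDCommHLin} interchanged, matching the sign flip in the threshold condition.
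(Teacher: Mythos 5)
There is a genuine gap. Your base case is not ``exactly'' the radial point estimate from Step 1 of the proof of Theorem~\ref{ThmP}: the commutant~\eqref{EqPThmComm} used there localizes near the \emph{entire} radial set $\cR_{\rm in}$ via the factor $\chi_\cR(\omega/\delta)$, and correspondingly requires a priori ($E$-)control on a full punctured neighborhood of $\cR_{\rm in}$, i.e.\ along \emph{all} incoming rays $\gamma_{I,y_0}$. Proposition~\ref{PropPDMicro} is microlocalized \emph{within} the radial set: the hypotheses only provide $E$-control along those $\gamma_{I,y_0}$ with $\gamma_{I,y_0}(0)\in\cS$ for a subset $\cS\subset\cR_{\rm in}$. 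To close the positive commutator argument one must therefore shrink the support of the commutant to an (approximately) flow-invariant neighborhood of a point $\zeta_0\in\cS$, so that the term produced when $\sfH$ differentiates the cutoff in $x$ is supported only where one has a priori control. The paper does this by constructing the function $\Ups$ recording the limit point of backward $\sfH$-integral curves (obtained by explicitly integrating the flow on the cosphere bundle of $\pa X$, following \cite{MelroseZworskiFIO}), verifying $\sfH\Ups=\cO(x)$ on the characteristic set over $\hat h=0$, and inserting the extra factor $\chi_\cS\bigl(\delta^{-1}(|\Ups-\zeta_0|^2-C x)\bigr)$ into the commutant; the $-C x$ correction makes the $\sfH$-derivative of this factor a negative square (see~\eqref{EqPDMicroSign}), hence absorbable with the correct sign. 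Without this ingredient the $k=0$ estimate, and hence your entire induction, does not yield the statement as formulated.

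A secondary issue concerns your treatment of $k\geq 1$: you fix a single multi-index $\alpha$ and use the commutant $(A^\alpha)^*\check A^*\check A A^\alpha$. Since Lemma~\ref{LemmaPDComm} gives $i[P_{h,z},A_j]=\sum_k C_{j k}A_k$, commuting $P_{h,z}$ through $A^\alpha$ produces terms involving $A^\beta u$ for \emph{other} multi-indices $\beta$ with $|\beta|=k$; these are controlled neither by the inductive hypothesis (which handles module order $\leq k-1$) nor by the single left-hand side $\|\check A A^\alpha u\|^2$. One must run the argument for all $|\alpha|=k$ simultaneously, i.e.\ use the vector-valued commutant $(\check A A^\alpha)_{\alpha\in\cI}$ with $\cI=\{|\alpha|=k,\ \alpha_0=\alpha_N=0\}$ as in \cite[Proposition~4.4]{BaskinVasyWunschRadMink}, so that the cross terms appear as off-diagonal entries of a system whose principal part is dominated by the sum of the diagonal main terms, using the vanishing of $\sigmach(C_{j k})$ on $\cF\cap x^{-1}(0)$ for $k\neq 0$. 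Apart from these two points, your test-module induction, the reduction of multi-indices with $\alpha_N>0$ to lower module order via the equation, and the role of Lemmas~\ref{LemmaPDModule}--\ref{LemmaPDComm} match the paper's approach.
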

\begin{proof}
  We begin with the proof of part~\eqref{ItPDMicroIn}. By compactness of $\cR_{\rm in/out}$ and since $\Ellch$ is open, it suffices to prove microlocal estimates near a single point $\zeta_0\in\cR_{\rm in}$, which in the coordinate system $(\hat h,x,y,\xi,\eta)$ used in~\eqref{EqPDFInOut} has coordinates $\zeta_0=(0,0,y_0,-1,0)$.

  Now, restricted to $x=\hat h=0$ and writing $k=k(y,\eta)$ for the dual metric function of the metric $k(0)$ on $\pa X$ in local coordinates, we have
  \[
    \sfH = -2\xi\eta\pa_\eta + 2|\eta|^2\pa_\xi + (\pa_\eta k)\pa_y - (\pa_y k)\pa_\eta.
  \]
  Following \cite[Lemma~2]{MelroseZworskiFIO}, introducing $|\eta|,\hat\eta=\frac{\eta}{|\eta|}$, one has $\pa_s y=(\pa_\eta k)(y,\hat\eta)|\eta|$ and $\pa_s\hat\eta=-(\pa_y k)(y,\hat\eta)|\eta|$ along $\sfH$-integral curves, hence reparameterizing to $t=t(s)$ satisfying $t'=2|\eta|$, one obtains
  \[
    \pa_t y=\half(\pa_\eta k)(y,\hat\eta),\quad
    \pa_t\hat\eta=-\half(\pa_y k)(y,\hat\eta),\quad
    \pa_t|\eta|=-\xi,\quad
    \pa_t\xi=|\eta|.
  \]
  Thus, $\xi(t)=a\cos(t+\varphi_0)$ and $|\eta(t)|=a\sin(t+\varphi_0)$ where $a=\sqrt{\xi^2+|\eta|^2}$ is constant, and $\varphi_0\in[0,\pi]$. Therefore, the function $\Ups$ assigning to $(y,\xi,\eta)$ near $(y_0,-1,0)$ the limiting point along the backward $\sfH$-integral curve is given by evaluation at $t=-\varphi_0$, so
  \[
    \Ups(y,\xi,\eta) = \Bigl(\exp_y\Bigl((-\arccos\tfrac{\xi}{\sqrt{\xi^2+|\eta|^2}}\bigr)\tfrac{\eta}{|\eta|}\Bigr),-1,0\Bigr).
  \]
  in particular, $\Ups$ is smooth, and $\sfH\Ups=0$ at $\hat h=x=0$. Extending $\Ups$ to a smooth function in a neighborhood of $x=\hat h=0$, with values in $\R^{n-1}\times\R\times\R^{n-1}$, we thus have $\sfH\Ups=\cO(x)$ at $\hat h=0$. Since $x^{-1}\sfH x=-2$ at $\zeta_0$, we can choose $C$ so that in any sufficiently small neighborhood $\cV$ of $\zeta_0$,
  \begin{equation}
  \label{EqPDMicroSign}
    \sfH\bigl(|\Ups-\zeta_0|^2-C x\bigr) \geq x > 0\ \text{in}\ \cV.
  \end{equation}

  Fix now cutoffs $\chi_\cS,\chi_\pa,\chi_\sface,\chi_\cF,\chi_\Sigma\in\CIc([0,1))$, identically $1$ near $0$, with nonpositive derivative and with $\sqrt{-\chi_\bullet\chi_\bullet'}\in\CI$, and consider the commutant
  \[
    \check a = \hat h^{-b-\frac12}x^{-\alpha}\chi_\pa\bigl(\tfrac{x}{\delta}\bigr)\chi_\sface\bigl(\tfrac{\hat h}{\delta}\bigr)\chi_\cF\Biggl(\delta^{-1}\sum_{j=1}^{N-1} a_j^2\Biggr)\chi_\Sigma\bigl(\tfrac{p^2}{\delta}\bigr) \chi_\cS\bigl(\delta^{-1}(|\Ups-\zeta_0|^2-C x)\bigr),
  \]
  where $\delta>0$ controls the size of $\supp\check a$. We now proceed as in the first step of the proof of Theorem~\ref{ThmP}. Thus, in the symbol~\eqref{EqPThmPCSymb} of the commutator appearing in~\eqref{EqPThmPC}, and specifically in the term $2\hat h\check a\sfH\check a$, the main contribution near $\zeta_0$ arises from differentiation of the weights (and then the subprincipal symbol of $P_{h,z}$ enters in the threshold condition on $b-\alpha$ as there), giving a negative multiple of $\hat h^{-2 b}x^{-2\alpha}$. Differentiation of $\chi_\cF$ gives a term of the same sign, namely a negative square, since $\sum a_j^2$ is a local quadratic defining function of $\cR_{\rm in}$ inside of $\Sigma\cap x^{-1}(0)$. In view of~\eqref{EqPDMicroSign}, differentiation of $\chi_\cS$ produces $x$ times the negative of a square, thus another term with sign matching that of the main term. Derivatives falling on $\chi_\pa$ produce a nonnegative square, corresponding to the a priori control required along $\gamma_{I,y}$, for $y$ near $y_0$, at $x\sim\delta$. Finally, differentiation of $\chi_\Sigma$ produces a term vanishing near $\Sigma$ which thus can be controlled by elliptic regularity, and differentiation of $\chi_\sface$ produces a semiclassically trivial (namely, vanishing near $\hat h=0$) term. We can then proceed as in~\eqref{EqPThmPCSymbSq}, obtaining the desired propagation estimate.

  For $k\geq 1$, we argue as in the proof of \cite[Proposition~4.4]{BaskinVasyWunschRadMink}: rather than using $\check A=\Op_{\cop,h}(\check a)$ as the commutant, we use (in the notation~\eqref{EqPDModulek}) the vector of ps.d.o.s $(\check A A^\alpha)_{\alpha\in\cI}$ where $\cI\subset\N_0^N$ consists of all $\alpha\in\N_0^N$ with $|\alpha|=k$ and $\alpha_0=\alpha_N=0$. The main term of the commutator arises from $\check A$ as before; the new contributions, from commutators of $P_{h,z}$ with a factor $A_j$, can be expanded as in Lemma~\ref{LemmaPDModule}, and those which have the maximal number of module factors $A_l$, $1\leq l\leq N-1$, can be absorbed into this main term due to the vanishing property of the $C_{j k}$ in Lemma~\ref{LemmaPDModule}. Thus, one can control $k$ module derivatives of $u$ in a neighborhood of $\zeta_0$ provided one has control of $k-1$ module derivatives in a slightly bigger neighborhood. Thus, one obtains the estimate~\eqref{EqPDMicroIn} inductively.

  The proof of part~\eqref{ItPDMicroOut} is completely analogous; one now takes $\Ups$ at $x=\hat h=0$ to be the limiting point along \emph{forward} $\sfH$-integral curves.
\end{proof}

Note that for any $\zeta\in\ol{\Tch^*_\sface}X_\chop\setminus\cF$, there exists an element $A\in\cM$ which is elliptic at $\zeta$; hence microlocally near such $\zeta$, membership in $\Hch^{s,l,\alpha,b;k}$ is equivalent to membership in $\Hch^{s,l,\alpha,b+k}$.\footnote{That is, for $B,\tilde B\in\Psich^0(X)$ with $\WFch'(B)\subset\Ellch(\tilde B)\setminus\cF$, one has $\|B u\|_{\Hch^{s,l,\alpha,b+k}}\lesssim\|\tilde B u\|_{\Hch^{s,l,\alpha,b;k}}+\|u\|_{\Hch^{-N,l,\alpha,-N}}$.} In particular, in $\Sigma\cap x^{-1}(0)$ but away from the radial sets, the propagation of $\Hch^{s,l,\alpha,b;k}$ regularity is equivalent to the standard (real principal type) propagation of $\Hch^{s,l,\alpha,b+k}$ regularity. One can thus concatenate the radial point estimates of Proposition~\ref{PropPDMicro} with such real principal type estimates. To state this succinctly, we introduce:

\begin{definition}[Integral curves connecting the radial sets]
\label{DefPDCurves}
  \begin{enumerate}
  \item For $y\in\pa X$, denote by $\Gamma^\rightarrow(y)\subset\cC^0([0,\pi];\Sigma)$ the set of integral curves of $\sfH$ inside $\Sigma\cap x^{-1}(0)$, smoothly reparameterized to uniformly continuous curves $\gamma\colon(0,\pi)\to\Sigma\cap x^{-1}(0)$, which satisfy $\gamma(\pi)=(0,0,y,1,0)\in\cR_{\rm out}$ and $\gamma(0)\in\cR_{\rm in}$. Denoting by $\Pi\colon\Sigma\cap x^{-1}(0)\to\pa X$ the projection to the base, define the set of starting points of such curves by
  \[
    \cY^\rightarrow(y)=\{\Pi(\gamma(0))\colon\gamma\in\Gamma^\rightarrow(y)\}.
  \]
  \item We call a continuous curve $\gamma\colon I\to\Sigma$ a \emph{resolved GBB} if it is either an integral curve of $h^{-1}H_p$ disjoint from $x^{-1}(0)$, or otherwise if for some $y\in\pa X$ and $y_0\in\cY^\rightarrow(y)$, the curve $\gamma$ is the concatenation of $\gamma_{I,y_0}$, an element $\gamma$ of $\Gamma^\rightarrow(y)$ with $\Pi(\gamma(0))=y_0$, $\Pi(\gamma(\pi))=y$, and the curve $\gamma_{O,y}$.
  \end{enumerate}
\end{definition}

See Figure~\ref{FigPDResolved}.

\begin{figure}[!ht]
\centering
\includegraphics{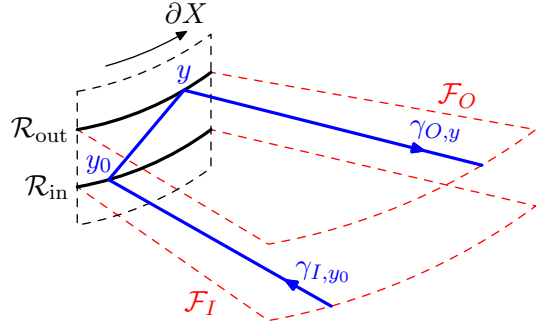}
\caption{Illustration of a part of the characteristic set, with the Lagrangians $\cF_I$ and $\cF_O$ in red, and a resolved GBB in blue.}
\label{FigPDResolved}
\end{figure}

\begin{cor}[Microlocalized propagation of Lagrangian regularity]
\label{CorPD}
  Let $s,l,\alpha\in\R$, $\sfb\in\CI(\ol{\Tch^*_\sface}X_\chop)$ and $\chi\in\CIc(X)$ with $\supp\chi\subset\supp\chi_1$ be as in Theorem~\usref{ThmP}, with $\sfb-\alpha$ satisfying the monotonicity and threshold conditions stated there. Let $k\in\N_0$. Let $B,E,G\in\Psich^0(X)$, with Schwartz kernels supported in $[0,1)_h\times(\chi^{-1}(1)\times\chi^{-1}(1))$. Suppose that all backward resolved GBBs starting at a point in $\WFch'(B)$ reach $\Ellch(E)$ in finite time while remaining in $\Ellch(G)$. Then the estimate~\eqref{EqPDMicroIn} holds.
\end{cor}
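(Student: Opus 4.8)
The plan is to combine the microlocalized radial point estimates of Proposition~\ref{PropPDMicro} with ordinary (real principal type) propagation of Lagrangian regularity along $\sfH$-integral curves in $\Sigma\cap x^{-1}(0)$ away from the radial sets, and with the standard propagation and elliptic estimates in $x>0$, exactly following the chain of regions traversed by a backward resolved GBB as in Definition~\ref{DefPDCurves}. The key observation, already recorded in the paragraph preceding Definition~\ref{DefPDCurves}, is that microlocally away from the Lagrangian $\cF$ (in particular along $\Sigma\cap x^{-1}(0)\setminus(\cR_{\rm in}\cup\cR_{\rm out})$) there is an element $A\in\cM$ elliptic at the given point, so that $\Hch^{s,l,\alpha,b;k}$-regularity is equivalent to $\Hch^{s,l,\alpha,b+k}$-regularity; thus module regularity propagates there by the ordinary real principal type estimate in the $\chop$-calculus (at shifted semiclassical order $b+k$), and no new argument is needed in that regime.

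First I would decompose $\WFch'(B)$ according to the behavior of the backward resolved GBBs through its points: those that reach $\Ellch(E)$ without ever hitting $x^{-1}(0)$ are handled purely by semiclassical elliptic regularity and real principal type propagation in $(X')^\circ$ (with complex-absorption-type arguments if needed), so the orders $l,\alpha$ are irrelevant there; those whose backward GBB does strike the cone point pass, in order, through a neighborhood of $\cR_{\rm out}$ (where Proposition~\ref{PropPDMicro}\eqref{ItPDMicroOut} applies, using $\sfb-\alpha<r_{\rm out}$ at $\cR_{\rm out}$), then along $\cF_O$ down to $x=0$, then along an integral curve $\gamma\in\Gamma^\rightarrow(y)$ inside $\Sigma\cap x^{-1}(0)$ connecting $\cR_{\rm out}$ to $\cR_{\rm in}$ (ordinary propagation of module regularity), then into $\cR_{\rm in}$ (where Proposition~\ref{PropPDMicro}\eqref{ItPDMicroIn} applies, using $\sfb-\alpha>r_{\rm in}$ at $\cR_{\rm in}$), and finally out along $\cF_I$ away from $x=0$ to where $E$ is elliptic. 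I would fix a finite cover of $\cR_{\rm in}\cup\cR_{\rm out}$ by the microlocal neighborhoods in which Proposition~\ref{PropPDMicro} holds, and a corresponding finite cover of the compact pieces of the curves $\gamma\in\Gamma^\rightarrow(y)$ (using compactness of $\pa X$ and continuity of the flow, cf.\ Lemma~\ref{LemmaPInOut} and the discussion after Definition~\ref{DefPBroken}), so that the concatenation involves only finitely many estimates; the operators $G$, $E$ supplied in each application of Proposition~\ref{PropPDMicro} can be chosen subordinate to the global $G,E,B$ by shrinking supports by arbitrarily small amounts at each of the finitely many steps, exactly as in the iterative argument in Step~1 of the proof of Theorem~\ref{ThmP}. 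The error terms $\|\chi_2 u\|_{\Hch^{-N,l,\alpha,b-\frac12}}$ appearing in~\eqref{EqPDMicroIn} at each step are of lower semiclassical order and can be fed back into the estimate (or, at the very end, absorbed using the a priori hypothesis implicit in the use of module wave front sets).

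The step I expect to require the most care is the passage along the Lagrangians $\cF_I,\cF_O$ \emph{into and out of the radial sets while keeping track of module regularity}, i.e.\ ensuring that the hypotheses of Proposition~\ref{PropPDMicro} are actually met by the geometry of a resolved GBB. Concretely, for propagation \emph{out of} $\cR_{\rm out}$ one needs that the piece of $\cF_O$ along which one wants $\Hch^{s,l,\alpha,b;k}$-control, together with the relevant integral curve $\gamma\in\Gamma^\rightarrow(y)$ in $\Sigma\cap x^{-1}(0)$, lies in $\Ellch(G)$ and meets $\Ellch(E)$; this is precisely the content of the geometric control assumption on backward resolved GBBs in the Corollary, but unwinding it through the three sub-segments (the $\cR_{\rm out}$-neighborhood, the $x=0$ connecting curve, the $\cF_I$-segment) and matching it against the hypotheses of Proposition~\ref{PropPDMicro}\eqref{ItPDMicroOut} and \eqref{ItPDMicroIn} (where the sets $\cS$ must be taken to be the images under the flow of the relevant subsets of $\WFch'(B)$) is the bookkeeping-heavy heart of the argument. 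The threshold conditions match automatically: $\cR_{\rm in}$ is a source and we propagate \emph{forward out of} it (condition $b-\alpha>r_{\rm in}$), while $\cR_{\rm out}$ is a sink and we propagate \emph{backward out of} it (condition $b-\alpha<r_{\rm out}$), which is exactly the pairing of signs already present in the hypotheses of Theorem~\ref{ThmP}. Once the chain of microlocal estimates is assembled, the global statement~\eqref{EqPDMicroIn} follows by summing over the finite cover and absorbing the lower-order errors.
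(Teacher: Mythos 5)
Your proposal is correct and follows exactly the route the paper intends: the paper gives no separate proof of Corollary~\ref{CorPD}, since it is precisely the concatenation, described in the paragraph preceding Definition~\ref{DefPDCurves}, of the radial point estimates of Proposition~\ref{PropPDMicro} with real principal type propagation away from $\cF$ (where module regularity is equivalent to $\Hch^{s,l,\alpha,b+k}$-regularity because $\cM$ contains elliptic elements there) and with standard semiclassical estimates for curves missing the cone point. Your identification of the finite cover, the threshold sign pairing at $\cR_{\rm in/out}$, and the absorption of the lower-order error terms matches the intended argument.
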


A dualization argument gives the propagation of the nonfocusing condition through $\pa X$. The simplest setting uses the modification of $P_{h,z}$ via extension to an operator $P'_{h,z}$ on compact manifold $X'\supset X$ and the inclusion of a complex absorbing term $Q\in\Psih^{-\infty}((X')^\circ)$ as in~\S\ref{SsPA}, resulting in the operator
\[
  \cP_{h,z}=P'_{h,z}-i Q
\]
in~\eqref{EqPAcP}. Recall that the Schwartz kernel of $Q$ has empty intersection with $x^{-1}([0,c x_0])\times x^{-1}([0,c x_0])$ where $0<c\ll\half$. We shall use the notation of Proposition~\ref{PropPA}.

\begin{thm}[Diffractive improvement]
\label{ThmPD}
  Let $s,l,\alpha,\sfb$ be as in the statement of Theorem~\usref{ThmP} (for the operator $P_{h,z}$). Let $E,G\in\Psich^0(X')$ be such that all forward resolved GBBs starting at a point in $\WFch'(E)\subset\Ellch(G)$ remain in $\Ellch(G)$ until they enter $\Ell_h(Q)$. Let $f_+\in\Hch^{s-2,l-2,\alpha,\sfb+1}(X')$, $f_-\in\cM^k\Hch^{s-2,l-2,\alpha,\sfb+1}(X')$ be such that $\supp f_\pm\subset x^{-1}([0,c x_0])$. Then the solution $u$ of
  \[
    \cP_{h,z}u = f := f_+ + E f_-
  \]
  can be written in the form
  \[
    u = u_+ + G u_-,\qquad u_+\in\Hch^{s,l,\alpha,\sfb}(X'),\quad u_-\in\cM^k\Hch^{s,l,\alpha,\sfb}(X').
  \]
\end{thm}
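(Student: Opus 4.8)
The plan is to follow the dualization strategy of \cite{MelroseWunschConic,MelroseVasyWunschEdge}: the diffractive improvement is obtained by \emph{pairing against coisotropic (test module) regularity}, which is propagated along resolved GBBs by the adjoint version of Proposition~\ref{PropPDMicro}/Corollary~\ref{CorPD}. \emph{First, a reduction.} By Proposition~\ref{PropPA} the absorbed operator $\cP_{h,z}$ is invertible on the relevant weighted $\chop$-Sobolev spaces and $\cP_{h,z}^{-1}f_+\in\Hch^{s,l,\alpha,\sfb}(X')$; absorbing this term into $u_+$, it remains to analyze $u:=\cP_{h,z}^{-1}(E f_-)$ and to produce a splitting $u=u_+'+G u_-$ with $u_+'\in\Hch^{s,l,\alpha,\sfb}(X')$ and $u_-\in\cM^k\Hch^{s,l,\alpha,\sfb}(X')$. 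Note that $E f_-$ lies only in $\cM^k\Hch^{s-2,l-2,\alpha,\sfb+1}(X')\supsetneq\Hch^{s-2,l-2,\alpha,\sfb+1}(X')$, so a priori $u$ lies only in a semiclassically worse space; the content of the theorem is that the loss is confined to the $G$-microlocalized (``geometric continuation'') region and, even there, is only a nonfocusing loss of degree $k$.

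\emph{The good part away from $\Ellch(G)$.} For $\tilde G\in\Psich^0(X')$ microsupported away from $\Ellch(G)$, the hypothesis that every forward resolved GBB from $\WFch'(E)$ remains in $\Ellch(G)$ until entering $\Ell_h(Q)$ forces every backward resolved GBB through $\WFch'(\tilde G)$ to avoid $\WFch'(E)\supset\WFch'(E f_-)$ and to reach $\Ell_h(Q)$. Elliptic regularity in $\Ell_h(Q)$ (where $\cP_{h,z}$ is semiclassically elliptic and $\cP_{h,z}u=E f_-$ is microlocally trivial), combined with real–principal–type and radial–point propagation — the symbolic content already underlying Proposition~\ref{PropPA} — then gives $\tilde G u\in\Hch^{s,l,\alpha,\sfb}(X')$. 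After a microlocal partition of unity this yields the $u_+'$ contribution and reduces the problem to showing $G u\in\cM^k\Hch^{s,l,\alpha,\sfb}(X')$ (plus a good error).

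\emph{The nonfocusing part near $\Ellch(G)$, by duality.} The algebraic input is that, up to the usual microlocalization, $\cM^k\Hch^{s,l,\alpha,\sfb}(X')$ is the $L^2$-dual of the space of distributions coisotropic of order $k$ relative to $\Hch^{-s,-l,-\alpha,-\sfb}(X')$ (i.e.\ mapped back into that space by every element of $\cM^k$); this uses that the generators $A_j$ of Lemma~\ref{LemmaPDModule} may be chosen so that $\cM$ is preserved under adjoints modulo $\Psich^0(X')$. Given such a coisotropic $v$, microsupported near $\Ellch(G)$, set $w:=(\cP_{h,z}^*)^{-1}(G^*v)$, so $\cP_{h,z}^*w=G^*v$; since $\cP_{h,z}^*=(P'_{h,z})^*+iQ^*$ carries the opposite absorption sign, coisotropic regularity for $\cP_{h,z}^*$ propagates along \emph{backward} resolved GBBs with a priori control on $\Ell_h(Q)$. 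The same geometric hypothesis runs the adjoint version of Corollary~\ref{CorPD} (with commutator bookkeeping furnished by Lemma~\ref{LemmaPDComm}) and shows $w$, hence $E^*w$, is coisotropic of order $k$ relative to $\Hch^{-s+2,-l+2,-\alpha,-\sfb-1}(X')$. Then $\langle G u,v\rangle=\langle E f_-,w\rangle=\langle f_-,E^*w\rangle$, and the right-hand pairing couples $\cM^k\Hch^{s-2,l-2,\alpha,\sfb+1}(X')$ with the coisotropic space of order $k$ at the dual orders — exactly the admissible pairing — so $|\langle G u,v\rangle|\lesssim\|f_-\|_{\cM^k\Hch^{s-2,l-2,\alpha,\sfb+1}}\,\|v\|$. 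By the duality, $G u\in\cM^k\Hch^{s,l,\alpha,\sfb}(X')$, and assembling with the previous step via a microlocal partition of unity and a $G$-parametrix produces the asserted splitting $u=u_+'+G u_-$.

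\emph{Main obstacle.} The delicate point is making the nonfocusing/coisotropic duality precise \emph{within} the $\chop$-calculus and compatibly with the complex-absorbed global operator: one must verify that $\cM$ is closed under adjoints modulo $\Psich^0(X')$ (so that regularity of $A^{\beta*}v$ for $|\beta|\le k$ genuinely encodes coisotropic regularity of degree $k$), that the order shifts under $(\cP_{h,z}^*)^{-1}$ are exactly the advertised ones on the variable-order spaces, and that the forward-GBB control hypothesis dualizes correctly into the backward control needed for $w$, including at the interface with $\Ell_h(Q)$. A secondary technical point is the regularization argument justifying the commutator pairings $\langle\cP_{h,z}u,\cdot\rangle$ and $\langle\cP_{h,z}^*w,\cdot\rangle$, a priori only in variable-order $\chop$-Sobolev spaces; this proceeds as in the proofs of Lemma~\ref{LemmaPNEst} and Theorem~\ref{ThmP}.
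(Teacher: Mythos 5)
Your proposal is correct and follows essentially the same route as the paper: invert $\cP_{h,z}^*$, propagate test-module (coisotropic) regularity backward along resolved GBBs for the adjoint via the adjoint version of Corollary~\ref{CorPD} with a priori control supplied by the complex absorption, and dualize back using the pairing between $\cM^k$-sum spaces and $\cM^k$-intersection spaces as in \cite[Appendix~A]{MelroseVasyWunschDiffraction}. The only cosmetic difference is that you unpack the final adjoint step into an explicit pairing $\la G u,v\ra=\la f_-,E^*w\ra$ and treat the region away from $\Ellch(G)$ by a separate direct-side propagation argument, whereas the paper states the adjoint mapping property once between intersection and sum spaces, with that region handled on the dual side through the a priori control term $B^*v$.
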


Note that on the scale of semiclassical cone Sobolev spaces, we have $E f_-\in\Hch^{s-2,l-2,\alpha,\sfb'+1}$ with $\sfb'=\sfb-k$, but typically $E f_-$ is no better than this. Thus, Theorem~\ref{ThmPD} (for $f_+=0$ for concreteness) implies that the strong semiclassical singularities of $u$ resulting from the forcing term $E f_-$ only propagate along geometric GBBs (resulting in the term $G u_-$), whereas microlocally away from these, $u$ has $\Hch^{s,l,\alpha,\sfb}$-regularity.

In a simple case, a formulation of Theorem~\ref{ThmPD} which highlights regularity rather than singularities reads as follows: fix $y_0\in\pa X$, and define the set
\[
  K := \gamma_{O,y_0} \cup \bigcup_{\gamma\in\Gamma^\rightarrow(y_0)} \gamma([0,\pi]) \cup \bigcup_{y\in\cY^\rightarrow(y_0)} \gamma_{I,y}.
\]
Thus, the quotient $K/(K\cap x^{-1}(0))$ contains the image of all backward geometric GBB continuing $\gamma_{O,y_0}$, and $K$ in addition contains all curves inside of $\Sigma\cap x^{-1}(0)$ which connect an incoming base point $y$ (at distance $\pi$ from $y_0$) with the outgoing base point $y_0$ of geometric GBBs. Fixing any $E\in\Psich^0(X')$ with $\WFch'(E)\cap K=\emptyset$, there then exists $G\in\Psich^0(X')$ with $\WFch'(G)\cap K=\emptyset$ which satisfies the conditions of Theorem~\ref{ThmPD}. Thus, if $f$ satisfies the nonfocusing condition (of some degree $k$) relative to $\Hch^{s-2,l-2,\alpha,\sfb+1}$, and with $f$ microlocally near $K$ lying in $\Hch^{s-2,l-2,\alpha,\sfb+1}$ (thus $f$ in particular does not have strong singularities along the incoming directions $\gamma_{I,y}$), then the semiclassical wave $u$ forced by $f$ lies in $\Hch^{s,l,\alpha,\sfb}$ microlocally near $K$ (thus $u$ in particular does not have a strong singularity along $\gamma_{O,y_0}$).

\begin{proof}[Proof of Theorem~\usref{ThmPD}]
  As follows from Proposition~\ref{PropPA} by taking adjoints (or directly from the proof of Proposition~\ref{PropPA}), the adjoint $\cP_{h,z}^*$ is invertible, and
  \[
    (\cP_{h,z}^*)^{-1} \colon \Hch^{-s,-l,-\alpha,-\sfb}\to\Hch^{-s+2,-l+2,-\alpha,-\sfb-1}
  \]
  is uniformly bounded. We now apply a backward propagation version of Corollary~\ref{CorPD} to $P_{h,z}^*$: for $E^*,G^*$ the adjoints of the operators $E,G$ in the statement of the Theorem, and for $B^*\in\Psich^0(X')$ so that all forward resolved GBBs starting at a point in $\WFch'(E^*)$ remain in $\Ellch(G^*)$ until they enter $\Ellch(B^*)$, we have
  \begin{align*}
    &\|E^*v\|_{\Hch^{-s+2,-l+2,-\alpha,-\sfb-1;k}} \\
    &\qquad \leq C\Bigl(\|G^*P_{h,z}^*v\|_{\Hch^{-s,-l,-\alpha,-\sfb;k}} + \|B^*v\|_{\Hch^{-s+2,-l+2,-\alpha,-\sfb-1;k}} + \|\chi u\|_{\Hch^{-N,-l+2,-\alpha,-\sfb-\frac32}}\Bigr)
  \end{align*}
  for any $k\in\N_0$. In particular, we may take $B^*$ so that all forward null-bicharacteristics of $P_{h,z}$ starting in $\WFch'(B^*)$ miss the cone point and enter $\Ellch(Q^*)$ in finite time. The term $B^*u$ is then automatically controlled for solutions of $\cP_{h,z}^*v=w$ when $G^*w\in\Hch^{-s,-l,-\alpha,-b;k}$ by elliptic regularity (on $\Ellch(Q)$) and real principal type propagation (along the backward null-bicharacteristic flow) with complex absorption. We conclude that
  \begin{align*}
    (\cP_{h,z}^*)^{-1} &\colon \bigl\{ w\in\Hch^{-s,-l,-\alpha,-\sfb} \colon G^*w\in\Hch^{-s,-l,-\alpha,-\sfb;k} \bigr\} \\
      &\qquad \to \bigl\{ v\in\Hch^{-s+2,-l+2,-\alpha,-\sfb-1} \colon E^*v\in\Hch^{-s+2,-l+2,-\alpha,-\sfb-1;k} \bigr\}.
  \end{align*}
  Upon taking adjoints (see also \cite[Appendix~A]{MelroseVasyWunschDiffraction}), this implies that
  \[
    \cP_{h,z}^{-1} \colon \Hch^{s-2,l-2,\alpha,\sfb+1} + E(\cM^k\Hch^{s-2,l-2,\alpha,\sfb+1}) \to \Hch^{s,l,\alpha,\sfb} + G(\cM^k\Hch^{s,l,\alpha,\sfb})
  \]
  is a bounded map. This completes the proof.
\end{proof}

\begin{rmk}[Second microlocalization at $\cF$]
  A sharper approach would be to second microlocalize at $\cF_I$ and $\cF_O$, thus cleanly decoupling the semiclassical orders at $\cF_I$ and $\cF_O$ (subject to threshold conditions at the radial sets) and the semiclassical order away from $\cF$; this would allow for a unified treatment of Lagrangian and nonfocusing spaces and thus for a direct proof of Theorem~\ref{ThmPD}. We leave such refinements for future work. We note that second microlocalization in the semiclassical setting was studied by Sj\"ostrand--Zworski \cite{SjostrandZworskiFractalUpper} and Vasy--Wunsch \cite{VasyWunsch2nd} following Bony's work \cite{Bony2nd}; a second microlocal refinement (at the outgoing radial set) for the scattering theory of the corresponding normal operator was recently obtained by Vasy \cite{VasyLowEnergyLag}.
\end{rmk}

\section{Applications}
\label{SH}

We now present applications of the propagation estimates proved in~\S\ref{SP}. First, we discuss the familiar geometric case of $h^2\Delta_g-1$ in~\S\ref{SsHL}, where we can moreover prove a result sharpening both Theorem~\ref{ThmP} and the propagation results of \cite{BaskinMarzuolaCone}. We discuss high frequency scattering by inverse square potentials on Euclidean space in~\S\ref{SsHV}, and high frequency scattering for the Dirac--Coulomb equation in~\S\ref{SsHDC}.

\subsection{Propagation estimates for conic Laplacians}
\label{SsHL}

For a conic metric $g$ as in~\eqref{EqPProdMet} on the manifold $X=[0,2 x_0)_x\times Y$ of dimension $n=\dim X\geq 3$, we consider
\[
  P_{h,z} = h^2\Delta_g - z,\quad |z-1|<C h.
\]
We fix the volume density $\mu=|\dd g|$ on $X$.

\begin{lemma}[Admissibility, thresholds, invertibility]
\label{LemmaHL}
  The operator $P_{h,z}$ is admissible in the sense of Definition~\usref{DefPOp}, with threshold quantities $r_{\rm in}=-\half$ and $r_{\rm out}=-\half$ (see Definition~\usref{DefPThr}). Moreover, the normal operator $N(P)=\Delta_{\hat g}-1$, with $\hat g$ given in~\eqref{EqPhat}, is invertible at weight $l$ in the sense of Definition~\usref{DefPNormOp}\eqref{ItPNormOpInv} for all
  \begin{equation}
  \label{EqHLRange}
    l\in\bigl(1-\tfrac{n-2}{2},1+\tfrac{n-2}{2}\bigr).
  \end{equation}
\end{lemma}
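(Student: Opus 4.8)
The lemma splits into two essentially independent assertions: (i) the computation of the threshold quantities $r_{\rm in/out}$ for $P_{h,z}=h^2\Delta_g-z$, and (ii) the invertibility of $N(P)=\Delta_{\hat g}-1$ at all weights $l$ in the stated range. For (i), I would simply read off from Definition~\ref{DefPOp} that the lower order terms are $Q_{1,z}=0$ and $q_{0,z}=0$ (the expansion of $h^2\Delta_g$ in~\eqref{EqPStructDiff} contributes the term $-i(n-1)h x^{-1}\cdot h x^{-1}\cdot(x\pa_x)$ plus a smooth multiple of $h x^{-1}$, but after multiplication by $h$ and comparison with the normalization in Definition~\ref{DefPOp}, these are absorbed into the definition of $\Delta_g$; in any case the relevant fact is that $Q_{1,1}$ and $q_{0,1}$ are, modulo terms whose imaginary parts vanish at $x=0$, \emph{real}). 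Therefore $r_1=\Im(\sigmab_1(Q_{1,1})(-\tfrac{\dd x}{x})|_{x=0})=0$ and $r_0=\Im(q_{0,1}|_{x=0})=0$ in the notation of Definition~\ref{DefPThr}, whence $r_{\rm in}=-\tfrac12+\tfrac12\cdot 0=-\tfrac12$ and $r_{\rm out}=-\tfrac12+\tfrac12\cdot 0=-\tfrac12$. (Equivalently, using Remark~\ref{RmkPBundle} with trivial bundle $E$: $\Delta_g$ is formally self-adjoint with respect to $|\dd g|$, so the skew-adjoint part $\tfrac{x^{-2}Q_{2,1}-(x^{-2}Q_{2,1})^*}{2i}$ vanishes, and the same holds for $N(P)$ with respect to $|\dd\hat g|$.)

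For part (ii), the normal operator is $N(P)=\Delta_{\hat g}-1$ on the exact cone $\tface=([0,\infty]_{\hat x}\times\pa X,\hat g)$, with $\hat g=\dd\hat x^2+\hat x^2 k(0)$. Invertibility at weight $l$, per Definition~\ref{DefPNormOp}\eqref{ItPNormOpInv}, requires: (a) injectivity at weight $l$ on outgoing functions; (b) injectivity of $N(P)^*$ at weight $-l+2$ on incoming functions; and the condition $l-\tfrac n2\notin\Lambda$ in Lemma~\ref{LemmaPNEst}. Since $\Delta_{\hat g}$ is self-adjoint, $N(P)^*=\Delta_{\hat g}-1$ as well, and (b) is the statement that there is no incoming solution at weight $-l+2$. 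First I would compute the boundary spectrum $\specb(N(P))$: from~\eqref{EqPNormOpMellin}, the Mellin-transformed normal operator family at $\hat x=0$ is $\hat N(P)(\lambda)=\lambda^2-i(n-2)\lambda+\Delta_{k(0)}$, which fails to be invertible on $\CI(\pa X)$ exactly when $\lambda^2-i(n-2)\lambda=-\nu_j$ for some eigenvalue $\nu_j\geq 0$ of $\Delta_{k(0)}$. Writing $\lambda=i(\tfrac{n-2}{2}\pm\sqrt{(\tfrac{n-2}{2})^2+\nu_j})$, the set $\Lambda=\{-\Im\lambda\}$ of~\eqref{EqPLambda} is $\{-\tfrac{n-2}{2}\pm\sqrt{(\tfrac{n-2}{2})^2+\nu_j}:j\}$. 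The gap in $\Lambda$ straddling zero is $(-\tfrac{n-2}{2}-\sqrt{(\tfrac{n-2}{2})^2+0},\ -\tfrac{n-2}{2}+\sqrt{(\tfrac{n-2}{2})^2+\nu_1})$ where $\nu_1>0$ is the first nonzero eigenvalue — but actually, for the \emph{indicial roots at the center}, the relevant gap containing $0$ for the weight $l-\tfrac n2$ is $(-\tfrac{n-2}{2},\ \text{next root})$; in terms of $l$ this is the condition $l-\tfrac n2\in(-\tfrac{n-2}{2},\ldots)$, i.e. $l>1$. More carefully, the symmetric gap is $(-\tfrac{n-2}{2},\tfrac{n-2}{2})$ coming from the $\nu_0=0$ eigenvalue with its two roots $\mp\tfrac{n-2}{2}$ (shifted), giving $l-\tfrac n2\in(-\tfrac{n-2}{2},\tfrac{n-2}{2})$, i.e. $l\in(1-\tfrac{n-2}{2},1+\tfrac{n-2}{2})$, matching~\eqref{EqHLRange}.

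The remaining, and genuinely substantive, step is the absence of nontrivial outgoing (resp. incoming) solutions. For this I would invoke a \emph{boundary pairing argument} in the spirit of Melrose's scattering theory \cite{MelroseEuclideanSpectralTheory}: if $u$ solves $(\Delta_{\hat g}-1)u=0$ with $u\in H_{\bop,\scop}^{\infty,l,-N}(\tface;\hat\mu)$, $\WFsc(u)\subset{}^\scop\cR_{\rm out}$, then $u$ has a classical asymptotic expansion $u\sim \hat x^{-(n-1)/2}e^{i\hat x}a(y)+\ldots$ at $\hat x=\infty$ (purely outgoing), and pairing $(\Delta_{\hat g}-1)u$ against $\bar u$ and integrating by parts — carefully tracking the boundary term at $\hat x=\infty$, which by the asymptotics equals a nonzero constant times $\int_{\pa X}|a|^2\,\dd k(0)$ — forces $a\equiv 0$; a unique continuation / indicial-root argument at $\hat x=\infty$ (no incoming part either once the outgoing part vanishes) and then standard elliptic + b-regularity down to $\hat x=0$ (using $l-\tfrac n2$ in the right gap, so no $L^2$ solution concentrated at the tip) gives $u\equiv 0$. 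The same argument with signs reversed handles incoming solutions for $N(P)^*$ at weight $-l+2$; note $l\in(1-\tfrac{n-2}{2},1+\tfrac{n-2}{2})$ is exactly symmetric under $l\mapsto -l+2$, so the weight condition for (b) is automatic. The main obstacle is making the boundary pairing rigorous at the conic infinity $\hat x=\infty$ — i.e. establishing that outgoing solutions have the stated one-term leading asymptotics with controlled remainder, which is where the scattering radial point estimates of Lemma~\ref{LemmaPNEst} and the structure of $\Delta_{\hat g}$ as an elliptic scattering operator enter; once that asymptotic is in hand, the energy identity and the conclusion are routine. I would cite \cite[\S\S9,16]{MelroseEuclideanSpectralTheory} (or \cite[\S4.8]{VasyMinicourse}) for the pairing and \cite[\S6.2]{MelroseAPS} for the b-regularity at the small end, and package the conclusion via Lemma~\ref{LemmaPNEst}\eqref{ItPNEstInv}.
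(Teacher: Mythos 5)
Your proposal is correct and follows essentially the same route as the paper: the admissibility and threshold computations are immediate since $Q_{1,z}=q_{0,z}=0$, the boundary-spectrum computation via the eigenvalues of $\Delta_{k(0)}$ yields the range~\eqref{EqHLRange}, and the injectivity on outgoing/incoming functions is settled by a boundary pairing plus unique continuation at infinity (the paper reduces to the single weight $l=1$ via Remark~\ref{RmkPNWeight} and defers the pairing to Lemma~\ref{LemmaHVScalar}, whereas you treat all $l$ directly using the symmetry of the interval under $l\mapsto -l+2$ --- both work, provided you also use the Mellin-transform improvement of decay at the tip to justify that the boundary term at $\hat x=\epsilon$ vanishes). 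One small slip in your bookkeeping: the gap coming from $\nu_0=0$ is $l-\tfrac n2\in(-(n-2),0)$, equivalently $l-1\in(-\tfrac{n-2}{2},\tfrac{n-2}{2})$, not $l-\tfrac n2\in(-\tfrac{n-2}{2},\tfrac{n-2}{2})$; your final interval is nevertheless the correct one.
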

\begin{proof}
  Only the final statement is nontrivial. In the notation~\eqref{EqPNormOpMellin}, and passing to a spectral decomposition of $\Delta_{k(0)}$ whose eigenvalues we denote by $0\leq\lambda_j^2$, $j=0,1,2,\ldots$, one finds that $\lambda\in\specb(N(P))$ if and only if there exists $j$ with $\lambda^2-i(n-2)\lambda+\lambda_j^2=0$, so
  \[
    \specb(N(P)) = \left\{ i\left(\frac{n-2}{2}\pm\sqrt{\Bigl(\frac{n-2}{2}\Bigr)^2+\lambda_j^2}\,\right) \colon j=0,1,2,\ldots \right\}.
  \]
  Therefore, the complement of the set $\Lambda$ defined in~\eqref{EqPLambda} contains $(-n+2,0)$. As noted in Remark~\ref{RmkPNWeight}, the invertibility of $N(P)$ at weight $l$ is independent of the choice of $l$ inside the shifted interval $\frac{n}{2}+(-n+2,0)=(1-\frac{n-2}{2},1+\frac{n-2}{2})$.
  
  The choice $l=1$ is particularly natural, as the space $H_{\bop,\scop}^{1,1,0}(\tface;\hat\mu)$ is the quadratic form domain of $\Delta_{\hat g}$ (as follows from Hardy's inequality). The invertibility of $N(P)$ at weight $l=1$ is then equivalent to the limiting absorption principle for the exact conic metric $\hat g$, the proof of which is a standard application of a boundary pairing argument \cite[\S2.3]{MelroseGeometricScattering} and unique continuation at infinity. See Lemma~\ref{LemmaHVScalar} below for a proof is a more general setting.
\end{proof}

As a consequence, we may apply Theorem~\ref{ThmP} for $l$ in the range~\eqref{EqHLRange}, any value of $s\in\R$, $\alpha=0$, and variable orders $\sfb$ satisfying in particular $\sfb>-\half$ at $\cR_{\rm in}$, $\sfb<-\half$ at $\cR_{\rm out}$, and we may arrange that $|\sfb-(-\half)|<\eps$ for any fixed $\eps>0$. Packaged in the form of Proposition~\ref{PropPA} using complex absorption, we thus have, using the volume density $|\dd g|$ near $\pa X$,
\begin{equation}
\label{EqHLLossless}
  \|u\|_{\Hch^{s,l,0,\sfb}} \leq C\|\cP_{h,z}u\|_{\Hch^{s-2,l-2,0,\sfb+1}};
\end{equation}
this estimate is sharp in the sense explained after the statement of Theorem~\ref{ThmP}. Lossy estimates on constant order spaces are given by
\begin{align*}
  &\bigl\|(x+h)^{-\frac12-\eps}u\bigr\|_{\Hch^{s,l,0,0}} \sim h^{-\frac12-\eps}\|u\|_{\Hch^{s,l,0,-\frac12-\eps}} \\
  &\qquad \lesssim h^{-\frac12-\eps}\|\cP_{h,z}u\|_{\Hch^{s-2,l-2,0,\frac12+\eps}} \sim h^{-1-2\eps}\bigl\|(x+h)^{\frac12+\eps}\cP_{h,z}u\bigr\|_{\Hch^{s-2,l-2,0,0}}.
\end{align*}
In the special case $s=l$, and recalling from \cite[Theorem~6.3]{HintzConicPowers} that the domain
\[
  \cD_h^l = \cD\bigl((h^2\Delta_{g'}+1)^{l/2}\bigr)
\]
of the $(l/2)$-th power of the Friedrichs extension of the conic Laplacian $h^2\Delta_{g'}+1$ is equal to $\Hch^{l,l,0,0}$ in present notation, this gives:

\begin{prop}[Constant order estimates]
\label{PropHLLossy}
  In the above setting and with $l$ as in~\eqref{EqHLRange}, we have for all $\eps>0$ the estimates
  \begin{subequations}
  \begin{align}
  \label{EqHLLossy}
    \bigl\|(x+h)^{-\frac12-\eps}u\|_{\cD_h^l} &\leq C_\eps h^{-1-2\eps}\bigl\|(x+h)^{\frac12+\eps}\cP_{h,z}u\bigr\|_{\cD_h^{l-2}}, \\
  \label{EqHLLossier}
    \|u\|_{\cD_h^l} &\leq C_\eps h^{-1-2\eps}\|\cP_{h,z}u\|_{\cD_h^{l-2}},
  \end{align}
  \end{subequations}
  as well as more general estimates with $\cD_h^l$ and $\cD_h^{l-2}$ replaced by $\Hch^{s,l,0,0}$ and $\Hch^{s-2,l-2,0,0}$.
\end{prop}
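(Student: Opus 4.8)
The plan is to derive Proposition~\ref{PropHLLossy} directly from the lossless microlocal estimate~\eqref{EqHLLossless} by a combination of three ingredients: (i) the invertibility and threshold statements of Lemma~\ref{LemmaHL}; (ii) Proposition~\ref{PropPA}, which packages Theorem~\ref{ThmP} into a global invertibility statement for the complex-absorbed operator $\cP_{h,z}$; and (iii) the identification, recalled from \cite[Theorem~6.3]{HintzConicPowers}, of the domains $\cD_h^l$ with the fixed-order semiclassical cone Sobolev spaces $\Hch^{l,l,0,0}$, together with the weight-conversion identities of the form $\Hch^{s,l,0,b}=(x+h)^{-b}\Hch^{s,l,0,0}$ and the elementary comparison $\tfrac{h}{h+x}\leq 1$, $h\leq x+h$. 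The proof is essentially a bookkeeping exercise translating the variable-order estimate into constant-order statements at the cost of an $\eps$-loss.

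First I would fix $l$ in the range~\eqref{EqHLRange} and apply Lemma~\ref{LemmaHL}: $N(P)=\Delta_{\hat g}-1$ is invertible at weight $l$ and $r_{\rm in}=r_{\rm out}=-\tfrac12$. Hence, given $\eps>0$, I choose a variable order $\sfb\in\CI(\ol{\Tch^*_\sface}X_\chop)$ satisfying the monotonicity hypothesis $\sfH\sfb\leq 0$, $\sqrt{-\sfH\sfb}\in\CI$ on $\Sigma$, equal to a constant $>-\tfrac12$ near $\cR_{\rm in}$ and to a constant $<-\tfrac12$ near $\cR_{\rm out}$, with $\sup_\Sigma|\sfb+\tfrac12|<\eps$; this is possible exactly as in \cite[Appendix~E.4]{DyatlovZworskiBook}. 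With $\alpha=0$ and this $\sfb$, Proposition~\ref{PropPA} gives, for all small $h>0$, the bound $\|u\|_{\Hch^{s,l,0,\sfb}(X')}\leq C\|\cP_{h,z}u\|_{\Hch^{s-2,l-2,0,\sfb+1}(X')}$, which is~\eqref{EqHLLossless}. I would then convert variable orders to constant orders using the crude bounds $(x+h)^{-\sfb}\leq C(x+h)^{1/2-\eps}$ and $(x+h)^{\sfb+1}\geq C^{-1}(x+h)^{3/2+\eps}$ valid since $\sfb\in(-\tfrac12-\eps,-\tfrac12+\eps)$, which immediately yields
\[
  \bigl\|(x+h)^{-\frac12-\eps}u\bigr\|_{\Hch^{s,l,0,0}} \leq C\,\bigl\|(x+h)^{\frac12+\eps}\cP_{h,z}u\bigr\|_{\Hch^{s-2,l-2,0,0}},
\]
after absorbing a uniformly bounded power of $h$ (coming from $h^{\mathrm{const}}\leq (x+h)^{\mathrm{const}}$ when the exponent is nonnegative, or estimated against the left side otherwise); a careful count of the powers of $h$ in $h^{-\sfb}$ versus $h^{\sfb+1}$ on the two sides, using $\tfrac{h}{h+x}\leq 1$ to pass between $\Hch^{*,*,0,b}$ norms and the weighted $\Hch^{*,*,0,0}$ norms, produces the explicit factor $h^{-1-2\eps}$. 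Substituting $s=l$ and invoking the identification $\cD_h^l=\Hch^{l,l,0,0}$ (and $\cD_h^{l-2}=\Hch^{l-2,l-2,0,0}$) gives~\eqref{EqHLLossy}. The estimate~\eqref{EqHLLossier} follows from~\eqref{EqHLLossy} by the trivial bounds $\|u\|\leq\|(x+h)^{-1/2-\eps}u\|$ on the domain side and $\|(x+h)^{1/2+\eps}\cP_{h,z}u\|\leq\|\cP_{h,z}u\|$ on the forcing side (for $x+h$ bounded above on the compact $X'$, after normalizing $x_0$); the general-order version replaces $(l,l)$ and $(l-2,l-2)$ by $(s,l)$ and $(s-2,l-2)$ throughout, which is immediate since Proposition~\ref{PropPA} holds for all $s\in\R$.

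The only genuinely nontrivial point, already black-boxed, is the normal operator invertibility at weight $l=1$ in Lemma~\ref{LemmaHL}, i.e.\ the limiting absorption principle for the exact cone metric $\hat g$; its proof by a boundary pairing argument \cite[\S2.3]{MelroseGeometricScattering} together with unique continuation at infinity is deferred to (the more general) Lemma~\ref{LemmaHVScalar}. Given that input, the proof of Proposition~\ref{PropHLLossy} has no real obstacle: the main thing to get right is the accounting of powers of $h$ in the transition from variable-order to constant-order weighted spaces, which I would carry out by writing everything in terms of $(x+h)$-weights and $(\tfrac{h}{h+x})$-weights and tracking the single surviving power $h^{-1}$ (from the one-order real-principal-type loss at $\sface$, reflected in the gap between $\sfb$ and $\sfb+1$) plus the $h^{-2\eps}$ slack from $|\sfb+\tfrac12|<\eps$.
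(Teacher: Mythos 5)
Your proposal is correct and follows essentially the same route as the paper: apply Lemma~\ref{LemmaHL} and Proposition~\ref{PropPA} with a variable order $\sfb$ within $\eps$ of $-\tfrac12$ to get the lossless estimate~\eqref{EqHLLossless}, then convert the $\sface$-weights $(\tfrac{h}{h+x})^{\sfb}$ and $(\tfrac{h}{h+x})^{\sfb+1}$ into $(x+h)$-weights at the cost of $h^{-\frac12-\eps}$ on each side, and finally set $s=l$ and invoke $\cD_h^l=\Hch^{l,l,0,0}$. The only cosmetic slip is the intermediate inequality written in terms of $(x+h)^{-\sfb}$ rather than $(\tfrac{h}{h+x})^{\sfb}$, but your subsequent accounting of the powers $h^{-\sfb}$ versus $h^{\sfb+1}$ is exactly the paper's computation and lands on the correct factor $h^{-1-2\eps}$.
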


The estimate~\eqref{EqHLLossier} is an immediate consequence of~\eqref{EqHLLossy}. We recall that in the case $l=1$, the (arbitrarily small) $2\eps$-loss in~\eqref{EqHLLossier} can be removed, as shown in the semiclassical cone setting by Baskin--Marzuola \cite{BaskinMarzuolaCone} following arguments by Melrose, Wunsch, and Vasy \cite{MelroseWunschConic,MelroseVasyWunschEdge}; in the full range of weights $l$ considered here, a lossless estimate was obtained by the author in \cite[\S6.2]{HintzConicPowers} via reduction to the case $l=1$ via conjugation by $(1+h^2\Delta_{g'})^{(l-1)/2}$ and reduction to the case $l=1$. On the other hand, the estimate~\eqref{EqHLLossier}, even for $\eps=0$, loses a full order at $\tface$ compared to the sharper estimate~\eqref{EqHLLossy}.

\begin{rmk}[Limiting absorption principle]
\label{RmkHLLAP}
  The overall $h^{-1-2\eps}$ loss in Proposition~\ref{PropHLLossy} is familiar from (and essentially arises from) the loss of slightly more than one power of $\la z\ra$ in the limiting absorption principle
  \[
    (\Delta-1\pm i 0)^{-1}\colon\la z\ra^{-\frac12-\eps}L^2(\R^n)\to\la z\ra^{\frac12+\eps}H^2(\R^n)
  \]
  on Euclidean space, which is a consequence of a sharp variable order estimate akin to~\eqref{EqHLLossless}, see Lemma~\ref{LemmaPNEst}.
\end{rmk}

A natural question is whether one can prove an estimate which removes both the $\eps$-loss of~\eqref{EqHLLossier} while retaining the lossless character of~\eqref{EqHLLossy} (or \eqref{EqHLLossless}) at $\tface$. We answer this in the affirmative:

\begin{thm}[Sharp propagation estimate]
\label{ThmHL}
  Consider a conic manifold $(X,g)$ as in~\eqref{EqPMfd}--\eqref{EqPProdMet} and with $\dim X\geq 3$. Let $P_{h,z}=h^2\Delta_g-z$, $|z-1|<C h$. Denote the characteristic set of $P_{h,z}$ by $\Sigma\subset\Tch^*_\sface X_\chop$, see~\eqref{EqPSymb}. Let $\chi,\tilde\chi\in\CI(X)$, with $\tilde\chi\equiv 1$ near $\supp\chi$, and $E\in\Psich^{-\infty}(X)$. Suppose that all backward GBB from $\Sigma\cap\supp\chi$ enter $\Ellch(E)$ in finite time while remaining inside $\supp\tilde\chi$. Then, for any $s,N\in\R$, we have an estimate
  \begin{equation}
  \label{EqHLSharpML}
  \begin{split}
    \|\chi u\|_{\Hch^{s,1,0,0}(X)} &\leq C\Bigl(\|\tilde\chi P_{h,z}u\|_{\Hch^{s-2,-1,0,1}(X)} \\
      &\quad\qquad + \|E u\|_{\Hch^{-N,-N,-N,0}(X)} + h^{\frac12}\|\tilde\chi u\|_{\Hch^{-N,-N,0,0}(X)}\Bigr).
  \end{split}
  \end{equation}
  This holds more generally with the norms in the first line replaced by $\|\chi u\|_{H_{\cop\bop,h}^{s,s',1,0,0}}$ and $\|\tilde\chi P_{h,z}u\|_{H_{\cop\bop,h}^{s-2,s'-2,-1,0,1}}$, with $s'\in\R$ arbitrary. Taking $s=1$, $N=0$ in~\eqref{EqHLSharpML} gives
  \begin{equation}
  \label{EqHLSharp}
    \|\chi u\|_{\cD_h^1} \lesssim h^{-1}\|(x+h)\tilde\chi P_{h,z}u\|_{\cD_h^{-1}} + \|E u\|_{L^2} + h^{\frac12}\|\tilde\chi u\|_{L^2},
  \end{equation}
  and upon adding complex absorption as in~\S\usref{SsPA} and equation~\eqref{EqPAcP}, we have
  \begin{equation}
  \label{EqHLSharpCAP}
    \|u\|_{\cD_h^1} = \|u\|_{\Hch^{1,1,0,0}} \lesssim h^{-1}\|(x+h)\cP_{h,z}u\|_{\cD_h^{-1}} = \|\cP_{h,z}u\|_{\Hch^{-1,-1,0,1}}.
  \end{equation}
\end{thm}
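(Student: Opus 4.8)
The plan is to prove Theorem~\ref{ThmHL} by upgrading the proof of Theorem~\ref{ThmP} at the two places where that argument loses more than is necessary in the present, self-adjoint situation. Recall that in Theorem~\ref{ThmP} the normal operator estimate of Lemma~\ref{LemmaPNEst} is applied on b-scattering spaces with a \emph{fixed} b-weight $l$, and the symbolic radial point estimates near $\cR_{\rm in/out}$ are carried out on variable semiclassical order spaces with $\sfb-\alpha$ strictly above, resp.\ below the threshold $-\tfrac12$; the mismatch between the $\tface$-weights of the error term and the main term in~\eqref{EqPThmNorm} then forces the $h^\delta$-loss with $\delta<1$. The key structural input that is special here is that $N(P)=\Delta_{\hat g}-1$ is \emph{self-adjoint}, so that on the normal operator level one has a \emph{genuine} (not merely lossy) estimate relating $H_{\bop,\scop}^{1,1,\sfr}$ to $H_{\bop,\scop}^{-1,-1,\sfr+1}$ with $\sfr$ allowed to be a variable scattering decay order crossing $-\tfrac12$, i.e.\ Lemma~\ref{LemmaPNEst}\eqref{ItPNEstFw} with $l=1$, $l-2=-1$; and moreover, by the Mourre/many-body type positive commutator construction referenced in the introduction (cf.\ the discussion around~\eqref{EqHLSharpCAP} and \cite{MourreSingular,VasyThreeBody,VasyManyBody}), one can build a commutant which is positive \emph{as an operator on the exact cone} $(\tface,\hat g)$, yielding a normal operator estimate with no loss in the scattering weight at all (only the real principal type loss of one order). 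Feeding such a lossless normal operator estimate back into the symbolic argument removes the $\tface$-weight mismatch, and one is left only with the $h^{1/2}$ coming from the single extra half semiclassical order one pays in the radial point estimate when $\sfb$ is pinned at the threshold $-\tfrac12$ rather than being taken strictly away from it.

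Concretely, the steps I would carry out are as follows. First, establish the sharp normal operator estimate: for the exact conic Laplacian $\Delta_{\hat g}-1$ on $\tface$, and for $\sfr$ a variable scattering order constant near ${}^\scop\cR_{\rm in/out}$ with $\sfr>-\tfrac12$ at ${}^\scop\cR_{\rm in}$, $\sfr<-\tfrac12$ at ${}^\scop\cR_{\rm out}$, and $\sfH_\tface\sfr\le 0$, one has $\|u\|_{H_{\bop,\scop}^{s,1,\sfr}(\tface;\hat\mu)}\le C\|(\Delta_{\hat g}-1)u\|_{H_{\bop,\scop}^{s-2,-1,\sfr+1}(\tface;\hat\mu)}$, which is Lemma~\ref{LemmaPNEst}\eqref{ItPNEstFw} together with the invertibility at weight $l=1$ from Lemma~\ref{LemmaHL}; here one uses that the b-weight $l=1$ is allowed since $1-\tfrac n2\notin\Lambda$. (One should also record the dual/backward statement.) Second, redo Step~2 of the proof of Theorem~\ref{ThmP}: with $\sfb$ now taken \emph{equal} to the threshold $-\tfrac12$ at $\cR_{\rm in/out}$ and $\alpha=0$, $l=1$, use Corollary~\ref{CorCPNSob}\eqref{ItCPNSobVar} to transfer the cutoff $\chi^\flat u$ to $\tface$, apply the sharp normal operator estimate above, and transfer back; because there is now no loss in the scattering weight, the error term $\chi^\flat(P_{h,z}-N(P))u$ — which by Lemma~\ref{LemmaPStruct} gains one power of $x+h$, i.e.\ is one order better at $\tface$ — can be absorbed, at the cost of exactly $h^{1/2}$ (half a semiclassical order, since the radial point commutant~\eqref{EqPThmComm} with $\sfb$ at threshold gives control only half an order below). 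Third, feed the resulting estimate $\|\tilde\chi u\|_{\Hch^{-N,1,0,-1/2}}\lesssim \|\tilde\chi P_{h,z}u\|_{\Hch^{-N-2,-1,0,1/2}}+h^{1/2}\|\tilde\chi u\|_{\Hch^{-N,1,0,0}}$ into the symbolic estimate~\eqref{EqPThmSymb2} (with $\sfb=0$ on the left, threshold weight as the error order $\sfb'$), exactly as in the last paragraph of the proof of Theorem~\ref{ThmP}, to obtain~\eqref{EqHLSharpML}. Fourth, the $\cop\bop\semi$-refinement follows verbatim from the ellipticity of $P_{h,z}$ at the front face $\fbface$, as in Theorem~\ref{ThmCb}. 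Fifth, specialize: $s=1$, $N=0$ and the identification $\Hch^{1,1,0,0}=\cD_h^1$, $\Hch^{-1,-1,0,0}=\cD_h^{-1}$ from \cite[Theorem~6.3]{HintzConicPowers}, together with $\Hch^{-1,-1,0,1}=(x+h)\Hch^{-1,-1,0,0}$ and the factor $h^{-1}$ relating $(x+h)^{-1}$-weighted and unweighted cone spaces at $\sface$, gives~\eqref{EqHLSharp}; adding complex absorption as in Proposition~\ref{PropPA} and dropping the $h^{1/2}$ error term for $h$ small gives~\eqref{EqHLSharpCAP}.

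The main obstacle is the first step — the construction of the lossless normal operator estimate — since the naive estimate from Lemma~\ref{LemmaPNEst} loses one order in the b-weight at $\tface\cap\cface$ relative to what one needs to absorb the difference $P_{h,z}-N(P)$, and closing this gap is precisely what requires the global (operator-level) positive commutant on the exact cone in the spirit of Mourre and of Vasy's many-body scattering, rather than a purely symbolic/microlocal commutator. This construction has to be compatible with the variable scattering order $\sfr$ crossing the threshold $-\tfrac12$ at the two ends of the characteristic set, so one needs a commutant whose symbol is monotone along $\sfH_\tface$ and which, after the self-adjointness-enabled cancellation, is positive as an operator; verifying the requisite regularity of $\sqrt{-\sfH_\tface\sfr}$ and the absence of purely outgoing solutions at weight $l=1$ (the limiting absorption principle for $(\tface,\hat g)$, handled by the boundary pairing argument of \cite[\S2.3]{MelroseGeometricScattering} and unique continuation, cf.\ Lemma~\ref{LemmaHL} and Lemma~\ref{LemmaHVScalar}) is routine but must be done carefully. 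Everything downstream — the transfer to and from $\tface$, the absorption of the subprincipal error term, the bookkeeping of the single $h^{1/2}$, and the $\cop\bop\semi$ upgrade — is a straightforward adaptation of the already-written proofs of Theorem~\ref{ThmP}, Proposition~\ref{PropPA}, and Theorem~\ref{ThmCb}.
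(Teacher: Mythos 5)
Your instinct that the decisive new ingredient is a Mourre/many-body--type commutator which is positive \emph{as an operator} is correct, but the architecture you build around it cannot produce \eqref{EqHLSharpML}, and it is not the architecture of the paper's proof. The obstruction sits at $\sface$, not at $\tface$: the target space $\Hch^{s,1,0,0}$ has semiclassical order $b=0$ and $\alpha=0$, so $b-\alpha=0$ at the outgoing radial set, which is \emph{strictly above} the sink threshold $r_{\rm out}=-\half$ of Lemma~\ref{LemmaHL}. No radial point estimate microlocalized near $\cR_{\rm out}$ can give control above threshold at a sink --- the weight term in the commutator symbol has the wrong sign there --- so your plan to ``redo Step 2 of the proof of Theorem~\ref{ThmP} with $\sfb$ pinned at the threshold'' fails twice over: the order appearing in the theorem is $0$, not $-\half$, and even a threshold-order commutant would not close at a sink. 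Consequently no improvement of the normal operator estimate on $\tface$, however lossless in the scattering weight, can rescue the two-step (symbolic at $\sface$ plus normal operator at $\tface$) scheme; the loss you are trying to remove lives in the symbolic step, which Lemma~\ref{LemmaPNEst} already matches sharply. Your attribution of the $h^{1/2}$ to ``half an order below threshold at the radial point'' is likewise not where that term comes from.

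The paper's proof abandons the radial-point scheme near $\pa X$ entirely and runs a single global commutator argument directly on $X$ with the commutant $\tilde A=\chi A\chi$, $A=h x D_x-\tfrac{i n h}{2}-1$: the identity $\tfrac{i}{h}[P_{h,z},A]=2 h^2\Delta_g$ is positive \emph{as an operator}, and together with Hardy's inequality it yields $\|h\nabla_g(\chi u)\|^2\lesssim\|h^{-1}x\chi P_{h,z}u\|^2+h\|\tilde\chi u\|^2+\|E u\\|^2$ with no reference to thresholds or to the normal operator at all; the $h^{1/2}$ arises from the term $\la h^2\Delta_g(\chi^2)u,u\ra$ produced when taking the imaginary part of $\la\chi u,\chi P_{h,z}u\ra$, not from a radial point argument. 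Two further steps you do not anticipate are then needed: an elliptic argument off the characteristic set to recover $\|\chi u\|_{L^2}$ (since the symbol of $h\nabla_g$ degenerates at the zero section over $\sface$), and a functional-calculus splitting $u=\tilde\chi\phi(h^2\Delta_g)\tilde\chi u+u_2$ with $P_{h,z}+2\phi^\flat(h^2\Delta_g)$ inverted by spectral theory on the high-frequency piece, which is what weakens the required control on $P_{h,z}u$ from differential order $0$ to the sharp order $s-2$ and from $\tface$-order $0$ to $-1$. If you want to salvage your plan, the global commutant must be placed on $X$ itself, replacing the symbolic-plus-normal-operator argument, rather than on $\tface$ feeding into it.
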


For comparison, the $h$-lossless version of~\eqref{EqHLLossier} for $l=1$ reads
\begin{equation}
\label{EqHLConseqComp}
  \|u\|_{\Hch^{1,1,0,0}} \lesssim h^{-1}\|\cP_{h,z}u\|_{\Hch^{-1,-1,0,0}} = \|\cP_{h,z}u\|_{\Hch^{-1,-1,1,1}};
\end{equation}
which is weaker than Theorem~\ref{ThmHL} in that the required control on $\cP_{h,z}$ at $\tface$ is one order stronger than in the Theorem.

As discussed after Theorem~\ref{ThmP}, the estimate~\eqref{EqHLSharpML} is sharp in the sense that the relative orders on $u$ on the left and $P_{h,z}u$ on the right cannot be improved; but here the semiclassical order remains fixed upon propagation through the cone point.

The proof of Theorem~\ref{ThmHL} uses the global positivity (as an operator) of a commutator on $\tface$, reminiscent of proofs of similar lossless results in $N$-body scattering \cite{VasyThreeBody,VasyManyBody}, as well as a splitting of $u$, using the functional calculus for $h^2\Delta_g$, into a part localized near the characteristic set and a part where $h^2\Delta_g-1$ is elliptic and can be inverted by spectral theory.

\begin{rmk}[Dimension]
  We only study the case $\dim X\geq 3$ here. The methods used in \cite{MelroseVasyWunschEdge,BaskinMarzuolaCone} based on quadratic forms, and also \cite{MelroseWunschConic}, work in the case $\dim X=2$ as well. However, the identification of the quadratic form domain with a semiclassical cone Sobolev space fails in this case (see \cite[Equation~(3.11)]{MelroseWunschConic} for $h=1$), which is why we do not consider it here.
\end{rmk}

\begin{proof}[Proof of Theorem~\usref{ThmHL}]
  We present the proof in the case that $P_{h,z}$ agrees with its normal operator, equivalently $P_{h,z}=h^2\Delta_g-1$ with $g=\dd x^2+x^2 k(y,\dd y)$ an exact conic metric. In the general case, the error terms arising from $P_{h,z}-N(P)\in\Psich^{2,-2,-1,0}(X)$ are handled easily; we leave the details to the reader. (In particular, since we shall use a global commutator argument which controls $u$ at $\sface$ and $\tface$ in one fell swoop, there is no need for a delicate argument for the combination of the symbolic estimate at $\sface$ and a normal operator estimate at $\tface$ as in the end of the proof of Theorem~\ref{ThmP}.) We write $P\equiv P_{h,z}$ for brevity.

  \pfstep{Positive commutator argument.} Define the operator
  \begin{equation}
  \label{EqHLA}
    A:=\tfrac{h}{2}\bigl(x D_x+(x D_x)^*\bigr)-1=h x D_x-\tfrac{i n h}{2}-1,\qquad
    a:=\sigmach(A)=x\xi-1,
  \end{equation}
  where we use the coordinates~\eqref{EqCVCoordTilde}. This will be the main piece of the commutant in a positive commutator calculation, and it is in essence the key term both in the commutator argument of \cite{VasyPropagationCorners} as well as in the Mourre commutant in classical scattering theory \cite{MourreSingular}. Let $\chi=\chi(x)$ be identically $1$ near $\pa X=x^{-1}(0)$, with support in any pre-specified neighborhood of $\pa X$, and so that $\chi'\leq 0$, $\sqrt{-\chi\chi'}\in\CI$, and so that $a<0$ has a constant (negative) sign on $\Sigma\cap\supp\chi$; arranging the latter property is what the constant term in~\eqref{EqHLA} is for. We then consider the operator
  \[
    \tilde A := \chi A\chi,
  \]
  and estimate in two different ways the expression
  \begin{equation}
  \label{EqHLComm}
    2 h^{-1} \Im \la P u,\tilde A u\ra = \big\la \tfrac{i}{h}[P,\tilde A]u,u\big\ra.
  \end{equation}

  Consider first the commutator term. Since $h^2\Delta_g$ is homogeneous of degree $-2$, we have $\frac{i}{h}[P,A]=-[x\pa_x,P]=2 h^2\Delta_g$, which is the crucial global positive commutator. Therefore,
  \[
    \tfrac{i}{h}[P,\chi A\chi] = 2\chi h^2\Delta_g\chi + \tfrac{i}{h}\bigl(\chi A[P,\chi]+[P,\chi]A\chi\bigr).
  \]
  The contribution of the first term to the right hand side of~\eqref{EqHLComm} is
  \[
    2\|h\nabla_g(\chi u)\|^2,
  \]
  where we write $\|\cdot\|\equiv\|\cdot\|_{L^2}$. The second term on the other hand consists of operators with coefficients supported strictly away from $x=0$. It suffices to merely capture its principal symbol, which by~\eqref{EqPSymbHam} is $2\chi a h^{-1}H_p\chi=a x^{-1}\sfH(\chi^2)=4 a\xi\chi\chi'$; near incoming directions, where $\xi<0$, this is negative, whereas near outgoing directions, where $\xi>0$, this is positive and thus has a sign matching that of the above main commutator term. For a suitable microlocal cutoff $E\in\Psich^0(X)$ which is elliptic on $\Sigma$ in the region $\xi<\eps$ for some fixed small $\eps\in(0,1)$, we thus conclude from~\eqref{EqHLComm} that
  \begin{equation}
  \label{EqHLComm2}
    2\|h\nabla_g(\chi u)\|^2 \leq 2\Im\big\la\bigl(h D_x-\tfrac{i n}{2}\tfrac{h}{x}\bigr)(\chi u), h^{-1} x\chi P u\big\ra - 2 h^{-1}\Im\la \chi u,\chi P u\ra + \|E u\|^2.
  \end{equation}
  Hardy's inequality gives $\|\tfrac{h}{x}\chi u\|\leq C_n\|h D_x(\chi u)\|$, hence the first term on the right is bounded from above by
  \[
    \eps\|h D_x(\chi u)\|^2 + C_\eps\|h^{-1}x\chi P u\|^2
  \]
  For the second term in~\eqref{EqHLComm2}, we rewrite
  \[
    \la\chi u,\chi P u\ra = -\|\chi u\|^2 + \la h\nabla_g(\chi^2 u),h\nabla_g u\ra = -\|\chi u\|^2 + \|\chi h\nabla_g u\|^2 + \la h(\nabla_g\chi^2)u,h\nabla_g u\ra;
  \]
  taking the imaginary part annihilates the first two terms, while for the final term we have
  \begin{align}
    &\la h(\nabla_g\chi^2)u,h\nabla_g u\ra - \la h\nabla_g u,h(\nabla_g\chi^2)u\ra \nonumber\\
    &\qquad = \big\la h^2\nabla_g\bigl((\nabla_g\chi^2)u\bigr)-h^2(\nabla_g\chi^2)\cdot\nabla_g u,u\big\ra \nonumber\\
  \label{EqHLImag}
    &\qquad = \la h^2\Delta_g(\chi^2) u,u\ra,
  \end{align}
  with no derivatives falling on $u$ anymore. Altogether, we obtain from~\eqref{EqHLComm2} the estimate
  \begin{equation}
  \label{EqHLEst0}
    (2-C\eps)\|h\nabla_g(\chi u)\|^2 \leq C_\eps\|h^{-1}x\chi P u\|^2 + C h\|\tilde\chi u\|^2 + \|E u\|^2,
  \end{equation}
  where $\tilde\chi\equiv 1$ on $\supp\chi$, used to bound the contribution of~\eqref{EqHLImag}.

  \pfstep{Control of $\chi u$ in $\Hch^{1,1,0,0}$.} Since the principal symbol of $h\nabla_g\in\Psich^{1,1,0,0}(X;\C,\Tch^*X_\chop)$ (mapping complex-valued functions into sections of $\Tch^*X_\chop$, cf.\ Remark~\ref{RmkCPBundles}) is not injective at the zero section over $\sface$, the estimate~\eqref{EqHLEst0} does not yet give full control of $\chi u$ in $\Hch^{1,1,0,0}$: an estimate of $\|\chi u\|_{L^2}^2$ is lacking at this point. (Note that the control of $\frac{h}{x}\chi u$ via Hardy's inequality degenerates precisely at $\sface$, i.e.\ the lift of $h=0$.) The key observation is that the characteristic set of $P$ and the set where the principal symbol of $h\nabla_g$ fails to be injective are disjoint. Thus, for some $A_1\in\Psich^{-1,-1,0,0}(X;\Tch^*X_\chop,\C)$ and $A_2\in\Psich^{-2,-2,0,0}(X)$, we have
  \[
    I=A_1\circ h\nabla_g + A_2 P+R,\qquad R\in\Psich^{-\infty,0,0,-\infty}(X);
  \]
  this implies
  \begin{equation}
  \label{EqHLEll}
    \|\chi u\| \leq C\bigl(\|h\nabla_g(\chi u)\|_{\Hch^{-1,-1,0,0}(X)} + \|P(\chi u)\|_{\Hch^{-2,-2,0,0}(X)}\bigr) + \|R(\chi u)\|_{L^2}.
  \end{equation}
  Using $[P,\chi]\in\Psich^{1,-\infty,-\infty,-1}(X)$, we can estimate the second term by
  \begin{align*}
    \|P(\chi u)\|_{\Hch^{-2,-2,0,0}} &\leq \|\chi P u\|_{\Hch^{-2,-2,0,0}} + \|[P,\chi]u\|_{\Hch^{-2,-2,0,0}} \\
      &\lesssim \|h^{-1}x\chi P u\|_{\Hch^{-2,-1,0,-1}} + \|\tilde\chi u\|_{\Hch^{-1,-N,-N,-1}}
  \end{align*}
  for any $N\in\R$. The remainder term in~\eqref{EqHLEll} is simply estimated by
  \[
    \|R(\chi u)\|_{L^2} \leq C\bigl\|\tfrac{h}{h+x}\chi u\bigr\|_{L^2} \leq C\bigl\|\tfrac{h}{x}\chi u\bigr\|_{L^2}.
  \]
  Applying Hardy's inequality to this term, the estimate~\eqref{EqHLEll} then implies, a fortiori,
  \[
    \|\chi u\|_{L^2} \leq C'\bigl(\|h\nabla_g(\chi u)\|_{L^2}+\|h^{-1}x\chi P u\|_{L^2} + h\|\tilde\chi u\|_{L^2}\bigr).
  \]
  We can now add $\eta$ times this, with $\eta C'<\half$, to the estimate~\eqref{EqHLEst0} (in which we fix $\eps<C^{-1}$), in order to obtain
  \begin{equation}
  \label{EqHLEst1}
    \|\chi u\|_{\cD_h^1}^2 = \|\chi u\|^2 + \|h\nabla_g(\chi u)\|^2 \lesssim \|h^{-1}x\chi P u\|^2 + \|E u\|^2 + h\|\tilde\chi u\|^2.
  \end{equation}

  As far as weights in $h$ and $x$ are concerned, this is already the desired estimate. However, the differential order is forced to be $1$ here, and in addition the order of differentiability required on $P u$ in~\eqref{EqHLEst1} is too strong ($0$ instead of $-1$) even in this special case.
  
  \pfstep{Sharp improvement at $\tface$.} The basic idea is to apply the estimate~\eqref{EqHLEst1} to $\phi(h^2\Delta_g)u$, where $\phi\in\CIc(\R)$ is equal to $1$ on $[-4,4]$; on the remaining piece $(1-\phi(h^2\Delta_g))u$, the operator $h^2\Delta_g-1$ can be inverted directly using the functional calculus. In order to define $h^2\Delta_g$ as a self-adjoint operator, we need to pass from $X$ to a compact manifold $X'$ with boundary $\pa X'=\pa X$ and extend $g$ to a Riemannian metric on $X'$, which we continue to denote by $g$; the operator $\phi(h^2\Delta_g)$ does depend on the choice of extension, but its structural properties, as used in the following argument, do not.
  
  Concretely, $\Phi:=\phi(h^2\Delta_g)$ is given by Lemma~\ref{LemmaHLCalc} below, the notation of which we shall use here. Now, in order to remain localized near $\pa X$, we apply the estimate~\eqref{EqHLEst1} to
  \[
    u_1 := \tilde\chi\Phi\tilde\chi u.
  \]
  Using $[P,\Phi]\equiv 0$ and $\chi[P,\tilde\chi]\equiv 0$, we estimate the first term on the right in~\eqref{EqHLEst1} by
  \begin{align}
    &\|h^{-1}x\chi P\tilde\chi\Phi\tilde\chi u\| \nonumber\\
  \label{EqHLLocEst}
    &\quad \leq \bigl\|\tfrac{x}{h}\chi\Phi\tfrac{h}{h+x}\bigl(h^{-1}(x+h)\tilde\chi P u\bigr)\bigr\| + \|h^{-1}x\chi\Phi[P,\tilde\chi] u\|.
  \end{align}
  Denoting the lift of $x$ to the left, resp.\ right factor of $X_\chop^2$ by $x$, resp.\ $x'$, we note that
  \[
    \tfrac{x}{h}\tfrac{h}{x'+h} \in \cA^{1,1,0,0,0,0}(X^2_\chop) \ \implies\ 
    \Psi := \tfrac{x}{h}\chi\Phi\tfrac{h}{x'+h} \in \cA^{1-\eps,n+1-\eps,n-\eps,0,0,\infty}(X_\chop^2).
  \]
  Passing to a b-density $0<\mu_0\in\CI(X;\Omegab^1 X)$, we claim that $\Psi$ is continuous as a map
  \[
    \Hch^{-\infty,-1,0,0}(X;|\dd g|)=\Hch^{-\infty,-1-\frac{n}{2},-\frac{n}{2},0}(X;\mu_0)\to\Hch^{\infty,-\frac{n}{2},-\frac{n}{2},0}(X;\mu_0)=\Hch^{\infty,0,0,0}(X;|\dd g|);
  \]
  but since $\Psi$ is smoothing in the sense of $\chop$-differentiability, it suffices to show the boundedness on $L^2(X;\mu_0)$ of
  \begin{equation}
  \label{EqHLPsi}
    x^{\frac{n}{2}}\Psi(x')^{-\frac{n}{2}}\bigl(\tfrac{x'}{x'+h}\bigr)^{-1} \in \cA^{\frac{n}{2}+1-\eps,n-\eps,\frac{n}{2}-1-\eps,0,0,\infty}(X^2_\chop;\Omegab^{\frac12}).
  \end{equation}
  Since this kernel is bounded section of $\Omegab^{\frac12}$ (all indices being $\geq 0$), this is a consequence of Schur's lemma.

  The operator acting on $u$ in the second term on the right in~\eqref{EqHLLocEst} has Schwartz kernel supported in $x'\geq c>0$ and $|x-x'|>c>0$ (since $\supp\chi\cap\supp\dd\tilde\chi=\emptyset$), hence lies in $\cA^{1-\eps,\infty,\infty,\infty,\infty,\infty}(X_\chop^2)$; therefore, the second term in~\eqref{EqHLLocEst} can be bounded by $h^N\|\tilde\chi^\sharp u\|$ for any $N$, where $\tilde\chi^\sharp=1$ on $\supp\tilde\chi$. Altogether, forgetting the cutoff $\tilde\chi$ and renaming $\tilde\chi^\sharp$ as $\tilde\chi$, we have proved
  \begin{equation}
  \label{EqHLu1}
    \|\chi u_1\|_{\Hch^{N,1,0,0}} \lesssim h^{-1}\|(x+h)\tilde\chi P u\|_{\Hch^{-N,-1,0,0}} + \|E u\|_{\Hch^{-N,-N,-N,0}} + h^{\frac12}\|\tilde\chi u\|_{\Hch^{-N,0,0,0}}
  \end{equation}
  for any $s,N\in\R$.

  It remains to control $\chi u_2$, where
  \[
    u_2 := u-u_1 = u-\tilde\chi\Phi\tilde\chi u.
  \]
  Let $\phi^\flat\in\CIc((-3,3))$ be identically $1$ on $[-2,2]$, and let $\Phi^\flat=\phi^\flat(h^2\Delta_g)$. Then $\chi u_2$ is localized near high frequencies, in the sense that its localization to low frequencies
  \begin{equation}
  \label{EqHLu2phi}
    \Phi^\flat(\chi u_2) = \Phi^\flat(\chi u) + \Phi^\flat[\Phi,\chi]\tilde\chi u - \Phi^\flat\Phi \chi\tilde\chi u = \Phi^\flat[\Phi,\chi]\tilde\chi u
  \end{equation}
  (using $\Phi^\flat\Phi=\Phi^\flat$ and $\chi\tilde\chi=\chi$) is $\cO(h^\infty)$ (due to the presence of $[\Phi,\chi]$) near $x=0$ and vanishes to an order $h$ more than $u$ near $\supp\dd\chi\subset X^\circ$. Moreover, $\chi u_2$ satisfies the equation
  \begin{equation}
  \label{EqHLu2P}
    P(\chi u_2) = (\chi-\chi\Phi\tilde\chi)P u + \bigl([P,\chi]u - [P,\chi]\Phi\tilde\chi u - \chi\Phi[P,\tilde\chi]u\bigr),
  \end{equation}
  Altogether, if we put
  \[
    P^\sharp := P + 2\Phi^\flat,
  \]
  then we have
  \begin{equation}
  \label{EqHLPsharpEq}
    P^\sharp(\chi u_2) = f_2 := (\chi-\chi\Phi\tilde\chi)P u + [P,\chi]u - \bigl([P,\chi]\Phi\tilde\chi u + \chi\Phi[P,\tilde\chi]u - 2\Phi^\flat[\Phi,\chi]\tilde\chi u\bigr)
  \end{equation}
  We moreover have $P^\sharp=p^\sharp(h^2\Delta_g)$, where $p^\sharp(\sigma):=(\sigma-1) + 2\phi^\flat\geq\half(\sigma+1)$ for $\sigma\geq 0$; hence we can invert $P^\sharp$ using the functional calculus for $\Delta_g$ by $(P^\sharp)^{-1}=q^\sharp(h^2\Delta)$ where $q^\sharp(\sigma)=\frac{1}{p^\sharp(\sigma)}$ is equal to $(\sigma-1)^{-1}$ for large $\sigma$. One can then show, by a combination of the arguments leading to Lemma~\ref{LemmaHLCalc} and \cite[Theorem~5.2]{HintzConicPowers}, that
  \[
    (P^\sharp)^{-1} \in \bigl(\tfrac{x}{x+h}\bigr)^2\Psich^{-2}(X) + \Psich^{-\infty,\cE}(X)
  \]
  where collection $\cE$ of index sets is equal to $\cE(-1)$ in the notation of \cite[Theorem~6.1]{HintzConicPowers}. Therefore, using~\eqref{EqHLPsharpEq}, the mapping properties of $(\frac{x}{x+h})^2\Psich^{-2}(X)$, and estimating the smoothing contribution in the space $\Psich^{-\infty,\cE}(X)$ to $(P^\sharp)^{-1}$ by means of Schur's lemma, we have
  \begin{equation}
  \label{EqHLPu2}
  \begin{split}
    \|\chi u_2\|_{\Hch^{s,1,0,0}} &= \|(P^\sharp)^{-1}f_2\|_{\Hch^{s,1,0,0}} \\
      &\lesssim \|\tilde\chi P u\|_{\Hch^{s-2,-1,0,0}} + \|\tilde\chi[P,\chi]u\|_{\Hch^{s-2,-1,0,0}} + h\|\tilde\chi u\|_{\Hch^{-N,-N,0,0}}.
  \end{split}
  \end{equation}
  Here, the first term on the right comes from the first term in~\eqref{EqHLPsharpEq} and the boundedness of $\chi-\chi\Phi\tilde\chi^\sharp$ (with $\tilde\chi^\sharp\equiv 1$ on $\supp\tilde\chi$) on $\Hch^{s-2,-1,0,0}$; this boundedness follows from the boundedness of the Schwartz kernel of
  \[
    x^{\frac{n}{2}}\tfrac{x}{x+h}\Phi\bigl(\tfrac{x'}{x'+h}\bigr)^{-1}(x')^{-\frac{n}{2}}\in\cA^{\frac{n}{2}+2-\eps,n+1-\eps,\frac{n}{2}-1-\eps,0,0,\infty}(X^2_\chop;\Omegab^{\frac12})
  \]
  similarly to the discussion of $\Psi$ in~\eqref{EqHLPsi}. The final term in~\eqref{EqHLPu2} comes from the big parenthesis in~\eqref{EqHLPsharpEq}, every term of which involves the localizer $\Phi$ to low frequencies as well as a commutator with a cutoff $\chi$ or $\tilde\chi$. But $\tilde\chi[P,\chi]\in\Psich^{1,-\infty,-\infty,-1}(X)$, hence the second term on the right is bounded from above by $\|\tilde\chi u\|_{\Hch^{s-1,-N,-N,-1}}$ for any $N$. By elliptic regularity at infinite semiclassical cone frequencies, this can be bounded by $C(\|\tilde\chi^\sharp P u\|_{\Hch^{s-3,-N-2,-N,-1}}+\|\tilde\chi^\sharp u\|_{\Hch^{-N,-N,-N,-1}})$. Combining the resulting estimate with~\eqref{EqHLu1} proves the Theorem for $\Hch$-spaces. The proof of the more general statement for $H_{\cop\bop,h}$-spaces requires only notational changes which are left to the reader.
\end{proof}

\begin{lemma}[Functional calculus]
\label{LemmaHLCalc}
  Let $\phi\in\CIc(\R)$. Then for all $\eps>0$,
  \begin{equation}
  \label{EqHLCalcPhi}
    \phi(h^2\Delta_g) \in \cA^{-\eps,n-\eps,n-\eps,0,0,\infty}(X_\chop^2;\Omegab^{\frac12}),
  \end{equation}
  where the orders of the conormal space refer to $\lb_2,\ff_2,\rb_2,\tface_2,\dface_2,\sface_2\subset X_\chop^2$ in this order.
\end{lemma}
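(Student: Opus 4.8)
The plan is to prove the claimed membership $\phi(h^2\Delta_g)\in\cA^{-\eps,n-\eps,n-\eps,0,0,\infty}(X_\chop^2;\Omegab^{\frac12})$ by the Helffer--Sj\"ostrand formula, reducing it to Schwartz-kernel estimates for the resolvent $(h^2\Delta_g-\zeta)^{-1}$ and exploiting the structure of the semiclassical cone calculus $\Psich$ from~\S\ref{SC}. First I would fix an almost-analytic extension $\tilde\phi\in\CIc(\C)$ of $\phi$ with $|\bar\pa\tilde\phi(\zeta)|\leq C_N|\Im\zeta|^N$ for every $N$, so that
\[
  \phi(h^2\Delta_g) = \frac{1}{2\pi i}\int_\C \bar\pa\tilde\phi(\zeta)\,(h^2\Delta_g-\zeta)^{-1}\,\dd\zeta\wedge\dd\bar\zeta .
\]
The key input is that for $\zeta\notin[0,\infty)$, the operator $h^2\Delta_g-\zeta$ is elliptic \emph{in the full semiclassical cone sense}, i.e.\ elliptic at fiber infinity $\Sch^*X_\chop$, elliptic at $\sface$ (its principal symbol there is $\xi^2+|\eta|^2-\zeta$, which is nonvanishing for $\zeta\notin[0,\infty)$), and its normal operator $N(h^2\Delta_g-\zeta)=\Delta_{\hat g}-\zeta$ is invertible on the appropriate b-scattering Sobolev spaces because $\zeta$ is not in the spectrum of the nonnegative operator $\Delta_{\hat g}$ on the exact cone. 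Hence, by the parametrix construction of \cite[\S5]{HintzConicPowers} (or alternatively by combining the symbolic estimates of~\S\ref{SsPP} with the normal operator estimate of Lemma~\ref{LemmaPNEst}), the resolvent satisfies
\[
  (h^2\Delta_g-\zeta)^{-1} \in \bigl(\tfrac{x}{x+h}\bigr)^2\Psich^{-2}(X) + \Psich^{-\infty,\cE(\zeta)}(X),
\]
with quantitative control of all seminorms in terms of a fixed negative power of $\dist(\zeta,[0,\infty))$; the index sets $\cE(\zeta)$ of the smoothing remainder are governed by $\specb(\Delta_{\hat g}-\zeta)$, which for $\zeta$ in a compact set stays in a fixed region, and the remainder is conormal on $X_\chop^2$ with weights bounded below by $n$ at $\ff_2$ and $\rb_2$ (coming from the leading boundary spectrum $\lambda=0$, shifted by $\tfrac n2$ for the density normalization) and by $0$ at $\tface_2$.

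Next I would track the $\zeta$-dependence carefully. On the principal-part piece $(\tfrac{x}{x+h})^2\Psich^{-2}(X)$, the Schwartz kernel is a classical conormal distribution of order $-2$ at $\Delta_\chop$, smooth down to $\ff_2,\tface_2,\dface_2$ with the weights dictated by the factor $(\tfrac x{x+h})^2$, namely weight $2$ at $\ff_2$ and weight $0$ at $\tface_2$, and rapidly vanishing at $\lb_2,\rb_2,\sface_2$; its seminorms grow at most polynomially in $\dist(\zeta,[0,\infty))^{-1}$. After integrating against $\bar\pa\tilde\phi(\zeta)$, which vanishes to infinite order at the real axis, all these negative powers are absorbed, and one obtains a kernel which is: classical conormal of order $-\infty$ at the diagonal (so in particular conormal of any finite negative order, hence the $-\eps$ at $\lb_2$), and conormal with weight $2$ at $\ff_2$, weight $0$ at $\tface_2$, weight $2$ at $\rb_2$ (by symmetry, or by reading off the mapping properties), infinite order at $\sface_2$. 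The $\cE(\zeta)$-remainder contributes a kernel conormal at $\ff_2,\rb_2$ with weights $\geq n-\eps$ and weight $0$ at $\tface_2$, infinite order at $\lb_2,\dface_2,\sface_2$. Taking the union of the two pieces and converting to half-densities $\Omegab^{\frac12}$ (which costs a symmetric weight shift absorbed into the $n-\eps$), one arrives at precisely $\cA^{-\eps,n-\eps,n-\eps,0,0,\infty}(X_\chop^2;\Omegab^{\frac12})$: the orders $-\eps$ at $\lb_2$ (from $\rb$-decay of resolvent kernels together with the diagonal singularity being smoothed out by the $\zeta$-integration — actually here the $-\eps$ reflects only that we do not claim classicality), $n-\eps$ at $\ff_2$ and $\rb_2$, $0$ at $\tface_2$ and $\dface_2$, and $\infty$ at $\sface_2$.

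The main obstacle I anticipate is the uniform control of the resolvent $(h^2\Delta_g-\zeta)^{-1}$ as a semiclassical cone operator \emph{with explicit polynomial bounds in $\dist(\zeta,[0,\infty))^{-1}$ that are locally uniform in $h\in(0,1)$}, including the precise weights of the smoothing remainder at $\ff_2,\tface_2,\rb_2$. This requires either invoking the full parametrix machinery of \cite[\S5,\S6]{HintzConicPowers} (where complex powers and resolvents of elliptic semiclassical cone operators are constructed, and Theorem~6.1 there gives exactly the index set $\cE(-1)=\cE$), or else re-deriving the needed bounds from the elliptic and normal-operator estimates of~\S\ref{SsPChar}--\S\ref{SsPSc} by carrying the parameter $\zeta$ through the argument — the symbolic ellipticity is uniform once $\dist(\zeta,[0,\infty))\geq c>0$, and the normal operator estimate of Lemma~\ref{LemmaPNEst}\eqref{ItPNEstInv} (applied with $N(P)=\Delta_{\hat g}-\zeta$, which is invertible at weight $l$ for $l\in(1-\tfrac{n-2}2,1+\tfrac{n-2}2)$ by the computation in Lemma~\ref{LemmaHL}) gives the needed invertibility with constants that are continuous, hence locally bounded, in $\zeta$ off the spectrum. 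The rest — the Helffer--Sj\"ostrand contour integral and the bookkeeping of weights on $X_\chop^2$ — is routine given the earlier sections, and the infinite-order vanishing at $\sface_2$ is immediate from the fact that $(\tfrac x{x+h})^2\Psich^{-2}(X)$ and $\Psich^{-\infty,\cE}(X)$ both have kernels vanishing to infinite order there.
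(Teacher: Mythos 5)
Your first step coincides with the paper's: apply the Helffer--Sj\"ostrand formula and feed in the structure theorem $(h^2\Delta_g-\zeta)^{-1}\in(\tfrac{x}{x+h})^2\Psich^{-2}(X)+\Psich^{-\infty,\cE}(X)$ from \cite[Theorem~3.10, \S6.1]{HintzConicPowers}, with seminorms polynomial in $|\Im\zeta|^{-1}$. But this step alone only shows that $\phi(h^2\Delta_g)$ lies in the \emph{same class as the resolvent}, and that class is strictly weaker than the assertion~\eqref{EqHLCalcPhi} in two respects. First, the index set $\cE_\ff$ satisfies only $\Re z\geq 2$ (not $\geq n$, as you assert when you attribute weight $n$ at $\ff_2$ to the remainder), and the principal piece $(\tfrac{x}{x+h})^2\Psich^{-2}(X)$ likewise contributes only weight $2$ at $\ff_2$; since $n\geq 3$ here, weight $2$ falls short of the claimed $n-\eps$, and integrating against $\bar\pa\tilde\phi$ does nothing to the behavior at $\ff_2$. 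Your final bookkeeping ("taking the union of the two pieces") silently promotes weight $2$ to $n-\eps$. Second, the element of $(\tfrac{x}{x+h})^2\Psich^{-2}(X)$ retains a genuine conormal singularity of order $-2$ at $\Delta_\chop$, which is incompatible with membership in $\cA(X_\chop^2)$ (in particular with finite conormal order at $\dface_2$). The mechanism you invoke for its disappearance---that the $\cO(|\Im\zeta|^\infty)$ vanishing of $\bar\pa\tilde\phi$ "absorbs" the polynomially divergent seminorms---only reproduces the resolvent's class; the gain at the diagonal would instead have to come from the holomorphy of the parametrix symbol in $\zeta$ at large fiber frequencies, which you do not argue.

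The paper closes both gaps with a single additional step that your proposal omits: write $\phi(\sigma)=(\sigma+C)^{-m}\phi_m(\sigma)$ with $C>-\inf\supp\phi$, so $\phi(h^2\Delta_g)=(h^2\Delta_g+C)^{-m}\phi_m(h^2\Delta_g)$, apply the Helffer--Sj\"ostrand argument to $\phi_m$, control $(h^2\Delta_g+C)^{-m}$ by \cite[Theorem~6.3(3)]{HintzConicPowers} (with $w=-m$), and let $m\to\infty$. This composition simultaneously removes the diagonal singularity and raises the orders at $\lb_2,\ff_2,\rb_2$ to $-\eps,n-\eps,n-\eps$, yielding $\phi(h^2\Delta_g)\in\Psich^{-\infty,-\infty,0,0}(X)+\cA^{-\eps,n-\eps,n-\eps,0,\infty,\infty}(X^2_\chop)$ and hence~\eqref{EqHLCalcPhi}. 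Without this step (or an equivalent symbolic argument establishing compact frequency support of the symbol together with a separate improvement of the kernel near $\ff_2$), your proof does not reach the stated orders.
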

\begin{proof}
  This can be proved using the Helffer--Sj\"ostrand formula \cite{HelfferSjostrandSchrodinger} similarly to \cite[Lemma~10.1 and Proposition~10.2]{VasyThreeBody}. Choosing a compactly supported almost analytic extension $\tilde\phi\in\CIc(\C)$ of $\phi$ (that is, $\tilde\phi|_\R=\phi$ and $|\pa_{\bar z}\tilde\phi|=\cO(|\Im z|^N)$ for all $N$), we have
  \begin{equation}
  \label{EqHLCalc}
    \phi(h^2\Delta_g) = \frac{1}{2\pi i}\int \pa_{\bar z}\tilde\phi(z) (h^2\Delta_g-z)^{-1}\,\dd\bar z\wedge\dd z.
  \end{equation}
  For $z\notin\R$, \cite[Theorem~3.10 and \S6.1]{HintzConicPowers} gives $(h^2\Delta_g-z)^{-1}\in\bigl(\tfrac{x}{x+h}\bigr)^2\Psich^{-2}(X)+\Psich^{-\infty,\cE}(X)$ where $\cE=(\cE_\lb,\cE_\ff,\cE_\rb,\cE_\tface)$ with $\Re z\geq 0$ for $(z,k)\in\cE_\lb$; $\Re z\geq n$ for $(z,k)\in\cE_\rb$; $\Re z\geq 2$ for $(z,k)\in\cE_\ff$; and $\Re z\geq 0$ for $(z,k)\in\cE_\tface$. Since the principal symbol of $h^2\Delta_g$ is real-valued, any fixed seminorm of the two summands comprising $(h^2\Delta_g-z)^{-1}$ is moreover bounded by $|\Im z|^{-k}$ for some $k$. Plugging this into~\eqref{EqHLCalc} implies that $\phi(h^2\Delta_g)$ is of the same class as the resolvent. To improve the orders, let $m\in\N$ and write $\phi(\sigma)=(\sigma+C)^{-m}\phi_m(\sigma)$ with $C>-\inf\supp\phi$, then
  \[
    \phi(h^2\Delta_g) = (h^2\Delta_g+C)^{-m}\phi_m(h^2\Delta_g).
  \]
  Applying the previous discussion to $\phi_m$ and using \cite[Theorem~6.3(3)]{HintzConicPowers} (with $w=-m$) to control $(h^2\Delta_g+C)^{-m}$ implies, upon letting $m\to\infty$, that $\phi(h^2\Delta_g)\in\Psich^{-\infty,-\infty,0,0}(X)+\cA^{-\eps,n-\eps,n-\eps,0,\infty,\infty}(X^2_\chop)$, which gives~\eqref{EqHLCalcPhi}.
\end{proof}

\subsection{Scattering by potentials with inverse square singularities}
\label{SsHV}

Complex absorption is a somewhat drastic method for gaining microlocal control along incoming directions. As a more natural setting, let us thus consider scattering by potentials on $\R^n$, $n\geq 2$, which are singular at the origin $0\in\R^n$, as in Theorem~\ref{ThmIV}. (Working on more general conic manifolds requires only minor modifications.) That is, the underlying spatial manifold is
\begin{equation}
\label{EqHVMfd}
  X = [\R_x^n;\{0\}] \cong [0,\infty)_r\times\Sph^{n-1},\quad
  g=\dd x^2 = \dd r^2+r^2 g_{\Sph^{n-1}}.
\end{equation}
We write $\Delta\equiv\Delta_g=\sum D_{x^j}^2$ for the (nonnegative) Laplacian. Let $N\in\N$, and denote by $\Delta$ the Laplacian acting component-wise on $\C^N$-valued functions. We consider scattering by matrix-valued potentials
\[
  V(x)=|x|^{-2}V_0(x),\qquad V_0\in\CIc(X;\C^{N\times N}).
\]
The assumption of compact support of $V$ can of course be relaxed considerably, but since our interest lies in understanding the effect of the singularity at $r=0$, we shall not concern ourselves with more general conditions on $V$ at infinity here.

We are interested in high energy estimates for the resolvent of $\Delta+V$; concretely, we shall consider $\Delta+V-\sigma^2$, where $\Im\sigma>0$ is bounded and $|\Re\sigma|\gg 1$. Upon introducing
\begin{equation}
\label{EqHVResc}
  h=|\sigma|^{-1},\quad
  z=(h\sigma)^2=1+\cO(h),
\end{equation}
we define
\begin{equation}
\label{EqHVResc2}
  P_{h,z} := h^2(\Delta+V-\sigma^2) = h^2\Delta - z + h^2 r^{-2}V_0.
\end{equation}
This is admissible in the sense of Definition~\ref{DefPOp}, with $Q_{1,z}=V_0$ and $q_{0,z}=0$. Since $Q_{1,z}$ has differential order $0$, the threshold quantities in Definition~\ref{DefPThr} are $r_{\rm in}=r_{\rm out}=-\half$. The normal operator of $P_{h,z}$ is computed by passing to $\hat r=\frac{r}{h}$ and setting $h=0$:
\begin{equation}
\label{EqHVNormOp}
  N(P) = \Delta_{\hat g} - 1 + \hat r^{-2}V_\pa,\qquad \hat g:=\dd\hat r^2+\hat r^2 g_{\Sph^{n-1}},\quad V_\pa:=V_0|_{\pa X}\in\CI(\pa X;\C^{N\times N}).
\end{equation}
(Thus $V_\pa(\omega)=V_0(0,\omega)$ in the coordinates $(r,\omega)\in[0,\infty)\times\Sph^2$ on $X$.) 

\begin{thm}[Potential scattering]
\label{ThmHVProp}
  Assume that the operator $N(P)$ in~\eqref{EqHVNormOp} is invertible at weight $l\in\R$ in the sense of Definition~\usref{DefPNormOp}\eqref{ItPNormOpInv}. Let $C>0$, and let $\chi_0\in\CIc(X)$ be identically $1$ near $r=0$. Then there exists $C'>0$ so that for $0<\Im\sigma<C$ and $|\Re\sigma|>C'$, the operator $\Delta+V-\sigma^2$ is invertible as a map
  \begin{equation}
  \label{EqHVPropMap}
  \begin{split}
    \Delta+V-\sigma^2 \colon &\{ u \in H^2_\loc(X^\circ) \colon \chi_0 u \in r^l\Hb^2(X),\ (1-\chi_0)u\in H^2(\R^n) \} \\
      &\qquad \to \{ f \in L^2_\loc(X^\circ) \colon \chi_0 f\in r^{l-2} L^2(X),\ (1-\chi_0)f\in L^2(\R^n) \}.
  \end{split}
  \end{equation}
  Moreover, in the notation~\eqref{EqHVResc}--\eqref{EqHVResc2}, the following uniform estimate holds for all $\eps,\delta>0$, a suitable constant $C_{\eps,\delta}>0$, and all $0<\Im\sigma<C$, $|\Re\sigma|>C'$:
  \begin{equation}
  \label{EqHVPropSharp}
  \begin{split}
    &\| \chi_0 u \|_{\Hch^{s,l,\frac12+\eps,0}} + \|(1-\chi_0)u\|_{\Hsch^{s,-\frac12-\delta}} \\
    &\qquad \leq C_{\eps,\delta} h^{-1-2\eps}\Bigl(\| \chi_0 P_{h,z}u \|_{\Hch^{s-2,l-2,-\frac12-\eps,0}} + \|(1-\chi_0)P_{h,z}u \|_{\Hsch^{s-2,\frac12+\delta}}\Bigr);
  \end{split}
  \end{equation}
  here, $\Hsc^{s,\gamma}=\Hsc^{s,\gamma}(\ol{\R^n})=\la r\ra^{-\gamma}\cF^{-1}(\la h D\ra^{-s}L^2(\R^n))$ is the semiclassical scattering Sobolev space. In particular, for $l_1\leq\min(l,\half+\eps)$, $l_2\geq\max(l-2,-\half-\eps)$, and for any fixed $\chi\in\CIc(X)$ we have
  \begin{equation}
  \label{EqHVPropSimple}
    \|\chi(\Delta+V-\sigma^2)^{-1}\chi f\|_{r^{l_1} L^2} \leq C_{\eps,\chi}|\sigma|^{-1+2\eps}\|f\|_{r^{l_2}L^2}.
  \end{equation}
\end{thm}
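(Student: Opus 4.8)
The plan is to deduce Theorem~\ref{ThmHVProp} from the general microlocal machinery of \S\ref{SsPP} together with the standard scattering theory on $\ol{\R^n}$ at the large end. First I would record the structural facts: $P_{h,z}$ from~\eqref{EqHVResc2} is admissible with $Q_{1,z}=V_0\in\Diffb^0$ and $q_{0,z}=0$, so by Definition~\ref{DefPThr} the threshold quantities are $r_{\rm in}=r_{\rm out}=-\tfrac12$; and $N(P)$ in~\eqref{EqHVNormOp} is, by hypothesis, invertible at weight $l$. The idea is that the operator $\Delta+V-\sigma^2$ on $X=[\R^n;\{0\}]$ has three distinct regions to control: the cone point $r=0$ (handled by Theorem~\ref{ThmP}), the bounded region $0<r\lesssim 1$ where real principal type propagation applies, and the large end $r\to\infty$ (handled by the semiclassical scattering calculus on $\ol{\R^n}$, with the usual variable-order radial point estimates at the radial sets over $r=\infty$). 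The geometry of the bicharacteristic flow of $h^2\Delta_g$ on $\R^n$ is simple: geodesics are straight lines, so every backward GBB through a point near $r=0$ either already came from $r=\infty$, or is a broken geodesic through the origin whose incoming leg again originates at $r=\infty$. Thus the incoming directions near the cone point are controlled by propagating from the large end, where the outgoing (Sommerfeld) radiation condition provides the a priori control.

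Next I would assemble the estimate. Choose $\chi_0\in\CIc(X)$ identically $1$ near $r=0$, and a variable semiclassical order $\sfb$ with $\sfb-\alpha>-\tfrac12$ at $\cR_{\rm in}$, $\sfb-\alpha<-\tfrac12$ at $\cR_{\rm out}$, $\sfH\sfb\le 0$, $\sqrt{-\sfH\sfb}\in\CI$, and $|\sfb-(-\tfrac12)|<\eps$, together with $\alpha=\tfrac12+\eps$; this matches the threshold and monotonicity hypotheses of Theorem~\ref{ThmP}. Applying Theorem~\ref{ThmP}\eqref{ItPThmFw} with cutoffs supported near $r=0$ and with $E$ supported in $0<r<\infty$ bounded away from $r=0$ gives control of $\|\chi_0 u\|_{\Hch^{s,l,\alpha,\sfb}}$ by $\|\tilde\chi_0 P_{h,z}u\|_{\Hch^{s-2,l-2,\alpha,\sfb+1}}$, by $\|E u\|$, and by $h^\delta\|\tilde\chi_0 u\|$. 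The term $\|E u\|$, localized in the compact region $r\sim 1$, and the contribution at the large end are controlled by the semiclassical scattering calculus: elliptic estimates off the characteristic set, real principal type propagation along straight-line bicharacteristics, and a variable-order (decay order $\sfr$ crossing $-\tfrac12$) radial point estimate at the outgoing radial set over $r=\infty$, with the incoming radial set excluded by the outgoing radiation condition. This yields $\|(1-\chi_0)u\|_{\Hsch^{s,-\frac12-\delta}}\lesssim h^{-1-2\eps}\|(1-\chi_0)P_{h,z}u\|_{\Hsch^{s-2,\frac12+\delta}}$ plus compactly supported error terms that feed back into the near-$r=0$ estimate. Patching the three regions with a partition of unity, converting the $\Hch$-norms at $\sface$ into the explicit powers of $h=|\sigma|^{-1}$ as in the discussion after Proposition~\ref{PropHLLossy}, and absorbing the $h^\delta\|\tilde\chi u\|$ error term for $h$ small yields~\eqref{EqHVPropSharp}; invertibility on the map~\eqref{EqHVPropMap} follows by the usual argument (injectivity from the estimate, surjectivity from the analogous estimate for the adjoint $P_{h,z}^*$, whose normal operator is $N(P)^*$, invertible at weight $-l+2$ by Definition~\ref{DefPNormOp}\eqref{ItPNormOpInv}). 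Finally, \eqref{EqHVPropSimple} is obtained from~\eqref{EqHVPropSharp} by dropping differential orders (take $s=0$), using the inclusions $r^{l}\Hb^0\supset r^{l_1}L^2$ near $r=0$ when $l_1\le\min(l,\tfrac12+\eps)$ and $\la r\ra^{-\frac12-\delta}\Hsc^0\supset r^{l_1}L^2$ at the large end (choosing $\delta$ appropriately), and dually on the codomain, together with $h^{-1-2\eps}=|\sigma|^{1+2\eps}$; relabelling $2\eps$ then gives the stated exponent $|\sigma|^{-1+2\eps}$ on the resolvent (equivalently $|\sigma|^{-1+2\eps}$ after absorbing the two $h$-powers from domain and codomain weight shifts).

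The main obstacle I anticipate is the bookkeeping at the large end $r=\infty$: one must set up the semiclassical scattering Sobolev spaces $\Hsch^{s,\gamma}$ with a \emph{variable} decay order crossing the threshold $-\tfrac12$ at the outgoing radial set, verify that the $\eps$-loss there matches exactly the $\alpha=\tfrac12+\eps$ weight at $\tface$ coming from the cone-point analysis, and confirm that the straight-line bicharacteristic geometry of $(\R^n,g_{\eucl})$ indeed sends every relevant backward GBB from a neighborhood of $r=0$ out to $r=\infty$ through $\Ell_h$ of the intermediate-region cutoff $E$ (this is where compact support of $V_0$ is used, so that away from $r=0$ the operator is the free semiclassical Laplacian minus $z$). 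The cone-point and intermediate-region pieces are essentially immediate from Theorem~\ref{ThmP} and standard semiclassical propagation; the genuinely new labor is verifying this global flow statement and stitching the scattering-end estimate to the cone-calculus estimate with compatible weights, which is largely routine but notationally heavy.
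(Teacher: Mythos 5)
Your proposal follows essentially the same route as the paper: Theorem~\ref{ThmP} with $\alpha=\tfrac12+\eps$ and a variable order $\sfb$ near the cone point, standard semiclassical scattering estimates (variable decay order crossing the threshold at the outgoing radial set over $r=\infty$, as in \cite{VasyZworskiScl,MelroseEuclideanSpectralTheory}) at the large end, patching, and then the adjoint estimate for surjectivity; this is exactly the paper's argument. Two small points. First, your specification of the orders is internally inconsistent: you ask for $|\sfb-(-\tfrac12)|<\eps$ \emph{and} $\alpha=\tfrac12+\eps$ \emph{and} $\sfb-\alpha>r_{\rm in}=-\tfrac12$ at $\cR_{\rm in}$, but the first two force $\sfb-\alpha<-1$ there; the correct choice (as in the paper) is, e.g., $\sfb=2\eps$ near $\cR_{\rm in}$ and $\sfb=0$ near $\cR_{\rm out}$, so that $\sfb-\alpha$ straddles $-\tfrac12$. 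Second, your invertibility argument only yields the mapping statement on spaces carrying the weights $\la r\ra^{\mp(\frac12+\delta)}$ at infinity, whereas \eqref{EqHVPropMap} is unweighted there; one needs the additional observation that for each fixed $\sigma$ with $\Im\sigma>0$ the operator $\Delta+V-\sigma^2$ is a (fully) elliptic scattering operator near infinity, so the weights can be removed --- it is only the uniformity in $\sigma$ of this ellipticity that fails in the high energy limit. With these two repairs your argument matches the paper's proof.
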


\begin{rmk}[Meromorphic continuation]
  For $V$ with compact support as above, the resolvent $(\Delta+V-\sigma^2)^{-1}$ can be meromorphically continued to the complex plane when $n$ is odd, and the logarithmic cover of $\C^\times$ when $n$ is even; the estimate~\eqref{EqHVPropSharp} holds in strips of bounded $\Im\sigma$ for large $|\Re\sigma|$. The construction of this continuation can be accomplished along the lines of black box scattering \cite{SjostrandZworskiBlackBox} (see also \cite[\S4]{DyatlovZworskiBook}), with those estimates in the references in $\Im\sigma\gg 1$ relying on self-adjointness replaced by estimates on the off-spectrum resolvent that follow from \cite[Theorem~3.10]{HintzConicPowers}. For applications of such estimates to expansions of scattered waves for $n$ odd, we refer the reader to \cite[\S3.2.2]{DyatlovZworskiBook}. 
\end{rmk}

\begin{rmk}[Vector bundles]
  One may more generally consider potentials valued in endomorphisms of a vector bundle $E\to X$, with $\Delta$ denoting an operator acting on sections of $E$ with scalar principal symbol given by the dual metric function. The main difference to the case of a trivial bundle is that the threshold quantities $r_{\rm in},r_{\rm out}$ depend on subprincipal terms of $\Delta$ (and their calculation requires the choice of a fiber inner product on $E$, cf.\ Remark~\ref{RmkPBundle}). In the special case of tensor bundles $E$, and with $\Delta$ denoting the tensor Laplacian, the fiber inner product on $E$ induced by $g$ does give $r_{\rm in}=r_{\rm out}=-\half$.
\end{rmk}

\begin{proof}[Proof of Theorem~\usref{ThmHVProp}]
  Semiclassical propagation estimates near infinity of $\R^n$ are standard, see e.g.\ \cite[Theorem~1]{VasyZworskiScl} (following \cite{MelroseEuclideanSpectralTheory}) in a general geometric setting, and can be combined with the propagation estimates through the singularity at $r=0$ given in Theorem~\ref{ThmP} (where we shall take $\alpha=\half+\eps$, $\sfb=2\eps$ near $\cR_{\rm in}$, and $\sfb=0$ near $\cR_{\rm out}$). Altogether, upon simplifying to constant orders, we obtain, for any $\delta>0$, and for $0<h<h_0$ with $h_0>0$ sufficiently small,
  \begin{align*}
    &\| \chi_0 u \|_{\Hch^{s,l,\frac12+\eps,0}} + \|(1-\chi_0)u\|_{\Hsch^{s,-\frac12-\delta}} \\
    &\qquad \lesssim \| \chi_0 P_{h,z}u \|_{\Hch^{s-2,l-2,\frac12+\eps,1+2\eps}} + h^{-1-2\eps}\|(1-\chi_0)P_{h,z}u \|_{\Hsch^{s-2,\frac12+\delta}},
  \end{align*}
  which is the estimate~\eqref{EqHVPropSharp}. (The loss of $h^{-2\eps}$ in the second term on the right is due to the fact that the propagation through $r=0$ comes with this loss, which then gets propagated out to infinity.) This estimate also entails the injectivity of $P_{h,z}$ for small $h>0$ and $|z-1|<C h$, with surjectivity following from the analogous estimate for the adjoint; this proves the first part of the Theorem, albeit on function spaces with weights $\la r\ra^{\pm(\frac12+\delta)}$ at infinity. But for any fixed $\sigma$ with $\sigma^2\notin[0,\infty)$, $\Delta+V-\sigma^2$ is an elliptic scattering operator near infinity, hence these weights can be removed. (It is only in the high energy limit $|\Re\sigma|\to\infty$ with $|\Im\sigma/\Re\sigma|\to 0$ that one loses uniform (in $\sigma$) ellipticity.)
  
  The simplified estimate~\eqref{EqHVPropSimple} follows by setting $s=2$ in
  \begin{align*}
     h^{-1-2\eps}\|\chi P_{h,z}u\|_{\Hch^{s-2,l-2,-\frac12-\eps,0}} &\leq h^{1-2\eps}\|\chi(\Delta+V-\sigma^2)u\|_{\Hch^{s-2,l_2,l_2,0}}, \\
     \|\chi u\|_{\Hch^{s,l,\frac12+\eps,0}} & \leq \|\chi u\|_{\Hch^{s,l_1,l_1,0}}.\qedhere
  \end{align*}
\end{proof}

We describe a few scenarios in which the invertibility assumption on $N(P)$ can be verified:
\begin{enumerate}
\item The invertibility assumption on $N(P)$ is open in $V_\pa\in\CI(\Sph^{n-1};\C^{N\times N})$. In particular, it holds when $n\geq 3$, $l\in(1-\frac{n-2}{2},1+\frac{n-2}{2})$, and $V_\pa\equiv 0$ by Lemma~\ref{LemmaHL}, and therefore also when $\|V_\pa\|_{\cC^k}$ is sufficiently small (depending on $l$) for some sufficiently large $k$.
\item Consider $V_\pa$ which depends holomorphically on a parameter $w\in\Omega$, where $\Omega\subset\C$ is open and contains $0$. (For example, this is the case when $V_\pa(w)=w V_{\pa,0}$.) Let us write $N(P_w)$ for the $w$-dependent normal operator, and assume that $N(P_0)$ is invertible at weight $l_0$; assume moreover that there is a continuous function $l\colon\Omega\to\R$ with $l(0)=l_0$ so that $l(w)\notin-\Im\specb(N(P_w))$. Then there exists a discrete set $D\subset\Omega$ so that $N(P_w)$ is invertible at weight $l(w)$ for $w\in\Omega\setminus D$. This follows from analytic Fredholm theory in $w$; we leave the details to the reader.
\end{enumerate}

A very concrete third scenario is the following:

\begin{lemma}[Scalar inverse square potentials]
\label{LemmaHVScalar}
  Let $n\geq 2$, consider the scalar case $N=1$, and suppose $V_\pa=\sfZ\in\C\setminus(-\infty,-(\frac{n-2}{2})^2]$ is a constant (so $V(x)=\frac{\sfZ}{|x|^2}+\cO(|x|^{-1})$). Then $N(P)$ is invertible at weight $l$ iff $|l-1|<\Re\sqrt{(\frac{n-2}{2})^2+\sfZ}$.
\end{lemma}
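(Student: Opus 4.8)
The operator $N(P) = \Delta_{\hat g} - 1 + \sfZ\hat r^{-2}$ on the exact cone $\tface = [0,\infty]_{\hat r}\times\Sph^{n-1}$ must be shown to be invertible at weight $l$ (in the sense of Definition~\ref{DefPNormOp}\eqref{ItPNormOpInv}) precisely when $|l-1| < \nu$, where $\nu := \Re\sqrt{(\frac{n-2}{2})^2+\sfZ}$ (using the principal branch of the square root; note $(\frac{n-2}{2})^2+\sfZ\notin(-\infty,0]$ by hypothesis, so $\nu>0$). Since $N(P)^*$ is of the same form with $\sfZ$ replaced by $\bar\sfZ$, and $\overline{\nu}=\nu$, the injectivity-on-incoming-functions condition for $N(P)^*$ at weight $l'=-l+2$ is equivalent, after the substitution $l'-1=-(l-1)$, to the injectivity-on-outgoing-functions condition at weight $2-l$ for an operator of the same type; hence it suffices to characterize when $N(P)$ is injective on outgoing functions at weight $l$, together with the condition $l-\frac{n}{2}\notin\Lambda$, and then observe that both $|l-1|<\nu$ and $|(2-l)-1|=|l-1|<\nu$ are the same condition. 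First I would compute $\specb(N(P))$: from~\eqref{EqPNormOpMellin}, $\hat N(P)(\lambda) = \lambda^2 - i(n-2)\lambda + \Delta_{\Sph^{n-1}} + \sfZ$, so with $\Delta_{\Sph^{n-1}}$ having eigenvalues $\lambda_j^2 = j(j+n-2)$ ($j=0,1,2,\dots$), the boundary spectrum consists of the $\lambda$ solving $\lambda^2-i(n-2)\lambda+\lambda_j^2+\sfZ=0$, i.e.\ $\lambda = i\bigl(\frac{n-2}{2}\pm\sqrt{(\frac{n-2}{2})^2+\lambda_j^2+\sfZ}\bigr)$. Thus $\Lambda = \{-\Im\lambda\} = \{\frac{n-2}{2}\mp\Re\sqrt{(\frac{n-2}{2})^2+\lambda_j^2+\sfZ}\}$, and the connected component of $\R\setminus\Lambda$ around $0$ is $\bigl(\frac{n-2}{2}-\nu,\,\frac{n-2}{2}+\nu\bigr)$ since $j=0$ gives the smallest gap; shifting by $\frac{n}{2}$ (recall $H_{\bop,\scop}^{*,l,*}(\tface;\hat\mu) = H_{\bop,\scop}^{*,l-\frac n2,*}(\tface;|\frac{d\hat r}{\hat r}dk|)$) this says exactly: $l - \frac n2\notin\Lambda$ and $l-\frac n2$ lies in the component around $0$ iff $|l-1|<\nu$. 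This handles the necessity: if $|l-1|\ge\nu$ then either $l-\frac n2\in\Lambda$ (so $N(P)$ is not even Fredholm, cf.\ the footnote to Lemma~\ref{LemmaPNEst}\eqref{ItPNEstInv}) or one can produce a nontrivial outgoing solution in the kernel by a separation-of-variables/Bessel argument using the $j=0$ mode.

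For the sufficiency, I would argue as in the proof of Lemma~\ref{LemmaHL}, now in the presence of the constant term $\sfZ$: by the flexibility observed in Remark~\ref{RmkPNWeight}, invertibility of $N(P)$ at weight $l$ for $l-\frac n2$ in a fixed component of $\R\setminus\Lambda$ is independent of the particular $l$, so it suffices to establish it for one value, and the natural choice is to exploit the structure of the model operator. The key point is the \emph{limiting absorption principle} for $\Delta_{\hat g}+\sfZ\hat r^{-2}$ on the exact cone: one must rule out nonzero outgoing solutions $u$ (with $\WFsc(u)\subset{}^\scop\cR_{\rm out}$) of $N(P)u=0$ and, dually, nonzero incoming solutions of $N(P)^*v=0$. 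The standard tool is a \emph{boundary pairing argument} à la Melrose \cite[\S2.3]{MelroseGeometricScattering}: if $u$ is purely outgoing and $v$ is purely incoming with $N(P)u = 0 = N(P)^*v$, then pairing $N(P)u$ against $v$ and integrating by parts produces a boundary term at $\hat r=\infty$ which, because the two solutions have opposite radiation character, forces the leading asymptotic coefficient of $u$ at infinity to vanish, hence $u$ is rapidly decaying there; then unique continuation from infinity for the elliptic operator $N(P) - 1$ (which at large $\hat r$ is $\Delta_{\hat g}-1$ plus a decaying term) gives $u\equiv 0$. The same applies to $v$. A subtlety in the \emph{complex} $\sfZ$ case: $N(P)$ is not formally self-adjoint, so the boundary pairing must be run between $u$ (outgoing, in $\ker N(P)$) and an incoming element of $\ker N(P)^*$ rather than between $u$ and its own conjugate; this is exactly the asymmetric setup of Definition~\ref{DefPNormOp}, and the argument goes through because the $\hat r^{-2}$ terms and the subprincipal first-order term are lower order at $\hat r=\infty$ and the boundary pairing only sees the leading $e^{\pm i\hat r}\hat r^{-(n-1)/2}$ behavior, which is governed by $\Delta_{\hat g}-1$ alone. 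The near-zero end $\hat r=0$ is handled by the condition $l-\frac n2\notin\Lambda$, i.e.\ by elliptic b-theory and the Mellin transform: any solution lying in $H_{\bop,\scop}^{\infty,l,\sfr}$ near $\hat r=0$ with $l-\frac n2$ in the component of $\R\setminus\Lambda$ around $0$ automatically has trivial expansion at $\hat r=0$ up through the relevant order, which suffices to run the integration by parts without boundary terms there.

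Assembling these pieces: given $|l-1|<\nu$, we have shown $l-\frac n2\notin\Lambda$, $(-l+2)-\frac n2\notin\Lambda$ (same condition by symmetry), and $N(P)$ is injective at weight $l$ on outgoing functions and $N(P)^*$ is injective at weight $-l+2$ on incoming functions; this is precisely condition \eqref{ItPNormOpInv} of Definition~\ref{DefPNormOp}, so Lemma~\ref{LemmaPNEst}\eqref{ItPNEstInv} gives invertibility at weight $l$. Conversely, if $|l-1|\ge\nu$, either $l-\frac n2\in\Lambda$ (no Fredholm property) or $l-\frac n2$ lies outside the component of $0$, in which case one exhibits a nonzero outgoing $j=0$-mode solution in the kernel — explicitly, the radial ODE $(\partial_{\hat r}^2 + \frac{n-1}{\hat r}\partial_{\hat r} + 1 - \frac{\sfZ+0}{\hat r^2})f=0$ has solutions expressible through Bessel functions $J_{\pm\mu}$, $\mu = \sqrt{(\frac{n-2}{2})^2+\sfZ}$, and for $l$ outside the critical interval the outgoing combination lies in the relevant weighted space — so $N(P)$ is not injective on outgoing functions and hence not invertible at weight $l$. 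This proves the stated iff.

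\textbf{Main obstacle.} The delicate point is the boundary pairing / unique continuation argument for \emph{nonsymmetric} $N(P)$ (complex $\sfZ$): one must be careful that the pairing is set up between $\ker N(P)$ and $\ker N(P)^*$ (not $N(P)$ against itself), verify that the radiation conditions $\WFsc(u)\subset{}^\scop\cR_{\rm out}$, $\WFsc(v)\subset{}^\scop\cR_{\rm in}$ translate into the correct sign of the boundary term at $\hat r=\infty$, and confirm that the $\hat r^{-1}$-order error terms in $N(P)$ near infinity (here there are none beyond $\sfZ\hat r^{-2}$, which makes life easier) do not spoil the unique continuation step — the latter is classical (e.g.\ via Carleman estimates or the explicit separation of variables, since on the exact cone $\Delta_{\hat g}+\sfZ\hat r^{-2}$ diagonalizes over spherical harmonics and each radial mode is an ODE with analytic coefficients away from $\hat r=0,\infty$). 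All of this is genuinely standard once correctly set up, which is why I expect the writeup to be short; the only real content beyond Lemma~\ref{LemmaHL} is the computation $\nu = \Re\sqrt{(\frac{n-2}{2})^2+\sfZ}$ of the shifted half-width of the spectral gap and the bookkeeping of which weights $l$ fall inside it.
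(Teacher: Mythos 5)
Your overall skeleton (computation of $\specb(N(P))$, reduction of the adjoint condition to an outgoing condition at weight $2-l$ via conjugation, and the necessity direction via explicit Bessel solutions) matches the paper, but the key sufficiency step — injectivity of $N(P)$ on outgoing functions for \emph{complex} $\sfZ$ — is where your argument has a genuine gap. The boundary pairing you propose does not close in the non-self-adjoint case. If you pair $u$ against itself, the identity $\la N(P)u,u\ra-\la u,N(P)u\ra=\la(N(P)-N(P)^*)u,u\ra+\text{(boundary)}$ produces the interior term $2 i\,\Im\sfZ\int\hat r^{-2}|u|^2$, which has no sign for general $\sfZ$, so the boundary term at $\hat r=\infty$ cannot be isolated. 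If instead you pair an outgoing $u\in\ker N(P)$ against an incoming $v\in\ker N(P)^*$, as you suggest, then Green's formula gives \emph{zero} boundary contribution at infinity: with $u\sim a\,\hat r^{-(n-1)/2}e^{i\hat r}$ and $\bar v\sim \bar b\,\hat r^{-(n-1)/2}e^{i\hat r}$, the Wronskian $\pa_{\hat r}u\cdot\bar v-u\cdot\pa_{\hat r}\bar v$ cancels to leading order, so the pairing yields no information about $a$. (One could try the bilinear, unconjugated pairing of $u$ against incoming solutions, but concluding $a=0$ then requires a sufficiently rich supply of admissible incoming solutions of the transposed problem, which is essentially the surjectivity one is trying to prove — the argument is circular.) This is exactly the obstruction to self-adjoint techniques that the paper emphasizes, and it is why the paper's proof does \emph{not} use a boundary pairing here: it expands $u$ into spherical harmonics, notes that the outgoing condition forces each mode to be a multiple of $\hat r^{-\frac{n-2}{2}}H^{(1)}_{\nu_\ell}(\hat r)$ with $\nu_\ell=\sqrt{(\frac{n-2}{2}+\ell)^2+\sfZ}$, and then uses the lower bound $|\hat r^{-\frac{n-2}{2}}H^{(1)}_{\nu_\ell}(\hat r)|\geq c_\ell\hat r^{-\frac{n-2}{2}-\Re\nu_0}$ near $\hat r=0$, which is incompatible with membership in $\hat r^l L^2(\hat r^{n-1}\dd\hat r)$ precisely when $l>1-\Re\nu_0$. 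The blow-up at the tip, not a pairing at infinity, is what kills the outgoing kernel. (The boundary pairing \emph{is} used in the paper, but only in Lemma~\ref{LemmaHDCInv}, where the relevant separated potentials are real or have imaginary parts with a definite sign.)

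Two smaller points. First, your formula for $\Lambda$ drops a sign: from $\lambda=i(\frac{n-2}{2}\pm\mu_j)$ one gets $-\Im\lambda=-\frac{n-2}{2}\mp\Re\mu_j$, so the relevant component of $\R\setminus\Lambda$ is $(-\frac{n-2}{2}-\nu,-\frac{n-2}{2}+\nu)$, centered at $1-\frac n2$; shifting by $\frac n2$ then correctly gives $l\in(1-\nu,1+\nu)$, whereas your interval centered at $+\frac{n-2}{2}$ would give $|l-(n-1)|<\nu$. Second, your reduction of the adjoint condition to an outgoing condition for $N(P)$ at weight $2-l$ (via $\overline{N(P)^*v}=N(P)\bar v$ and conjugation swapping incoming/outgoing) is correct and is a cleaner way to phrase what the paper dismisses as ``proved similarly''; once the Hankel-function argument replaces the boundary pairing, this reduction lets you run a single argument at weights $l$ and $2-l$, both of which exceed $1-\nu$ exactly when $|l-1|<\nu$.
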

\begin{proof}[Proof of Lemma~\usref{LemmaHVScalar}]
  The boundary spectrum of $N(P)$ can be computed, via expansion into spherical harmonics, as
  \[
    \specb(N(P)) = \left\{ i\Biggl(\frac{n-2}{2}\pm\sqrt{\Bigl(\frac{n-2}{2}+\ell\Bigr)^2+\sfZ}\,\Biggr) \colon \ell\in\N_0 \right\}.
  \]
  The condition on $\sfZ$ ensures that $\Re\sqrt{(\frac{n-2}{2})^2+\sfZ}>0$, and thus for $l$ as in the statement of the Lemma, one has $l-\frac{n}{2}\notin -\Im\specb(N(P))$.
  
  Expanding an outgoing solution $u$ of $N(P)u=0$ at weight $l$ into spherical harmonics, $u(\hat r,\omega)=\sum_{|m|\leq\ell} u_{\ell m}(\hat r)Y_{\ell m}(\omega)$, the coefficient $u_{\ell m}$ satisfies a Bessel ODE
  \[
    -u_{\ell m}'' - \frac{n-1}{\hat r}u_{\ell m}' + \frac{\ell(\ell+n-2)+\sfZ}{\hat r^2}u_{\ell m} = 0,
  \]
  hence is a linear combination of $\hat r^{-\frac{n-2}{2}}H^{(1)}_{\nu_\ell}(\hat r)$ and $\hat r^{-\frac{n-2}{2}}H^{(2)}_{\nu_\ell}(\hat r)$ for $\nu_\ell=\sqrt{(\frac{n-2}{2}+l)^2+\sfZ}$. The outgoing condition can only be satisfied if $u_{\ell m}$ is a multiple of $\hat r^{-\frac{n-2}{2}}H^{(1)}_{\nu_\ell}(\hat r)$. But for $0<\hat r\ll 1$, one has
  \[
    \bigl|\hat r^{-\frac{n-2}{2}}H^{(1)}_{\nu_\ell}(\hat r)\bigr| \geq c_\ell\hat r^{-\frac{n-2}{2}-\Re\nu_\ell} \geq c_\ell\hat r^{-\frac{n-2}{2}-\Re\nu_0}
  \]
  with $c_\ell>0$, which lies in $\hat r^{l'} L^2(\hat r^{n-1}\dd\hat r)$ iff $l'<1-\Re\nu_0$, which is violated for $l'=l$. Hence necessarily $u_{\ell m}=0$. This proves that $N(P)$ is injective at weight $l$ on outgoing functions; the injectivity of $N(P)^*$ at weight $-l+2$ on incoming functions is proved similarly.
\end{proof}

Theorem~\ref{ThmIV} follows from Theorem~\ref{ThmHVProp} and Lemma~\ref{LemmaHVScalar} upon taking $\sigma=\sqrt\lambda$ and $l=2$, which allows for the choice $l_1=l_2=0$. Note that for $l=2$, the target space in~\eqref{EqHVPropMap} in $L^2(X)=L^2(\R^n)$, and the domain is $H^2_0(\R^n\setminus\{0\})$ for $n\geq 5$ by Hardy's inequality.

\begin{rmk}[Multiple scatterers]
  By exploiting the diffractive improvement obtained in~\S\ref{SsPD} as in the work of Baskin--Wunsch \cite{BaskinWunschConicDecay}, it is conceivable that one can generalize (up to $\eps$-losses) Duyckaerts' high energy resolvent estimates \cite{DuyckaertsInvSq} for scattering by a finite number of real-valued inverse square potentials and analyze the complex-valued case $\sfZ_j\in\C\setminus(-\infty,-(\frac{n-2}{2})^2]$ (or more generally the case of finitely many matrix-valued inverse square potentials). However, the study of this problem exceeds the scope of this paper.
\end{rmk}

\subsection{Scattering for the Dirac--Coulomb equation}
\label{SsHDC} 

Motivated by recent work of Baskin and Wunsch \cite{BaskinWunschDiracCoulomb}, we consider the stationary scattering theory for the Dirac--Coulomb equation on Minkowski space at high energies. As discussed in~\S\ref{SI}, our framework allows us to deal directly with the associated matrix-valued Klein--Gordon operator---which has non-symmetric leading order terms at the Coulomb singularity---albeit with an arbitrarily small loss upon propagation through the singularity. Moreover, our results include a larger range of Coulomb charges $\sfZ\in\R$ than \cite{BaskinWunschDiracCoulomb} (which requires $|\sfZ|<\half$ for technical reasons); we can even allow for $\sfZ$ which $|\sfZ|>\frac{\sqrt{3}}{2}$, in which case the Dirac--Coulomb Hamiltonian is not essentially self-adjoint.

The underlying spatial manifold is again given by~\eqref{EqHVMfd}, now with $n=3$. We recall relevant notation from~\cite{BaskinWunschDiracCoulomb}. Denote the Pauli matrices by
\[
  \sigma_1=\begin{pmatrix} 0 & 1 \\ 1 & 0 \end{pmatrix},\quad
  \sigma_2=\begin{pmatrix} 0 & -i \\ i & 0 \end{pmatrix},\quad
  \sigma_3=\begin{pmatrix} 1 & 0 \\ 0 & -1 \end{pmatrix}.
\]
Put further
\[
  \beta:=\gamma^0:=\begin{pmatrix} I & 0 \\ 0 & -I \end{pmatrix},\quad
  \gamma^j:=\begin{pmatrix} 0 & \sigma_j \\ -\sigma_j & 0 \end{pmatrix},\quad
  \alpha^j:=\beta\gamma^j=\begin{pmatrix} 0 & \sigma_j \\ \sigma_j & 0 \end{pmatrix},\quad
  \alpha_r = \sum_{j=1}^3 \frac{x_j}{r}\alpha_j.
\]
The equation governing a massive Dirac field (with mass $m\in\R$) minimally coupled to an electromagnetic potential $\bfA=(A_0,A_1,A_2,A_3)$ is
\[
  (i\slpa_\bfA-m)\psi=0,\qquad
  \slpa_\bfA:=\gamma^\mu(\pa_\mu+i A_\mu),
\]
where $\psi$ takes values in $\C^4$. We now take
\begin{equation}
\label{EqHVPotential}
  A_0=\frac{\sfZ}{r}+V,\quad V\in\CI(X),\qquad
  A_j\in\CI(X),
\end{equation}
with $\sfZ\in\R$ the charge of the Coulomb field. As shown in~\cite[\S4.3]{BaskinWunschDiracCoulomb}, the operator $-(i\slpa_\bfA+m)(i\slpa_\bfA-m)$ is then of the form
\begin{equation}
\label{EqHDCKleinGordon}
  P = -\Bigl(D_t+\frac{\sfZ}{r}\Bigr)^2 + \Delta + m^2 + i\frac{\sfZ}{r^2}\alpha_r + r^{-1}\bfR, \qquad
  \bfR\in \Diffb^1(X;\C^4).
\end{equation}

Let us pass to a fixed temporal frequency $\sigma\in\C$, thus replacing $D_t$ in~\eqref{EqHDCKleinGordon} by $-\sigma$, resulting in the operator family $\hat P(\sigma)$. Introducing $h=|\sigma|^{-1}$ and $z=(h\sigma)^2$ and multiplying $\hat P(\sigma)=\hat P(h^{-1}\sqrt z)$ by $h^2$ gives
\begin{align}
\label{EqHDCOp}
  P_{h,z} &= h^2\Delta - \Bigl(\sqrt z-\frac{h\sfZ}{r}\Bigr)^2 + \frac{h^2}{r^2}\biggl(i\sfZ\alpha_r + r^2 m^2\biggr) + \frac{h^2}{r^2} r \bfR \\
    &= h^2\Delta - z + h^2 r^{-2}Q_{1,z} + h r^{-1}q_{0,z}, \nonumber\\
    &\qquad\qquad Q_{1,z}=-\sfZ^2 + i\sfZ\alpha_r + r^2 m^2 + r \bfR,\quad
    q_{0,z} = 2\sqrt{z}\sfZ. \nonumber
\end{align}
When $\Im\sigma$ is bounded while $|\Re\sigma|\to\infty$, one has $z=1+\cO(h)$; thus, $P_{h,z}$ is an admissible operator in the sense of Definition~\ref{DefPOp}. The threshold quantities in Definition~\ref{DefPThr} take the values
\[
  r_{\rm in}=r_{\rm out}=-\half
\]
since $q_{0,1}$ is real and the principal symbol of $Q_{1,z}$ (as a first order b-differential operator) vanishes at $r=0$.

The normal operator of $P_{h,z}$ is obtained by passing to $\hat r=\frac{r}{h}$ and restricting to $h=0$, giving in polar coordinates $(\hat r,\omega)\in[0,\infty]\times\Sph^2$
\begin{equation}
\label{EqHDCNormOp}
  N(P) = D_{\hat r}^2 - \frac{2 i}{\hat r}D_{\hat r} - \Bigl(1-\frac{\sfZ}{\hat r}\Bigr)^2 + \hat r^{-2}\bigl(\slDelta+i\sfZ\alpha_r(\omega)\bigr).
\end{equation}
For $\sfZ=0$, the operator $N(P)$ is equal to $(\Delta_{\hat g}-1)\otimes \Id_{\C^4}$ where $\hat g=\dd\hat r^2+\hat r^2 g_{\Sph^2}$, hence is invertible at weight $l\in(\half,\tfrac32)$ by Lemma~\ref{LemmaHL}. For fixed $l$, this will remain true for $\sfZ$ in a small neighborhood of $0$. The determination of the largest set of $\sfZ$ for which $N(P)$ is invertible at some weight requires explicit calculations:

\begin{lemma}[Invertibility of $N(P)$]
\label{LemmaHDCInv}
  Let $\sfZ\in\R$ be such that $|\sfZ|\neq\sqrt{\kappa^2-\frac14}$ for all $\kappa\in\N$. Then the operator $N(P)$ is invertible at weight $l=1$.
\end{lemma}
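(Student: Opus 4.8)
The plan is to verify the two injectivity conditions of Definition~\ref{DefPNormOp} at weight $l=1$ (noting $l'=-l+2=1$ as well), via separation of variables into generalized spherical harmonics adapted to the Dirac--Coulomb operator. First I would recall the standard decomposition of $L^2(\Sph^2;\C^4)$ into the eigenspaces of the angular operator $K:=\beta(\slDelta_{\Sph^2}^{\rm Dir}+1)$ (or, more directly, of the combined angular part $\slDelta+i\sfZ\alpha_r(\omega)$ appearing in~\eqref{EqHDCNormOp}); the relevant fact from the Dirac--Coulomb literature is that, on each $2$-dimensional invariant subspace labeled by the spin-orbit quantum number, the angular part acts with eigenvalues whose effect is that the radial operator reduces to a system equivalent to a single second-order ODE of Whittaker/Coulomb-wave-function type, with parameter $\nu=\sqrt{\kappa^2-\sfZ^2}$ where $\kappa\in\{\pm1,\pm2,\dots\}$. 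The hypothesis $|\sfZ|\neq\sqrt{\kappa^2-\tfrac14}$ for $\kappa\in\N$ is precisely the condition ensuring that $l-\tfrac{n}{2}=1-\tfrac32=-\tfrac12\notin\Lambda$, i.e.\ that $1\notin\tfrac32-\Im\specb(N(P))$: indeed $\specb(N(P))$ is the set of $i\bigl(\tfrac12\pm\nu_\kappa\bigr)$, so $-\Im\specb = -\tfrac12\mp\nu_\kappa$ and $-\tfrac12\notin -\Im\specb$ iff $\nu_\kappa\neq\tfrac12$ for all $\kappa$, iff $\kappa^2-\sfZ^2\neq\tfrac14$. I would record this computation of $\specb(N(P))$ first, as it both secures the hypothesis $l-\tfrac n2\notin\Lambda$ needed in Lemma~\ref{LemmaPNEst} and identifies the indicial exponents governing the behavior of solutions at $\hat r=0$.

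Next I would carry out the radial ODE analysis near $\hat r=0$ and near $\hat r=\infty$. Near $\hat r=0$, after the reduction to a $2\times2$ first-order Coulomb system (or the scalar second-order form), solutions have leading behavior $\hat r^{-\frac12+\nu_\kappa}$ and $\hat r^{-\frac12-\nu_\kappa}$ (up to the overall $\hat r^{-1}$ from the volume form conventions in $\Hb^{\infty,1}(\tface;\hat\mu)$); the requirement $u\in H_{\bop,\scop}^{\infty,1,-N}$ near $\tface\cap\cface$ selects, in each mode with $\Re\nu_\kappa>\tfrac12$, exactly the subdominant solution, and in modes with $\Re\nu_\kappa<\tfrac12$ forces the coefficient to vanish (this is where $l=1$ enters, via the threshold $1-\tfrac32$ relative to the indicial roots). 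Near $\hat r=\infty$, the equation is a perturbation of the free Coulomb-modified Helmholtz equation; solutions are spanned by outgoing and incoming Coulomb waves $\hat r^{-1}e^{\pm i(\hat r+\eta\log\hat r)}(1+O(\hat r^{-1}))$ with Sommerfeld parameter $\eta$ depending on $\sfZ$ and $z=1$. The wave front condition $\WFsc(u)\subset{}^\scop\cR_{\rm out}$ in Definition~\ref{DefPNormOp}\eqref{ItPNormOpFw} is exactly the statement that $u$ is purely outgoing, so in each mode $u$ is determined up to scalar by the requirement of being recessive at $0$ and outgoing at $\infty$.

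The crux is then a boundary pairing (Green's identity) argument, exactly as invoked in the proof of Lemma~\ref{LemmaHL} and referenced to \cite[\S2.3]{MelroseGeometricScattering}: for a purely outgoing solution $u$ of $N(P)u=0$, pairing $N(P)u$ against $u$ over $\{1\le\hat r\le R\}$ and letting $R\to\infty$, the boundary term at $\hat r=R$ computes, via the asymptotics above, a nonzero multiple of the squared modulus of the outgoing data, while the boundary term at $\hat r=1$ and the bulk term combine with the skew-adjoint part of $N(P)$. Here one must check that the operator $N(P)$ in~\eqref{EqHDCNormOp}, despite its non-symmetric term $i\sfZ\alpha_r/\hat r^2$ and the first-order terms, is formally self-adjoint on $L^2(\tface;\hat\mu)$ --- this is the content of $P=-(i\slpa_\bfA+m)(i\slpa_\bfA-m)$ being (minus) a product of an operator and its formal adjoint, so that $q_{0,1}=2\sfZ$ is real and $Q_{1,1}$ is formally self-adjoint up to $O(r)$, as already noted before the Lemma via $r_{\rm in}=r_{\rm out}=-\tfrac12$. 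Consequently the boundary pairing forces the outgoing data to vanish, and then the recessiveness at $\hat r=0$ forces $u\equiv0$ by uniqueness for the ODE (a solution vanishing to sufficiently high order at both ends of a regular-singular second-order ODE is zero). This establishes condition~\eqref{ItPNormOpFw}; condition~\eqref{ItPNormOpBw} at weight $-l+2=1$ follows by the identical argument applied to $N(P)^*=N(P)$ with the roles of incoming and outgoing reversed. By Definition~\ref{DefPNormOp}\eqref{ItPNormOpInv}, $N(P)$ is invertible at weight $l=1$.

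The main obstacle I anticipate is the separation-of-variables bookkeeping for the Dirac--Coulomb angular operator --- getting the $2\times2$ block structure, the spin-orbit labels $\kappa$, and the resulting parameter $\nu_\kappa=\sqrt{\kappa^2-\sfZ^2}$ correct, and confirming that $\Re\nu_\kappa>0$ for all $\kappa$ when $\sfZ\in\R$ with $|\sfZ|<1$ (automatic since $\kappa^2\ge1$), so that the indicial roots are genuinely split and the $\hat r=0$ analysis goes through; for $|\sfZ|\ge1$ the lowest mode $\kappa=\pm1$ has $\nu_{\pm1}$ imaginary or zero, but one checks the $\Re\nu_\kappa>\tfrac12$ versus $<\tfrac12$ dichotomy still cleanly selects a well-defined recessive solution as long as $\nu_\kappa\neq\tfrac12$, which is the hypothesis. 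A secondary technical point is justifying the vanishing of the boundary term at $\hat r=1$ and the convergence at $\hat r=\infty$ for the a priori only weakly-decaying solutions in the statement of Definition~\ref{DefPNormOp}; this is handled exactly as in \cite{MelroseGeometricScattering} and \cite[\S4.8]{VasyMinicourse} by a standard regularization, and I would simply cite Lemma~\ref{LemmaHVScalar} and the proof of Lemma~\ref{LemmaHL} as templates rather than reproduce the argument.
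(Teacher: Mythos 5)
Your overall strategy is the same as the paper's: decompose into spinor spherical harmonics, reduce to radial Coulomb-type ODEs with parameter $\nu_\kappa=\sqrt{\kappa^2-\sfZ^2}$, check that the hypothesis is exactly the non-resonance condition $1\notin\Lambda+\tfrac{n}{2}$, select the recessive solution at $\hat r=0$ and the outgoing solution at $\hat r=\infty$, and kill the latter by a boundary pairing. Two steps, however, are genuinely wrong or missing. First, $N(P)$ is \emph{not} formally self-adjoint on $L^2(\tface;\hat\mu)$: the term $i\sfZ\hat r^{-2}\alpha_r$ is skew-adjoint (since $\alpha_r$ is Hermitian and $\sfZ$ is real), and the presence of precisely such a non-symmetric leading-order term at the cone point is the raison d'\^etre of the whole paper (cf.\ the discussion of \cite{BaskinWunschDiracCoulomb} in \S\ref{SI}). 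Your appeal to ``$N(P)^*=N(P)$'' to dispatch condition~\eqref{ItPNormOpBw} of Definition~\ref{DefPNormOp} therefore fails as stated. The correct mechanism, which the paper uses, is that the adjoint permutes the angular eigenmodes: the separated radial operators satisfy $(N_\kappa^\pm)^*=N_\kappa^\mp$ (the angular eigenvalues $\lambda_\kappa^\pm=\kappa^2\pm\sqrt{\kappa^2-\sfZ^2}$ are complex conjugates of one another when $\kappa^2<\sfZ^2$), so the incoming-injectivity of $N(P)^*$ follows by running the same mode-by-mode argument on the swapped family rather than by self-adjointness. Relatedly, for the modes with $\kappa^2<\sfZ^2$ the scalar operators themselves carry a non-real potential term, so the Green's identity acquires a bulk contribution and is not the purely formal computation you describe; this needs to be addressed, not assumed away.

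Second, your reduction to scalar ODEs presupposes that the angular operator $\slDelta+i\sfZ\alpha_r$ is diagonalizable on each $2$-dimensional block. When $|\sfZ|=|\kappa|$ for some $\kappa$ --- which \emph{is} allowed by the hypothesis, since $\sqrt{\kappa'^2-\tfrac14}$ is never an integer --- the block is a nontrivial Jordan block with the single eigenvalue $\kappa^2$, and the radial system does not decouple. The paper handles this case separately via the lower-triangular structure of the resulting ODE system (solve the decoupled component first, conclude it vanishes, then the remaining component satisfies the same scalar equation). Your proposal is silent on this case, and a minor bookkeeping slip compounds the issue: your stated form of $\specb(N(P))$ does not match the actual indicial roots (which, shifted by $\tfrac{n}{2}=\tfrac32$, are $\tfrac12\mp\nu_\kappa$ and $\tfrac32\pm\nu_\kappa$), although the conclusion you draw from it --- that the hypothesis excludes $\nu_\kappa=\tfrac12$ and hence secures $l=1\notin\Lambda+\tfrac32$ --- is correct. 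With the adjoint relation corrected and the Jordan-block case supplied, your argument would align with the paper's.
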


The conclusion of the Lemma in particular holds in the range $|\sfZ|<\frac{\sqrt 3}{2}$; this is related to the essential self-adjointness of Dirac operators, see \cite{WeidmannDirac,LeschFuchs} and \cite[\S4.1]{BaskinWunschDiracCoulomb}.

\begin{proof}[Proof of Lemma~\usref{LemmaHDCInv}]
  We begin by separating into spinor spherical harmonics following \cite[\S2.1]{BaskinWunschDiracCoulomb}: for
  \[
    \kappa\in\Z\setminus\{0\},\quad
    \mu\in\{-|\kappa|+\half,\ldots,|\kappa|-\half\},
  \]
  define the $\C^2$-valued function on $\Sph^2$
  \[
    \Omega_{\kappa,\mu}(\omega)
      =\begin{pmatrix}
         -\sgn(\kappa)\bigl(\tfrac{\kappa+\frac12-\mu}{2\kappa+1}\bigr)^{1/2}Y_{l,\mu-\frac12}(\omega) \\
         \bigl(\tfrac{\kappa+\frac12+\mu}{2\kappa+1}\bigr)^{1/2}Y_{l,\mu+\frac12}(\omega)
       \end{pmatrix},\qquad
    l=|\kappa+\half|-\half.
  \]
  Thus $\slDelta\Omega_{\kappa,\mu}=\kappa(\kappa+1)\Omega_{\kappa,\mu}$. Moreover,
  \[
    \alpha_r\begin{pmatrix} a\Omega_{\kappa,\mu} \\ b\Omega_{-\kappa,\mu'} \end{pmatrix} = \begin{pmatrix} -b\Omega_{\kappa,\mu'} \\ -a\Omega_{-\kappa,\mu} \end{pmatrix},
  \]
  as follows from~\cite[Equations~(3), (4), (9)]{BaskinWunschDiracCoulomb} or \cite[Equation~(3.1.3)]{SzmytkowskiSpinors}. Thus, the action of the spherical operator $\slDelta+i\sfZ\alpha_r\in\Diff^2(\Sph^2;\C^4)$ appearing in~\eqref{EqHDCNormOp} on the 2-dimensional space with basis $(\Omega_{\kappa,\mu},0)$ and $(0,\Omega_{-\kappa,\mu})$ is given by the matrix
  \begin{equation}
  \label{EqHDCMatrix}
    \begin{pmatrix}
      \kappa(\kappa+1) & -i\sfZ \\
      -i\sfZ & \kappa(\kappa-1)
    \end{pmatrix}.
  \end{equation}
  This can be diagonalized when $|\sfZ|\neq|\kappa|$, and it has eigenvalue $\lambda_\kappa^\pm=\kappa^2\pm\sqrt{\kappa^2-\sfZ^2}$ on the eigenspace spanned by
  \[
    \cY_{\kappa,\mu}^\pm(\omega) := \begin{pmatrix}
      i\sfZ \Omega_{\kappa,\mu}(\omega) \\
      (\kappa\mp\sqrt{\kappa^2-\sfZ^2})\Omega_{-\kappa,\mu}(\omega)
    \end{pmatrix},
    \qquad \mu\in\{-|\kappa|+\half,\ldots,|\kappa|-\half\}.
  \]
  Thus, the action of $N(P)$ on separated functions of the form $u(\hat r)\cY_{\kappa,\mu}^\pm(\omega)$ is given by the action on $u$ of the differential operator
  \[
    N_\kappa^\pm = D_{\hat r}^2-\frac{2 i}{\hat r}D_{\hat r}-\Bigl(1-\frac{\sfZ}{\hat r}\Bigr)^2+\hat r^{-2}\lambda_\kappa^\pm.
  \]
  The Mellin-transformed normal operator of $\hat r^2 N_\kappa^\pm$ at $\hat r=0$ is the polynomial
  \[
    \lambda^2 - i\lambda - \sfZ^2 + \lambda_\kappa^\pm,\qquad \lambda\in\C;
  \]
  for its roots, we have
  \[
    -(\Im\lambda) + \tfrac32 \in \Bigl\{ 1 - \bigl(\half \pm \sqrt{\kappa^2-\sfZ^2} \bigr),\ \ 1 + \bigl(\half \pm \sqrt{\kappa^2-\sfZ^2} \bigr) \Bigr\}
  \]
  Now if $\sfZ^2>\kappa^2$, then these two roots have real parts $\half$ and $\tfrac32$, whereas if $\sfZ^2<\kappa^2$, they are disjoint from an open interval $(1-\delta,1+\delta)$ around $1$ due to the assumption that $\kappa^2-\sfZ^2\neq\frac14$.

  An outgoing solution of $N(P)$ at weight $l=1$, expanded into the spherical eigenfunctions $\cY_{\kappa,\mu}^\pm$, is an outgoing solution of $N_\kappa^\pm$; one easily finds $u=u_\infty\hat r^{-1-i Z}e^{i\hat r}+\cO(\hat r^{-2})$ as $\hat r\to\infty$, where $u_\infty\in\C$, and the $\cO(\hat r^{-2})$ term is conormal at $\hat r=0$; near $\hat r=0$ on the other hand, we have $u=\cA^{-\frac12+\delta}([0,1)_{\hat r})$. A boundary pairing argument, i.e.\ the evaluation of
  \[
    0 = \lim_{\eps\to 0}\int_\eps^{1/\eps} \bigl((N_\kappa^\pm u)\bar u - u\ol{N_\kappa^\pm u}\bigr)\,\hat r^2\,\dd\hat r = 2 i\lim_{\eps\to 0}\Im(\hat r^2 u \ol{u'}|_\eps^{1/\eps}) = -2 i|u_\infty|^2,
  \]
  gives $u_\infty=0$, and thus $u\equiv 0$ by standard ODE analysis near $\hat r=\infty$. This shows that $N(P)$ is injective at weight $l=1$ on outgoing solutions. Since $(N_\kappa^\pm)^*=N_\kappa^\mp$ with respect to the $L^2(\hat r^2\,\dd\hat r)$ inner product, the injectivity of $N(P)^*$ at weight $-l+2$ on incoming functions is proved similarly. This completes the proof when $\sfZ$ is not a nonzero integer.

  When $\sfZ\in\Z\setminus\{0\}$ and $\kappa$ satisfies $|\sfZ|=|\kappa|$, then the action of $\slDelta+i\sfZ\alpha_r$ on the span of $(\Omega_{\kappa,\mu},0)$ and $(0,\Omega_{-\kappa,\mu'})$ is not diagonalizable anymore. By inspection of~\eqref{EqHDCMatrix}, it still has the eigenvalue $\kappa^2$ with eigenspace spanned by $\cY_{\kappa,\mu}^+=\cY_{\kappa,\mu}^-$. Let $\tilde\cY_{\kappa,\mu}=(\Omega_{\kappa,\mu},0)$, then an outgoing solution $u=u_1\cY_{\kappa,\mu}^+ +u_2\tilde\cY_{\kappa,\mu}$ of $N(P)$ satisfies a lower triangular ODE system, with a decoupled equation for $u_1$ which implies $u_1\equiv 0$ by the previous arguments, whence $u_2$ is now an outgoing solution to the same equation as $u_1$ and must therefore also vanish. The proof is complete.
\end{proof}

If we cut $\bfA$ off via multiplication by a cutoff $\chi\in\CIc(X)$, the operator $P_{h,z}$ is equal to $h^2\Delta-z$ near infinity and can thus be analyzed as in~\S\ref{SsHV}. In this setting, we thus obtain invertibility and quantitative estimates for $P_{h,z}$:

\begin{thm}[High energy estimates for the Dirac--Coulomb equation]
\label{ThmHDC}
  Suppose $\bfA=(A_0,A_1,A_2,A_3)$ as in~\eqref{EqHVPotential} has compact support. Let $\sfZ\in\R$ be such that $|\sfZ|\neq\sqrt{\kappa^2-\frac14}$ for all $\kappa\in\N$. Then for $l=1$ (and indeed for $l$ sufficiently close to $1$), the operator $P_{h,z}=h^2\hat P(h^{-1}z)$ defined in~\eqref{EqHDCKleinGordon}--\eqref{EqHDCOp} is invertible as a map between the spaces~\eqref{EqHVPropMap} and satisfies the uniform bound~\eqref{EqHVPropSharp} as well as the bound~\eqref{EqHVPropSimple} (with $\Delta_g+V-\sigma^2$ replaced by $\hat P(\sigma)$) for $l_1=l_2=0$.
\end{thm}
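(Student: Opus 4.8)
The plan is to deduce Theorem~\ref{ThmHDC} as an essentially immediate consequence of the general machinery already assembled, exactly in parallel to how Theorem~\ref{ThmHVProp} (with Lemma~\ref{LemmaHVScalar}) yields Theorem~\ref{ThmIV}. First I would record that the cut-off Dirac--Coulomb Klein--Gordon operator $P_{h,z}$ in \eqref{EqHDCOp} is admissible in the sense of Definition~\ref{DefPOp}, now in the vector bundle setting of Remarks~\ref{RmkPBundle0} and~\ref{RmkPBundle} with $E=\underline{\C^4}\to X$: the principal part is $h^2\Delta\otimes\Id_{\C^4}$, which has scalar principal symbol $|\zeta|_{g^{-1}}^2$, and the remaining terms $h^2 r^{-2}Q_{1,z}$, $h r^{-1}q_{0,z}$ are of the required form with $Q_{1,z}\in\Diffb^1(X;\C^4)$ and $q_{0,z}\in\CI(X;\End(\C^4))$, depending smoothly on $z$ with $z=1+\cO(h)$ once $\Im\sigma$ is bounded and $|\Re\sigma|\to\infty$.

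Next I would compute the threshold quantities $r_{\rm in},r_{\rm out}$ using the bundle version of Definition~\ref{DefPThr} given in Remark~\ref{RmkPBundle}. Since $q_{0,1}=2\sfZ$ is a real scalar (hence $q_{0,1}-q_{0,1}^*=0$) and the principal symbol of the first-order b-operator $Q_{1,z}$ vanishes at $r=0$ (so the scattering symbol of $\tfrac{x^{-2}Q_{2,1}-(x^{-2}Q_{2,1})^*}{2i}$ has vanishing contribution from $Q_{1,z}$ at $\pa X$, the matrix part $i\sfZ\alpha_r$ being self-adjoint and thus contributing nothing to the skew-adjoint part), with respect to the standard fiber inner product on $\C^4$ one gets $r_{\rm in}=r_{\rm out}=-\tfrac12$, just as stated in the excerpt right before Lemma~\ref{LemmaHDCInv}. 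Then the invertibility hypothesis of Theorem~\ref{ThmHVProp}'s vector-valued analogue is precisely the statement of Lemma~\ref{LemmaHDCInv}: $N(P)$ is invertible at weight $l=1$ whenever $|\sfZ|\neq\sqrt{\kappa^2-\tfrac14}$ for all $\kappa\in\N$; by openness of the invertibility condition in the lower-order data (cf.\ scenario~(1) after the proof of Theorem~\ref{ThmHVProp}, together with Remark~\ref{RmkPNWeight} on the flexibility in $l$), it also holds for $l$ in a small neighborhood of $1$ with $l-\tfrac{n}{2}\notin\Lambda$.

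With these three points in place, I would invoke Theorem~\ref{ThmHVProp} verbatim: since $\bfA$ has compact support, $P_{h,z}=h^2\Delta-z$ outside a compact set, so the $r=0$ propagation estimate of Theorem~\ref{ThmP} (in its vector bundle form, Remark~\ref{RmkPBundle}, with $\alpha=\tfrac12+\eps$ and variable order $\sfb$ equal to $2\eps$ near $\cR_{\rm in}$ and $0$ near $\cR_{\rm out}$) concatenates with standard semiclassical scattering estimates near infinity of $\R^3$ exactly as in the proof of Theorem~\ref{ThmHVProp}. This gives the invertibility of $\hat P(\sigma)=h^{-2}P_{h,z}$ between the spaces~\eqref{EqHVPropMap}, the sharp variable-order estimate~\eqref{EqHVPropSharp}, and hence the simplified bound~\eqref{EqHVPropSimple} with $l_1=l_2=0$ (using $l=1\in[\,\tfrac12-\eps,\ \ldots\,]$ and $l-2=-1\leq-\tfrac12-\eps$ for $\eps$ small, so the weights $l_1=l_2=0$ are admissible). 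The only genuinely new input beyond Theorem~\ref{ThmHVProp} is the verification that the thresholds are $-\tfrac12$ and the normal operator invertibility — and the former is a short symbol computation while the latter is Lemma~\ref{LemmaHDCInv}, already proved.

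The main obstacle, such as it is, is purely bookkeeping: one must be careful that the vector-bundle modifications of Definitions~\ref{DefPOp}, \ref{DefPThr} and of Theorem~\ref{ThmP} (all collected in Remark~\ref{RmkPBundle}) are applied consistently — in particular that the near-optimal choice of fiber inner product on $\C^4$ in the definition of $r_{\rm in/out}$ is the standard one here (legitimate because the relevant skew-adjoint scattering symbol vanishes, so no nontrivial optimization over inner products is needed), and that the weight $l=1$ lies strictly between the two elements of $\tfrac{n}{2}+\Lambda$ straddling $\tfrac{n}{2}$, which is exactly what the eigenvalue/root computation in the proof of Lemma~\ref{LemmaHDCInv} established. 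No positive-commutator or normal-operator argument needs to be redone; everything reduces to citing Theorem~\ref{ThmHVProp} and Lemma~\ref{LemmaHDCInv}.
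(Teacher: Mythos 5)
Your proposal is correct and follows exactly the paper's route: the paper gives no separate proof of Theorem~\ref{ThmHDC}, deducing it (as you do) from the admissibility and threshold computation preceding Lemma~\ref{LemmaHDCInv}, the normal operator invertibility of that lemma, and the argument of Theorem~\ref{ThmHVProp} applied to the compactly supported perturbation of $h^2\Delta-z$. One parenthetical slip: $i\sfZ\alpha_r$ is skew-adjoint (not self-adjoint) for real $\sfZ$ since $\alpha_r$ is Hermitian; the correct reason it does not affect $r_{\rm in/out}$ is that it is a zeroth-order term of $Q_{1,z}$ and hence appears in $N(P)$ at scattering decay order $2$ rather than $1$, which is subsumed in your (and the paper's) main justification that only the first-order b-principal symbol of $Q_{1,z}$ at $r=0$ and $\Im q_{0,1}$ enter the thresholds.
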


\begin{rmk}[Complex charges]
  One can also analyze the case of non-real $\sfZ\in\C$, in which case $r_{\rm in}=-\half+\Im\sfZ$ and $r_{\rm out}=-\half-\Im\sfZ$. The difference $r_{\rm out}-r_{\rm in}=-2\Im\sfZ$ results in an additional $2\Im\sfZ$ loss of powers of the semiclassical parameter $h$ when propagating through the singularity at $r=0$. Nonetheless, the invertibility of $N(P)$ automatically holds for values of $\sfZ$ close to those allowed in Theorem~\ref{ThmHDC}, as discussed prior to Lemma~\ref{LemmaHVScalar}.
\end{rmk}

\appendix
\section{A class of examples with sharp semiclassical loss}
\label{SLoss}

Note that the semiclassical order $\sfb$ in Theorem~\ref{ThmP} must decrease from $\cR_{\rm in}$ to $\cR_{\rm out}$ by more than
\begin{equation}
\label{EqLossD}
  D=\max(r_{\rm in}-r_{\rm out},0);
\end{equation}
thus, the estimate~\eqref{EqPThmFw} controls $u$ in $L^2$, say, microlocally near the flow-out of $\cR_{\rm out}$ by $h^{-D-\eps}$ (for any $\eps>0$) times the $L^2$-norm of the microlocalization of $u$ near the flow-in of $\cR_{\rm in}$. While in many natural settings, such as those discussed in~\S\ref{SH}, one has $D=0$, it is easy to construct examples where $D>0$. The following example (placed into a general context at the end of this appendix) shows that a loss of $h^{-D}$ typically \emph{does} occur, whence our estimates are sharp up to an $\eps$-loss. This $\eps$-loss may be avoidable, though we are not able to prove or disprove this here.

Consider $X=[0,2)_r$, $\mu=|\dd r|$, and
\[
  P_{h,z}=h^2(-\pa_r^2-\tfrac{2}{r}\pa_r)-z+\tfrac{2 i h}{r}q,\qquad z=1+\cO(h),
\]
where $q\in\C$ is a parameter. (The term in parentheses is the radial part of the Laplacian on $\R^3$ in polar coordinates.) The normal operator is
\[
  N(P)=-\pa_{\hat r}^2-\tfrac{2}{\hat r}\pa_{\hat r}-1+\tfrac{2 i}{\hat r}q.
\]
For $q=0$, the kernel of $N(P)$ is spanned by $\hat r^{-1}e^{\pm i\hat r}$; since $\hat r^{-1}$ barely fails to lie in $\hat r^{-\frac12}L^2([0,1)_{\hat r},|\dd\hat r|)$, it is easy to see that $N(P)$ is invertible at weight $l$ in the sense of Definition~\ref{DefPNormOp} for $l\in(-\half,\half)$; this persists for small values of $q\in\C$. (The boundary spectrum of $N(P)$ at $\hat r=0$ is independent of $q$.) In the notation of Definition~\ref{DefPThr}, we have
\begin{equation}
\label{EqLossThr}
  r_{\rm in}=\half+\Re q,\quad
  r_{\rm out}=\half-\Re q,
\end{equation}
so $D=\max(2\Re q,0)$. The quantities~\eqref{EqLossThr} correspond precisely to the $L^2$-decay rates of incoming and outgoing solutions $\hat u_{\rm in},\hat u_{\rm out}\in\ker N(P)$, which have the asymptotic behavior
\begin{equation}
\label{EqLossAsy}
  \hat u_{\rm in}\sim\hat r^{-1-\Re q}e^{-i\hat r},\quad
  \hat u_{\rm out}\sim\hat r^{-1+\Re q}e^{i\hat r},\qquad \hat r\to\infty.
\end{equation}
(We omit the explicit expressions involving confluent hypergeometric functions.)

We can now construct an element $\hat u\in\ker N(P)$ which lies in $\hat r^l L^2$, $l\in(-\half,\half)$, near $\hat r=0$ and which is uniquely specified by requiring its incoming data at $\hat r=\infty$ to be given by $\hat u_{\rm in}$. Indeed, $\hat u$ is necessarily of the form
\[
  \hat u(\hat r) = \hat u_{\rm in} + c\hat u_{\rm out},
\]
where $c\in\C$ is the `scattering matrix'; necessarily $c\neq 0$ (since $\hat u_{\rm in}$ fails to lie in $\hat r^l L^2$ near $\hat r=0$). But then
\[
  P_{h,1}u_h(r)=0,\qquad u_h(r):=\hat u(r/h).
\]
(This exact formula uses the invariance of $P_{h,1}$ under dilations in $(h,r)$.) Considering a neighborhood of $r=1$ then, the asymptotics~\eqref{EqLossAsy} for $u_{\bullet,h}(r):=u_\bullet(r/h)$, $\bullet=\rm in,out$, imply
\[
  u_h = u_{\rm in,h}+c u_{\rm out,h},\qquad u_{\rm in,h}\sim h^{1+\Re q}e^{-i r/h},\quad u_{\rm out,h}\sim h^{1-\Re q}e^{i r/h}\qquad\text{(near $r=1$)}.
\]
This demonstrates the loss of $h^{-2\Re q}$ between the amplitudes $h^{1+\Re q}$, resp.\ $h^{1-\Re q}$ of the incoming, resp.\ outgoing pieces of $u_h$. (The fact that there is in fact a gain when $\Re q<0$ is a peculiar feature of the 1-dimensional situation considered here: the characteristic set of $P_{h,z}$ has two connected components, with the incoming and outgoing radial sets lying in different components, and the monotonicity requirement in Theorem~\ref{ThmP} does not relate the two components.)

The same idea can applied to produce many more examples with sharp loss $D$. Indeed, when $N(P)$ is invertible at weight $l$, then the solution $\hat u=\hat u(\hat r,y)$ (with $y$ denoting points on $\pa X$) of $N(P)\hat u=0$, where $\hat u$ has prescribed incoming data and lies in $\hat r^l L^2$ near $\hat r=0$, gives rise to a solution $u_h(r,y)=\hat u(r/h,y)$ of $P_{h,1}u_h(r,y)=0$ where $P_{h,1}=N(P)$ (upon changing coordinates $\hat r=r/h$). The relative decay rates of incoming/outgoing solutions of $N(P)$ are then directly reflected in the relative semiclassical orders of $u_h$ near the flow-in/flow-out of the cone point. (Since the characteristic set of $P_{h,1}$ is connected when $\dim X\geq 2$, the loss is at least $0$, cf.\ \eqref{EqLossD}; after all, even away from the cone point, semiclassical regularity cannot improve under real principal type propagation.)

\bibliographystyle{alpha}

\end{document}